\newtheorem{theoremIntro}{Theorem}
\newtheorem{propositionIntro}[theoremIntro]{Proposition}
\newtheorem{theorem}{Theorem}[section]
\newtheorem{proposition}[theorem]{Proposition}
\newtheorem{corollary}[theorem]{Corollary}
\newtheorem{lemma}[theorem]{Lemma}
\newtheorem{remark}[theorem]{Remark}
\newcommand{\scal}[2]{\left\langle #1,#2 \right\rangle}
\newcommand{\g}{\nabla}
\newcommand{\di}{\mathrm{div}}
\newcommand{\lap}{\Delta}
\newcommand{\dr}{\partial}
\newcommand{\vol}{\mathrm{vol}}
\newcommand{\Span}{\mathrm{Span}}
\newcommand{\tr}{\mathrm{tr}}
\newcommand{\Riem}{\mathrm{Riem}}
\newcommand{\Ric}{\mathrm{Ric}}
\newcommand{\Scal}{\mathrm{Scal}}
\newcommand{\Conf}{\mathrm{Conf}}
\newcommand{\Diff}{\mathrm{Diff}}
\newcommand{\proj}{\mathrm{proj}}
\newcommand{\sign}{\mathrm{sign}}
\newcommand{\Ima}{\mathrm{Im}}
\newcommand{\id}{\mathrm{id}}
\newcommand{\R}{\mathbb{R}}
\newcommand{\N}{\mathbb{N}}
\newcommand{\s}{\mathbb{S}}
\newcommand{\RP}{\mathbb{RP}}
\newcommand{\CP}{\mathbb{CP}}
\newcommand{\Lr}{\mathcal{L}}
\newcommand{\Er}{\mathcal{E}}
\newcommand{\Ur}{\mathcal{U}}
\newcommand{\Sr}{\mathcal{S}}
\newcommand{\pr}{\mathcal{P}}
\newcommand{\Arond}{\mathring{A}}
\newcommand{\inter}[2]{[\![#1,#2]\!]}
\newcommand{\vp}{\varphi}
\newcommand{\ve}{\varepsilon}
\newcommand{\ggo}{\mathfrak{g}}
\numberwithin{equation}{section}
\title{A duality theorem for a four dimensional Willmore energy}
\author{Dorian Martino\footnote{Partial support through ANR BLADE-JC ANR-18-CE40-002 is acknowledged.\\Institut de Mathématiques de Jussieu, Université Paris Cité, Bâtiment
		Sophie Germain, 75205 Paris Cedex 13, France\\ Email : dorian.martino@imj-prg.fr}}
\begin{document}
	
	\maketitle

	\begin{abstract}
		We prove an analog of Bryant's duality theorem for a four dimensional Willmore energy $\Er_{GR}$ obtained by Graham-Reichert and Zhang. We show that for an immersion $\Phi$ from a four dimensional compact manifold without boundary $\Sigma$ into $\R^5$, the energy $\Er_{GR}(\Phi)$ is equal to two energies on its conformal Gauss map $Y$. One defined only in terms of the image of $Y$, which is the analog of the area functional for Willmore surfaces, and an other one defined on maps from $\Sigma$ into the De Sitter space $\s^{5,1}$, which is the analog of the Dirichlet energy for Willmore surfaces. We prove that even when restricted to immersions of a given topological manifold $\Sigma^4$, $\Er_{GR}$ is never bounded from below on the set of immersions from $\Sigma$ into $\R^5$. We exhibit a second conformally invariant energy $\Er_P$ which is bounded from below and whose construction is closer to the two dimensional Willmore energy.
	\end{abstract}

	\section{Introduction}
	
	A standard question in conformal geometry is the understanding of functionals defined on submanifolds of a given Riemannian manifold, and which are invariant by conformal change of the underlying metric. For instance, if we consider immersions $\Phi$ from a closed surface $\Sigma$ into $\R^3$, Mondino-Nguyen \cite{mondino2018} proved that any conformally invariant functional can be expressed as a linear combination :
	\begin{align*}
		\int_\Sigma \alpha |\Arond|^2_g +\beta K\ d\vol_g,
	\end{align*}
	where $\alpha,\beta\in\R$ are fixed constants, $g$ is the metric induced by the flat one from $\R^3$ on the surface $\Phi(\Sigma)$, $\Arond$ is the traceless part of the second fundamental form $A$ of the surface $\Phi(\Sigma)$ and $K$ is its Gauss curvature. By Gauss-Bonnet, the term $\int_\Sigma K d\vol_g=2\pi\chi(\Sigma)$ depends only on the topology, not on the immersion $\Sigma\hookrightarrow\R^3$. Up to a constant depending only on the topology of $\Sigma$, the functional $\int_\Sigma |\Arond|^2_g d\vol_g$ can be expressed in terms of $\int_\Sigma H^2 d\vol_g$, which is known as the Willmore functional and studied since the 19th century, see for instance \cite{bryant1984,helein1998,hertrich-jeromin2003,kuwert2001,kuwert2004,kuwert2012,laurain,marque2021,marques2014b,martino2023,michelat,pinkall1985,riviere2016a,riviere2021}. \\
	
	In higher dimension, the diversity of curvature quantities makes the study of conformally invariants energies defined on embeddings $\Sigma^n \hookrightarrow \R^{n+1}$ much more complex, where $\Sigma$ is a closed $n$-manifold. For instance one can consider functionals containing only polynomials with intrinsic curvature terms or more generally, functionals defined by polynomials on the second fundamental form. Mondino-Nguyen \cite{mondino2018} showed that the first family can be expressed only in terms of the Pfaffian and the Weyl tensor, whereas the second family can be expressed only in terms of the traceless part of the second fundamental form. For instance in dimension $n=4$, any linear combination of the following functionals is conformally invariant :
	\begin{align*}
		\int_\Sigma |\Arond|^4_g d\vol_g,\ \ \ \int_\Sigma \tr_g(\Arond^4) d\vol_g,\ \ \ \int_\Sigma \det_g\Arond\ d\vol_g.
	\end{align*}
	A natural question is to know whether one can also construct and classify conformally invariant functionals containing derivatives of the second fundamental form. This has been well studied and some examples and methods to construct them can be found in \cite{blitz2023generalized,gover2020,graham1999,guven,vyatkin,zhang}. By adding lower order terms, one can produce conformally invariants functionals with leading term of the form 
	\begin{align*}
		\int_\Sigma |\g A|^2_g d\vol_g, \ \ \ \int_\Sigma |\g \Arond|^2_g d\vol_g,\ \ \ \int_\Sigma |\g H|^2_g d\vol_g.
	\end{align*}
	Thanks to \cref{lm:integral_gH} in the appendix, all the above integrals are the same, up to a lower order terms. Blitz-Gover-Waldron have listed some of such functionals and compared them together in \cite[Section 4]{blitz2023generalized}. We would like to mention that the study of critical points of conformally invariant energies can be stated in terms of a sign changing Yamabe problem, see for instance the works of \cite{gover2021,graham2017,juhl}. \\
	
	Recently, the question of finding a suitable generalization of the Willmore energy for immersions of compact manifolds $\Sigma^4$ embedded in $\R^5$ has been raised by several authors \cite{gover2020,guo,robingraham2020}. One of the main motivation is the study of volume renormalization for the AdS/CFT correspondence in physics. Broadly speaking, this correspondence states the existence of a duality between gravitational theories on an asymptotically Anti De Sitter space-time $(X^{d+2},g)$ and conformal field theories on the conformal boundary $\dr_\infty X$ of $X$, see for instance \cite{maldacena1997,aharony1999}. In \cite{maldacena1998,graham1999,ryu2006,perlmutter2015}, the authors argue that the computation of some quantities such as the entanglement entropy or the expectation value of Wilson lines, requires the computation of the total area of a minimal hypersurface $M^{d+1}$ of $X^{d+2}$. However the area of $M$ has no reason to be finite in general since the boundary of $X$ (and thus, the boundary of $M$) might be at infinity. In order to obtain rigorously defined quantities, a renormalization procedure detailed in \cite{fefferman2012,graham1999,gover2017} can be roughly summarized as follows : we compute the area of $M\cap B_g(x_0,R)$ for some $x_0\in X$ and $R>0$, let $R\to \infty$ and consider the asymptotic expansion of this area in $R$. After substracting all the diverging terms, we obtain a number which can be written as the integral over the conformal boundary $\dr_\infty M$. This integral actually defines a functional over the submanifolds of dimension $d$ in $\dr_\infty X^{d+2}$ which is invariant by conformal transformations. For instance, Graham-Witten \cite{graham1999} computed this functional in dimension $d=2$ and recovered the classical Willmore energy $\int H^2$ when $\dr_\infty X = \R^3$.\\
	
	From a mathematical point of view, one advantage of the renormalized volume is to obtain conformally invariant functionals having minimal surfaces as critical points. For instance in dimension 2, the study of conformal transformations of minimal surfaces is one of the interests of the Willmore energy. In dimension 4, this procedure has been applied by Zhang \cite{zhang} and  Graham-Reichert \cite{robingraham2020}. They both end up with the following energy : for an immersion $\Phi : \Sigma^4 \to \R^5$ the functional is given by
	\begin{align*}
		\Er_{GR}(\Phi) &:= \int_\Sigma |\g H|^2_g - H^2 |A|^2_g + 7H^4 \ d\vol_g .
	\end{align*}
	where $g:= \Phi^*\xi$, $\xi$ is the flat metric on $\R^5$, $A$ is the second fundamental form of $\Phi(\Sigma)$ and $H = \frac{1}{4}\tr_g A$ is its mean curvature. However, if we want to see the Willmore energy as a measure of the bending or of the umbilicity of a given hypersurface, as in the original work of Germain, then having minimal surfaces as critical points does not seem to be an important property. In \cite{blitz2023generalized,gover2020}, the authors define a generalized Willmore energy for hypersurfaces $\Sigma^4 \subset \R^5$ as a conformally invariant functional whose Euler-Lagrange equation has a leading term of the form $\lap^2 H$. They use tractor calculus to generate numerous functionals arising from different geometric context. \\
	
	Another major property of the Willmore surfaces in dimension 2 is their correspondence with some minimal surfaces in the De Sitter space. In 1984, Bryant \cite{bryant1984} proved a correspondence between Willmore surfaces in the Euclidean space and minimal surfaces in the De Sitter space
	\begin{align*}
		\s^{3,1} &:= \{ x\in \R^5 : |x|^2_\eta = 1 \},\\
		\eta &:= (dx^1)^2 + ... + (dx^4)^2 - (dx^5)^2 .
	\end{align*}
	He showed that the conformal Gauss map $Y :\Sigma^2 \to \s^{3,1}$ is a conformal harmonic map, that is to say a branched minimal surface, if and only if $\Phi$ is a Willmore surface. By definition, see \cref{sub:intro_CGM} for the construction, the conformal Gauss map $Y(x)\in \s^{3,1}$ at a point $x\in\Sigma^2$ represents the sphere of $\R^3$ which is tangent to $\Phi(\Sigma)$ at the point $\Phi(x)$ and having mean curvature $H(x)$. The starting point of Bryant's correspondence is to show that 
	\begin{align*}
		\int_\Sigma |\Arond|^2_{g_\Phi} d\vol_{g_\Phi} = \int_\Sigma |\g Y|^2_{g_\Phi} d\vol_{g_\Phi} = 2\textrm{Area}(Y),
	\end{align*}	
	where $g_\Phi = \Phi^*\xi$ and $\xi$ is the flat metric in $\R^3$. We recall that the term \enquote{conformal} in the name \enquote{conformal Gauss map} refers to the following property : if $\Theta$ is a conformal transformation of $\R^3$, then there exists a matrix $M_\Theta\in SO(3,1)$ such that the conformal Gauss map $Y_{\Theta\circ \Phi}$ of $\Theta\circ \Phi$ is given by $Y_{\Theta\circ \Phi} = M_\Theta Y_\Phi$, where $Y_\Phi$ is the conformal Gauss map of $\Phi$. In this work, we show an analog of Bryant's correspondence in dimension $4$. By comparing the construction of the conformal Gauss map in \cite{dajczer,hertrich-jeromin2003,marque2021}, see \cref{se:Preliminaries} for the definition, and the construction of the normal tractor in \cite{curry,vyatkin} when the underlying manifold is conformally flat, one can see that these are basically the same object. However, the formalism of the conformal Gauss map looks to us more elementary. Since we will restrict ourselves to hypersurfaces in $\R^5$, we will only work with the conformal Gauss map's point of view. \\
	
	We emphasize a first major difference between the dimension 2 and the dimension 4. In dimension 2, the conformal Gauss map $Y :(\Sigma^2 ,g_\Phi) \to (\s^{3,1},\eta)$ of an immersion $\Phi : \Sigma \to \R^3$ is always conformal. However, the following proposition shows that in dimension 4, the conformal Gauss map $Y :(\Sigma^4 ,\Phi^*\xi) \to (\s^{5,1},\eta)$ is no more conformal in general, which brings additional technical issues.
	
	\begin{propositionIntro}\label{if_Y_conformal}
		Let $\Phi : B^4(0,1) \to \R^5$ be a smooth immersion, $Y : B^4(0,1) \to \s^{5,1}$ be its conformal Gauss map. Assume that there exists a smooth positive function $\omega : B^4(0,1)\to (0,+\infty)$ such that the metric $g = \Phi^*\xi$ and $\ggo = Y^*\eta$ satisfies $\ggo = \omega^2 g$. \\
		Then there exists coordinates $(t^1,\ldots,t^4)$ on $\Phi(B_1)$ such that for any constant $c\in\R^4$, the leaves $\{t^3=c^3,t^4=c^4\}$ define totally umbilic surfaces in $\R^5$. As well, the leaves $\{t^1=c^1,t^2=c^2\}$ define totally umbilic surfaces in $\R^5$.
	\end{propositionIntro}
	
	Thanks to the computations of \cite{robingraham2020}, if $r_1,r_2>0$ satisfy $r_1^2 + r_2^2 = 1$, then $\s^2(r_1)\times \s^2(r_2)\subset \s^5$ satisfies 
	\begin{align*}
		\Arond^2 = \frac{1}{4}\left( \frac{r_2}{r_1} + \frac{r_1}{r_2} \right)^2 g.
	\end{align*}
	Hence, the conformal Gauss map of such hypersurfaces is conformal. \Cref{if_Y_conformal} shows that the converse is true only topologically : if $Y$ is conformal, then the image of $\Phi$ is locally a product of spheres whose radii might not be constant. An analog for surfaces in $\R^3$ is provided by the stereographic projection of the Clifford torus from a point in $\s^3$ which is not the north pole or the south pole. The resulting surface $S$ is topologically $\s^1\times \s^1$ but the radius of each leaf $\s^1\times \{p\}$ depends on $p$.\\
	
	In dimension 2, a important consequence of $Y^*\eta$ being conformal to $\Phi^*\xi$ is the following : the area of $Y$ is exactly equal to its Dirichlet energy. The second energy is much more useful in practice since it does not depend on the geometry of $Y$, which degenerates around umbilic points of $\Phi$. Theorem \ref{thm_duality} shows that in dimension 4, the area is replaced by the total scalar curvature, and the Dirichlet energy is replaced with a Paneitz-type energy. The Paneitz operator, see \cite{chang2005}, is defined by the following formula : for any $f\in C^\infty(\Sigma;\R)$,
	\begin{align*}
		P_g f &= \lap_g^2 f- \di_g\left[ \left( \frac{2\Scal_g}{3} - 2 \Ric_g \right) \g^g f \right].
	\end{align*}
	Therefore,
	\begin{align}\label{eq:integral_paneitz_general_formula}
		\int_\Sigma f P_g f\ d\vol_g &= \int_\Sigma |\lap_g f|^2 +\frac{2\Scal_g}{3} |\g^g f|^2 - 2\Ric_g(\g^g f,\g^g f)\ d\vol_g.
	\end{align}
	By direct computations, we obtain the following relation :
	
	\begin{theoremIntro}\label{thm_duality}
		Let $\Sigma$ be a closed smooth four dimensional manifold. Let $\Phi : \Sigma \to \R^5$ be a smooth immersion and $Y : \Sigma \to \s^{5,1}$ be its conformal Gauss map. Let $g = \Phi^*\xi$ and consider $\pr$ be given by
		\begin{align*}
			\pr &:= \frac{1}{4} \int_\Sigma \scal{Y}{P_g Y}_\eta - \frac{4}{3} |\g Y|^4_{g,\eta} - 4\det\Arond\ d\vol_g.
		\end{align*}
		Then, it holds
		\begin{align*}
			\Er_{GR} &= 4\pi^2 \chi(\Sigma) +\pr,
		\end{align*}
		where $\chi(\Sigma)$ is the Euler characteristic of $\Sigma$ and $\Arond$ is the traceless part of the second fundamental form of $\Phi$. Furthermore, if $\det\Arond$ never vanishes, we define the following functional. Let $\ve := \sign\left[\det_g\Arond\right]$, $\ggo := Y^*\eta$ and $\Sr$ be given by
		\begin{align*}
			\Sr &:=  \frac{\ve}{2} \int_\Sigma \left( 6(2-\ve) - \Scal_\ggo \right) \ d\vol_\ggo,
		\end{align*}
		where $\Scal_\ggo$ is the scalar curvature of the metric $\ggo$. Then it holds
		\begin{align*}
			\Er_{GR} &= 4\pi^2 \chi(\Sigma) + \Sr.
		\end{align*}
	\end{theoremIntro}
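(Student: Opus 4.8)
The plan is to reduce both identities to the differential geometry of the conformal Gauss map, using the structure equations of \cref{se:Preliminaries} to express every derivative of $Y$ through the geometry of $\Phi$, and then to feed the result into the Paneitz quadratic form \eqref{eq:integral_paneitz_general_formula} and the Gauss--Bonnet theorem. First I would write $Y$ in the adapted light--cone frame, so that its tangential derivatives $\dr_i Y$ are carried by the traceless second fundamental form alone. The structural output to isolate is that the induced symmetric tensor $\ggo_{ij} := \scal{\dr_i Y}{\dr_j Y}_\eta$ equals $(\Arond^2)_{ij}$ in a $g$--orthonormal frame; in particular $|\g Y|^2_{g,\eta} = \tr_g \ggo = |\Arond|^2_g$, $\ |\g Y|^4_{g,\eta} = |\ggo|^2_g = \tr_g(\Arond^4)$, and $d\vol_\ggo = |\det_g\Arond|\, d\vol_g$. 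As a sanity check, in dimension two this collapses to $\ggo = \tfrac12|\Arond|^2 g$ and recovers Bryant's identity $\int|\Arond|^2 = 2\,\mathrm{Area}(Y)$, and it is exactly the failure of $\ggo$ to be conformal to $g$ (quantified by \cref{if_Y_conformal}) that forces the genuinely four dimensional terms below.

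Next I would compute $\lap_g Y$ from the structure equations: it splits into a component along $Y$ of size $|\g Y|^2_{g,\eta}$, a component along the light--cone directions carrying $\lap_g H$ and the Codazzi terms, and a tangential part. Substituting $\g Y$ and $\lap_g Y$ into the vector--valued version of \eqref{eq:integral_paneitz_general_formula},
\begin{align*}
\int_\Sigma \scal{Y}{P_g Y}_\eta\, d\vol_g = \int_\Sigma |\lap_g Y|^2_\eta + \frac{2\Scal_g}{3}\,|\g Y|^2_{g,\eta} - 2\,\Ric_g(\g Y,\g Y)_\eta\ d\vol_g,
\end{align*}
expresses $\scal{Y}{P_g Y}_\eta$ as a polynomial in $H$, $\g H$ and $\Arond$. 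Here the intrinsic curvatures are eliminated through the Gauss equation $R_{ijkl} = A_{ik}A_{jl} - A_{il}A_{jk}$, which gives $\Scal_g = 16H^2 - |A|^2$ and $\Ric_g = 4H\,A - A^2$, so that the Ricci term produces the quartic contractions $\tr_g(\Arond^4)$, $|\Arond|^2 H^2$ and $H\,\tr_g(\Arond^3)$.

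To finish the first identity I would assemble $\tfrac14\scal{Y}{P_g Y}_\eta - \tfrac43|\g Y|^4_{g,\eta} - 4\det_g\Arond$ and compare it to the integrand $|\g H|^2_g - H^2|A|^2_g + 7H^4$ of $\Er_{GR}$. After integrating the fourth--order part by parts and using the Codazzi identity to commute derivatives, the two integrands differ by a universal quartic curvature expression which, via Newton's identities for the traceless tensor $\Arond$, equals the Gauss--Kronecker curvature $\det_g A$ up to a universal constant and a divergence. Since $\int_\Sigma \det_g A\, d\vol_g = \tfrac12\,\mathrm{Vol}(\s^4)\,\chi(\Sigma) = \tfrac{4\pi^2}{3}\chi(\Sigma)$ by the Gauss--Bonnet theorem for closed even dimensional hypersurfaces, this remainder integrates to $4\pi^2\chi(\Sigma)$, proving $\Er_{GR} = 4\pi^2\chi(\Sigma) + \pr$.

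For the second identity, assuming $\det_g\Arond$ never vanishes so that $\ggo = Y^*\eta = \Arond^2$ is a genuine (Riemannian) metric, I would compute its scalar curvature $\Scal_\ggo$ through the Gauss equation for the immersion $Y$ into the constant curvature De Sitter space $\s^{5,1}$: the ambient curvature $+1$ contributes the fixed constants, the second fundamental form of $Y$ (of rank two normal bundle) contributes the $\Arond$--quartic terms already met above, and the sign $\ve = \sign[\det_g\Arond]$ records the orientation through $d\vol_\ggo = \ve\,\det_g\Arond\, d\vol_g$, which is also the origin of the $\ve$--dependent constant $6(2-\ve)$. Rewriting $\int_\Sigma(6(2-\ve) - \Scal_\ggo)\, d\vol_\ggo = \ve\int_\Sigma(6(2-\ve) - \Scal_\ggo)\det_g\Arond\, d\vol_g$ and inserting the $g$--expressions from the previous steps should match $\pr$ term by term, giving $\Sr = \pr$. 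I expect the main obstacle to be the accurate evaluation of $|\lap_g Y|^2_\eta$: it is the sole fourth--order input, it mixes $\lap_g H$ with the quartic contractions $\tr_g(\Arond^4)$ and $|\Arond|^2H^2$, and the cancellations that reveal the clean coefficients $7$, $-\tfrac43$ and $-4$ rely on carefully commuting covariant derivatives through Codazzi; the difficulty is bookkeeping of these quartic terms rather than any conceptual step, and it is there that the computation is most error--prone.
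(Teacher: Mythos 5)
Your overall route is the one the paper takes: establish $\ggo=Y^*\eta=\Arond^2$ (hence $|\g Y|^2_{g,\eta}=|\Arond|^2_g$ and $d\vol_\ggo=|\det_g\Arond|\,d\vol_g$), evaluate $|\lap_g Y|^2_\eta$ and insert it into \eqref{eq:integral_paneitz_general_formula} with the Gauss equation eliminating $\Scal_g$ and $\Ric_g$, reduce the quartic remainder to $\det_g A$ by Cayley--Hamilton, and invoke \eqref{gauss_bonnet}; for the second identity, compute $\Scal_\ggo$ through the Gauss equation for $Y(\Sigma)\subset\s^{5,1}$. But there is one concrete error. You assert $|\g Y|^4_{g,\eta}=|\ggo|^2_g=\tr_g(\Arond^4)$. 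The quantity $|\g Y|^4_{g,\eta}$ in the definition of $\pr$ is the square of $|\g Y|^2_{g,\eta}=\tr_g(\Arond^2)$, i.e.\ $|\Arond|^4_g=(\tr_g\Arond^2)^2$, not the $g$-norm of the tensor $\ggo$. By \eqref{det_Arond} the two differ by $|\Arond|^4_g-\tr_g(\Arond^4)=\tfrac12|\Arond|^4_g+4\det_g\Arond$, which is not absorbed by the $\det_g\Arond$ term; carrying your identification through the bookkeeping computes a different functional than $\pr$ and the coefficients no longer close up (the cancellation in \cref{pr:equality_paneitz} rests precisely on $4\det_g\Arond+\tfrac43|\Arond|^4_g=\tfrac{11}{6}|\Arond|^4_g-\tr_g(\Arond^4)$). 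This must be corrected before the first identity can come out right.

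For the second identity you locate the main obstacle in $|\lap_g Y|^2_\eta$, but that term is benign: in normal coordinates it equals $4|\g^g H|^2_g+|\Arond|^4_g$ with no commuting of derivatives needed. The genuine difficulty, absent from your outline, is that the second fundamental form $B$ of $Y(\Sigma)$ is not algebraic in $\Arond$: its component along $\nu$ carries $(\g^\ggo)^2H+H\ggo$ (\cref{second_ff_Y}), so $\Scal_\ggo$ contains $\scal{\Arond}{(\g^\ggo)^2H}_\ggo-(\tr_\ggo\Arond)\lap_\ggo H$. Turning this into $|\g^g H|^2_g$ requires the Christoffel symbols of $\ggo$, an integration by parts in the $\ggo$-metric, Codazzi, and Cayley--Hamilton applied to $\Arond^{-1}$ (equations \eqref{trace_gradient_arond}, \eqref{divergence_arond}, \eqref{simplification_cayley_hamilton}); this is where the $\ve$-dependent constants and the $|\g^g H|^2_g$ term actually come from. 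Your proposal to conclude by proving $\Sr=\pr$ term by term (rather than relating $\Sr$ to $\Er_{GR}$ directly, as the paper does) is legitimate given the first identity, but it requires exactly this computation of $\int_\Sigma\Scal_\ggo\,d\vol_\ggo$ in terms of $g$-quantities, so it saves nothing and the plan should budget for it.
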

	We give a geometric interpretation of the term $\det\Arond$. In dimension 2, thanks to Cayley-Hamilton theorem, we always have $\det_g\Arond = \frac{1}{2}|\Arond|^2_g \geq 0$. This is not true anymore in dimension 4, hence the sign plays a role. As shown in \Cref{lm:metric_ggo}, the volume element of $Y$ is given by $d\vol_\ggo = \big|\det \Arond\big| d\vol_g$. This means that the quantity $\int_\Sigma (\det \Arond) d\vol_g$ can be understood as a signed volume of $Y$. The condition $\det\Arond\neq 0$ means that the metric $\ggo$ is a smooth Riemannian metric. \\
	The equality between $\Er_{GR}$ and $\int_\Sigma \scal{Y}{P_g Y}_\eta$ up to lower order terms, has already been observed in \cite{blitz2023generalized}, when the underlying manifold is not $\R^5$ but any manifold. However, some computations have been made with the help of a computer. The computations in the present work are all explicit. The computation of the Euler-Lagrange equations of the two functionals $\Sr$ and $\pr$ can be found in the \cref{se:conservation_laws} and \cref{sec:proof_EL_P}.\\
	
	A question asked by Graham-Reichert \cite{robingraham2020} is to know whether we can find lower or upper bounds on $\Er_{GR}$. In their work, it is shown that $\Er_{GR}$ is not bounded from below or above for immersions of $\s^1\times \s^3$ and $\s^2 \times \s^2$. It is also shown in Proposition 1.2 of \cite{robingraham2020} that the round sphere is stable. But knowing whether a minimum of $\Er_{GR}$ exists for immersions of $\s^4$ and is reached by the round sphere was left open. Here we give a negative answer :
	\begin{theoremIntro}\label{prop:min_W_spheres}
		Given any closed hypersurface $\Sigma^4 \subset \R^5$, there exists a sequence of smooth immersions $\Phi_k : \Sigma \to \R^5$ such that $\Er_{GR}(\Phi_k) \xrightarrow[k\to\infty]{}{-\infty}$.
	\end{theoremIntro}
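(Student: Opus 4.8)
The plan is to produce the divergence by a purely local construction: I will build a small immersed ``negative piece'' which coincides with a flat hyperplane near its boundary and carries arbitrarily negative energy, and then graft it onto an arbitrary immersion $\Phi$ after flattening the latter on a tiny disk. Since $\Er_{GR}$ is the integral of a local, scale-invariant density, all energies add across the flat interfaces and the construction does not change the diffeomorphism type of $\Sigma$. First I would rewrite the integrand: writing $A=\Arond+Hg$ and using $|A|^2_g=|\Arond|^2_g+4H^2$ in dimension four, one gets
\begin{align*}
	\Er_{GR}(\Phi)=\int_\Sigma |\g H|^2_g-H^2|\Arond|^2_g+3H^4\ d\vol_g .
\end{align*}
Thus negative energy appears wherever $H^2|\Arond|^2_g>3H^4+|\g H|^2_g$, i.e. where $|\Arond|^2_g$ is large compared with $H^2$ while $H$ is nearly constant. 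The model case is the thin cylinder $\s^1(r)\times\R^3$, whose principal curvatures are $(1/r,0,0,0)$: here $H=\tfrac{1}{4r}$, $|\Arond|^2_g=\tfrac{3}{4r^2}$, $\g H=0$, so the density equals $-H^2|\Arond|^2_g+3H^4=-\tfrac{9}{256\,r^4}<0$, diverging to $-\infty$ as $r\to0$ against a volume of order $r$.

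\textbf{The negative local model.} Next I would compactify this model into an immersed disk. Let $\psi_r:D^4\to\R^5$ be obtained from the flat inclusion $D^4\hookrightarrow\R^4\times\{0\}$ by folding its interior once: outside a fixed compact set the image is the flat hyperplane, the sheet then turns over a half-tube of radius $r$ around a fixed round $\s^3(R_0)$, and closes up as a second flat sheet. The only non-flat region is the fold, which is isometric to $\s^3(R_0)\times(\text{semicircle of radius }r)$ and so reproduces the $(1/r,O(1),O(1),O(1))$ curvature profile above; a direct computation gives density $-\tfrac{9}{256r^4}+O(r^{-3})$ over a volume $\sim 2\pi^3R_0^3\,r$, whence
\begin{align*}
	\Er_{GR}(\psi_r)=-\frac{9\pi^3R_0^3}{128\,r^3}+O(1)\xrightarrow[r\to0]{}-\infty .
\end{align*}
By conformal (in particular scale) invariance of $\Er_{GR}$, I may rescale $\psi_r$ into an arbitrarily small ball without changing its energy, and by construction it matches a flat hyperplane on a collar of $\partial D^4$.

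\textbf{Flattening and grafting.} Finally I would graft this onto a given $\Phi:\Sigma\to\R^5$. Fix $p\in\Sigma$; after a rigid motion assume $\Phi(p)=0$ with tangent plane $\R^4\times\{0\}$, so that on a small ball $\Phi$ is a graph $(x,f(x))$ with $f(0)=0$, $Df(0)=0$. Replacing $f$ by $\beta f$ for a cutoff $\beta$ vanishing on $B_\rho$ and equal to $1$ off $B_{2\rho}$ keeps a graph, hence an immersion $\Phi_\rho$ equal to $\Phi$ off $B_{2\rho}(p)$ and flat on $B_\rho(p)$; since $f=O(|x|^2)$, scale invariance makes the energy of the modified annulus bounded (indeed $o(1)$) uniformly in $\rho$. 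I then excise the flat disk $B_\rho(p)$ and insert the rescaled $\psi_{r}$, matching along the flat collar. The result $\Phi_{\rho,r}$ is a smooth immersion of $\Sigma$ (a flat disk has been replaced by an immersed disk, so the diffeomorphism type is unchanged), and, the density being local with flat interfaces,
\begin{align*}
	\Er_{GR}(\Phi_{\rho,r})=\Er_{GR}\big(\Phi_\rho|_{\Sigma\setminus B_\rho}\big)+\Er_{GR}(\psi_{r})\le C(\Phi)+\Er_{GR}(\psi_{r})\xrightarrow[r\to0]{}-\infty .
\end{align*}
Choosing $r_k\to0$ yields the desired sequence $\Phi_k:=\Phi_{\rho,r_k}$.

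\textbf{Main obstacle.} The delicate step is the construction of $\psi_r$: turning the heuristic fold into a genuine smooth immersion with an honest flat collar, and then controlling every region other than the fold --- the two flat sheets (density $0$) and, above all, the two transition zones where the flat sheets bend into the half-tube. These transitions must be arranged with curvature $O(1)$, i.e. radius of curvature bounded below independently of $r$, so that they contribute only $O(1)$ and cannot generate a competing $+c\,r^{-3}$ term cancelling the fold's divergence; one must likewise check that the genuine $r$-dependent corrections to the principal curvatures along the fold perturb the leading density only at order $r^{-3}$. Once the fold is confirmed to dominate, the remaining steps are soft, using only the locality and scale invariance of $\Er_{GR}$.
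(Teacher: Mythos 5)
Your grafting and flattening steps are fine, and your starting observation (a region with one principal curvature $1/r$ and the rest $O(1)$ has density $-\tfrac{9}{256r^4}$ at leading order) is correct. But there is a genuine gap in the construction of the negative model $\psi_r$, precisely at the point you flag as the "main obstacle", and the fix you propose there is impossible. The meridian principal curvature of the fold must interpolate continuously between $0$ (on the flat sheets) and $1/r$ (on the half-tube); by the intermediate value theorem the transition zones cannot have curvature $O(1)$. Worse, the total turning of the profile curve is $\pi=\int\kappa\,ds$, which forces the transition length to be $O(r)$ once $\kappa$ reaches $1/r$; hence $|\g H|\sim |\kappa'|\sim r^{-2}$ there, the positive term $|\g H|^2\sim r^{-4}$ is of the \emph{same} order as the negative term $-H^2|\Arond|^2$, and after integrating over a volume $\sim R_0^3 r$ the transitions contribute $+cR_0^3r^{-3}$, competing exactly with your claimed divergence $-\tfrac{9\pi^3R_0^3}{128}r^{-3}$. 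So the estimate $\Er_{GR}(\psi_r)=-\tfrac{9\pi^3R_0^3}{128}r^{-3}+O(1)$ is unjustified as stated: the correct leading coefficient is $2\pi^2R_0^3\bigl(\tfrac{1}{16}\int(k')^2-\tfrac{9}{256}\int k^4\bigr)r^{-3}$ for the rescaled curvature profile $k$ of the smoothed fold (subject to $\int k=\pi$, $k\ge0$), and its sign is not determined by your argument. The approach is salvageable --- one can check that profiles with $\int(k')^2<\tfrac{9}{16}\int k^4$ and $\int k=\pi$ exist (e.g.\ $k=2\cos^2$ on $[-\pi/2,\pi/2]$ gives ratio $16/35$) --- but this one-dimensional variational computation is the actual heart of the matter and is missing.

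The paper avoids this competition entirely by choosing a model whose negative density comes with $\g H\equiv 0$ and whose divergence is driven by growing volume rather than blowing-up curvature: it uses $\R^2\times\s^2$ (principal curvatures $(0,0,1,1)$, constant density $-\tfrac{1}{16}$), builds a closed hypersurface containing a region isometric to $[0,L]^2\times\s^2$ by two successive cut-and-insert-a-cylinder operations on $\s^4$, and observes that all gluing regions are $L$-independent in the transverse directions, so they contribute only $O(L)$ against the bulk $-\tfrac{1}{16}L^2$. The general $\Sigma$ is then handled by connected sum, playing the same role as your grafting step. If you want to keep your thin-fold route, you must carry out the profile optimization above; otherwise the paper's "large constant-curvature region" route is the more robust one.
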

	The construction of such sequence is based on the observation that the integrand of $\Er_{GR}$ is pointwise equal to a negative constant for the isometric immersion of $\R^2 \times \s^2$, see \cref{lm:II_R2_S2}. An interesting question would be to know whether we can classify the behaviour of any sequence of immersions $(\Phi_k)_{k\in\N}$ such that $\Er_{GR}(\Phi_k) \xrightarrow[k\to\infty]{}{-\infty}$. For instance, $\R^2 \times\s^2$ has constant mean curvature, and it seems possible that some other noncompact constant mean curvature hypersurface could be used to obtain such sequences. It would be interesting to know whether some other type of hypersurfaces could produce sequences whose $\Er_{GR}$-energy goes to $-\infty$. We emphasize the fact that  a conformally invariant functional which is unbounded from below or above does not exist in dimension 2. Thus, $\Er_{GR}$ is very specific to the four dimensional case.\\
	
	However, using the explicit formulas of $P_{g_\Phi} Y$, where $Y$ is the conformal Gauss map of some immersion $\Phi : \Sigma\to \R^5$ and $g_\Phi := \Phi^*\xi$, we obtain the following lower bound :
	\begin{propositionIntro}\label{prop:min_P}
		Let $\Sigma^4$ be a smooth closed manifold. Let $\Phi : \Sigma \to \R^5$ be a smooth immersion, $g := \Phi^*\xi$ where $\xi$ is the flat metric on $\R^5$. Let $Y$ be the conformal Gauss map of $\Phi$. It holds
		\begin{align*}
			\Er_P(\Phi) := \int_{\Sigma} \scal{Y}{P_g Y}_\eta d\vol_g =\int_\Sigma 4|\g^g H|^2 + \frac{1}{3}|\Arond|^4_g + 2H^2 |\Arond|^2_g -4H\tr_g(\Arond^3)  + 2\tr_g(\Arond^4)\ d\vol_g.
		\end{align*}
		Furthermore, we have $\Er_P(\Phi) \geq 0$ with equality if and only if $\Phi$ is a round sphere.
	\end{propositionIntro}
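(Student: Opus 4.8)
The argument splits into two independent parts: establishing the explicit integral formula for $\Er_P$, and then extracting positivity from it via a pointwise algebraic inequality. Since the Paneitz operator $P_g$ acts componentwise on the $\R^{5,1}$-valued map $Y=(Y^1,\dots,Y^6)$ and is self-adjoint, the plan for the formula is to polarize the scalar identity \eqref{eq:integral_paneitz_general_formula} and contract the target indices against $\eta$, which yields
\begin{align*}
\Er_P(\Phi)=\int_\Sigma \scal{Y}{P_g Y}_\eta\,d\vol_g=\int_\Sigma |\lap_g Y|^2_\eta+\frac{2\Scal_g}{3}\,|\g Y|^2_{g,\eta}-2\,\Ric_g(\g Y,\g Y)\ d\vol_g,
\end{align*}
reducing the whole computation to three pointwise quantities.

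\textbf{Assembling the three pieces.} For the lower order terms I would use that the first fundamental form $\ggo=Y^*\eta$ of the conformal Gauss map equals $\Arond^2$ as a bilinear form, consistent with $d\vol_\ggo=|\det\Arond|\,d\vol_g$ from \Cref{lm:metric_ggo}; hence $|\g Y|^2_{g,\eta}=\tr_g\ggo=|\Arond|^2_g$ and $\Ric_g(\g Y,\g Y)=\Ric_g^{ij}(\Arond^2)_{ij}$. I would then insert the hypersurface Gauss equation $\Ric_{ij}=4HA_{ij}-(A^2)_{ij}$ and $\Scal_g=16H^2-|A|^2$, and expand through the orthogonal splitting $A=\Arond+Hg$; this produces every monomial in the stated formula except those arising from the top order term. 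The main obstacle is precisely the computation of $\int_\Sigma |\lap_g Y|^2_\eta\,d\vol_g$, which I expect to equal $\int_\Sigma 4|\g H|^2+|\Arond|^4_g\,d\vol_g$ only after an integration by parts. This requires the explicit second order structure equation expressing $\lap_g Y$ in terms of $\g H$, $\Arond$ and $Y$ itself, derived from the conformal Gauss map of \cref{se:Preliminaries}, with the Codazzi equation entering to handle the derivative terms and the clean coefficient $4$ of $|\g H|^2$ appearing only after discarding a divergence. A useful consistency check is $\scal{Y}{\lap_g Y}_\eta=-|\Arond|^2_g$, obtained by differentiating $\scal{Y}{Y}_\eta=1$ twice.

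\textbf{Positivity and the equality case.} Since $4|\g H|^2\geq 0$, it suffices to prove the pointwise bound $Q:=\tfrac13|\Arond|^4_g+2H^2|\Arond|^2_g-4H\tr_g(\Arond^3)+2\tr_g(\Arond^4)\geq 0$. Writing $\lambda_1,\dots,\lambda_4$ for the eigenvalues of $\Arond$ and $p_k=\sum_i\lambda_i^k$, so that $\sum_i\lambda_i=0$, $p_2=|\Arond|^2_g$, $p_3=\tr_g(\Arond^3)$ and $p_4=\tr_g(\Arond^4)$, the expression $Q$ is the quadratic $2p_2H^2-4p_3H+\big(\tfrac13 p_2^2+2p_4\big)$ in the scalar $H$. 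Minimizing over $H$ gives $p_2\,Q_{\min}=\tfrac13 p_2^3+2p_2p_4-2p_3^2$, and the Cauchy--Schwarz inequality $p_3^2=\big(\sum_i\lambda_i\cdot\lambda_i^2\big)^2\leq p_2p_4$ shows $p_2\,Q_{\min}\geq \tfrac13 p_2^3>0$ whenever $\Arond\neq 0$. Thus $Q>0$ pointwise unless $\Arond=0$, so $\Er_P(\Phi)=0$ forces both $\g H\equiv 0$ and $\Arond\equiv 0$; the latter makes $\Phi$ totally umbilic, hence a round sphere since $\Sigma$ is closed, while the converse implication is immediate.
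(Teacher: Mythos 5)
Your proposal is correct and, for the integral formula, follows the same route as the paper: polarize \eqref{eq:integral_paneitz_general_formula}, use $|\g Y|^2_{g,\eta}=|\Arond|^2_g$ from \Cref{lm:metric_ggo}, and substitute $\Ric_{ij}=4HA_{ij}-(A^2)_{ij}$ and $\Scal_g=16H^2-|A|^2_g$. One small correction to your expectations: the identity $|\lap_g Y|^2_\eta=4|\g^g H|^2_g+|\Arond|^4_g$ holds \emph{pointwise}, not merely after discarding a divergence. In the paper's \Cref{lm:laplacian_Y_eta}, Codazzi gives $\di_g\Arond=3\g^g H$, so $\lap_g Y=(\lap_g H)\nu-2(\g^g H)(\g^g\nu)-\Arond\cdot(\g^g)^2\nu$; the $\nu$-terms drop because $|\nu|^2_\eta=0$ and $\Arond^{ij}\scal{\nu}{\dr^2_{ij}\nu}_\eta=-\tr_g\Arond=0$, and the cross term $\scal{(\g^g H)\g^g\nu}{\Arond\cdot(\g^g)^2\nu}_\eta$ vanishes in normal coordinates since $\scal{\dr_i\nu}{\dr^2_{jk}\nu}_\eta=\Gamma^g_{jki}=0$. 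So no integration by parts is needed, and the coefficient $4$ appears directly.

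For positivity your argument is a genuine (if mild) variant of the paper's. The paper applies the one-line Cauchy--Schwarz $2H\tr_g(\Arond^3)=2\scal{H\Arond}{\Arond^2}_g\le H^2|\Arond|^2_g+\tr_g(\Arond^4)$ to the integrand and keeps the leftover $\frac13|\Arond|^4_g$; you instead minimize the quadratic $2p_2H^2-4p_3H+(\frac13p_2^2+2p_4)$ over $H$ and then use $p_3^2\le p_2p_4$ on the eigenvalue power sums. The two are equivalent (your eigenvalue inequality is exactly the paper's matrix Cauchy--Schwarz in disguise), both yield the pointwise lower bound by $\frac13|\Arond|^4_g$, and both force $\g H\equiv 0$ and $\Arond\equiv 0$ in the equality case, whence $\Phi$ is totally umbilic and a round sphere. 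The paper's version is slightly shorter; yours makes transparent that the bound is already sharp in $H$ at each point. Either is fine.
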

	
	Given the construction of $\Er_P$, we see that it is close to the two-dimensional Willmore energy in the sense that the Paneitz operator and the Laplacian are both GJMS operators, which are both considered in the critical dimension. From an analytic point of view, the energy $\Er_P$ is simpler than the energy $\Er_{GR}$ for two reasons : first $\Er_P$ is quadratic in $Y$ whereas $\Er_{GR}$ is not. Second, $\Er_P$ is positive so finding critical points could be done through direct minimization procedure, thus providing a \enquote{best representation} for immersions of a given hypersurface of $\R^5$. However, minimal surfaces do not seem to be critical points of $\Er_P$. The Euler-Lagrange equation of $\Er_P$ is similar to the one of $\pr$ and can also be found in \cref{sec:proof_EL_P}.
	
	Given \cref{prop:min_P}, we can ask the same questions as for the Willmore conjecture in dimension 2 : Can we minimize $\Er_P$ among the immersions of any closed hypersurface of $\R^5$ ? Can we compute the values of the infimum of $\Er_P$ among the immersions of any closed hypersurface of $\R^5$ ? For example in the case $\Sigma = \CP^2$, what about the Veronese embedding in $\s^5$, is it stable or minimizing ?\\

	\textbf{Structure of the paper :} In \Cref{se:Preliminaries}, we introduce the main notations of the paper. In \Cref{sub:Willmore}, we define the geometric quantities of an immersion into $\R^5$. In \Cref{sub:intro_CGM}, we define the conformal Gauss map of a hypersurface in $\R^5$. In \Cref{se:ggo}, we study the metric $\ggo = Y^*\eta$ : we compute the Christoffel symbols, second fundamental form and the curvature. In \Cref{se:no_conformal}, we prove \Cref{if_Y_conformal}. In \Cref{se:proof_duality}, we prove \Cref{thm_duality}. In \cref{se:infimum}, we prove \Cref{prop:min_W_spheres}. In \cref{se:inf_paneitz}, we prove  \Cref{prop:min_P}. In \cref{se:conservation_laws}, we discuss the Euler-Lagrange equation of $\Sr$. In \cref{sec:proof_EL_P}, we study the general form of the Euler-Lagrange equation of the conformal Gauss map and the associated conservation laws. We apply it to $\pr$ and $\Er_P$, and discuss the situation in dimension 2. In \Cref{sec:cayley_ham}, we give some relations coming from Cayley-Hamilton theorem applied to traceless symmetric matrices of dimension $4\times 4$. In \Cref{sec:curvature_Phi}, we compute different curvature tensors of an immersion $\Sigma^4 \hookrightarrow \R^5$ in terms of the second fundamental form.\\
	
	\textbf{Acknowledgements :} I would like to thank Paul Laurain for his constant support and advice. I am grateful to Yann Bernard, Rod Gover and Frédéric Hélein for many useful discussions. \\
	
	\textbf{Remarks :} After the first publication of this article, Graham-Reichert reported me that they also get an unpublished proof of \cref{prop:min_W_spheres}.
	
	\section{Immersions in $\R^5$}\label{se:Preliminaries}
	
	\subsection{The Willmore functional}\label{sub:Willmore}
	
	Consider $\Sigma$ a closed manifold of dimension 4, and $\Phi :\Sigma\to \R^5$ a smooth immersion. We denote by $\xi$ the flat metric on $\R^5$. Let $g := \Phi^*\xi$. Let $n$ be the Gauss map of $\Phi$, $A$ its second fundamental form, $H$ its mean curvature and $\Arond$ the traceless part of the second fundamental form. They are given by the formulas :
	\begin{align*}
		A_{ij} &:= -\scal{\dr_i \Phi}{\dr_j n}_\xi, \\
		H &:= \frac{1}{4} \tr_g (A) = \frac{1 }{4} \scal{\lap_g\Phi}{n}_\xi, \\
		\Arond &= A - Hg,\\
		\g n &= -H\g \Phi - \Arond\g \Phi.
	\end{align*}
	We consider the energy
	\begin{align*}
		\Er_{GR}(\Phi) &:= \int_\Sigma |\g H|_g^2 - H^2 |\Arond|^2_g + 3H^4 \ d\vol_{g}.
	\end{align*}
	To simplify the computations, we will often use normal coordinates for the metric $g$. The Christoffel symbols of $g$ are given by $(\Gamma^g)_{ij}^k = \scal{\dr_{ij}\Phi}{\dr^k \Phi}_\xi$. Therefore in normal coordinates around $x_0\in \Sigma$, $\g^2 \Phi(x_0)$ is proportional to $n(x_0)$.\\
	
	We will frequently work under some smallness assumption on the umbilic points, that is either
	\begin{align}\label{hyp:no_umbilic}
		\det \Arond \neq 0\ \ \ \text{on }\Sigma,
	\end{align}
	or
	\begin{align}\label{hyp:small_umbilic}
		\{\det_g \Arond \neq 0\} \text{ is dense in }\Sigma.
	\end{align}
	
	\subsection{The conformal Gauss map of an immersion}\label{sub:intro_CGM}
	
	An introduction to conformal Gauss map can be found in \cite{hertrich-jeromin2003}. Here we sketch the construction and recall its main properties.
	
	\subsubsection*{Construction of the conformal Gauss map for an immersion in $\s^5$ :}
	Let $\Sigma^4$ be a smooth manifold and consider a smooth immersion $\Psi : \Sigma^4\to \s^5$, with mean curvature $H$ and normal vector $N$. Let $x\in\Sigma$ and consider the 4-sphere $S_x\subset \s^5$ which is tangent to $\Psi(\Sigma)$ at the point $\Psi(x)$ with mean curvature $H(x)$. Let $\hat{S}_x\subset \R^6$ be the 5-sphere that intersect $\s^5$ orthogonally along $S_x$. If $H(x)=0$, then $\hat{S}_x$ is a flat 4-plane. If $H(x)\neq 0$, the center of $\hat{S}_x$ is given by $c_x := \Psi(x) + \frac{N(x)}{H(x)}$, with radius $\frac{1}{|H(x)|}$. In order to have a description independent of the vanishing of the mean curvature, we can consider the homogeneous vector
	\begin{align}\label{homogeneous_Y}
		\left[ \begin{pmatrix}
			\Psi(x) + \frac{N(x)}{H(x)}\\ 1
		\end{pmatrix} \right] \in \RP^6 .
	\end{align}
	If $\eta = (dx^1)^2 + \cdots + (dx^6)^2 - (dx^7)^2$ is the Minkowski metric on $\R^7$ and $|\cdot|_\eta$ is the associated norm, we note that
	\begin{align*}
		\left| \begin{pmatrix}
			\Psi(x) + \frac{N(x)}{H(x)}\\ 1
		\end{pmatrix} \right|^2_\eta = \frac{1}{H(x)^2} >0.
	\end{align*}
	We can normalize the homogeneous vector \eqref{homogeneous_Y} in order to have a constant norm equal to $1$. Therefore, we obtain a representation of $S_x$ with the quantity 
	\begin{align}\label{definition_Y_sphere}
		Y(x) := \begin{pmatrix}
			H(x)\Psi(x) + N(x)\\ H(x)
		\end{pmatrix} = H(x) \begin{pmatrix} \Psi(x)\\ 1 \end{pmatrix} + \begin{pmatrix}
			N(x)\\ 0
		\end{pmatrix}.
	\end{align}
	This definition is independent of the vanishing of $H$. By construction, $Y(x)$ belongs to the De Sitter space
	\begin{align*}
		\s^{5,1} &:= \{ p\in \R^7 : |p|^2_\eta = 1 \}.
	\end{align*}
	We note $\R^{6,1}$ the space $(\R^7,\eta)$. \underline{We call $Y$ the conformal Gauss map} because it is a conformally invariant version of the Gauss map, in the sense that for any $\Xi \in \Conf(\s^5)$, the conformal Gauss map $Y_{\Xi\circ\Psi}$ is given by the following relation :
	\begin{align*}
		Y_{\Xi\circ \Psi} = M_{\Xi} Y.
	\end{align*}
	
	\subsubsection*{Construction of the conformal Gauss map for an immersion in $\R^5$ :}
	Consider the stereographic projection from the point $Q= (0,\ldots,0,1)$ :
	\begin{align*}
		\pi : \left\{ \begin{array}{r c l}
			\s^5\setminus \{ Q \} & \to & \R^5 \\
			\begin{pmatrix}
				x_1\\
				\vdots\\
				x_6
			\end{pmatrix} & \mapsto & \frac{1}{1-x_6} \begin{pmatrix}
				x_1 \\ \vdots \\ x_5
			\end{pmatrix}
		\end{array} 
		\right. ,& & \pi^{-1} : \left\{ \begin{array}{r c l}
			\R^5 & \to & \s^5\setminus \{Q\} \\
			\begin{pmatrix}
				x_1\\
				\vdots\\
				x_5
			\end{pmatrix} & \mapsto & \frac{1}{1+x_1^2 +\cdots x_5^2} \begin{pmatrix}
				2x_1\\
				\vdots \\ 
				2x_5 \\
				x_1^2 + \cdots x_5^2 -1 
			\end{pmatrix}
		\end{array} .
		\right.
	\end{align*}
	The differential of $\pi^{-1}$ is given by
	\begin{align*}
		d\pi^{-1}(z) =& \frac{2}{1+|z|^2} \begin{pmatrix}
			dz\\ 0
		\end{pmatrix} - \frac{4\scal{z}{dz}}{(1+|z|^2)^2} \begin{pmatrix} z\\ -1 \end{pmatrix}.
	\end{align*}
	Hence, its norm is given by
	\begin{align*}
		|d\pi^{-1}(z)|^2 &= \frac{4}{(1+|z|^2)^2}|dz|^2.
	\end{align*}
	Consider an immersion $\Psi : \Sigma^4 \to \s^5\setminus \{Q\}$, let $\Phi := \pi \circ\Psi$ and $n$ be the normal vector of $\Phi$. Then it holds
	\begin{align*}
		\Psi &= \frac{1}{1+|\Phi|^2_\xi} \begin{pmatrix}
			2\Phi\\ |\Phi|^2_\xi - 1
		\end{pmatrix}.
	\end{align*}
	We compute the mean curvature $H_\Psi$ and the normal vector $N$ of $\Psi$ in terms of the mean curvature $H_\Phi$ and the normal vector $n$ of $\Phi$. The vector $N$ is given by
	\begin{align*}
		N = \frac{d\pi^{-1}(n)}{|d\pi^{-1}(n)|} =& \begin{pmatrix}
			n \\ 0
		\end{pmatrix} - \frac{2\scal{n}{\Phi}_\xi }{1+|\Phi|^2} \begin{pmatrix}\Phi\\ -1\end{pmatrix}.
	\end{align*}
	Thanks to \cite[Proposition 1.2.1]{kuwert2012}, it holds
	\begin{align*}
		H_\Psi =& \frac{1+|\Phi|^2}{2}\left( H_\Phi +2\frac{\scal{\Phi}{n}_\xi}{1+|\Phi|^2} \right).
	\end{align*}
	Therefore, the conformal Gauss map of $\Phi$ is given by
	\begin{align}\label{eq:def_Y_R5}
		Y_\Phi :=H_\Phi \begin{pmatrix}
			\Phi\\ \frac{|\Phi|^2-1}{2} \\ \frac{|\Phi|^2+1}{2}
		\end{pmatrix}+ \begin{pmatrix}
			n\\ \scal{n}{\Phi}_\xi \\ \scal{n}{\Phi}_\xi
		\end{pmatrix}.
	\end{align}
	
	\subsubsection*{Derivatives of the conformal Gauss map :}
	We define the vector
	\begin{align*}
		\nu :=&\begin{pmatrix}
			\vspace{0.2em}\Phi\\ \vspace{0.2em}\frac{|\Phi|^2-1}{2} \\ \vspace{0.2em}\frac{|\Phi|^2+1}{2}
		\end{pmatrix}.
	\end{align*}
	It holds $|\nu|^2_\eta = 0$, in particular $\scal{\nu}{\g \nu}_\eta = 0$. The derivatives of $Y$ are given by
	\begin{align*}
		\g Y =& (\g H)\begin{pmatrix}
			\vspace{0.2em} \Phi\\ \vspace{0.2em}\frac{|\Phi|^2-1}{2} \\ \vspace{0.2em}\frac{|\Phi|^2+1}{2}
		\end{pmatrix} + H\begin{pmatrix}
			\g \Phi\\ \scal{\Phi}{\g \Phi}_\xi \\ \scal{\Phi}{\g \Phi}_\xi
		\end{pmatrix} + \begin{pmatrix}
			\g n\\ \scal{\g n}{\Phi}_\xi \\ \scal{\g n}{\Phi}_\xi
		\end{pmatrix}.
	\end{align*}
	This simplifies into a formula we will often use :
	\begin{align}\label{eq:derivativeY}
		\g Y =& (\g H)\begin{pmatrix}
			\vspace{0.2em}\Phi\\ \vspace{0.2em}\frac{|\Phi|^2-1}{2} \\ \vspace{0.2em}\frac{|\Phi|^2+1}{2}
		\end{pmatrix} -\Arond \begin{pmatrix}
			\g \Phi\\ \scal{\Phi}{\g \Phi}_\xi \\ \scal{\Phi}{\g \Phi}_\xi
		\end{pmatrix} = (\g H)\nu - \Arond \g\nu.
	\end{align}
	We have the orthogonality relations 
	\begin{align*}
		\scal{Y}{\nu}_\eta = 0, & & \scal{\g Y}{\nu}_\eta = 0.
	\end{align*}

	\section{The metric $\ggo := Y^*\eta$ for an immersion in $\R^5$ }\label{se:ggo}
	
	In this section, we compute geometric quantities of the metric associated to the conformal Gauss map of an immersion into $\R^5$. We fix a smooth immersion $\Phi : \Sigma^4 \to \R^5$. Let $g = \Phi^*\xi$, $n$ the unit normal vector to $\Phi$, $A$ the second fundamental form of $\Phi$ and $H$ its mean curvature. Let $\eta$ be the Lorentz metric on $\R^{6,1}$. Let $Y:\Sigma\to \s^{5,1}$ be the conformal Gauss map of $\Phi$, see Definition \eqref{eq:def_Y_R5}. Let $\ggo := Y^*\eta$.\\
	
	In the following lemma, we compute the metric $\ggo$. It shows that under the assumption \eqref{hyp:no_umbilic}, the map $Y$ is a smooth immersion.
	\begin{lemma}\label{lm:metric_ggo}
		The metric $\ggo$ is given by
		\begin{align*}
			\ggo := Y^*\eta = \Arond^2.
		\end{align*}
	\end{lemma}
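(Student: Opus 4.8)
The plan is to compute the components $\ggo_{ij} = \scal{\dr_i Y}{\dr_j Y}_\eta$ directly from the expression \eqref{eq:derivativeY} for the derivative of the conformal Gauss map, namely $\g Y = (\g H)\nu - \Arond\g\nu$. Writing this in coordinates as $\dr_i Y = (\dr_i H)\nu - \Arond_i^{\ k}\,\dr_k\nu$, where $\Arond_i^{\ k} = g^{kl}\Arond_{il}$, I would expand the bilinear form $\scal{\dr_i Y}{\dr_j Y}_\eta$ into four terms. The strategy is then to eliminate the three terms involving $\nu$ using the null and orthogonality relations recorded just after \eqref{eq:derivativeY}: since $|\nu|^2_\eta = 0$ and $\scal{\nu}{\g\nu}_\eta = 0$, every term carrying a factor of $\nu$ paired with $\nu$ or with $\dr_k\nu$ vanishes. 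This is the crucial point: the null direction $\nu$, which carries the entire $\g H$ contribution, decouples completely, so that only the term $\Arond_i^{\ k}\Arond_j^{\ l}\scal{\dr_k\nu}{\dr_l\nu}_\eta$ survives.

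It then remains to evaluate $\scal{\dr_k\nu}{\dr_l\nu}_\eta$. From the definition of $\nu$ one has $\dr_k\nu = \big(\dr_k\Phi,\ \scal{\Phi}{\dr_k\Phi}_\xi,\ \scal{\Phi}{\dr_k\Phi}_\xi\big)$, so that the final two coordinates coincide. Because $\eta$ assigns opposite signs to these last two slots, their contributions cancel exactly, leaving $\scal{\dr_k\nu}{\dr_l\nu}_\eta = \scal{\dr_k\Phi}{\dr_l\Phi}_\xi = g_{kl}$. In other words, $\nu$ induces the metric $g$ on $\Sigma$, just as $\Phi$ does, which is what makes the surviving term collapse to a clean expression in $\Arond$ and $g$.

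Combining the two steps gives $\ggo_{ij} = \Arond_i^{\ k}\Arond_j^{\ l} g_{kl} = \Arond_{ik}g^{kl}\Arond_{lj} = (\Arond^2)_{ij}$, which is the claimed identity $\ggo = \Arond^2$, where $\Arond^2$ denotes the square of $\Arond$ regarded as a $g$-symmetric endomorphism. I do not expect a genuine obstacle here, since the computation is elementary once \eqref{eq:derivativeY} and the accompanying orthogonality relations are in hand; the only points requiring care are the index bookkeeping in the meaning of $\Arond\g\nu$ and the verification that the last-two-coordinate cancellation in $\scal{\dr_k\nu}{\dr_l\nu}_\eta$ is exact, both of which are routine.
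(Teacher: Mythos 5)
Your proposal is correct and follows essentially the same route as the paper: expand $\scal{\dr_i Y}{\dr_j Y}_\eta$ using $\g Y = (\g H)\nu - \Arond\g\nu$, discard the $\nu$-terms via $|\nu|^2_\eta = 0$ and $\nu\perp\g\nu$, and use $\scal{\dr_k\nu}{\dr_l\nu}_\eta = g_{kl}$ to obtain $\ggo_{ij} = \Arond_i^{\ k}\Arond_{jk}$. No issues.
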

	\begin{proof}
		Thanks to \eqref{eq:derivativeY}, it holds
		\begin{align*}
			\scal{\dr_i Y}{\dr_j Y}_\eta &= \scal{\Arond_i^{\ k}\dr_k \Phi }{ \Arond_j^{\ l}\dr_l \Phi}_\xi = \Arond_i^{\ k}\Arond_{jk}.
		\end{align*}
		where the indexes are raised with respect to $g$.
	\end{proof}
	\textbf{\underline{In the whole section, we will work under the assumption \eqref{hyp:no_umbilic}. }}\\
	
	In particular, the volume form of $\ggo$ is given by $d\vol_\ggo = |\det\Arond| d\vol_g$. Hence, the 4-form $(\det \Arond) d\vol_g$ that appears in \Cref{thm_duality} can be considered as a signed volume form of the manifold $Y(\Sigma)$. Furthermore, we remark that  $\sign[\det\Arond]$ corresponds to the orientation of the basis $\left( Y,\dr_1 Y,...,\dr_4 Y, \nu , \begin{pmatrix}
		\vspace{0.2em} 0_{\R^5} \\ \vspace{0.2em}\frac{|\Phi|^2-1}{2} \\ \vspace{0.2em}\frac{|\Phi|^2+1}{2}
	\end{pmatrix} \right)$ with the following computation.
	
	\begin{align*}
		\det \left[ Y,\dr_1 Y,...,\dr_4 Y, \nu , \begin{pmatrix}
		\vspace{0.2em}	0_{\R^5} \\ \vspace{0.2em}\frac{|\Phi|^2-1}{2} \\ \vspace{0.2em}\frac{|\Phi|^2+1}{2}
		\end{pmatrix} \right] &= \det \left[ \begin{pmatrix}
			n\\ \scal{n}{\Phi} \\ \scal{n}{\Phi}
		\end{pmatrix} ,\begin{pmatrix}
			\Arond\g \Phi \\ \scal{\Phi}{\Arond \g \Phi} \\ \scal{\Phi}{\Arond\g \Phi}
		\end{pmatrix} , \begin{pmatrix}
			\vspace{0.2em}\Phi \\ \vspace{0.2em}\frac{|\Phi|^2-1}{2} \\ \vspace{0.2em}\frac{|\Phi|^2+1}{2}
		\end{pmatrix}  ,  \begin{pmatrix}
			0_{\R^5} \\ 1\\ 1
		\end{pmatrix} \right] \\
		&= \det \left[ \begin{pmatrix}
			n\\ 0 \\ 0
		\end{pmatrix} ,\begin{pmatrix}
			\Arond\g \Phi \\ 0 \\ 0
		\end{pmatrix} , \begin{pmatrix}
			\Phi \\ -\frac{1}{2} \\ \frac{1}{2}
		\end{pmatrix}  ,  \begin{pmatrix}
			0_{\R^5} \\ 1\\ 1
		\end{pmatrix} \right] \\
		&= \det \left[ \begin{pmatrix}
			n\\ 0 \\ 0
		\end{pmatrix} ,\begin{pmatrix}
			\Arond\g \Phi \\ 0 \\ 0
		\end{pmatrix} , \begin{pmatrix}
			\Phi \\ -1 \\ 0
		\end{pmatrix}  ,  \begin{pmatrix}
			0_{\R^5} \\ 1\\ 1
		\end{pmatrix} \right] \\
		&= \det \left[ \begin{pmatrix}
			n\\ 0 
		\end{pmatrix} ,\begin{pmatrix}
			\Arond\g \Phi \\ 0 
		\end{pmatrix} , \begin{pmatrix}
			\Phi \\ -1
		\end{pmatrix}   \right] \\
		&= \det[\Arond\g \Phi,n] = \det\Arond.
	\end{align*}
	
	In order the compute the curvature of $Y(\Sigma)$, we need to compute its Christoffel symbols.
	\begin{lemma}
		The Christoffel symbols of $\ggo$ are given by 
		\begin{align*}
			(\Gamma^\ggo)_{ij}^k := \scal{\dr^2_{ij} Y}{\dr_l Y}_\eta \ggo^{kl} &= \left( \Arond^{-1} \right)^{kb}\left( \g^g_i \Arond_{jb} \right) + (\g^\ggo H)^k \Arond_{ij} - \left( \Arond^{-1} \right)^k_{\ i} (\dr_j H) + (\Gamma^g)^k_{ij},
		\end{align*}
		where $\left( \Arond^{-1} \right)^k_{\ i} := \left( \Arond^{-1} \right)^{km} g_{mi}$.
	\end{lemma}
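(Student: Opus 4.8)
The plan is to compute the right-hand side directly from the explicit formula \eqref{eq:derivativeY} for $\g Y$, working at an arbitrary point $x_0$ in coordinates that are $g$-normal at $x_0$. The crucial preliminary observation is that, although $(\Gamma^\ggo)_{ij}^k$ is not a tensor, the difference $(\Gamma^\ggo)_{ij}^k-(\Gamma^g)_{ij}^k$ is, being the difference tensor of the two Levi-Civita connections. Every other term on the right-hand side is manifestly tensorial: $(\Arond^{-1})^{kb}\g^g_i\Arond_{jb}$ and $(\Arond^{-1})^k_{\ i}\dr_j H$ are contractions of tensors with covariant derivatives, while $(\g^\ggo H)^k=\ggo^{kl}\dr_l H$ and $\dr_j H=\g^g_j H$ involve no connection coefficients. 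Hence the claimed identity is equivalent to a purely tensorial one, and it suffices to verify it at $x_0$ in $g$-normal coordinates, where $(\Gamma^g)_{ij}^k(x_0)=0$ and covariant derivatives of tensors reduce to partial derivatives.

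In such coordinates I would differentiate \eqref{eq:derivativeY}, $\dr_i Y=(\dr_i H)\nu-\Arond_i^{\ k}\dr_k\nu$, to get
\begin{align*}
	\dr^2_{ij}Y=(\dr^2_{ij}H)\nu+(\dr_i H)\dr_j\nu-(\dr_j\Arond_i^{\ k})\dr_k\nu-\Arond_i^{\ k}\dr^2_{jk}\nu,
\end{align*}
and then compute $\scal{\dr^2_{ij}Y}{\dr_l Y}_\eta$ with $\dr_l Y=(\dr_l H)\nu-\Arond_l^{\ m}\dr_m\nu$. The bookkeeping uses the relations $|\nu|^2_\eta=0$, $\scal{\nu}{\g\nu}_\eta=0$ (recorded in \Cref{se:Preliminaries}) and $\scal{\dr_k\nu}{\dr_m\nu}_\eta=g_{km}$ (immediate from the definition of $\nu$), together with $\scal{\nu}{\dr^2_{jk}\nu}_\eta=-g_{jk}$ (differentiate $\scal{\nu}{\dr_k\nu}_\eta=0$) and $\scal{\dr^2_{jk}\nu}{\dr_m\nu}_\eta=\scal{\dr^2_{jk}\Phi}{\dr_m\Phi}_\xi=0$ at $x_0$, the latter because $\dr^2_{jk}\Phi(x_0)$ is normal in $g$-normal coordinates. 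The orthogonality $\scal{\nu}{\g Y}_\eta=0$ annihilates the Hessian-of-$H$ term, and the surviving contributions collapse to
\begin{align*}
	\scal{\dr^2_{ij}Y}{\dr_l Y}_\eta=-(\dr_i H)\Arond_{lj}+(\dr_j\Arond_{i\alpha})\Arond_l^{\ \alpha}+(\dr_l H)\Arond_{ij}\qquad\text{at }x_0.
\end{align*}

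Next I would contract with $\ggo^{kl}$. By \Cref{lm:metric_ggo} one has $\ggo=\Arond^2$, whence $\Arond_{lj}\ggo^{kl}=(\Arond^{-1})^k_{\ j}$ and $\Arond_l^{\ \alpha}\ggo^{kl}=(\Arond^{-1})^{k\alpha}$. The third term immediately becomes $(\g^\ggo H)^k\Arond_{ij}$, exactly as stated, while the first two yield $-(\dr_i H)(\Arond^{-1})^k_{\ j}+(\Arond^{-1})^{k\alpha}\dr_j\Arond_{i\alpha}$, which is the claimed expression with $i$ and $j$ interchanged. Since the left-hand side is symmetric in $i,j$, it remains to check that these two terms are themselves $i\leftrightarrow j$ symmetric. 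This is where Codazzi's equation enters: for a hypersurface of flat $\R^5$ the tensor $\g A$ is totally symmetric, so with $A=\Arond+Hg$ one gets $\g_j\Arond_{i\alpha}-\g_i\Arond_{j\alpha}=(\dr_i H)g_{j\alpha}-(\dr_j H)g_{i\alpha}$; contracting with $(\Arond^{-1})^{k\alpha}$ shows that the antisymmetric-in-$(i,j)$ parts of the two terms cancel exactly. The expression therefore equals its $i\leftrightarrow j$ swap, which is precisely $(\Arond^{-1})^{kb}\g^g_i\Arond_{jb}-(\Arond^{-1})^k_{\ i}\dr_j H$, and reinstating $(\Gamma^g)_{ij}^k$ through the tensoriality reduction gives the stated formula.

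I expect the symmetrization via Codazzi to be the only genuinely delicate point: the direct differentiation naturally places the derivative index in the ``wrong'' slot, and one must invoke Codazzi to recognize that, modulo the $\dr H$ terms already present, the result is symmetric and matches the stated form. The inner-product computation, though somewhat lengthy, is routine once the relations $|\nu|^2_\eta=0$, $\scal{\dr_k\nu}{\dr_m\nu}_\eta=g_{km}$ and $\scal{\dr^2_{jk}\nu}{\dr_m\nu}_\eta=0$ at $x_0$ are in hand.
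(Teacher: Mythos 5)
Your proposal is correct and follows essentially the same computation as the paper: both differentiate \eqref{eq:derivativeY} and evaluate $\scal{\dr^2_{ij} Y}{\dr_l Y}_\eta$ using $|\nu|^2_\eta=0$, $\scal{\nu}{\g\nu}_\eta=0$, $\scal{\dr_a\nu}{\dr_b\nu}_\eta=g_{ab}$ and $\scal{\nu}{\dr^2_{ab}\nu}_\eta=-\scal{\dr_a\nu}{\dr_b\nu}_\eta$, then raise indices with $\ggo^{kl}=(\Arond^{-2})^{kl}$. The only differences are presentational: you work in $g$-normal coordinates and reinstate $(\Gamma^g)^k_{ij}$ by tensoriality of the difference of connections, whereas the paper keeps general coordinates and tracks $\scal{\dr^2_{il}\Phi}{\dr_b\Phi}_\xi$ explicitly, and your order of differentiation places the derivative on the ``wrong'' slot of $\Arond$ so that the Codazzi symmetrization you invoke (which the paper records only as a remark after the lemma) is genuinely needed to match the stated form --- both routes are valid.
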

	
	\begin{remark}
		This expression is symmetric in $i,j$. By Gauss-Codazzi identity, it holds :
		\begin{align*}
			\g^g_i \Arond_{jb} &= \g^g_i\left( A_{jb} - H g_{jb} \right)\\
			&= \g^g_i A_{ib} - (\dr_i H) g_{jb} \\
			&= \g^g_j \Arond_{ib} + (\dr_j H)g_{ib} - (\dr_i H)g_{jb}.
		\end{align*}
		Hence, we have
		\begin{align*}
			\g^g_i \Arond_{jb} -(\dr_j H)g_{ib} &= \g^g_j \Arond_{ib}- (\dr_i H)g_{jb}.
		\end{align*}
	\end{remark}

	\begin{proof}
		Let $\nu := \begin{pmatrix} \vspace{0.2em}\Phi\\ \vspace{0.2em}\frac{|\Phi|^2-1}{2} \\ \vspace{0.2em}\frac{|\Phi|^2+1}{2} \end{pmatrix}$.	Using \eqref{eq:derivativeY}, we compute :
		\begin{align*}
			\scal{\dr^2_{ij} Y}{\dr_k Y}_\eta =& \scal{ \dr_i\left[ (\dr_j H) \nu -\Arond_j^{\ l} \dr_l\nu \right]  }{ (\dr_k H) \nu -\Arond_k ^{\ b} \dr_b \nu }_\eta \\
			=& \scal{ (\dr^2_{ij} H) \nu
				+(\dr_j H)\dr_i\nu }{   (\dr_k H) \nu -\Arond_k ^{\ b} \dr_b \nu   }_\eta  - \scal{  \left( \dr_i \Arond_j^{\ l} \right)  \dr_l \nu
				+\Arond_j^{\ l} \dr^2_{il}\nu }{  (\dr_k H) \nu -\Arond_k ^{\ b} \dr_b \nu   }_\eta.
		\end{align*}
		Using the relations $\nu\perp \nu,\g\nu$, $\scal{\nu}{\dr^2_{ab}\nu}_\eta= -\scal{\dr_a\nu}{\dr_b\nu}_\eta$ and $\scal{\dr_a\nu}{\dr_b\nu} = g_{ab}$, we obtain :
		\begin{align*}
			\scal{\dr^2_{ij} Y}{\dr_k Y}_\eta =& -(\dr_j H)\Arond_k ^{\ b} g_{ib} + \left( \dr_i \Arond_j^{\ l} \right) \Arond_k^{\ b} g_{lb} + (\dr_k H) \Arond_j^{\ l} g_{il} + \Arond_j^{\ l} \Arond_k^{\ b} \scal{\dr^2_{il} \Phi}{\dr_b \Phi}_\xi  \\
			=& (\dr_k H) \Arond_{ij} - (\dr_j H)\Arond_{ki} + \Arond_k^{\ b} \left[ \left(\dr_i \Arond_j^{\ l} \right) g_{lb} + \Arond_j^{\ l}\scal{\dr^2_{il} \Phi}{\dr_b \Phi}_\xi  \right].
		\end{align*}
		We consider the last term :
		\begin{align*}
			\left(\dr_i \Arond_j^{\ l} \right) g_{lb} + \Arond_j^{\ l}\scal{\dr^2_{il} \Phi}{\dr_b \Phi}_\xi &= \dr_i \left( \Arond_{jb} \right) - \Arond_j^{\ l} (\dr_i g_{lb})+ \Arond_j^{\ l}\scal{\dr^2_{il} \Phi}{\dr_b \Phi}_\xi \\
			&= \dr_i \left( \Arond_{jb} \right) - \Arond_j ^{\ l} \scal{\dr_l \Phi}{\dr^2_{ib} \Phi}_\xi \\
			&=  \dr_i \left( \Arond_{jb} \right) - \Arond_{jl} (\Gamma^g)_{ib}^l.
		\end{align*}
		If $\g^g$ is the Levi-Civita connection of $(\Sigma,g)$ and $(\Gamma^g)_{ij}^k$ are the Christoffel symbols of $g$, then it holds
		\begin{align*}
			\g^g_i \Arond_{jb} &= \dr_i \left( \Arond_{jb} \right) - \Arond_{jl} (\Gamma^g)_{ib}^l - \Arond_{bl}(\Gamma^g)_{ij}^l \\
			& = \left( \dr_i \Arond_j^{\ l} \right)g_{lb} + \Arond_j^{\ l}\scal{\dr^2_{il} \Phi}{\dr_b \Phi}_\xi- \Arond_{bl}(\Gamma^g)_{ij}^l.
		\end{align*}
		Let $(\Gamma^\ggo)_{ijk} := \scal{\dr_{ij}Y}{\dr_k Y}_\eta$ be the Christoffel symbols of $\ggo$. It holds
		\begin{align*}
			\Gamma^\ggo_{ijk} &= \left( \g^g_i \Arond_{jb} \right)\Arond_k^{\ b} + (\dr_k H)\Arond_{ij} - (\dr_j H)\Arond_{ki} + \left( \Arond^2\right)_{kl} (\Gamma^g)_{ij}^l .
		\end{align*}
		We obtain
		\begin{align*}
			(\Gamma^\ggo)_{ij}^k &= \ggo^{kl}\Gamma^\ggo_{ijl} \\
			&= \left( \Arond^{-1} \right)^{km} g_{mn} \left( \Arond^{-1} \right)^{nl} \left[ \left( \g^g_i \Arond_{jb} \right)\Arond_{lc}g^{bc} + (\dr_l H)\Arond_{ij} - (\dr_j H)\Arond_{li} + \left( \Arond^2\right)_{k\lambda} (\Gamma^g)_{ij}^\lambda \right] \\
			&= \left( \Arond^{-1} \right)^{kb}\left( \g^g_i \Arond_{jb} \right) + (\g^\ggo H)^k \Arond_{ij} - \left( \Arond^{-1} \right)^k_{\ i} (\dr_j H) + (\Gamma^g)^k_{ij}.
		\end{align*}
	\end{proof}
	
	Now we compute the second fundamental form $B$ of $Y(\Sigma)\subset \s^{5,1}$. Consider the basis $(\dr_1 Y,...,\dr_4 Y)$ of $TY(\Sigma)$, that we complete into a basis $(\dr_1 Y,...,\dr_4 Y, \nu,\nu^*)$ of $T\s^{5,1} = T Y(\Sigma)\oplus TY(\Sigma)^\perp$ with the normalization $\nu = \begin{pmatrix} \vspace{0.2em}\Phi\\  \vspace{0.2em}\frac{|\Phi|^2 - 1}{2} \\ \vspace{0.2em}\frac{|\Phi|^2 + 1}{2} \end{pmatrix}$ and $|\nu|^2_\eta = |\nu^*|^2_\eta = 0$. The general form of the second fundamental form $B$ is given by 
	\begin{align*}
		B &= B^\nu \nu + B^* \nu^* \\
		&= \text{proj}_{TY(\Sigma)^\perp}(\g^2 Y) = \frac{\scal{\g^2 Y}{\nu}_\eta }{ \scal{\nu}{\nu^*}_\eta} \nu^* + \frac{\scal{\g^2 Y}{\nu^*}_\eta }{ \scal{\nu}{\nu^*}_\eta} \nu,
	\end{align*}
	where for $i,j\in\inter{1}{4}$, we have written :
	\begin{align*}
		B^\nu_{ij} = \frac{\scal{\dr^2_{ij} Y}{\nu^*}_\eta }{ \scal{\nu}{\nu^*}_\eta},\ \ \ \ \ B^*_{ij} = \frac{\scal{\dr^2_{ij} Y}{\nu}_\eta }{ \scal{\nu}{\nu^*}_\eta}.
	\end{align*}
	In particular, its norm is given by the formula :
	\begin{align}
		|B|^2_\eta &= 2\scal{\nu}{\nu^*}_\eta \left( \ggo^{-1} \right)^{ij} \left( \ggo^{-1} \right)^{kl} B^\nu_{ik} B^*_{jl}  \nonumber \\
		&= \frac{2}{\scal{\nu}{\nu^*}_\eta} \left( \ggo^{-1} \right)^{ij} \left( \ggo^{-1} \right)^{kl} \scal{\dr^2_{ij} Y}{\nu^*}_\eta \scal{\dr^2_{ij} Y}{\nu}_\eta. \label{eq:Bsquare}
	\end{align}
	In order to compute $B$, we will need to give a formula for $\nu^*$. To do so, we will need to assume that $H^2 + |\g^g H|^2_g \neq 0$ in the intermediate computations. Since the computation can be done locally, if $H^2+ |\g^g H|^2_g = 0$ around some point $x_0\in\Sigma$, we can smoothly approximate $\Phi$ around $x_0$ by immersions satisfying $H^2 + |\g^g H|^2_g \neq 0$ around $x_0$. The final result, see \cref{second_ff_Y}, is independant of the vanishing of $H^2 + |\g^g H|^2_g$, thus the formula obtained for the approximation can pass to the limit. Now we give an expression of $\nu^*$ of the form $ \begin{pmatrix} \vspace{0.2em}\Phi^*\\  \vspace{0.2em}\frac{|\Phi^*|^2 - 1}{2} \\ \vspace{0.2em}\frac{|\Phi^*|^2 + 1}{2} \end{pmatrix}$. 
	
	\begin{lemma}\label{lm:computation_Phistar}
		$\Phi^*$ is given by 
		\begin{align*}
			\Phi^* &= \Phi + \frac{2H}{H^2 + |\g^\ggo H|^2_\ggo} n - \frac{2 \left(\Arond^{-1} \right)^{ki} \dr_i H}{H^2 + |\g^\ggo H|^2_\ggo}  \dr_k \Phi .
		\end{align*}
	\end{lemma}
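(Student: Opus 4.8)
The plan is to pin down $\nu^*$ from its defining properties. Since it is written in the lightcone form $\nu^* = \begin{pmatrix} \Phi^* \\ \frac{|\Phi^*|^2-1}{2} \\ \frac{|\Phi^*|^2+1}{2}\end{pmatrix}$, it is automatically $\eta$-null, so the only remaining conditions are tangency to $\s^{5,1}$, namely $\scal{\nu^*}{Y}_\eta = 0$, and orthogonality to the image, namely $\scal{\nu^*}{\dr_i Y}_\eta = 0$ for $i\in\inter{1}{4}$; these single out the null normal direction distinct from $\nu$. Because $(n,\dr_1\Phi,\dots,\dr_4\Phi)$ is a frame of $\R^5$ along $\Phi$, I would write the ansatz $\Phi^* - \Phi = \alpha\, n + \beta^k \dr_k\Phi$ and solve for the scalar $\alpha$ and the tangential coefficients $\beta^k$.

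The computation rests on three inner-product identities, each obtained by a short expansion exploiting the null structure of the last two coordinates. Setting $w := Y - H\nu = \begin{pmatrix} n\\ \scal{n}{\Phi}_\xi \\ \scal{n}{\Phi}_\xi\end{pmatrix}$, I would first establish
\[
\scal{\nu^*}{\nu}_\eta = -\tfrac{1}{2}|\Phi^*-\Phi|^2_\xi, \qquad \scal{\nu^*}{\dr_k\nu}_\eta = \scal{\Phi^*-\Phi}{\dr_k\Phi}_\xi, \qquad \scal{\nu^*}{w}_\eta = \scal{\Phi^*-\Phi}{n}_\xi.
\]
Then, substituting $Y = H\nu + w$ and $\dr_i Y = (\dr_i H)\nu - \Arond_i^{\ k}\dr_k\nu$ from \eqref{eq:derivativeY} and abbreviating $D := |\Phi^*-\Phi|^2_\xi = \alpha^2 + |\beta|^2_g$, the tangency condition $\scal{\nu^*}{Y}_\eta = 0$ reduces to $-\tfrac{H}{2}D + \alpha = 0$, while $\scal{\nu^*}{\dr_i Y}_\eta = 0$ reduces to $-\tfrac{1}{2}(\dr_i H)D - \Arond_{ik}\beta^k = 0$. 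Using the invertibility of $\Arond$ granted by \eqref{hyp:no_umbilic}, these yield $\alpha = \tfrac{H}{2}D$ and $\beta^k = -\tfrac{D}{2}\,(\Arond^{-1})^{ki}\dr_i H$, expressing both coefficients in terms of the still-unknown scalar $D$.

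The final and most delicate step is to determine $D$ self-consistently. Inserting the formulas for $\alpha$ and $\beta^k$ back into $D = \alpha^2 + |\beta|^2_g$ produces a quadratic relation in $D$. Here the key algebraic point is to recognize, via the identity $\ggo^{ij} = (\Arond^{-1})^{ki}\,g_{kl}\,(\Arond^{-1})^{lj}$ (the inverse of $\ggo = \Arond^2$ from \cref{lm:metric_ggo}), that $|\beta|^2_g = \tfrac{D^2}{4}|\g^\ggo H|^2_\ggo$, whence
\[
D = \tfrac{D^2}{4}\big(H^2 + |\g^\ggo H|^2_\ggo\big).
\]
The root $D = 0$ corresponds to $\Phi^* = \Phi$ and merely recovers the direction $\nu$, so I discard it; the genuine second null normal corresponds to $D = \dfrac{4}{H^2 + |\g^\ggo H|^2_\ggo}$, which is exactly why the intermediate nonvanishing assumption $H^2 + |\g^\ggo H|^2_\ggo \neq 0$ is needed. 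Substituting this value of $D$ into the expressions for $\alpha$ and $\beta^k$ gives
\[
\Phi^* = \Phi + \frac{2H}{H^2 + |\g^\ggo H|^2_\ggo}\,n - \frac{2\,(\Arond^{-1})^{ki}\dr_i H}{H^2 + |\g^\ggo H|^2_\ggo}\,\dr_k\Phi,
\]
as claimed. I expect the main obstacle to be precisely this last closing-of-the-loop for $D$: both isolating the correct (nonzero) root and identifying the coefficient $H^2 + |\g^\ggo H|^2_\ggo$ through the metric identity for $\ggo^{-1}$, the rest being direct linear algebra on the orthogonality relations.
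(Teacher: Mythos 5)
Your proposal is correct and follows essentially the same route as the paper: the ansatz $\Phi^*=\Phi+\alpha n+\beta^k\dr_k\Phi$, the two orthogonality conditions $\scal{\nu^*}{Y}_\eta=\scal{\nu^*}{\g Y}_\eta=0$, and the resulting quadratic closure. The only (minor, favorable) difference is that you solve for $D=\alpha^2+|\beta|^2_g$ directly rather than eliminating via $\gamma$, which avoids the intermediate division by $H$ that appears in the paper's derivation of $\beta^k$.
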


	\begin{proof}
		We look for $\Phi^*$ of the form
		\begin{align*}
			\Phi^* &= \Phi + \beta^k \dr_k \Phi + \gamma n.
		\end{align*}
		Let $\nu^* := \begin{pmatrix}  \vspace{0.2em}\Phi^*\\ \vspace{0.2em} \frac{|\Phi^*|^2 - 1}{2} \\ \vspace{0.2em}\frac{|\Phi^*|^2 + 1}{2} \end{pmatrix}$. The following equations follows from the fact that $\nu^*$ belongs to the normal space of $Y$ :
		\begin{align}\label{relation_Phistar}
			\scal{Y}{\nu^*}_\eta =0,\ \ \ \ \ \scal{\g Y}{\nu^*}_\eta = 0.
		\end{align}
		The first equation of \eqref{relation_Phistar} can be written as
		\begin{align*}
			0 &=\scal{ \begin{pmatrix}
					\vspace{0.2em}H\Phi + n\\
					\vspace{0.2em}H \frac{|\Phi|^2 - 1}{2} + \scal{n}{\Phi} \\
					\vspace{0.2em}H \frac{|\Phi|^2 + 1}{2} + \scal{n}{\Phi} 
			\end{pmatrix} }{ \begin{pmatrix}
					\vspace{0.2em}\Phi^*\\
					\vspace{0.2em}\frac{|\Phi^*|^2-1}{2} \\
					\vspace{0.2em}\frac{|\Phi^*|^2+1}{2} 
			\end{pmatrix} }_\eta \\
			&= \scal{ H\Phi + n}{\Phi^*}_\xi + \left( H \frac{|\Phi|^2 - 1}{2} + \scal{n}{\Phi} \right)\frac{|\Phi^*|^2-1}{2} - \left( H \frac{|\Phi|^2 + 1}{2} + \scal{n}{\Phi}  \right)\frac{|\Phi^*|^2+1}{2}   \\
			&= \scal{ H\Phi + n}{\Phi^*}_\xi - H\frac{|\Phi^*|^2}{2} -\left( H \frac{|\Phi|^2}{2} +\scal{n}{\Phi} \right) .
		\end{align*}
		We now replace $\Phi^*= \Phi + \beta^k \dr_k \Phi + \gamma n$ :
		\begin{align}
			0 &= \scal{ H\Phi + n}{\Phi + \beta^k \dr_k \Phi + \gamma n}_\xi - H\frac{|\Phi + \beta^k \dr_k \Phi + \gamma n|^2}{2} -\left( H \frac{|\Phi|^2}{2} +\scal{n}{\Phi} \right)  \nonumber\\
			&=  H|\Phi|^2 + H\beta^k \scal{\Phi}{\dr_k \Phi} + ( \gamma H+1) \scal{\Phi}{n} + \gamma \nonumber\\
			&- \frac{H}{2}\left( |\Phi|^2 + |\beta|^2_g + \gamma^2 + 2\beta^k \scal{\Phi}{\dr_k \Phi} + 2\gamma \scal{\Phi}{n} \right) \nonumber\\
			&-\left( H \frac{|\Phi|^2}{2} +\scal{n}{\Phi} \right)  \nonumber\\
			&=  \gamma - \frac{ H}{2}(\gamma^2 + |\beta|^2_g) . \label{relation1}
		\end{align}
		We now focus on the second equation of \eqref{relation_Phistar}. For any $i\in\inter{1}{4}$, it holds
		\begin{align*}
			0 =& \scal{ (\dr_i H) \begin{pmatrix} \vspace{0.2em}\Phi\\ \vspace{0.2em}\frac{|\Phi|^2-1}{2} \\ \vspace{0.2em}\frac{|\Phi|^2+1}{2} \end{pmatrix} -\Arond_i^{\ l} \begin{pmatrix}
					\dr_l \Phi\\ \scal{\dr_l \Phi}{\Phi} \\ \scal{\dr_l \Phi}{\Phi}
			\end{pmatrix} }{ \begin{pmatrix}
				\vspace{0.2em}	\Phi^*\\
				\vspace{0.2em}	\frac{|\Phi^*|^2-1}{2} \\
				\vspace{0.2em}	\frac{|\Phi^*|^2+1}{2} 
			\end{pmatrix} }_\eta \\
			=& \scal{ (\dr_i H) \Phi - \Arond_i^{\ l}\dr_l \Phi}{\Phi^*} \\
			&+ \left[ (\dr_i H) \frac{|\Phi|^2-1}{2} - \Arond_i^{\ l} \scal{\dr_l \Phi}{\Phi} \right] \frac{|\Phi^*|^2-1}{2} \\
			&- \left[ (\dr_i H) \frac{|\Phi|^2+1}{2} - \Arond_i^{\ l} \scal{\dr_l \Phi}{\Phi} \right] \frac{|\Phi^*|^2+1}{2} \\
			=&  \scal{ (\dr_i H) \Phi - \Arond_i^{\ l}\dr_l \Phi}{\Phi^*} -  (\dr_i H) \frac{|\Phi^*|^2}{2} -\left[ (\dr_i H) \frac{|\Phi|^2}{2} - \Arond_i^{\ l} \scal{\dr_l \Phi}{\Phi} \right].
		\end{align*}
		We replace $\Phi^*= \Phi + \beta^k \dr_k \Phi + \gamma n$ :
		\begin{align*}
			0 =& \scal{ (\dr_i H) \Phi - \Arond_i^{\ l}\dr_l \Phi}{\Phi + \beta^k \dr_k \Phi + \gamma n} - (\dr_i H) \frac{|\Phi + \beta^k \dr_k \Phi + \gamma n|^2}{2} -\left[ (\dr_i H) \frac{|\Phi|^2}{2} - \Arond_i^{\ l} \scal{\dr_l \Phi}{\Phi} \right].
		\end{align*}
		We obtain :
		\begin{align}
			0 =& (\dr_i H) \left( |\Phi|^2 + \beta^k \scal{\Phi}{\dr_k \Phi} + \gamma \scal{\Phi}{n} \right) - \lambda \Arond_i^{\ l} \left( \scal{\dr_l \Phi}{\Phi} + \beta^k g_{kl} \right)v \nonumber\\
			&- \frac{\dr_i H}{2} \left( |\Phi|^2 + |\beta|^2_g + \gamma^2 + 2\beta^k \scal{\Phi}{\dr_k \Phi} + 2\gamma \scal{\Phi}{n} \right) \nonumber\\
			&-\left[ (\dr_i H) \frac{|\Phi|^2}{2} - \Arond_i^{\ l} \scal{\dr_l \Phi}{\Phi} \right] \nonumber \\
			=& -\frac{\dr_i H}{2} (|\beta|^2_g + \gamma^2 ) -  \Arond_{ik} \beta^k . \label{relation2}
		\end{align}
		We deduce from \eqref{relation2} :
		\begin{align*}
			0 &= \lambda \left[ \gamma\frac{\dr_i H}{H} + \Arond_{ik} \beta^k \right].
		\end{align*}
		Thanks to the assumption \eqref{hyp:no_umbilic}, we obtain an expression of $\beta^k$ in terms of $\gamma$ :
		\begin{align*}
			\beta^k &= -\gamma \left(\Arond^{-1} \right)^{ki}\frac{\dr_i H}{H}.
		\end{align*}
		So the norm $|\beta|^2_g$ is given by
		\begin{align*}
			|\beta|^2_g &= g_{kl} \beta^k \beta^l = g_{kl} \left(\Arond^{-1} \right)^{ki} \left(\Arond^{-1} \right)^{lj} \frac{(\dr_i H)(\dr_j H)}{H^2}\gamma^2.
		\end{align*}
		Using this into \eqref{relation1}, we obtain an equation for $\gamma$ only :
		\begin{align*}
			0 &= \gamma -\frac{1}{2}\left( H + g_{kl} \left(\Arond^{-1} \right)^{ki} \left(\Arond^{-1} \right)^{lj} \frac{(\dr_i H)(\dr_j H)}{H} \right) \gamma^2.
		\end{align*}
		The solution $\gamma=0$ gives $\Phi^* = \Phi$. The solution $\gamma\neq 0$ is given by
		\begin{align*}
			\gamma &= \frac{2H}{H^2 + g_{kl} \left(\Arond^{-1} \right)^{ki} \left(\Arond^{-1} \right)^{lj}(\dr_i H)(\dr_j H)} = \frac{2H}{H^2 + |\g^\ggo H|^2_\ggo}.
		\end{align*}
	\end{proof}
	
	The last ingredient to compute $B$ is the following.
	
	\begin{lemma}
		We have
		\begin{align*}
			\scal{\nu}{\nu^*}_\eta &= \frac{-2}{H^2 + |\g^\ggo H|^2_\ggo }.
		\end{align*}
	\end{lemma}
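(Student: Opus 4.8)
The plan is to reduce everything to a single algebraic identity for the Minkowski product of two light-cone lifts, after which the lemma becomes pure substitution. First I would record the elementary fact that for any $a,b\in\R^5$, writing $\nu_a := \begin{pmatrix} a \\ \frac{|a|^2-1}{2} \\ \frac{|a|^2+1}{2}\end{pmatrix}$, one has
\begin{align*}
\scal{\nu_a}{\nu_b}_\eta = \scal{a}{b}_\xi - \frac{|a|^2+|b|^2}{2} = -\frac{1}{2}|a-b|^2_\xi.
\end{align*}
This follows by expanding the products of the last two coordinates against the Lorentzian signature of $\eta$, where the mixed $|a|^2|b|^2$ and constant terms cancel; in particular it recovers $|\nu_a|^2_\eta=0$, consistent with $\nu$ and $\nu^*$ being null.

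Next I would apply this identity with $a=\Phi$ and $b=\Phi^*$. By \Cref{lm:computation_Phistar} we have $\Phi^*-\Phi=\beta^k\dr_k\Phi+\gamma n$ with $\gamma$ as stated and $\beta^k=-\gamma(\Arond^{-1})^{ki}\tfrac{\dr_i H}{H}$. Since $n$ is the unit normal, orthogonal to each $\dr_k\Phi$, the cross terms drop out and $|\Phi-\Phi^*|^2_\xi=|\beta|^2_g+\gamma^2$. Hence $\scal{\nu}{\nu^*}_\eta=-\tfrac12(|\beta|^2_g+\gamma^2)$, so the whole problem collapses to evaluating this one scalar.

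To finish I would use relation \eqref{relation1}, namely $\gamma=\tfrac{H}{2}(\gamma^2+|\beta|^2_g)$, which rearranges to $\gamma^2+|\beta|^2_g=\tfrac{2\gamma}{H}$ and therefore $\scal{\nu}{\nu^*}_\eta=-\tfrac{\gamma}{H}=-\tfrac{2}{H^2+|\g^\ggo H|^2_\ggo}$ after inserting the value of $\gamma$. Alternatively, and this is the version I would prefer in order to keep the argument uniform, I would plug the explicit $\beta,\gamma$ straight into $|\beta|^2_g+\gamma^2$: with $D:=H^2+|\g^\ggo H|^2_\ggo$ one has $\gamma^2=4H^2/D^2$ and $|\beta|^2_g=\tfrac{4}{D^2}|\g^\ggo H|^2_\ggo$, so that $|\beta|^2_g+\gamma^2=4/D$ and the factor $H$ cancels cleanly. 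The only point deserving care is the locus $H=0$: the route through \eqref{relation1} formally divides by $H$, whereas the direct computation is manifestly regular wherever $D\neq 0$. This is precisely the $H^2+|\g^g H|^2_g=0$ subtlety flagged just before \Cref{lm:computation_Phistar}, and the stated formula extends across it by the same approximation-and-limit argument already set up there. I expect no serious obstacle beyond this bookkeeping, since the light-cone identity does all the real work.
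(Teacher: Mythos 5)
Your proof is correct and follows essentially the same route as the paper: both reduce $\scal{\nu}{\nu^*}_\eta$ to $\scal{\Phi}{\Phi^*}_\xi - \tfrac{1}{2}|\Phi|^2 - \tfrac{1}{2}|\Phi^*|^2$ and substitute the explicit formula for $\Phi^*$ from \Cref{lm:computation_Phistar}. Your packaging of this as $-\tfrac{1}{2}|\Phi-\Phi^*|^2_\xi$ together with the orthonormality of $(\dr_k\Phi, n)$ is a clean shortcut for the brute-force expansion the paper carries out, and your remark that the direct substitution of $\beta,\gamma$ stays regular at $H=0$ is accurate.
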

	
	\begin{proof}
		We compute :
		\begin{align*}
			\scal{\nu}{\nu^*}_\eta =& \scal{ \begin{pmatrix}
					\vspace{0.2em}\Phi\\
					\vspace{0.2em}\frac{|\Phi|^2-1}{2} \\
					\vspace{0.2em}\frac{|\Phi|^2+1}{2}
			\end{pmatrix} }{ \begin{pmatrix}
					\vspace{0.2em}\Phi^*\\
					\vspace{0.2em}\frac{|\Phi^*|^2-1}{2}\\
					\vspace{0.2em}\frac{|\Phi^*|^2+1}{2}
			\end{pmatrix} }_\eta \\
			=& \scal{\Phi}{\Phi^*}_\xi + \frac{|\Phi|^2-1}{2}\frac{|\Phi^*|^2-1}{2} - \frac{|\Phi|^2+1}{2}\frac{|\Phi^*|^2+1}{2} \\
			=& \scal{\Phi}{\Phi^*}_\xi - \frac{|\Phi|^2}{2} - \frac{|\Phi^*|^2}{2}.
		\end{align*}
		Using \Cref{lm:computation_Phistar}, we obtain  
		\begin{align*}
			\scal{\nu}{\nu^*}_\eta =& \scal{ \Phi}{ \Phi + \frac{2H}{H^2 + |\g^\ggo H|^2_\ggo} n - \frac{2 \left(\Arond^{-1} \right)^{ki} \dr_i H}{H^2 + |\g^\ggo H|^2_\ggo}  \dr_k \Phi } - \frac{|\Phi|^2}{2} - \frac{1}{2} \left| \Phi + \frac{2H}{H^2 + |\g^\ggo H|^2_\ggo} n - \frac{2 \left(\Arond^{-1} \right)^{ki} \dr_i H}{H^2 + |\g^\ggo H|^2_\ggo}  \dr_k \Phi \right|^2.
		\end{align*}
		We expand :
		\begin{align*}
			\scal{\nu}{\nu^*}_\eta	=&  |\Phi|^2 +  \frac{2H \scal{\Phi}{n} }{H^2 + |\g^\ggo H|^2_\ggo} - \frac{2 \scal{\Phi}{\dr_k \Phi} \left(\Arond^{-1} \right)^{ki} \dr_i H}{H^2 + |\g^\ggo H|^2_\ggo} -\frac{|\Phi|^2}{2} \\
			& - \frac{1}{2}\Bigg( |\Phi|^2 + \left( \frac{2H}{H^2 + |\g^\ggo H|^2_\ggo} \right)^2 + \left(\frac{2}{H^2 + |\g^\ggo H|^2_\ggo} \right)^2 |\Arond^{-1} \g H|^2_g  + \frac{4H \scal{\Phi}{n} }{H^2 + |\g^\ggo H|^2_\ggo} - \frac{4 \scal{\Phi}{\dr_k \Phi} \left(\Arond^{-1} \right)^{ki} \dr_i H}{H^2 + |\g^\ggo H|^2_\ggo} \Bigg). 
		\end{align*}
		We end up with :
		\begin{align*}
			\scal{\nu}{\nu^*}_\eta=& -2\frac{H^2 + |\Arond^{-1} \g^g H|^2_g }{\left( H^2 + |\g^\ggo H|^2_\ggo \right)^2 } = \frac{-2}{H^2 + |\g^\ggo H|^2_\ggo}.
		\end{align*}
		
	\end{proof}
	
	\underline{Notation :} Let $f := H^2 + |\g^\ggo H|^2_\ggo$.\\
	Thanks to \eqref{eq:Bsquare}, it holds :
	\begin{align*}
		|B|^2_\eta &= -f\left( \ggo^{-1} \right)^{ij} \left( \ggo^{-1} \right)^{kl} \scal{\dr^2_{ij} Y}{\nu^*}_\eta \scal{\dr^2_{ij} Y}{\nu}_\eta.
	\end{align*}
	We compute $B^*$ :
	\begin{lemma}
		The second fundamental form in the direction $\nu^*$ is given by
		\begin{align}\label{hessianY_direction_nu}
			\scal{\dr^2_{ij} Y}{\nu}_\eta &= \Arond_{ij}.
		\end{align}
	\end{lemma}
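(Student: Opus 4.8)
The plan is to avoid differentiating $Y$ twice explicitly, and instead to exploit the orthogonality relation $\scal{\dr_j Y}{\nu}_\eta = 0$, which holds for every $j\in\inter{1}{4}$ and was recorded in \cref{sub:intro_CGM}. Differentiating this identity in the direction $\dr_i$ turns the desired second-order quantity into a purely first-order one:
\begin{align*}
    \scal{\dr^2_{ij} Y}{\nu}_\eta = -\scal{\dr_j Y}{\dr_i \nu}_\eta .
\end{align*}

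Next I would substitute the explicit derivative formula \eqref{eq:derivativeY}, namely $\dr_j Y = (\dr_j H)\nu - \Arond_j^{\ l}\dr_l\nu$, into the right-hand side. This splits the inner product into two contributions. The first is $(\dr_j H)\scal{\nu}{\dr_i\nu}_\eta$, which vanishes since $|\nu|^2_\eta = 0$ forces $\scal{\nu}{\dr_i\nu}_\eta = 0$. The second is $-\Arond_j^{\ l}\scal{\dr_l\nu}{\dr_i\nu}_\eta$, where I would invoke the normalization $\scal{\dr_l\nu}{\dr_i\nu}_\eta = g_{li}$ already used in the previous computations.

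Combining the two contributions gives $\scal{\dr_j Y}{\dr_i\nu}_\eta = -\Arond_j^{\ l}g_{li} = -\Arond_{ij}$, using the symmetry of $\Arond$, and therefore $\scal{\dr^2_{ij} Y}{\nu}_\eta = \Arond_{ij}$, as claimed. There is no real analytic obstacle here; the whole content is bookkeeping, and the only point requiring care is to have the three structural facts — the orthogonality $\scal{\g Y}{\nu}_\eta = 0$, the null condition $\scal{\nu}{\g\nu}_\eta = 0$, and the normalization $\scal{\dr_a\nu}{\dr_b\nu}_\eta = g_{ab}$ — simultaneously at hand. I would also emphasize that, unlike the companion computation involving $\nu^*$, this identity is independent of the assumption \eqref{hyp:no_umbilic}, since $\Arond^{-1}$ never enters.
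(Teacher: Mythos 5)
Your proof is correct and is essentially the same computation as the paper's: both reduce the claim to the three facts $\scal{\g Y}{\nu}_\eta=0$ (or its avatar $\nu\perp\nu,\g\nu$ after expanding), $|\nu|^2_\eta=0$, and $\scal{\dr_a\nu}{\dr_b\nu}_\eta=g_{ab}$, together with the formula \eqref{eq:derivativeY}. The only cosmetic difference is that you differentiate the recorded orthogonality relation $\scal{\dr_j Y}{\nu}_\eta=0$ and then substitute the first-derivative formula for $Y$, whereas the paper expands $\dr^2_{ij}Y$ directly and kills the extra terms with the same orthogonality relations; your observation that the identity needs neither $\Arond^{-1}$ nor \eqref{hyp:no_umbilic} is also consistent with how the paper uses it.
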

	
	\begin{proof}
		We compute, using \eqref{eq:derivativeY} and the orthgonality relations $\nu\perp \nu,\g \nu$ :
		\begin{align*}
			\scal{\dr^2_{ij} Y}{\nu}_\eta = \scal{ (\dr^2_{ij} H) \nu	+(\dr_j H)\dr_i\nu - \left( \dr_i \Arond_j^{\ l} \right)  \dr_l \nu
				-\Arond_j^{\ l} \dr^2_{il}\nu }{ \nu }_\eta = \Arond_j^{\ l}\scal{\dr_i\nu}{\dr_l\nu}_\eta = \Arond_j^{\ l} g_{il}.
		\end{align*} 
	\end{proof}
	
	We compute $B^\nu$ :
	\begin{lemma}
		The second fundamental form in the direction $\nu$ is given by
		\begin{align*}
			\scal{\dr^2_{ij} Y}{\nu^*}_\eta &= \frac{-2}{f} \Big[ (\g^\ggo)^2_{ij} H + H\ggo_{ij} \Big] - \Arond_{ij}.
		\end{align*}
	\end{lemma}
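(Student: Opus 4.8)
The plan is to obtain $\dr^2_{ij}Y$ by differentiating the first–derivative formula \eqref{eq:derivativeY}, namely $\dr_j Y=(\dr_j H)\nu-\Arond_j^{\ l}\dr_l\nu$, and then to pair the result with $\nu^*$ in the metric $\eta$. Differentiating yields
\begin{align*}
\dr^2_{ij}Y=(\dr^2_{ij}H)\nu+(\dr_j H)\dr_i\nu-(\dr_i\Arond_j^{\ l})\dr_l\nu-\Arond_j^{\ l}\dr^2_{il}\nu,
\end{align*}
so that $\scal{\dr^2_{ij}Y}{\nu^*}_\eta$ splits into four pairings. Three are already under control: $\scal{\nu}{\nu^*}_\eta=-2/f$ by the previous lemma, while $\scal{\dr_l\nu}{\nu^*}_\eta$ follows from the second relation in \eqref{relation_Phistar}, $\scal{\dr_k Y}{\nu^*}_\eta=0$, which gives $\Arond_k^{\ l}\scal{\dr_l\nu}{\nu^*}_\eta=-\tfrac{2}{f}\dr_k H$ and hence $\scal{\dr_l\nu}{\nu^*}_\eta=-\tfrac{2}{f}(\Arond^{-1})_l^{\ m}\dr_m H$ under the assumption \eqref{hyp:no_umbilic}.

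The only genuinely new ingredient is the pairing $\scal{\dr^2_{il}\nu}{\nu^*}_\eta$, which I would compute directly from the explicit components of $\nu$ together with $\Phi^*=\Phi+\tfrac{2H}{f}n-\tfrac{2}{f}(\Arond^{-1})^{km}(\dr_m H)\dr_k\Phi$ from \cref{lm:computation_Phistar}. Writing $\dr^2_{il}\nu$ componentwise and exploiting the $\eta$–cancellation between the last two coordinates, one gets $\scal{\dr^2_{il}\nu}{\nu^*}_\eta=\scal{\dr^2_{il}\Phi}{\Phi^*-\Phi}_\xi-g_{il}$; then $\scal{\dr^2_{il}\Phi}{n}_\xi=A_{il}$ and $\scal{\dr^2_{il}\Phi}{\dr_k\Phi}_\xi=\Gamma^g_{ilk}$ turn this into an explicit expression in $A$, $H$, $\Arond^{-1}\g H$ and the Christoffel symbols of $g$.

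It then remains to collect the four contributions and reorganise them into the claimed form, and this is where I expect the real work to lie. Two simplifications drive it. First, writing $A=\Arond+Hg$ and $\ggo=\Arond^2$ produces the term $-\tfrac{2H}{f}\ggo_{ij}$ together with a multiple of $\Arond_{ij}$; matching the latter against the claimed answer forces exactly the identity $\tfrac{2H^2}{f}+\tfrac{2}{f}|\g^\ggo H|^2_\ggo=\tfrac{2}{f}(H^2+|\g^\ggo H|^2_\ggo)=2$, which holds by the very definition of $f$ and reconciles the two $\Arond_{ij}$ coefficients. Second, the leftover first–order pieces, the partial derivatives $\dr_i\Arond$, the factor $\dr^2_{ij}H$, and the $g$–Christoffel symbols, must be assembled into the $\ggo$–Hessian $(\g^\ggo)^2_{ij}H=\dr^2_{ij}H-(\Gamma^\ggo)^k_{ij}\dr_k H$: one substitutes the explicit formula for $(\Gamma^\ggo)^k_{ij}$ from the earlier lemma, rewrites $\dr_i\Arond_j^{\ l}$ via $\dr_i g^{lr}=-g^{la}g^{rb}\dr_i g_{ab}$ and $\dr_i g_{ab}=\Gamma^c_{ia}g_{cb}+\Gamma^c_{ib}g_{ca}$, and uses the Codazzi symmetry of $\g^g\Arond$ recorded in the Remark. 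Checking that the $\Gamma^g$–corrections cancel in pairs and that the surviving term is precisely $\tfrac{2}{f}(\Arond^{-1})^{kb}(\g^g_i\Arond_{jb})\dr_k H$ completes the identification $\scal{\dr^2_{ij}Y}{\nu^*}_\eta=-\tfrac{2}{f}\big[(\g^\ggo)^2_{ij}H+H\ggo_{ij}\big]-\Arond_{ij}$. Throughout, the computation is performed where $f\neq 0$, which is exactly the domain on which $\nu^*$ is defined, consistently with the preceding lemmas.
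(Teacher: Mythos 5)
Your proposal is correct and follows essentially the same route as the paper: expand $\dr^2_{ij}Y$ from \eqref{eq:derivativeY}, pair with $\nu^*$ using the explicit $\Phi^*=\Phi+\beta^k\dr_k\Phi+\gamma n$ of \cref{lm:computation_Phistar} (your identities $\scal{\dr_l\nu}{\nu^*}_\eta=\beta^lg_{lk}$-type and $\scal{\dr^2_{il}\nu}{\nu^*}_\eta=\scal{\dr^2_{il}\Phi}{\Phi^*-\Phi}_\xi-g_{il}$ are exactly the intermediate steps there), and then reassemble the leftover terms into $(\g^\ggo)^2_{ij}H$ via the formula for $(\Gamma^\ggo)^k_{ij}$, with the $\Arond_{ij}$ coefficients reconciling through $f=H^2+|\g^\ggo H|^2_\ggo$. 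The only cosmetic difference is that you extract $\scal{\dr_l\nu}{\nu^*}_\eta$ from the orthogonality relation $\scal{\g Y}{\nu^*}_\eta=0$ rather than by direct substitution, which is equivalent.
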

	
	\begin{proof}
		We compute by direct computation :
		\begin{align*}
			\scal{\dr^2_{ij} Y}{\nu^*}_\eta &= \scal{ (\dr^2_{ij} H) \begin{pmatrix} \vspace{0.2em}\Phi\\
					 \vspace{0.2em}\frac{|\Phi|^2-1}{2} \\
					  \vspace{0.2em}\frac{|\Phi|^2+1}{2} 
					  \end{pmatrix}
				+(\dr_j H)\begin{pmatrix} \dr_i \Phi\\ \scal{\dr_i \Phi}{\Phi} \\ \scal{\dr_i \Phi}{\Phi} \end{pmatrix} - \left( \dr_i \Arond_j^{\ l} \right)  \begin{pmatrix}
					\dr_l \Phi\\ \scal{\dr_l \Phi}{\Phi} \\ \scal{\dr_l \Phi}{\Phi}
				\end{pmatrix} 
				-\Arond_j^{\ l} \begin{pmatrix}
					\dr^2_{il}\Phi \\ 
					\scal{\dr^2_{il}\Phi}{\Phi} + \scal{\dr_l\Phi}{\dr_i \Phi}\\
					\scal{\dr^2_{il}\Phi}{\Phi} + \scal{\dr_l\Phi}{\dr_i \Phi}
			\end{pmatrix} }{ \begin{pmatrix}
					\vspace{0.2em}\Phi^*\\
					\vspace{0.2em}\frac{|\Phi^*|^2-1}{2} \\
					\vspace{0.2em}\frac{|\Phi^*|^2+1}{2} 
			\end{pmatrix} }_\eta \\
			&= (\dr^2_{ij} H) \scal{\nu}{\nu^*} + (\dr_j H) \left[ \scal{\dr_i \Phi}{\Phi^*} - \scal{\dr_i\Phi}{\Phi} \right] \\
			&- \left( \dr_i \Arond_j^{\ l} \right) \left[ \scal{\dr_l\Phi}{\Phi^*} - \scal{\dr_l \Phi}{\Phi} \right] - \Arond_j^{\ l} \left[ \scal{\dr^2_{il}\Phi}{\Phi^*} - \scal{\dr^2_{il} \Phi}{\Phi} - g_{il} \right].
		\end{align*}
		We write \Cref{lm:computation_Phistar} as $\Phi^* = \Phi + \beta^k \dr_k \Phi + \gamma n$. We obtain
		\begin{align}
			\scal{\dr^2_{ij} Y}{\nu^*}_\eta		&= (\dr^2_{ij} H) \frac{-2}{f} + (\dr_j H) \beta^k g_{ik}  - \left(\dr_i \Arond_j ^{\ l} \right) \beta^k g_{kl} - \Arond_j^{\ l} ( \beta^k\scal{\dr_{il} \Phi}{\dr_k \Phi} + \gamma \scal{\dr^2_{il} \Phi}{n} - g_{il} ) \nonumber\\
			&= (\dr^2_{ij} H) \frac{-2}{f} + (\dr_j H) \beta^k g_{ik}  - \left(\dr_i \Arond_j ^{\ l} \right) \beta^k g_{kl} - \Arond_j^{\ l} ( \beta^k \scal{\dr^2_{li} \Phi}{\dr_k \Phi}_\xi + \gamma A_{il} - g_{il} ) . \label{eq:Bnu_1}
		\end{align}
		Note that $\Arond_j^{\ l} A_{il} = \left(\Arond^2 \right)_{ij} + H \Arond_{ij} = \ggo_{ij} + H \Arond_{ij}$. Furthermore, it holds
		\begin{align*}
			\left( \dr_i \Arond_j^{\ l} \right)  g_{lk} &= \dr_i \left( \Arond_j^{\ l}g_{lk} \right) - \Arond_j^{\ l}(\dr_i g_{lk} ).
		\end{align*}
		We write the terms containing derivative of $\Arond$ in \eqref{eq:Bnu_1} as :
		\begin{align*}
			\left( \dr_i \Arond_j^{\ l} \right) \beta^k g_{lk}+ \beta^k \Arond_j^{\ l} \scal{\dr^2_{li} \Phi}{\dr_k \Phi}_\xi &= \dr_i\left( \Arond_{jk} \right) \beta^k - \beta^k \Arond_j^{\ l} \scal{\dr^2_{ik} \Phi}{\dr_l \Phi}_\xi \\
			&=  \dr_i\left( \Arond_{jk} \right) \beta^k - \beta^k \Arond_{jl} (\Gamma^g)^l_{ik} \\
			&= \beta^k\left[ \g^g_i \Arond_{jk} + \Arond_{k\lambda} (\Gamma^g)^\lambda_{ij} \right].
		\end{align*}
		Therefore, \eqref{eq:Bnu_1} becomes
		\begin{align}
			\scal{\dr^2_{ij} Y}{\nu^*}_\eta &= -\frac{2}{f} (\dr^2_{ij} H) + (\dr_j H) \beta^k g_{ik} -\beta^k \left[ \g^g_i \Arond_{jk} + \Arond_{k\lambda} (\Gamma^g)^\lambda_{ij} \right] -\gamma \left( \ggo_{ij} + H\Arond_{ij} \right) + \Arond_{ij} \nonumber \\
			&= -\frac{2}{f} (\dr^2_{ij} H) + (\dr_j H) \beta^k g_{ik} -\beta^k \left[ \g^g_i \Arond_{jk} + \Arond_{k\lambda} (\Gamma^g)^\lambda_{ij} \right] -\gamma \ggo_{ij} +(1- \gamma H) \Arond_{ij}.  \label{eq:Bnu_2}
		\end{align}
		By \Cref{lm:computation_Phistar}, it holds
		\begin{align*}
			\beta^k = \frac{-2(\dr_l H) \left( \Arond^{-1} \right)^{lk} }{f}, \ \ \ \text{and }\ \ \ \gamma = \frac{2H}{f}.
		\end{align*}
		Using this into \eqref{eq:Bnu_2}, we obtain
		\begin{align}
			\scal{\dr^2_{ij} Y}{\nu^*}_\eta =& -\frac{2}{f} (\dr^2_{ij} H) + (\dr_j H)  \frac{-2(\dr_l H) \left( \Arond^{-1} \right)^{lk} }{f}  g_{ik} - \frac{-2(\dr_l H) \left( \Arond^{-1} \right)^{lk} }{f}  \left[ \g^g_i \Arond_{jk} + \Arond_{k\lambda} (\Gamma^g)^\lambda_{ij} \right]  -\frac{2H}{f} \ggo_{ij} +(1- \frac{2H^2}{f}) \Arond_{ij} \nonumber \\
			=& \frac{-2}{f} \left[ (\dr^2_{ij} H)+ (\dr_j H)(\dr_l H) \left(\Arond^{-1} \right)^{lk} g_{ik} - (\dr_l H) \left( \Arond^{-1} \right)^{lk} \g^g_i \Arond_{jk} - (\dr_l H) (\Gamma^g)_{ij}^l + H \ggo_{ij} \right] + \frac{f-2H^2}{f} \Arond_{ij}. \label{eq:Bnu_32}
		\end{align}
		We write the term $(\dr_l H)\left( \Arond^{-1} \right)^{lk}$ in the following form :
		\begin{align*}
			(\dr_l H)\left( \Arond^{-1} \right)^{lk}g_{jk} &= (\dr_l H)\left( \Arond^{-1} \right)^{lk}g_{qk} \left( \Arond^{-1} \right)^{qp} \Arond_{pj} = (\g^\ggo H)^p \Arond_{pj}.
		\end{align*}
		Coming back to \eqref{eq:Bnu_32}, we obtain
		\begin{align}
			\scal{\dr^2_{ij} Y}{\nu^*}_\eta &= \frac{-2}{f} \left[ (\dr^2_{ij} H) + (\dr_j H) (\g^\ggo H)^p \Arond_{pi} - (\dr_l H) \left( \Arond^{-1} \right)^{lk} \g^g_i \Arond_{jk} - (\dr_l H) (\Gamma^g)_{ij}^l + H \ggo_{ij} \right]  + \frac{f-2H^2}{f} \Arond_{ij} . \label{eq:Bnu_4}
		\end{align}
		The Hessian of $H$ in the metric $\ggo$ is given by
		\begin{align*}
			(\g^\ggo)^2_{ij} H &= \dr^2_{ij} H - (\Gamma^\ggo)^k_{ij} (\dr_k H) \\
			&= \dr^2_{ij} H - \left[ (\Gamma^g)_{ij}^k + \left( \Arond^{-1} \right)^{kb}\left( \g^g_i \Arond_{jb} \right) + (\g^\ggo H)^k \Arond_{ij} - \left( \Arond^{-1} \right)^{km}g_{mi} (\dr_j H)\right] (\dr_k H).
		\end{align*}
		Hence, we can write \eqref{eq:Bnu_4} as 
		\begin{align*}
			\scal{\dr^2_{ij} Y}{\nu^*}_\eta	=& \frac{-2}{f} \Bigg( (\g^\ggo)^2_{ij} H + \left[ (\Gamma^g)_{ij}^k + \left( \Arond^{-1} \right)^{kb}\left( \g^g_i \Arond_{jb} \right) + (\g^\ggo H)^k \Arond_{ij} - \left( \Arond^{-1} \right)^{km}g_{mi} (\dr_j H)\right] (\dr_k H)\\
			&+ (\dr_j H) (\g^\ggo H)^p \Arond_{pi} - (\dr_l H) \left( \Arond^{-1} \right)^{lk} \g^g_i \Arond_{jk} - (\dr_l H) (\Gamma^g)_{ij}^l + H \ggo_{ij} \Bigg] + \frac{f-2H^2}{f} \Arond_{ij}.
		\end{align*}
		Since $(\g^\ggo H)^k \Arond_{ij}(\dr_k H) = |\g^\ggo H|^2_\ggo \Arond_{ij}$, we obtain
		\begin{align*}
			\scal{\dr^2_{ij} Y}{\nu^*}_\eta = \frac{-2}{f} \left[ (\g^\ggo)^2_{ij} H + H\ggo_{ij}  \right] +\frac{f- 2H^2 - 2|\g^\ggo H|^2_\ggo}{f} \Arond_{ij} 
			= \frac{-2}{f} \left[ (\g^\ggo)^2_{ij} H + H\ggo_{ij} \right] - \Arond_{ij}.
		\end{align*}
	\end{proof}
	
	We complete the description of $B$ :	
	\begin{corollary}\label{second_ff_Y}
		We conclude :
		\begin{align*}
			B_{ij} &= \frac{-f}{2}\Arond_{ij} \nu^* + \Bigg( (\g^\ggo)^2_{ij} H + H\ggo_{ij} +\frac{f}{2} \Arond_{ij} \Bigg) \nu,  \\
			B^*_{ij} &=\frac{-f}{2}\Arond_{ij}, \\
			B^\nu_{ij} &= (\g^\ggo)^2_{ij} H + H\ggo_{ij} +\frac{f}{2} \Arond_{ij}.
		\end{align*} 
		where $f=H^2+|\g^\ggo H|^2_\ggo$.
	\end{corollary}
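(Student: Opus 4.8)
The plan is to assemble the three preceding lemmas into the stated decomposition; no new computation is required. Recall that the general form of the second fundamental form has already been set up as $B = B^\nu \nu + B^* \nu^*$ with the quotient formulas
\begin{align*}
	B^\nu_{ij} = \frac{\scal{\dr^2_{ij} Y}{\nu^*}_\eta}{\scal{\nu}{\nu^*}_\eta}, \qquad B^*_{ij} = \frac{\scal{\dr^2_{ij} Y}{\nu}_\eta}{\scal{\nu}{\nu^*}_\eta}.
\end{align*}
All three scalar quantities appearing here have been computed above: $\scal{\nu}{\nu^*}_\eta = -2/f$, the projection $\scal{\dr^2_{ij} Y}{\nu}_\eta = \Arond_{ij}$ from \eqref{hessianY_direction_nu}, and the projection $\scal{\dr^2_{ij} Y}{\nu^*}_\eta = \frac{-2}{f}\big[(\g^\ggo)^2_{ij} H + H\ggo_{ij}\big] - \Arond_{ij}$ from the last lemma.

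First I would substitute into the formula for $B^*$: dividing $\Arond_{ij}$ by $-2/f$ immediately gives $B^*_{ij} = -\tfrac{f}{2}\Arond_{ij}$. Next I would substitute into the formula for $B^\nu$. Dividing the expression for $\scal{\dr^2_{ij} Y}{\nu^*}_\eta$ by $-2/f$ has two effects: the term $\tfrac{-2}{f}\big[(\g^\ggo)^2_{ij} H + H\ggo_{ij}\big]$ loses its prefactor and contributes $(\g^\ggo)^2_{ij} H + H\ggo_{ij}$, while the remaining $-\Arond_{ij}$ turns into $+\tfrac{f}{2}\Arond_{ij}$. This yields $B^\nu_{ij} = (\g^\ggo)^2_{ij} H + H\ggo_{ij} + \tfrac{f}{2}\Arond_{ij}$. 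Finally, reassembling $B_{ij} = B^\nu_{ij}\nu + B^*_{ij}\nu^*$ produces the first line of the corollary.

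There is essentially no obstacle here, since the substantive work was carried out in establishing the preceding lemmas; the corollary is a bookkeeping step. The only point requiring care is tracking the sign and the cancellation of the factor $f$ in each quotient, which I would verify explicitly, together with the consistency check that the off-diagonal $-\Arond_{ij}$ contributions in $B^\nu$ and $B^*$ carry opposite signs so that the two normal components genuinely differ by the Hessian-plus-metric term.
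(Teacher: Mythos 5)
Your proposal is correct and follows exactly the paper's route: the paper's proof is likewise a one-line substitution of $\scal{\nu}{\nu^*}_\eta = -2/f$, $\scal{\dr^2_{ij}Y}{\nu}_\eta = \Arond_{ij}$, and $\scal{\dr^2_{ij}Y}{\nu^*}_\eta = \frac{-2}{f}\big[(\g^\ggo)^2_{ij}H + H\ggo_{ij}\big] - \Arond_{ij}$ into the quotient formulas for $B^\nu$ and $B^*$. The sign and prefactor bookkeeping you describe is exactly what the paper does.
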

	
	\begin{proof}
		By the previous computations, it holds
		\begin{align*}
			B_{ij} &= \frac{-f}{2} \left[ \Arond_{ij} \nu^* + \Bigg( \frac{-2}{f} \left[ (\g^\ggo)^2_{ij} H + H\ggo_{ij} \right] - \Arond_{ij} \Bigg) \nu \right] .
		\end{align*} 
	\end{proof}
	We compute the mean curvature vector $\vec{b}$ of $Y(\Sigma)\subset \s^{5,1}$ :
	\begin{lemma}\label{lm:mean_curv_vectorY}
		The mean curvature vector $\vec{b}$ of $Y$ is given by
		\begin{align*}
			4\vec{b} &= \frac{-f}{2}\tr_\ggo\left(\Arond \right) \nu^* +\left[  \lap_\ggo H + 4H + \frac{f}{2} \tr_\ggo\left( \Arond \right) \right] \nu.
		\end{align*}
	\end{lemma}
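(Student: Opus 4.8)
The plan is simply to take the $\ggo$-trace of the second fundamental form $B$ computed in \Cref{second_ff_Y}. The mean curvature vector of the four dimensional submanifold $Y(\Sigma)\subset\s^{5,1}$ is the averaged $\ggo$-trace of its second fundamental form, so that $4\vec{b} = \ggo^{ij}B_{ij}$, the factor $4$ being exactly $\dim\Sigma$. Thus the whole statement reduces to contracting the already-established formula for $B_{ij}$ with $\ggo^{ij}$.

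First I would recall from \Cref{second_ff_Y} that
\begin{align*}
    B_{ij} = \frac{-f}{2}\Arond_{ij}\,\nu^* + \left( (\g^\ggo)^2_{ij}H + H\ggo_{ij} + \frac{f}{2}\Arond_{ij} \right)\nu,
\end{align*}
and contract it with $\ggo^{ij}$. Since $f = H^2 + |\g^\ggo H|^2_\ggo$ is a scalar function it carries no contracted indices and factors out of each term, and the fixed normal frame vectors $\nu,\nu^*$ pull out of the contraction unchanged.

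For the $\nu^*$ component this gives $\frac{-f}{2}\,\ggo^{ij}\Arond_{ij} = \frac{-f}{2}\tr_\ggo(\Arond)$. For the $\nu$ component I would evaluate the three terms separately: $\ggo^{ij}(\g^\ggo)^2_{ij}H = \lap_\ggo H$ by the very definition of the Laplace--Beltrami operator of $\ggo$; next $\ggo^{ij}H\ggo_{ij} = H\,\ggo^{ij}\ggo_{ij} = 4H$ since $\ggo^{ij}\ggo_{ij} = \dim\Sigma = 4$; and finally $\frac{f}{2}\ggo^{ij}\Arond_{ij} = \frac{f}{2}\tr_\ggo(\Arond)$. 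Summing the contributions yields
\begin{align*}
    4\vec{b} = \frac{-f}{2}\tr_\ggo(\Arond)\,\nu^* + \left[ \lap_\ggo H + 4H + \frac{f}{2}\tr_\ggo(\Arond) \right]\nu,
\end{align*}
which is the asserted formula.

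There is no genuine obstacle here: the result is a direct trace of the expression for $B$ obtained in the preceding corollary. The only points requiring attention are the normalization convention — recognizing that the factor $4$ on the left-hand side is precisely $\dim\Sigma$, so that $4\vec{b}$ coincides with the unnormalized trace $\ggo^{ij}B_{ij}$ — and the bookkeeping that $f$ is scalar and hence commutes with the contraction.
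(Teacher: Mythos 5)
Your proposal is correct and matches the paper's proof exactly: the paper likewise obtains $4\vec{b}$ as $\left(\ggo^{-1}\right)^{ij}B_{ij}$ by tracing the formula for $B$ from \Cref{second_ff_Y}, with the same term-by-term evaluation you describe.
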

	
	\begin{proof}
		The mean curvature is given by
		\begin{align*}
			4\vec{b} &= \tr_\ggo B = \left(\ggo^{-1} \right)^{ij} B_{ij}.
		\end{align*}
	\end{proof}
	
	We can now compute the norms.
	
	\begin{lemma}\label{lm:normsB}
		The norms of $B$ and $\vec{b}$ are given by
		\begin{align*}
			|B|^2_{\ggo,\eta} &= 2\left( \scal{\Arond}{(\g^\ggo)^2 H}_\ggo + H\tr_\ggo\left( \Arond \right) \right) + f\left|\Arond \right|^2_\ggo, \\
			16|\vec{b}|^2_\eta &=2\left( \tr_\ggo\Arond \right) \left( \lap_\ggo H + 4H \right) + f \left( \tr_\ggo\Arond \right)^2.
		\end{align*}
	\end{lemma}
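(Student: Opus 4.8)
The plan is to simply substitute the expressions obtained in \Cref{second_ff_Y} and \Cref{lm:mean_curv_vectorY} into the norm formula \eqref{eq:Bsquare}, together with the value $\scal{\nu}{\nu^*}_\eta = -\tfrac{2}{f}$ computed above. No new geometric input is required; everything reduces to bookkeeping with the null frame $(\nu,\nu^*)$.

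First I would compute $|B|^2_{\ggo,\eta}$. By \eqref{eq:Bsquare} and $\scal{\nu}{\nu^*}_\eta = -\tfrac{2}{f}$, we have
\begin{align*}
|B|^2_{\ggo,\eta} = -\frac{4}{f}\left( \ggo^{-1} \right)^{ij}\left( \ggo^{-1} \right)^{kl} B^\nu_{ik} B^*_{jl}.
\end{align*}
Now I insert $B^*_{jl} = -\tfrac{f}{2}\Arond_{jl}$ and $B^\nu_{ik} = (\g^\ggo)^2_{ik}H + H\ggo_{ik} + \tfrac{f}{2}\Arond_{ik}$ from \Cref{second_ff_Y}. The overall scalar prefactor collapses, since $-\tfrac{4}{f}\cdot\big({-}\tfrac{f}{2}\big) = 2$, leaving
\begin{align*}
|B|^2_{\ggo,\eta} = 2\left( \ggo^{-1} \right)^{ij}\left( \ggo^{-1} \right)^{kl}\Big[ (\g^\ggo)^2_{ik}H + H\ggo_{ik} + \tfrac{f}{2}\Arond_{ik} \Big]\Arond_{jl}.
\end{align*}
The three resulting contractions are then carried out with respect to $\ggo$: the Hessian term yields $\scal{\Arond}{(\g^\ggo)^2 H}_\ggo$; the middle term uses $\ggo^{kl}\ggo_{ik} = \delta^l_i$ to give $H\,\tr_\ggo(\Arond)$; and the last term gives $\tfrac{f}{2}|\Arond|^2_\ggo$. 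Collecting these produces exactly $2\big(\scal{\Arond}{(\g^\ggo)^2 H}_\ggo + H\tr_\ggo\Arond\big) + f|\Arond|^2_\ggo$.

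For the mean curvature, I would exploit that $4\vec{b}$ is expressed in \Cref{lm:mean_curv_vectorY} purely in terms of the null vectors $\nu,\nu^*$. Writing $4\vec b = a\,\nu^* + b\,\nu$ with $a = -\tfrac{f}{2}\tr_\ggo\Arond$ and $b = \lap_\ggo H + 4H + \tfrac{f}{2}\tr_\ggo\Arond$, the nullity $|\nu|^2_\eta = |\nu^*|^2_\eta = 0$ gives $|4\vec b|^2_\eta = 2ab\,\scal{\nu}{\nu^*}_\eta = -\tfrac{4}{f}ab$. Substituting $a$ and $b$ and simplifying (the factor $-\tfrac{4}{f}\cdot(-\tfrac{f}{2}) = 2$ again clears the denominator) yields $16|\vec b|^2_\eta = 2\,(\tr_\ggo\Arond)(\lap_\ggo H + 4H) + f(\tr_\ggo\Arond)^2$, as claimed.

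I expect no serious obstacle here: the computation is entirely mechanical once \Cref{second_ff_Y} and the value of $\scal{\nu}{\nu^*}_\eta$ are available. The only places requiring care are sign and factor tracking through the prefactor $-\tfrac{4}{f}$ arising from $\scal{\nu}{\nu^*}_\eta$, and the correct contraction of the mixed term $H\ggo_{ik}\Arond_{jl}$ to $H\tr_\ggo\Arond$ rather than to a $|\Arond|^2$-type quantity. These are the steps where an erroneous index or a dropped factor of $f$ would be most likely, so I would double-check those contractions explicitly.
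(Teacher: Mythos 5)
Your computation is correct and follows exactly the paper's route: substitute the components from \Cref{second_ff_Y} and \Cref{lm:mean_curv_vectorY} into \eqref{eq:Bsquare} together with $\scal{\nu}{\nu^*}_\eta = -2/f$, and contract with $\ggo^{-1}$. The sign and factor bookkeeping you flag as the delicate points all checks out, and the contractions $\ggo^{kl}\ggo_{ik}\mapsto\delta^l_i$ and $\Arond_{ik}\Arond_{jl}\mapsto|\Arond|^2_\ggo$ are handled as in the paper.
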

	
	\begin{proof}
		We compute the norm of $B$ :
		\begin{align*}
			|B|^2_{\ggo,\eta} &= 2\scal{\nu}{\nu^*}_\eta\left( \ggo^{-1} \right)^{ij} \left( \ggo^{-1} \right)^{kl} B^\nu_{ik} B^*_{jl} \\
			&= 2\left( \ggo^{-1} \right)^{ij} \left( \ggo^{-1} \right)^{kl} \Arond_{ik} \left( (\g^\ggo)^2_{jl} H + H\ggo_{jl} \right) + f \left( \ggo^{-1} \right)^{ij} \left( \ggo^{-1} \right)^{kl}\Arond_{ik} \Arond_{jl} \\
			&= 2\left( \scal{\Arond}{(\g^\ggo)^2 H}_\ggo + H\tr_\ggo\left( \Arond \right) \right) + f\left|\Arond \right|^2_\ggo .
		\end{align*}
		We compute the norm of $4\vec{b}$ :
		\begin{align*}
			16|\vec{b}|^2_\eta &= -f\scal{\nu}{\nu^*}_\eta \tr_\ggo\left(\Arond\right) \left[ \lap_\ggo H + 4H + \frac{f}{2} \tr_\ggo\left( \Arond \right) \right] \\
			&= 2\left( \tr_\ggo\Arond \right) \left( \lap_\ggo H+ 4H \right) + f \left( \tr_\ggo\Arond \right)^2.
		\end{align*}
	\end{proof}
	
	\section{Proof of the proposition \ref{if_Y_conformal}}\label{se:no_conformal}
	
	In this section, we prove \Cref{if_Y_conformal}. Let $\Phi : B^4(0,1)=B_1 \to \R^5$ be a smooth immersion. Assume that there exists $\omega \in C^\infty(B_1;(0,+\infty))$ such that $\Arond^2 = \omega^2 g$. In \Cref{lm:eigenvalues_Arond}, we show that the eigenspaces of $\Arond$ define an orthogonal splitting of the tangent space in two distributions of dimension 2. In \Cref{lm:frobenius_eigenspaces}, we show that these eigenspaces are involutive. Thanks to Frobenius theorem, we have roughly a splitting of sets $\Phi(B_1) = \psi_+(U_+)\times \psi_-(U_-)$, but not necessarily a Riemannian splitting. In \Cref{lm:round_leaves}, we prove that the metric $g$ restricted to each leaf $\psi_+(U_+)\times \{*\}$ or $\{*\}\times \psi_-(U_-)$ is a round metric on a 2-dimensional manifold. \\
	
	\begin{lemma}\label{lm:eigenvalues_Arond}
		At any $p\in B_1$, $\Arond(p)$ has exactly two eigenvalues $-\omega(p)$ and $\omega(p)$, each of them having exactly multiplicity two. 
	\end{lemma}
	\begin{proof}[Proof]
		Let $p\in B_1$. Since $\Arond$ is symmetric, we can find an orthonormal basis $e_1,\ldots,e_4$ of $T_{\Phi(p)}\Phi(\Sigma)$ such that for any $i\in\inter{1}{4}$, $\Arond e_i = \lambda_i e_i$.	By multiplying again by $\Arond$, we obtain $\lambda_i^2 = \omega^2 $. 	Since $\tr_g \Arond = 0$, we have $\lambda_4 = -\lambda_1 - \lambda_2 - \lambda_3$.	Since $\lambda_4 = \pm \omega$, there must be two of the $(\lambda_i)_{1\leq i\leq 3}$ which have the same sign and one with the opposit sign. So up to exchanging the $\lambda_i$ for $i\in\inter{1}{3}$, we have $\lambda_1=\lambda_2=\omega$ and $\lambda_3=-\omega$. Thus, we obtain $\lambda_4 = -\omega$. Therefore, in the basis $(e_1,e_2,e_3,e_4)$, we have 
		\begin{align}\label{eq:Arond_diag}
			\Arond(p) &= \omega \begin{pmatrix}
				1 & 0 & 0 & 0\\
				0 & 1 & 0 & 0\\
				0 & 0 & -1 &0 \\
				0 & 0 & 0 & -1
			\end{pmatrix}.
		\end{align}

		\iffalse
		We now differentiate the relation $\Arond^2 = \omega^2 g$ :
		\begin{align*}
			\Arond (\g \Arond) + (\g \Arond) \Arond = 2\omega(\g \omega) g.
		\end{align*}
		We write $\Arond$ in blocks :
		\begin{align*}
			\Arond = \begin{pmatrix}
				\Arond^{11} & \Arond^{12}\\
				\Arond^{21} & \Arond^{22}
			\end{pmatrix}.
		\end{align*}
		Then, we obtain :
		\begin{align*}
			2\omega(\g \omega)\id_4 & = \begin{pmatrix}
				\omega\id_2 & 0 \\
				0 & -\omega \id_2 
			\end{pmatrix} \begin{pmatrix}
				\g \Arond^{11} & \g\Arond^{12}\\
				\g\Arond^{21} & \g\Arond^{22}
			\end{pmatrix} + \begin{pmatrix}
				\g \Arond^{11} & \g\Arond^{12}\\
				\g\Arond^{21} & \g\Arond^{22}
			\end{pmatrix} \begin{pmatrix}
				\omega\id_2 & 0 \\
				0 & -\omega \id_2 
			\end{pmatrix} \\
			&= 2\omega \begin{pmatrix}
				\g \Arond^{11} & 0 \\
				0 & \g\Arond^{22}
			\end{pmatrix}.
		\end{align*}
		Hence, $\g \Arond^{11} = (\g \omega)\id_2$ and $\g \Arond^{22} = -(\g \omega) \id_2$.
		\fi
		
	\end{proof}

	Since the two eigenvalues $\pm \omega$ are always distinct in $B_1$ with fixed multiplicities, the bundles $\ker\left( \Arond\pm \omega g \right)$ are smooth vector bundles of dimension 2, see for instance Chapter I, Section 5 or Chapter II, Section 5 in \cite{kato1995} : we have the orthogonal decomposition 
	\begin{align}\label{eq:orthogonal_decompo_TPhi}
		T_{\Phi(p)}\Phi(B_1) = \ker\left( \Arond-\omega g \right)_{\Phi(p)} \oplus \ker\left(\Arond+\omega g\right)_{\Phi(p)}.
	\end{align}
	
	\begin{lemma}\label{lm:frobenius_eigenspaces}
		The two distributions $\ker\left(\Arond\pm \omega g\right)$ are involutive in the following sense. Given any vector field $x,y \in \ker\left(\Arond -\omega g\right)$, it holds $[x,y]\in \ker\left(\Arond - \omega g\right)$. The same statement holds with $\ker\left(\Arond -\omega g\right)$ replaced by $\ker\left(\Arond +\omega g\right)$.
	\end{lemma}
	\begin{proof}[Proof]
		Since the decomposition \eqref{eq:orthogonal_decompo_TPhi} is smooth, there exists a smooth orthonormal frame $(e_1,e_2,e_3,e_4)$ such that $(e_1,e_2)$ is frame for $\ker(\Arond- \omega g)$ and $(e_3,e_4)$ is frame for $\ker(\Arond+\omega g)$. \\
		Fix $p \in U$. Consider normal coordinates $(t^1 ,\ldots, t^4)$ around $p$ such that the following holds at the origin :
		\begin{align*}
			\forall i\in\inter{1}{4},\ \ \ \frac{\dr}{\dr t^i}_{|{t=0}} = e_i(p).
		\end{align*} 
		Let $X,Y\in \ker(\Arond - \omega g)_p$. We show that $[X,Y]\in \ker(\Arond - \omega g)_p$. We decompose
		\begin{align*}
			X =  x^1e_1 + x^2 e_2,\ \ \ Y = y^1 e_1+y^2 e_2.
		\end{align*}
		At the origin, it holds
		\begin{align}
			\Arond[X,Y] &= \sum_{1\leq i,j\leq 2}\Arond \left[ x^i e_i, y^j e_j\right]  \nonumber \\
			&= \sum_{1\leq i,j\leq 2} \Arond\left( x^i e_i(y^j) e_j - y^j e_j(x^i) e_i + x^i y^j[e_i,e_j]  \right) \nonumber \\
			&= \sum_{1\leq i,j\leq 2}\left( \omega x^i e_i(y^j) e_j -\omega y^j e_j(x^i) e_i + x^i y^j \Arond[e_i,e_j]  \right). \label{eq:Arond_bracket}
		\end{align}
		We compute the last term thanks to the choice of normal coordinates, using that the Christoffel symbol vanish at the origin :
		\begin{align}
			\Arond[e_1,e_2] & = \Arond \left( \g_{e_1} e_2 - \g_{e_2} e_1 \right) \nonumber \\
			&= \Arond \left( \g_{\frac{\dr}{\dr t^1}} e_2 - \g_{\frac{\dr}{\dr t^2}} e_1 \right) \nonumber\\
			&= \g_{\frac{\dr}{\dr t^1}}\left( \Arond e_2 \right) - \g_{\frac{\dr}{\dr t^2}} \left( \Arond e_1 \right) - \left( \g_{\frac{\dr}{\dr t^1}}\Arond \right) e_2 + \left( \g_{\frac{\dr}{\dr t^2}} \Arond \right) e_1 \nonumber\\
			&= \g_{\frac{\dr}{\dr t^1}}\left( \Arond e_2 \right) - \g_{\frac{\dr}{\dr t^2}} \left( \Arond e_1 \right) - \left( \g_{\frac{\dr}{\dr t^1}}\Arond \right) \frac{\dr}{\dr t^2} + \left( \g_{\frac{\dr}{\dr t^2}} \Arond \right) \frac{\dr}{\dr t^1}. \label{eq:Arond_bracket2}
		\end{align}
		The first two terms satisfy
		\begin{align*}
			\g_{\frac{\dr}{\dr t^1}}\left( \Arond e_2 \right) - \g_{\frac{\dr}{\dr t^2}} \left( \Arond e_1 \right) &= \g_{\frac{\dr}{\dr t^1}}\left(  \omega e_2 \right) - \g_{\frac{\dr}{\dr t^2}} \left( \omega e_1 \right) \\
			&= e_1(\omega) e_2 - e_2(\omega) e_1 + \omega \left( \g_{e_1} e_2 - \g_{e_2} e_1 \right) \\
			&= e_1(\omega) e_2 - e_2(\omega) e_1 + \omega[e_1,e_2].
		\end{align*}
		We denote $A^{*i}$ the $i^{\text{th}}$ column of the matrix $A$. The two last terms of \eqref{eq:Arond_bracket2} satisfy
		\begin{align*}
			- \left( \g_{\frac{\dr}{\dr t^1}}\Arond \right) \frac{\dr}{\dr t^2} + \left( \g_{\frac{\dr}{\dr t^2}} \Arond \right) \frac{\dr}{\dr t^1} &= \g_2 \Arond^{*1} - \g_1\Arond^{*2} \\
			&= \g_2 (A^{*1} - Hg^{*1}) - \g_1(A^{*2}  - Hg^{*2}).
		\end{align*}
		Thanks to Gauss-Codazzi equations, it holds
		\begin{align*}
			- \left( \g_{\frac{\dr}{\dr t^1}}\Arond \right) \frac{\dr}{\dr t^2} + \left( \g_{\frac{\dr}{\dr t^2}} \Arond \right) \frac{\dr}{\dr t^1} &= \g A^{21} - (\g_2 H)g^{*1} - \g A^{12} + (\g_1 H) g^{*2} \\
			&=-(\g_2 H) e_1 + (\g_1 H) e_2.
		\end{align*}
		Coming back to \eqref{eq:Arond_bracket2}, we deduce that 
		\begin{align}\label{eq:Arond_bracket3}
			\Arond[e_1,e_2] = \omega[e_1,e_2] + \g_1(\omega+H) e_2 - \g_2(\omega+H) e_1.
		\end{align}
		We claim that $[e_1,e_2]\in \Span(e_1,e_2)$. To prove this claim, we consider the scalar product with $e_i$, for $i\in\{3,4\}$, and use the above identity :
		\begin{align*}
			-\omega \scal{[e_1,e_2]}{e_i} = \scal{[e_1,e_2]}{\Arond e_i} = \scal{\Arond[e_1,e_2]}{e_i} = \omega\scal{[e_1,e_2]}{e_i}.
		\end{align*}
		Hence, we obtain
		\begin{align*}
			2\omega\scal{[e_1,e_2]}{e_i} = 0.
		\end{align*}
		Consequently, it holds $[e_1,e_2] \in \Span(e_3,e_4)^\perp = \Span(e_1,e_2)= \ker\left( \Arond - \omega g \right)$. Thus, the relation \eqref{eq:Arond_bracket3} reduces to $\g_2 (\omega+ H)= 0$, $\g_1 (\omega+H)=0$ and
		\begin{align*}
			\Arond [e_1,e_2] = \omega [e_1,e_2].
		\end{align*}
		Coming back to \eqref{eq:Arond_bracket}, we deduce that 
		\begin{align*}
			\Arond [X,Y] = \omega[X,Y].
		\end{align*}
		Thus, the distribution $\ker\left( \Arond - \omega g\right)$ is involutive. Following the same strategy, we obtain that the distribution $\ker\left( \Arond + \omega g\right)$ is involutive as well.
	\end{proof}
	
	Thanks to Frobenius theorem, see for instance \cite[Theorem 1.60]{warner1983}, there exists coordinates $(t^1,\ldots,t^4)$ on $B_1$ such that $\left(\frac{\dr}{\dr t^1},\frac{\dr}{\dr t^2} \right)$ is a basis for $\ker\left( \Arond-\omega g\right)$ and $\left( \frac{\dr}{\dr t^3},\frac{\dr}{\dr t^4}\right)$ is a basis for $\ker\left(\Arond +\omega g\right)$. In particular, given any constant $c\in\R^4$, the equations $\{t^3=c^3,t^4=c^4\}$ (resp. $\{t^1=c^1,t^2=c^2\}$) define integral manifolds of the distribution $\ker\left(\Arond -\omega g\right)$ (resp. for the distribution $\ker\left(\Arond +\omega g\right)$). We show that each leaf $\{t^3=c^3,t^4=c^4\}$ is a round 2-sphere. 
	
	\begin{lemma}\label{lm:round_leaves}
		Let $c\in\R^4$. Define $\Psi_+(t^1,t^2) := \Phi(t^1,t^2,c^3,c^4)$. Then the submanifold $\Ima(\Psi_+)$ is totally umbilic in $\R^5$, hence it is either a flat 2-plane or a round 2-sphere.\\
		Similarly, the map $\Psi_-(t^3,t^4) := \Phi(c^1,c^2,t^3,t^4)$ defines an immersion such that $\Ima(\Psi_-)$ is totally umbilic.
	\end{lemma}
	\begin{proof}[Proof]
		To reduce the notations, we let $\Psi = \Psi_+$. Consider $q=\Psi(t^1_0,t^2_0)$ and choose the normal coordinates $(\tau^1,\ldots,\tau^4)$ for $\Phi$ around $q$ such that $\Arond_\Phi$ has the form \eqref{eq:Arond_diag} in the basis $(e_1(q),\ldots,e_4(q)) = \left( \dr_{\tau^1}\Phi(0) ,\ldots, \dr_{\tau^4}\Phi(0)\right)$. In particular, $\left( \dr_{\tau^1}\Phi(0), \dr_{\tau^2}\Phi(0) \right)$ is a basis for $\ker\left( \Arond - \omega g\right)_q$ and $\left( \dr_{\tau^3}\Phi(0), \dr_{\tau^4}\Phi(0) \right)$ is a basis for $\ker\left( \Arond + \omega g\right)_q$. Hence, there exists some vectors $u_1,u_2\in\R^2$ such that the following relations hold :
		\begin{align*}
			& q= \Phi(0)= \Psi(t^1_0,t^2_0),\\
			& d\Psi(t^1_0,t^2_0)\cdot u_1 = \dr_{\tau^1} \Phi(0)=e_1(q),\\
			& d\Psi(t^1_0,t^2_0)\cdot u_2 = \dr_{\tau^2} \Phi(0)=e_2(q),\\
			& \dr_{\tau^3} \Phi(0)=e_3(q),\\
			& \dr_{\tau^4} \Phi(0)=e_4(q),\\
			& \Gamma_{ij}^k (0) = 0.
		\end{align*}
		By inverting the second and the third relation, we obtain a matrix $M = (m_i^{\ j})_{1\leq i,j\leq 2}$ such that 
		\begin{align*}
			\forall i\in\{1,2\},\ \ \ \dr_{t^i} \Psi(t_0^1,t_0^2) = m_i^{\ j}\dr_{\tau^j} \Phi(0).
		\end{align*}
		Hence, the normal space of $\Ima(\Psi)$ at $q$ is given by
		\begin{align*}
			(T_q \Ima(\Psi))^\perp = \Span(\dr_{\tau^3}\Phi(0),\dr_{\tau^4}\Phi(0),n(0)) = \Span(e_3(q),e_4(q),n(0)).
		\end{align*}
		The second fundamental form $A_\Psi^{\dr_k\Phi}$ of $\Psi$ in the direction $\dr_k \Phi$ is given by :
		\begin{align}
			A_\Psi^{\dr_k\Phi}(q) & = \left( -\scal{\g_{\dr_i \Psi(t_0^1,t_0^2)} \dr_k \Phi(0) }{\dr_j \Psi(t_0^1,t_0^2)} \right)_{1\leq i,j\leq 2} \nonumber\\
			&= \left( -m_i^{\ \alpha} m_j^{\ \beta}\scal{\g_{\dr_\alpha \Phi(0)} \dr_k \Phi(0) }{\dr_\beta \Phi(0)} \right)_{1\leq i,j\leq 2} \nonumber\\
			&= \left( -m_i^{\ \alpha} m_j^{\ \beta} \Gamma_{\alpha k\, \beta}(0) \right)_{1\leq i,j\leq 2} = 0. \label{eq:A_Psi_tangent}
		\end{align}
		The second fundamental form $A_\Psi^{n}$ of $\Psi$ in the direction $n$ is given by :
		\begin{align*}
			A_{\Psi}^n(q) &= \left( -\scal{ \g_{\dr_i \Phi(0)} n}{\dr_j \Phi(0)} \right)_{1\leq i,j\leq 2} \\
			&= \left( -\scal{ \g_{e_i} n}{e_j} \right)_{1\leq i,j\leq 2}.
		\end{align*}
		From \eqref{eq:Arond_diag}, we obtain
		\begin{align*}
			A_{\Psi}^n(q) &= \begin{pmatrix}
				H+\omega & 0\\
				0 & H+\omega
			\end{pmatrix}.
		\end{align*}
		Hence, its traceless part satisfies $\Arond_{\Psi}^n(q) = 0$. Together with \eqref{eq:A_Psi_tangent}, we deduce that
		\begin{align*}
			\forall v\in (T_q \Ima(\Psi))^\perp,\ \ \ \Arond_\Psi^v(q) = 0.
		\end{align*}
		Since $q$ is arbitrary, we deduce that $\Psi$ is totally umbilic.
	\end{proof}

	\section{Proof of the theorem \ref{thm_duality}}\label{se:proof_duality}
	
	In this section, we consider $\Sigma^4$ a closed $4$-manifold, a smooth immersion $\Phi : \Sigma^4 \to \R^5$ and $Y:\Sigma\to \s^{5,1}$ its conformal Gauss map. We define $g:= \Phi^*\xi$, where $\xi$ is the flat metric on $\R^5$.
	
	\subsection{Scalar curvature}
	
	In this section, we work only under the assumption \eqref{hyp:no_umbilic}. Here, we prove the equality $\Er_{GR} = 4\pi^2 \chi(\Sigma) + \Sr$ of \Cref{thm_duality}. We define $\ggo := Y^*\eta$, where $\eta$ is the Minkowski metric on $\s^{5,1}$.\\
	
	The Gauss equation can be written as follows, see for instance \cite[Theorem II.2.1]{chavel2006}. Given any vector fields $x,y,z,t$ along $Y(\Sigma)$, it holds
	\begin{align*}
		\Riem^\ggo(x,y,z,t) &= \Riem^\eta(x,y,z,t) + \scal{B(x,z)}{B(y,t)}_\eta - \scal{B(x,t)}{ B(y,z)}_\eta.
	\end{align*}
	The Riemann tensor on $\s^{5,1}$ is given by
	\begin{align*}
		\Riem^\eta(x,y,z,t) &= \scal{x}{z}_\eta \scal{y}{t}_\eta - \scal{x}{t}_\eta \scal{y}{z}_\eta.
	\end{align*}
	From this, we compute the scalar curvature of $Y(\Sigma)$.
	\begin{lemma}
		The scalar curvature $\Scal_\ggo$ of $Y(\Sigma)\subset (\s^{5,1},\eta)$ is given by
		\begin{align*}
			\Scal_\ggo &= 12 -  |B|^2_\eta + 16 \big|\vec{b}\big|^2_\eta .
		\end{align*}
	\end{lemma}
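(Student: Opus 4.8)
The statement is a direct consequence of the Gauss equation together with the norm computations of \Cref{lm:normsB} and \Cref{lm:mean_curv_vectorY}, so the plan is purely computational: trace the Gauss equation twice in a well-chosen frame. First I would exploit the assumption \eqref{hyp:no_umbilic}, under which $\ggo = \Arond^2$ is a genuine positive-definite Riemannian metric, to pick a local $\ggo$-orthonormal frame $(e_1,\dots,e_4)$ of $TY(\Sigma)$, so that $\scal{e_i}{e_j}_\eta = \delta_{ij}$. In such a frame the scalar curvature is the full double trace
\begin{align*}
	\Scal_\ggo &= \sum_{i,j=1}^4 \Riem^\ggo(e_i,e_j,e_i,e_j),
\end{align*}
the sign of this contraction being fixed so that the ambient tensor $\Riem^\eta(x,y,z,t) = \scal{x}{z}_\eta\scal{y}{t}_\eta - \scal{x}{t}_\eta\scal{y}{z}_\eta$ of the De Sitter space encodes constant sectional curvature $+1$.

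Then I would insert the Gauss equation into each summand and treat the three resulting sums separately. For the ambient term, evaluating $\Riem^\eta(e_i,e_j,e_i,e_j) = \scal{e_i}{e_i}_\eta\scal{e_j}{e_j}_\eta - \scal{e_i}{e_j}_\eta\scal{e_j}{e_i}_\eta = 1 - \delta_{ij}$ and summing over $i,j$ gives $\sum_{i,j}(1-\delta_{ij}) = 16 - 4 = 12$, which is exactly $n(n-1)$ for $n=4$ and produces the constant in the claimed formula. The two quadratic terms in $B$ are
\begin{align*}
	\sum_{i,j=1}^4 \scal{B(e_i,e_i)}{B(e_j,e_j)}_\eta &= \Big| \sum_{i=1}^4 B(e_i,e_i) \Big|^2_\eta = \big|\tr_\ggo B\big|^2_\eta = 16\big|\vec{b}\big|^2_\eta,
\end{align*}
using $\tr_\ggo B = 4\vec{b}$ from \Cref{lm:mean_curv_vectorY}, and
\begin{align*}
	\sum_{i,j=1}^4 \scal{B(e_i,e_j)}{B(e_j,e_i)}_\eta &= \sum_{i,j=1}^4 \big|B(e_i,e_j)\big|^2_\eta = |B|^2_\eta,
\end{align*}
where the symmetry $B(e_j,e_i) = B(e_i,e_j)$ is used. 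The Gauss equation carries these with signs $+$ and $-$ respectively, so summing the three contributions yields $\Scal_\ggo = 12 + 16|\vec{b}|^2_\eta - |B|^2_\eta$, as desired.

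I do not expect any genuine obstacle here: the entire argument is an orthonormal-frame contraction of the Gauss equation. The one point that needs care is that the normal bundle of $Y(\Sigma)$ in $\s^{5,1}$ inherits an indefinite inner product (signature $(1,1)$), so the quantities $|B|^2_\eta$ and $|\vec{b}|^2_\eta$ are Lorentzian norms which may well be negative; however, since every pairing above is taken with $\scal{\cdot}{\cdot}_\eta$ and these two norms are precisely the ones already computed in \Cref{lm:normsB} through the null pairing of $\nu$ and $\nu^*$, the algebra passes through unchanged and no positivity is invoked. The only bookkeeping to double-check is the overall sign convention on $\Riem^\ggo$, which is pinned down by requiring $\Scal_\ggo = 12$ in the totally geodesic case $B = 0$, consistent with the induced metric being a piece of the round $\s^4$.
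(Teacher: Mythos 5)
Your proof is correct and follows essentially the same route as the paper: the paper also contracts the Gauss equation twice (first to the Ricci tensor in index notation, then to the scalar curvature), obtaining the ambient contribution $12$, the term $16|\vec{b}|^2_\eta$ from $\tr_\ggo B = 4\vec{b}$, and the term $-|B|^2_{\ggo,\eta}$. Your orthonormal-frame presentation of the double trace is just a cosmetic reorganization of the same computation, and your remark about the indefinite normal bundle is a correct and harmless observation.
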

	
	\begin{remark}
		The term $12$ arises because we assumed $\ggo$ Riemannian non degenerate, see \eqref{hyp:no_umbilic}. 
	\end{remark}
	
	\begin{proof}
		Here, we raise/lower the indices with respect to the metric $\ggo$.	We compute :
		\begin{align*}
			\Ric^\ggo_{ik} &= {\Riem^\ggo}_{ijk}^{\ \ \ \ j} \\
			&= {\Riem^\eta}_{ijk}^{\ \ \ \ j} + \scal{B_{ik} }{B_j^{\ j} }_\eta - \scal{B_i^{\ j} }{ B_{jk} }_\eta \Big) \\
			&=  \ggo_{ik}\ggo_j^{\ j} - \ggo_i^{\ j}\ggo_{jk} + \scal{B_{ik} }{4 \vec{b} }_\eta - \scal{B_i^{\ j} }{ B_{jk} }_\eta  \\
			&= 3\ggo_{ik} + 4\scal{B_{ik} }{\vec{b}}_\eta - \scal{B_i^{\ j} }{ B_{jk} }_\eta  .
		\end{align*}
		We contract again :
		\begin{align*}
			\Scal_\ggo &= {\Ric^\ggo }_i^{\ i} = 12 + 16 |\vec{b}|^2_\eta - |B|^2_{\ggo,\eta}.
		\end{align*}
	\end{proof}

	\begin{proposition}\label{lm:integral_ScalY}
		Let $\ve:= \sign\left[\det_g\Arond\right]$, which is constant by \eqref{hyp:no_umbilic}. It holds
		\begin{align*}
			\int_\Sigma \Scal^\ggo d\vol_\ggo &= 12\vol_\ggo(\Sigma) - \ve\int_\Sigma 2|\g^g H|^2_g + H^2|\Arond|^2_g - 2H\left(\tr_g\Arond^3 \right)\  d\vol_g.
		\end{align*}
	\end{proposition}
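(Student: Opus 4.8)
The plan is to integrate the pointwise formula $\Scal_\ggo = 12 - |B|^2_\eta + 16|\vec b|^2_\eta$ from the preceding lemma against $d\vol_\ggo$, substituting the expressions of \Cref{lm:normsB} and converting every $\ggo$-contraction into a $g$-contraction by means of \Cref{lm:metric_ggo}, which gives $\ggo = \Arond^2$ and $d\vol_\ggo = |\det\Arond|\,d\vol_g = \ve(\det\Arond)\,d\vol_g$. First I would record the grouping
\begin{align*}
-|B|^2_\eta + 16|\vec b|^2_\eta &= \Big[2(\tr_\ggo\Arond)\lap_\ggo H - 2\scal{\Arond}{(\g^\ggo)^2 H}_\ggo\Big] + 6H\,\tr_\ggo\Arond + f\big[(\tr_\ggo\Arond)^2 - |\Arond|^2_\ggo\big],
\end{align*}
where $f = H^2 + |\g^\ggo H|^2_\ggo$. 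The constant $12$ integrates to $12\,\vol_\ggo(\Sigma)$, so it remains to treat the three bracketed groups.

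For the two purely algebraic groups I would diagonalise $\Arond$ with respect to $g$, with eigenvalues $\lambda_1,\dots,\lambda_4$, so that $\ggo$ has eigenvalues $\lambda_i^2$ and $|\g^\ggo H|^2_\ggo = |\Arond^{-1}\g^g H|^2_g$. Using the Newton identities for the traceless matrix $\Arond$ recalled in \Cref{sec:cayley_ham}, the two relations
\begin{align*}
(\det\Arond)\big[(\tr_\ggo\Arond)^2 - |\Arond|^2_\ggo\big] = -|\Arond|^2_g, \qquad (\det\Arond)\,\tr_\ggo\Arond = \tfrac13\tr_g\Arond^3
\end{align*}
convert the last two groups into
\begin{align*}
-\ve\int_\Sigma H^2|\Arond|^2_g\,d\vol_g + 2\ve\int_\Sigma H\,\tr_g\Arond^3\,d\vol_g - \ve\int_\Sigma |\Arond^{-1}\g^g H|^2_g\,|\Arond|^2_g\,d\vol_g,
\end{align*}
which already produces the $H^2|\Arond|^2_g$ and $-2H\,\tr_g\Arond^3$ contributions of the statement (up to the global factor $-\ve$), but leaves the spurious gradient term $-\ve\int_\Sigma|\Arond^{-1}\g^g H|^2_g|\Arond|^2_g$.

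The remaining second-order group I would handle by integration by parts on the closed manifold $\Sigma$. Writing it as a $\ggo$-divergence,
\begin{align*}
\int_\Sigma \Big[2(\tr_\ggo\Arond)\lap_\ggo H - 2\scal{\Arond}{(\g^\ggo)^2 H}_\ggo\Big] d\vol_\ggo = \int_\Sigma \Big[2\scal{\di_\ggo\Arond}{\g^\ggo H}_\ggo - 2\scal{\g^\ggo(\tr_\ggo\Arond)}{\g^\ggo H}_\ggo\Big] d\vol_\ggo,
\end{align*}
reduces the matter to a first-order expression. The divergence $\di_\ggo\Arond$ and the gradient $\g^\ggo\tr_\ggo\Arond$ are then expanded through the Christoffel formula for $\ggo$ and, after passing to $g$, through the contracted Codazzi identity $\di_g\Arond = 3\g^g H$ coming from the Gauss-Codazzi equations for $\Phi$ (the symmetry recorded in the Remark following the Christoffel computation). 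The expected outcome is that this first-order integral equals $-2\ve\int_\Sigma|\g^g H|^2_g\,d\vol_g + \ve\int_\Sigma|\Arond^{-1}\g^g H|^2_g|\Arond|^2_g\,d\vol_g$; the second summand cancels the spurious term above, and collecting everything yields the claimed identity.

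The main obstacle is precisely this last step: converting $\di_\ggo\Arond$ and $\g^\ggo\tr_\ggo\Arond$ into $g$-quantities requires carrying the full Christoffel correction $(\Gamma^\ggo)^k_{ij} - (\Gamma^g)^k_{ij}$ of the earlier lemma through a traced computation, and one must check that all terms involving $\Arond^{-1}$ and the derivatives of $\Arond$ reorganise, via Codazzi, into exactly the two gradient contributions above, so that the unwanted integrand $|\Arond^{-1}\g^g H|^2_g|\Arond|^2_g$ disappears and only $-2\ve|\g^g H|^2_g$ survives.
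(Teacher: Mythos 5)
Your proposal is correct and follows essentially the same route as the paper: the same grouping of $-|B|^2_\eta+16|\vec b|^2_\eta$ from \Cref{lm:normsB}, the same integration by parts reducing the second-order group to $2\scal{\di_\ggo\Arond-\g^\ggo\tr_\ggo\Arond}{\g^\ggo H}_\ggo$, and the same conversion to $g$-quantities via $\ggo=\Arond^2$, Codazzi and the Cayley--Hamilton identities of \Cref{calcul_a_inv}. The one step you leave as an ``expected outcome'' is exactly what the paper establishes in \Cref{lm:integral_ScalY} by expanding $\di_\ggo\Arond$ and $\tr_\ggo\g^\ggo\Arond$ through the Christoffel symbols of $\ggo$ and applying \eqref{simplification_cayley_hamilton}, namely $\scal{\tr_\ggo\g^\ggo\Arond-\di_\ggo\Arond}{\g^\ggo H}_\ggo=(\det\Arond)^{-1}\big[|\g^g H|^2_g-\tfrac{1}{2}|\Arond|^2_g|\g^\ggo H|^2_\ggo\big]$, which indeed cancels your spurious gradient term.
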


	\begin{proof}
		By \Cref{lm:normsB}, we obtain directly :
		\begin{align*}
			|B|^2_{\ggo,\eta} - 16|\vec{b}|^2_\eta &= 2\left[ \scal{\Arond}{(\g^\ggo)^2 H}_\ggo  - \left(\tr_\ggo \Arond \right) \left( \lap_\ggo H +3H \right) \right] + f \left( \left|\Arond\right|^2_\ggo - \left( \tr_\ggo \Arond \right)^2 \right).
		\end{align*}
		We integrate with respect to $d\vol_\ggo$ :
		\begin{align}
			\int_\Sigma |B|^2_{\ggo,\eta} - 16|\vec{b}|^2_\eta\ d\vol_\ggo &= 2\int_\Sigma \scal{\Arond}{ (\g^\ggo)^2 H}_\ggo - \left(\tr_\ggo \Arond \right) \lap_\ggo H \ d\vol_\ggo \label{eq:integral_scal1}\\
			&- 6\int_\Sigma (\tr_\ggo \Arond) H\ d\vol_\ggo + \int_\Sigma f\left( |\Arond|^2_\ggo - (\tr_\ggo \Arond)^2  \right) d\vol_\ggo. \label{eq:integral_scal2}
		\end{align}
		We can integrate by parts the first term :
		\begin{align}
			\int_\Sigma \scal{\Arond}{ (\g^\ggo)^2 H}_\ggo - \left(\tr_\ggo \Arond \right) \lap_\ggo H \ d\vol_\ggo &= \int_\Sigma \scal{ \g^\ggo \tr_\ggo \Arond }{ \g^\ggo H }_\ggo - \scal{ \di_\ggo \Arond }{ \g^\ggo H }_\ggo \ d\vol_\ggo \nonumber \\
			&=  \int_\Sigma \scal{ \tr_\ggo \g^\ggo \Arond - \di_\ggo \Arond }{ \g^\ggo H }_\ggo \ d\vol_\ggo . \label{eq:hard_term_scal}
		\end{align}
		We compute $\g^\ggo \Arond$ :
		\begin{align}
			\g^\ggo_i \Arond_{kl} &= \dr_i \Arond_{kl} - (\Gamma^\ggo)_{ik}^p \Arond_{pl} - (\Gamma^\ggo)_{il}^p \Arond_{kp} \nonumber\\
			&=  \dr_i \Arond_{kl} - \left[ \left( \Arond^{-1} \right)^{pb}\left( \g^g_i \Arond_{kb} \right) + (\g^\ggo H)^p \Arond_{ik} - \left( \Arond^{-1} \right)^p_{\ i} (\dr_k H) + (\Gamma^g)^p_{ik} \right] \Arond_{pl} \nonumber\\
			&- \left[ \left( \Arond^{-1} \right)^{pb}\left( \g^g_i \Arond_{lb} \right) + (\g^\ggo H)^p \Arond_{il} - \left( \Arond^{-1} \right)^p_{\ i} (\dr_l H) + (\Gamma^g)^p_{il} \right] \Arond_{kp} \nonumber\\
			&= \g^g_i \Arond_{kl} - 2\g^g_i \Arond_{kl} - \Arond_{pl} (\g^\ggo H)^p \Arond_{ik} - \Arond_{kp} (\g^\ggo H)^p \Arond_{il} + g_{il} (\dr_k H) + g_{ik} (\dr_l H) \nonumber\\
			&= -\g^g_i \Arond_{kl} - (\Arond^{-1})^{\alpha\beta} g_{l\alpha} (\dr_\beta H) \Arond_{ik} - (\Arond^{-1})^{\alpha\beta} g_{k\alpha} (\dr_\beta H) \Arond_{il} + g_{il} (\dr_k H) + g_{ik} (\dr_l H) . \label{eq:g_ggo_Arond}
		\end{align}
		We trace the indices $(k,l)$ in \eqref{eq:g_ggo_Arond} with respect to the metric $\ggo$ :
		\begin{align}
			\tr_\ggo \g^\ggo_i \Arond &= (\ggo^{-1})^{kl} \g^\ggo_i \Arond_{kl} \nonumber\\
			&= \left(\Arond^{-2} \right)^{kl} \left[ -\g^g_i \Arond_{kl} - \left( \Arond^{-1} \right)^{\alpha\beta} g_{l\alpha} (\dr_\beta H) \Arond_{ik} - \left( \Arond^{-1} \right)^{\alpha\beta} g_{k\alpha} (\dr_\beta H) \Arond_{il} + g_{il} (\dr_k H) + g_{ik} (\dr_l H)  \right] \nonumber\\
			&= \g^g_i \left( \tr_g \Arond^{-1}\right) -2 \left( \Arond ^{-2} \right)_i ^{\ \beta} (\dr_\beta H) + \left(\Arond^{-2} \right)_i^{\ k} (\dr_k H) + \left(\Arond^{-2} \right)_i^{\ l} (\dr_l H) \nonumber\\
			&= \g^g_i \left( \tr_g \Arond^{-1}\right) . \label{trace_gradient_arond}
		\end{align}
		We trace the indices $(i,k)$ in \eqref{eq:g_ggo_Arond} with respect to the metric $\ggo$ :
		\begin{align*}
			(\di_\ggo \Arond)_l &= (\ggo^{-1})^{ik} \g^\ggo_i \Arond_{kl}\\
			&= \left( \Arond^{-2} \right)^{ik} \left[  -\g^g_i \Arond_{kl} - \left( \Arond^{-1} \right)^{\alpha\beta} g_{l\alpha} (\dr_\beta H) \Arond_{ik} - \left( \Arond^{-1} \right)^{\alpha\beta} g_{k\alpha} (\dr_\beta H) \Arond_{il} + g_{il} (\dr_k H) + g_{ik} (\dr_l H)  \right] \\
			&= - \left( \Arond^{-2} \right)^{ik} (\g^g_i \Arond_{kl}) - \left( \tr_g\Arond^{-1} \right) \left( \Arond^{-1} \right)^{\alpha\beta} g_{l\alpha} (\dr_\beta H) - \left(\Arond^{-2} \right)_l ^{\ \beta} (\dr_\beta H) + \left(\Arond^{-2} \right)_l ^{\ k}(\dr_k H) + \left(\tr_g \Arond^{-2} \right) (\dr_l H).
		\end{align*}
		We treat the first term using Gauss-Codazzi identity:
		\begin{align*}
			\g^g_i \Arond_{kl} &= \g^g_l \Arond_{ik} + (\dr_l H)g_{ik} - (\dr_i H)g_{kl}.
		\end{align*}
		It follows that 
		\begin{align*}
			- \left( \Arond^{-2} \right)^{ik} (\g^g_i \Arond_{kl}) &= -\left( \Arond^{-2} \right)^{ik} \left( \g^g_l \Arond_{ik} + (\dr_l H)g_{ik} - (\dr_i H)g_{kl} \right) \\
			&= \g^g_l \left( \tr_g \Arond^{-1} \right) - \left(\tr_g \Arond^{-2} \right) (\dr_l H) + \left(\Arond^{-2} \right)^i_{\ l} (\dr_i H).
		\end{align*}
		Hence, $\di_\ggo\Arond$ is given by
		\begin{align}
			(\di_\ggo \Arond)_l &= \g^g_l \left( \tr_g \Arond^{-1} \right)- \left( \tr_g\Arond^{-1} \right) \left( \Arond^{-1} \right)^{\alpha\beta} g_{l\alpha} (\dr_\beta H)+ \left(\Arond^{-2} \right)_l ^{\ k}(\dr_k H). \label{divergence_arond}
		\end{align}
		Coming back to \eqref{eq:hard_term_scal}, we obtain
		\begin{align}
			\scal{ \tr_\ggo \g^\ggo \Arond - \di_\ggo \Arond }{ \g^\ggo H }_\ggo &= (\ggo^{-1})^{lp} \left[ \left( \tr_g\Arond^{-1} \right) \left( \Arond^{-1} \right)^{\alpha\beta} g_{l\alpha} (\dr_\beta H)- \left(\Arond^{-2} \right)_l ^{\ k}(\dr_k H) \right] (\dr_p H) \nonumber\\
			&= \left[  \left( \tr_g\Arond^{-1} \right) \left( \Arond^{-3} \right)^{p\beta} (\dr_\beta H) - \left(\Arond^{-4} \right)^{kp} (\dr_k H) \right] (\dr_p H) \nonumber\\
			&= \left[ \left( \tr_g\Arond^{-1} \right) \left( \Arond^{-3} \right)^{kp} - \left(\Arond^{-4} \right) ^{kp} \right] (\dr_k H) (\dr_p H). \label{eq:hard_term_scal1}
		\end{align}
		Using \Cref{calcul_a_inv} : $ \tr_g\Arond^{-1} = \frac{\tr_g\Arond^3}{3(\det\Arond)} $. If we multiply Cayley-Hamilton (\ref{cayley_hamilton}) by $\Arond^{-4}$ :
		\begin{align*}
			g - \frac{|\Arond|^2_g }{2} \Arond ^{-2} - \frac{\tr_g\Arond^3}{3} \Arond^{-3} + (\det\Arond) \Arond ^{-4} &= 0.
		\end{align*}
		Hence, we obtain
		\begin{align}
			\left( \tr_g\Arond^{-1} \right) \left( \Arond^{-3} \right)^{kp} - \left(\Arond^{-4} \right) ^{kp} &= \left(\det\Arond \right)^{-1} \left[ g ^{kp} - \frac{|\Arond|^2}{2} \left(\Arond^{-2} \right)^{kp} \right]. \label{simplification_cayley_hamilton}
		\end{align}
		Using this into \eqref{eq:hard_term_scal1}, we obtain
		\begin{align*}
			\scal{ \tr_\ggo \g^\ggo \Arond - \di_\ggo \Arond }{ \g^\ggo H }_\ggo &=\left(\det\Arond \right)^{-1} \left[ g ^{kp} - \frac{|\Arond|^2}{2} \left(\Arond^{-2} \right)^{kp} \right](\dr_k H) (\dr_p H) \\
			&= \left(\det\Arond \right)^{-1}\left[ |\g^g H|^2_g - \frac{|\Arond|^2_g}{2} |\g^\ggo H|^2_\ggo \right].
		\end{align*}
		We now come back to \eqref{eq:integral_scal1}-\eqref{eq:integral_scal2} : using $d\vol_\ggo = |\det_g \Arond|d\vol_g$, and $\ve:= \sign\left[\det_g\Arond\right]$,
		\begin{align*}
			\int_\Sigma |B|^2_{\ggo,\eta} - 16|\vec{b}|^2_\eta\ d\vol_\ggo &= 2\ve\int_\Sigma  |\g^g H|^2_g - \frac{|\Arond|^2_g}{2} |\g^\ggo H|^2_\ggo \ d\vol_g \\
			&- 6\int_\Sigma (\tr_\ggo \Arond) H\ d\vol_\ggo + \int_\Sigma f\left( |\Arond|^2_\ggo - (\tr_\ggo \Arond)^2  \right) d\vol_\ggo.
		\end{align*}
		By \Cref{calcul_a_inv}, we obtain :
		\begin{align*}
			\int_\Sigma |B|^2_{\ggo,\eta} - 16|\vec{b}|^2_\eta\ d\vol_\ggo &= 2\ve\int_\Sigma  |\g^g H|^2_g - \frac{|\Arond|^2_g}{2} |\g^\ggo H|^2_\ggo \ d\vol_g \\
			&- 2\ve\int_\Sigma (\tr_g \Arond^3) H\ d\vol_g + \ve\int_\Sigma f|\Arond|^2_g d\vol_g \\
			&= \ve\int_\Sigma 2|\g^g H|^2_g + H^2|\Arond|^2_g - 2H\left(\tr_g\Arond^3 \right)\  d\vol_g.
		\end{align*}
	\end{proof}
	
	We now compare the energies $\Er_{GR}$ and $\Sr$.
	\begin{proof}[Proof of \Cref{thm_duality}]
		By \Cref{lm:integral_ScalY}, if $\ve := \sign\left[\det_g\Arond\right]$, it holds :
		\begin{align*}
			\int_\Sigma \Scal_\ggo d\vol_\ggo &= 12\vol_\ggo(\Sigma) - \ve\int_\Sigma 2|\g^g H|^2_g + H^2|\Arond|^2_g - 2H\left(\tr_g\Arond^3 \right)\  d\vol_g \\
			&=12\vol_\ggo(\Sigma) + \ve\int_\Sigma -2|\g^g H|^2_g - H^2|\Arond|^2_g + 2H\left(\tr_g\Arond^3 \right)\  d\vol_g .
		\end{align*}
		By \eqref{det_A}, it holds :
		\begin{align*}
			\det A &= H^4 - \frac{1}{2}H^2 |\Arond|^2_g + \frac{1}{3} H \left( \tr_g\Arond^3 \right) + \frac{1}{8} |\Arond|^4_g - \frac{1}{4} \left(\tr_g \Arond^4 \right).
		\end{align*}
		Therefore, we obtain
		\begin{align*}
			\int_\Sigma \Scal_\ggo d\vol_\ggo - 6\ve\int_\Sigma \det A\ d\vol_g &= 12\vol_\ggo(\Sigma) + \ve\int_\Sigma  -2|\g^g H|^2_g - 6H^4 + 2H^2 |\Arond|^2_g - \frac{3}{4} |\Arond|^4_g + \frac{3}{2} \left( \tr_g \Arond^4 \right)\ d\vol_g.
		\end{align*}
		By \Cref{calcul_a_inv}, it holds
		\begin{align*}
			\frac{3}{2}\left( \tr_g \Arond^4 \right) &= \frac{3}{4} |\Arond|^4_g - 6\det\Arond.
		\end{align*}
		Therefore it holds :
		\begin{align*}
			\int_\Sigma \Scal_\ggo d\vol_\ggo - 6\ve\int_\Sigma \det A\ d\vol_g &= 6(2-\ve)\vol_\ggo(\Sigma) -2\ve \Er_{GR}.
		\end{align*}
		We conclude using \eqref{gauss_bonnet}.
	\end{proof}
	
	\subsection{The Paneitz operator}
	In this section, we prove the equality $\Er_{GR}=4\pi^2\chi(\Sigma) + \pr$ in \Cref{thm_duality}, see \Cref{pr:equality_paneitz}, without the assumption \eqref{hyp:no_umbilic}. Given the formula \eqref{eq:integral_paneitz_general_formula}, we start by computing $|\lap_g Y|^2_\eta$.

	\begin{lemma}\label{lm:laplacian_Y_eta}
		The laplacian of $Y$ satisfies
		\begin{align*}
			|\lap_g Y|^2_\eta &= 4|\g^g H|^2_g + |\Arond|^4_g.
		\end{align*}
	\end{lemma}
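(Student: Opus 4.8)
The plan is to compute $\lap_g Y$ explicitly as an $\R^{6,1}$-valued function and then take its $\eta$-norm, exploiting the null and orthogonality structure of the frame built from $\nu$ and $Y$. Fix $x_0\in\Sigma$ and work in $g$-normal coordinates centered at $x_0$, so that at $x_0$ one has $g_{ij}=\delta_{ij}$, $(\Gamma^g)^k_{ij}=0$ and $\dr^2_{il}\Phi = A_{il}n$, and consequently $\lap_g Y = \sum_i \dr^2_{ii}Y$ since $Y$ is treated as an $\R^{6,1}$-valued map. Differentiating \eqref{eq:derivativeY}, namely $\dr_j Y = (\dr_j H)\nu - \Arond_j^{\ l}\dr_l\nu$, once more and tracing gives at $x_0$
\begin{align*}
	\lap_g Y &= (\lap_g H)\nu + g^{ij}(\dr_i H)\dr_j\nu - (\di_g\Arond)^l\dr_l\nu - \Arond^{il}\dr^2_{il}\nu.
\end{align*}

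The next step is to collapse this expression into a few frame vectors via two simplifications. First, tracing the Codazzi relation $\g^g_i\Arond_{jb}-\g^g_j\Arond_{ib} = (\dr_j H)g_{ib}-(\dr_i H)g_{jb}$ from the remark after the Christoffel computation, and using $\tr_g A = 4H$, one obtains $\di_g\Arond = 3\g^g H$; hence the two middle terms combine into $-2\tau$, where $\tau := g^{ij}(\dr_i H)\dr_j\nu$. Second, from $\dr^2_{il}\Phi = A_{il}n$ one computes $\dr^2_{il}\nu$ directly, and using $\Arond^{il}A_{il}=\scal{\Arond}{A}_g = |\Arond|^2_g$ together with $\tr_g\Arond=0$ the whole second-Hessian term collapses to $\Arond^{il}\dr^2_{il}\nu = |\Arond|^2_g\,\mu$, where $\mu := Y - H\nu = \begin{pmatrix} n\\ \scal{n}{\Phi}_\xi \\ \scal{n}{\Phi}_\xi\end{pmatrix}$. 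Altogether this yields the clean formula
\begin{align*}
	\lap_g Y &= \left(\lap_g H + H|\Arond|^2_g\right)\nu - 2\tau - |\Arond|^2_g\,Y.
\end{align*}

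It then remains to take the $\eta$-norm. The conceptual point is that the complicated coefficient of $\nu$ contributes nothing, because $\nu$ is null and $\eta$-orthogonal to the other two vectors: one has $|\nu|^2_\eta=0$ and $\scal{\nu}{Y}_\eta=0$, while a short computation gives $\scal{\nu}{\tau}_\eta = 0$. For the remaining terms, writing $V:=g^{ij}(\dr_i H)\dr_j\Phi\in\R^5$ for the first block of $\tau$, the last two Lorentzian components of $\tau$ cancel so that $\scal{\tau}{\tau}_\eta = |V|^2_\xi = |\g^g H|^2_g$; moreover $\scal{Y}{\tau}_\eta = \scal{\mu}{\tau}_\eta = \scal{n}{V}_\xi = 0$ since $V$ is tangent to $\Phi(\Sigma)$. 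Using $|Y|^2_\eta=1$, every cross term vanishes and we conclude
\begin{align*}
	|\lap_g Y|^2_\eta &= 4\scal{\tau}{\tau}_\eta + |\Arond|^4_g\,|Y|^2_\eta = 4|\g^g H|^2_g + |\Arond|^4_g.
\end{align*}

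The proof involves no deep difficulty; the main work is bookkeeping, concentrated in verifying the collapse $\Arond^{il}\dr^2_{il}\nu = |\Arond|^2_g\mu$ and the three pairings of $\tau$. The step I expect to be the real crux is recognizing the right frame decomposition: once $\lap_g Y$ is written in the basis $\{\nu,\tau,Y\}$, the null character of $\nu$ annihilates its otherwise unwieldy coefficient $\lap_g H + H|\Arond|^2_g$, leaving only the two clean contributions $4|\g^g H|^2_g$ (from $\tau$) and $|\Arond|^4_g$ (from $Y$).
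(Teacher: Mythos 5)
Your proof is correct and follows essentially the same route as the paper: normal coordinates, the identity $\di_g\Arond=3\g^g H$ from Codazzi to get $\lap_g Y=(\lap_g H)\nu-2(\g^g H)\cdot(\g^g\nu)-\Arond\cdot(\g^g)^2\nu$, and then the null/orthogonality relations of $\nu$ to kill all unwanted terms. The only (cosmetic) difference is that you first collapse $\Arond^{il}\dr^2_{il}\nu$ into $|\Arond|^2_g\,(Y-H\nu)$ and invoke $|Y|^2_\eta=1$, whereas the paper evaluates $\bigl|\Arond\cdot(\g^g)^2\nu\bigr|^2_\eta=\Arond^{ij}\Arond^{\alpha\beta}A_{ij}A_{\alpha\beta}=|\Arond|^4_g$ directly.
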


	\begin{proof}
		Let $x_0\in \Sigma$. We compute $|\lap_g Y(x_0)|^2_\eta$ in normal coordinates for $g$ around $x_0$. It holds
		\begin{align*}
			\lap_g Y &= \di_g\left( (\g^g H)\nu - \Arond \g \nu \right)\\
			&= (\lap_g H)\nu + (\g^g H)(\g^g \nu) - (\di_g \Arond)(\g^g \nu) - \Arond \cdot [(\g^g)^2 \nu].
		\end{align*}
		Thanks to Gauss-Codazzi identity, it holds $\g^i A_{ij} = \g_j A_i^{\ i}$. Therefore, we have $\di_g \Arond + \g^g H = 4\g^g H$, hence it holds $\di_g \Arond = 3\g^g H$. Thus, we have
		\begin{align}\label{laplace_Y}
			\lap_g Y &=(\lap_g H)\nu -2 (\g^g H)(\g^g \nu) - \Arond \cdot [(\g^g)^2 \nu].
		\end{align}
		Hence, its $\eta$-norm is given by
		\begin{align*}
			|\lap_g Y|^2_\eta =& -2(\lap_g H)\Arond^{ij} \scal{\nu}{\dr^2_{ij}\nu }_\eta \\
			&+ 4|\g^g H|^2_g +4\scal{(\g^g H)(\g^g \nu) }{\Arond \cdot [(\g^g)^2 \nu] }_\eta \\
			&+ | \Arond \cdot [(\g^g)^2 \nu] |^2_\eta.
		\end{align*}
		Since $\scal{\nu}{\dr^2_{ij}\nu }_\eta = -\scal{\dr_i\nu}{\dr_j\nu}_\eta = g_{ij}$, we obtain that the first term vanishes. Since we work in normal coordinates, it holds $\g^2 \Phi(x_0) \perp \g \Phi(x_0)$, and we have
		\begin{align*}
			\scal{\dr_i \nu}{\dr^2_{jk}\nu}_\eta &= \scal{\dr_i \Phi}{\dr^2_{jk}\Phi}_\xi + \scal{\dr_i\Phi}{\Phi}_\xi \dr_j \left( \scal{\dr_k\Phi}{\Phi}_\xi \right) - \scal{\dr_i\Phi}{\Phi}_\xi \dr_j \left( \scal{\dr_k\Phi}{\Phi}_\xi \right)\\
			&= \scal{\dr_i \Phi}{\dr^2_{jk}\Phi}_\xi = \Gamma^g_{jki} = 0.
		\end{align*}
		Therefore, we obtain
		\begin{align*}
			|\lap_g Y|^2_\eta &=  4|\g^g H|^2_g + | \Arond \cdot [(\g^g)^2 \nu] |^2_\eta.
		\end{align*}
		For the last term, using  $\dr^2_{ij}\Phi(x_0)=\scal{\dr^2_{ij}\Phi(x_0)}{n(x_0)}_\xi n(x_0) $, it holds
		\begin{align*}
			| \Arond \cdot [(\g^g)^2 \nu] |^2_\eta &= \Arond^{ij} \Arond^{\alpha\beta} \scal{ \begin{pmatrix} 
					\dr^2_{ij} \Phi \\ \dr_i\left(\scal{\dr_j\Phi}{\Phi}_\xi \right) \\ \dr_i\left( \scal{\dr_j\Phi}{\Phi}_\xi \right)
			\end{pmatrix} }{ \begin{pmatrix} 
					\dr^2_{\alpha\beta } \Phi \\ \dr_\alpha\left( \scal{\dr_\beta\Phi}{\Phi}_\xi \right) \\ \dr_\alpha\left( \scal{\dr_\beta\Phi}{\Phi}_\xi \right)
			\end{pmatrix} }_\xi \\
			&= \Arond^{ij} \Arond^{\alpha\beta} A_{ij}A_{\alpha\beta} = |\Arond|^4_g. \label{eq:normArond4}
		\end{align*}
	\end{proof}
	
	We now compute the main term $\Er_P := \int_\Sigma \scal{Y}{P_gY}_\eta d\vol_g$.
	\begin{lemma}\label{lm:integral_Paneitz}
		It holds,
		\begin{align*}
			\Er_P = \int_\Sigma \scal{Y}{P_g Y}_\eta d\vol_g &=\int_\Sigma 4|\g^g H|^2 + \frac{1}{3}|\Arond|^4_g + 2H^2 |\Arond|^2_g -4H\tr_g(\Arond^3)  + 2\tr_g(\Arond^4)\ d\vol_g.
		\end{align*}
	\end{lemma}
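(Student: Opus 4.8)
The plan is to turn the fourth-order quantity $\scal{Y}{P_g Y}_\eta$ into a lower-order expression by means of the integration-by-parts identity \eqref{eq:integral_paneitz_general_formula}, applied componentwise. Since $\eta$ has constant coefficients, the scalar Paneitz operator $P_g$ acts on each of the seven real components of $Y$ independently, and the integration by parts underlying \eqref{eq:integral_paneitz_general_formula} is a purely intrinsic fact on the closed manifold $(\Sigma,g)$, so it holds for each component $Y^a$ with no boundary term. Summing the scalar identity over the components with the constant signs of $\eta$ recombines the $\eta$-pairing, and I would obtain
\begin{align*}
	\Er_P &= \int_\Sigma |\lap_g Y|^2_\eta + \frac{2\Scal_g}{3} |\g^g Y|^2_{g,\eta} - 2\Ric_g^{ij}\scal{\dr_i Y}{\dr_j Y}_\eta\ d\vol_g.
\end{align*}

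Next I would treat the three resulting terms separately. For the leading term I invoke \Cref{lm:laplacian_Y_eta}, which gives $|\lap_g Y|^2_\eta = 4|\g^g H|^2_g + |\Arond|^4_g$. For the two first-order terms the key input is \Cref{lm:metric_ggo}, more precisely its pointwise content $\scal{\dr_i Y}{\dr_j Y}_\eta = (\Arond^2)_{ij}$. Tracing against $g$ yields $|\g^g Y|^2_{g,\eta} = \tr_g(\Arond^2) = |\Arond|^2_g$, while the Ricci term becomes the tensor contraction $\Ric_g^{ij}(\Arond^2)_{ij}$. At this stage everything is expressed through $\Arond$ and the intrinsic curvature of $g$.

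It then remains to feed in the curvature of the immersion. Using the Gauss equation for $\Sigma^4\hookrightarrow\R^5$ (as computed in \Cref{sec:curvature_Phi}), together with $A = \Arond + Hg$ and $\tr_g\Arond = 0$, I would write $\Scal_g = 12H^2 - |\Arond|^2_g$ and $\Ric_{ij} = 2H\Arond_{ij} + 3H^2 g_{ij} - (\Arond^2)_{ij}$. Contracting the Ricci tensor against $\Arond^2$ gives $\Ric_g^{ij}(\Arond^2)_{ij} = 2H\tr_g(\Arond^3) + 3H^2|\Arond|^2_g - \tr_g(\Arond^4)$, while $\frac{2\Scal_g}{3}|\Arond|^2_g = 8H^2|\Arond|^2_g - \frac{2}{3}|\Arond|^4_g$. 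Collecting the three contributions, the $|\Arond|^4_g$ coefficients add to $\frac{1}{3}$, the $H^2|\Arond|^2_g$ coefficients to $2$, and the remaining cubic and quartic terms assemble into $-4H\tr_g(\Arond^3) + 2\tr_g(\Arond^4)$, which is exactly the claimed integrand. The only delicate point is the bookkeeping: one must carefully distinguish $A$ from $\Arond$ when contracting the Gauss-equation Ricci tensor with $\Arond^2$, so that the traces $\tr_g(\Arond^3)$, $\tr_g(\Arond^4)$ and $|\Arond|^4_g$ emerge with the correct coefficients. The vectorial integration by parts itself is harmless precisely because $\eta$ is constant, so there is no genuine analytic obstacle beyond this algebraic reduction.
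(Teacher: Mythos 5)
Your proposal is correct and follows essentially the same route as the paper's proof: apply the integrated Paneitz formula \eqref{eq:integral_paneitz_general_formula} componentwise (valid since $\eta$ is constant), substitute $|\lap_g Y|^2_\eta = 4|\g^g H|^2_g + |\Arond|^4_g$ from \Cref{lm:laplacian_Y_eta} and $\scal{\dr_i Y}{\dr_j Y}_\eta = (\Arond^2)_{ij}$ from \Cref{lm:metric_ggo}, then insert the expressions for $\Scal_g$ and $\Ric_g$ from \Cref{sec:curvature_Phi} and collect terms. The intermediate contraction $\Ric_g^{ij}(\Arond^2)_{ij} = 2H\tr_g(\Arond^3) + 3H^2|\Arond|^2_g - \tr_g(\Arond^4)$ and the final coefficients all match the paper's computation.
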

	
	\begin{proof}
		We compute, using \Cref{ricci_Phi} and \Cref{scal_Phi} :
		\begin{align*}
			& |\lap_g Y|^2 +\frac{2\Scal_g}{3} |\g^g Y|^2 - 2\Ric_g(\g^g Y,\g^g Y) \\
			=& 4|\g^g H|^2 + |\Arond|^4_g + \frac{2}{3}(12H^2 - |\Arond|^2_g) |\Arond|^2_g - 2\left( 4H A_{\alpha\beta} - A_\alpha^\gamma A_{\gamma\beta}\right) \scal{\g^\alpha Y}{\g^\beta Y}_\eta  \\
			=& 4|\g^g H|^2 + \frac{1}{3}|\Arond|^4_g + 8H^2 |\Arond|^2_g - 2(4 H A_{\alpha\beta} - (A^2)_{\alpha\beta} ) (\Arond^2)^{\alpha\beta}. 
		\end{align*}
		We compute the last term :
		\begin{align*}
			(4 H A_{\alpha\beta} - (A^2)_{\alpha\beta} ) (\Arond^2)^{\alpha\beta} &= 4H(\Arond_{\alpha\beta} + Hg_{\alpha\beta} ) (\Arond^2)^{\alpha\beta} - (\Arond^2 + 2H\Arond + H^2 g)_{\alpha\beta} (\Arond^2)^{\alpha\beta} \\
			&= 4H\tr_g(\Arond^3) + 4H^2 |\Arond|^2_g - \tr_g(\Arond^4) - 2H\tr_g(\Arond^3)-H^2 |\Arond|^2_g \\
			&= 2H\tr_g(\Arond^3) + 3H^2|\Arond|^2_g - \tr_g(\Arond^4).
		\end{align*}
		Hence, we deduce that
		\begin{align}
			& |\lap_g Y|^2 +\frac{2\Scal_g}{3} |\g^g Y|^2 - 2\Ric_g(\g^g Y,\g^g Y) \nonumber\\
			=& 4|\g^g H|^2 + \frac{1}{3}|\Arond|^4_g + 8H^2 |\Arond|^2_g -4H\tr_g(\Arond^3) - 6H^2|\Arond|^2_g + 2\tr_g(\Arond^4) \\
			=& 4|\g^g H|^2 + \frac{1}{3}|\Arond|^4_g +2H^2 |\Arond|^2_g -4H\tr_g(\Arond^3)  + 2\tr_g(\Arond^4). \label{integrad_paneitz}
		\end{align}
	\end{proof}
	
	Using Cayley-Hamilton theorem, we conclude :
	
	\begin{proposition}\label{pr:equality_paneitz}
		Let $\Phi : \Sigma^4 \to \R^5$ be a smooth immersion. Its conformal Gauss map $Y : \Sigma \to \s^{5,1}$ satisfies :
		\begin{align*}
			\int_\Sigma \left( \scal{Y}{P_g Y}_\eta -\frac{4}{3} |\g Y|^4_{g,\eta} - 4\det_g(\Arond) \right) d\vol_g &= 4\Er_{GR} - 16\pi^2 \chi(\Sigma).
		\end{align*}
	\end{proposition}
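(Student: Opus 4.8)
The plan is to assemble three ingredients already established in the excerpt: the explicit value of $\int_\Sigma \scal{Y}{P_g Y}_\eta\, d\vol_g$ from \Cref{lm:integral_Paneitz}, the identity $\ggo = \Arond^2$ of \Cref{lm:metric_ggo}, and the purely algebraic Cayley-Hamilton relations for traceless symmetric $4\times 4$ matrices collected in \Cref{sec:cayley_ham}. Since each of these is either pointwise or algebraic, no nondegeneracy hypothesis such as \eqref{hyp:no_umbilic} is needed, which is why the statement holds in full generality; the whole argument is a reduction of the integrand to the integrand of $4\Er_{GR}$ plus a Gauss-Bonnet term.

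First I would rewrite the three terms of the integrand separately. By \Cref{lm:metric_ggo} the pullback metric is $\ggo = \Arond^2$, hence $|\g Y|^2_{g,\eta} = \tr_g \ggo = \tr_g(\Arond^2) = |\Arond|^2_g$ and therefore $|\g Y|^4_{g,\eta} = |\Arond|^4_g$. Substituting this together with the formula of \Cref{lm:integral_Paneitz}, and collecting $\tfrac{1}{3}|\Arond|^4_g - \tfrac{4}{3}|\Arond|^4_g = -|\Arond|^4_g$, turns the integrand into
\begin{align*}
4|\g^g H|^2_g - |\Arond|^4_g + 2H^2|\Arond|^2_g - 4H\tr_g(\Arond^3) + 2\tr_g(\Arond^4) - 4\det_g\Arond.
\end{align*}
The Cayley-Hamilton relation $\tr_g(\Arond^4) = \tfrac{1}{2}|\Arond|^4_g - 4\det_g\Arond$ from \Cref{calcul_a_inv} then cancels both the quartic term and the $\tr_g(\Arond^4)$ term, leaving
\begin{align*}
4|\g^g H|^2_g + 2H^2|\Arond|^2_g - 4H\tr_g(\Arond^3) - 12\det_g\Arond.
\end{align*}

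The final step is to eliminate the remaining $\tr_g(\Arond^3)$ in favour of the Gauss-Kronecker curvature $\det A$. Writing $A = \Arond + Hg$ and expanding its determinant (equivalently, using \eqref{det_A} together with the same Cayley-Hamilton relation) gives the clean identity $\det A - \det_g\Arond = H^4 - \tfrac{1}{2}H^2|\Arond|^2_g + \tfrac{1}{3}H\tr_g(\Arond^3)$. Solving for $H\tr_g(\Arond^3)$ and substituting, the two $\det_g\Arond$ contributions cancel and the integrand collapses to
\begin{align*}
4|\g^g H|^2_g - 4H^2|\Arond|^2_g + 12H^4 - 12\det A,
\end{align*}
whose first three terms are precisely the integrand of $4\Er_{GR}$. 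Integrating and invoking the Gauss-Bonnet formula \eqref{gauss_bonnet}, namely $\int_\Sigma \det A\, d\vol_g = \tfrac{4\pi^2}{3}\chi(\Sigma)$, converts the last term into $-16\pi^2\chi(\Sigma)$ and yields the claimed identity.

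The computation is essentially bookkeeping, so the only genuine subtlety is making sure the Cayley-Hamilton reductions are applied consistently. The relevant relations are polynomial identities in the entries of $\Arond$, hence valid even where $\det_g\Arond$ vanishes, so both cancellations (first of $|\Arond|^4_g$ against $\tr_g(\Arond^4)$, then of the two $\det_g\Arond$ terms) are legitimate without the density assumption \eqref{hyp:no_umbilic}. This is the point where one must take care not to implicitly divide by $\det_g\Arond$, in contrast with the scalar-curvature proof of \Cref{thm_duality} where $\Arond^{-1}$ appears throughout.
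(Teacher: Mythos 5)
Your proof is correct and follows essentially the same route as the paper: both combine \Cref{lm:integral_Paneitz}, the Cayley--Hamilton identities \eqref{det_Arond} and \eqref{det_A}, and the Gauss--Bonnet formula \eqref{gauss_bonnet}, differing only in the order in which the algebraic substitutions are performed. Your remark that no nondegeneracy assumption on $\det_g\Arond$ is needed is also consistent with the paper, which proves this proposition without \eqref{hyp:no_umbilic}.
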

	
	\begin{proof}
		Thanks to \eqref{det_A}, it holds :
		\begin{align*}
			\det A &= H^4 - \frac{1}{2}H^2 |\Arond|^2_g + \frac{1}{3} H \left( \tr_g\Arond^3 \right) + \frac{1}{8} |\Arond|^4_g - \frac{1}{4} \left(\tr_g \Arond^4 \right).
		\end{align*}
		We obtain the following expression of $H\tr_g(\Arond^3)$ :
		\begin{align*}
			-4 H\tr_g(\Arond^3) &= -12\det_g(A) +12H^4 - 6H^2|\Arond|^2_g + \frac{3}{2} |\Arond|^4_g - 3\tr_g(\Arond^4).
		\end{align*}
		Using this together with \Cref{lm:integral_Paneitz}, we obtain
		\begin{align}
			\int_\Sigma \scal{Y}{P_g Y}_\eta d\vol_g &= \int_\Sigma 4|\g^g H|^2 + \frac{1}{3}|\Arond|^4_g + 2H^2 |\Arond|^2_g -4H\tr_g(\Arond^3)  + 2\tr_g(\Arond^4)\ d\vol_g \nonumber \\
			&= \int_\Sigma 4|\g^g H|^2 + 12H^4 - 4H^2 |\Arond|^2_g + \frac{11}{6}|\Arond|^4_g - \tr_g(\Arond^4)- 12 \det(A)\ d\vol_g \nonumber\\
			&= 4\Er_{GR} - 16\pi^2 \chi(\Sigma) + \int_\Sigma \frac{11}{6}|\Arond|^4_g - \tr_g(\Arond^4)\ d\vol_g. \label{eq:comparison_Paneitz_W}
		\end{align}
		Since $8\det_g(\Arond) = |\Arond|^4_g - 2\tr_g(\Arond^4)$, see \eqref{det_Arond}, and $|\g Y|^2_g = |\Arond|^2_g$, it holds
		\begin{align*}
			4 \det_g(\Arond) + \frac{4}{3}|\g Y|^4_{g,\eta} = \frac{11}{6}|\Arond|^4_g - \tr_g(\Arond^4).
		\end{align*}
	\end{proof}

	\section{Proof of \cref{prop:min_W_spheres} }\label{se:infimum}
	
	The plan of the proof of \cref{prop:min_W_spheres} is the following. First, we show that among immersions of $\s^4$, the functional $\Er_{GR}$ is not bounded from below by constructing an example of sequence $(\Phi_k)_{k\in\N}$ such that $\Er_{GR}(\Phi_k)\xrightarrow[k\to\infty]{}{-\infty}$, see \cref{lm:inf_W_spheres}. \Cref{prop:min_W_spheres} then follows by considering the connected sum of the sequence $(\Phi_k)_{k\in\N}$ to any closed hypersurface $\Sigma^4 \subset \R^5$. This operation preserves the topology and the energy $\Er_{GR}$ goes to $-\infty$.\\
	
	First we show that the integrand $|\g H|^2 - H^2|A|^2 + 7H^4$ is a negative constant for isometric immersions of $\R^2 \times\s^2$.
	
	\begin{lemma}\label{lm:II_R2_S2}
		Consider the following isometric immersion of $\R^2 \times \s^2$ in $\R^5$ :
		\begin{align*}
			\forall ((x,y),p)\in\R^2\times\s^2,\ \ \ \ \iota((x,y),p) = \begin{pmatrix}
				x\\ y \\ p
			\end{pmatrix} \in \R^5.
		\end{align*} 
		It holds 
		\begin{align*}
			|\g H_{\iota}|^2_{g_\iota} - H^2_{\iota} |A_\iota|^2_{g_\iota} + 7H^4_{\iota} = -\frac{1}{16}.
		\end{align*}
	\end{lemma}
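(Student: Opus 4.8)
The plan is to exploit the product structure of $\iota$ to read off the second fundamental form explicitly, after which the identity reduces to an elementary arithmetic check. Writing $\R^5 = \R^2 \times \R^3$, the immersion $\iota$ is the Riemannian product of the identity map $\R^2 \to \R^2$, a totally geodesic flat factor, with the standard inclusion of the unit sphere $\s^2 \hookrightarrow \R^3$. Consequently a unit normal is $n = (0,0,p)$ up to sign, the second fundamental form annihilates the two $\R^2$-directions, and on the two $\s^2$-directions it coincides with the shape operator of the unit sphere, namely $\pm \mathrm{Id}$ depending on the chosen orientation. Thus, in any local orthonormal frame $(e_1,e_2,e_3,e_4)$ adapted to the splitting, with $e_1,e_2$ tangent to the $\R^2$ factor and $e_3,e_4$ tangent to $\s^2$, one has $A = \mathrm{diag}(0,0,\ve,\ve)$ for a fixed $\ve \in \{-1,+1\}$.

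First I would establish this explicit form of $A$ and observe that its entries are constant on $\Sigma$. It follows immediately that the mean curvature $H = \tfrac14 \tr_g A = \ve/2$ is a nonzero constant, so that $\g H \equiv 0$ and $|\g H|^2_{g_\iota} = 0$; this disposes of the gradient term. I would then compute the remaining pointwise invariants, $|A|^2_{g_\iota} = \tr(A^2) = 2\ve^2 = 2$, together with $H^2 = \tfrac14$ and $H^4 = \tfrac{1}{16}$, and substitute into the integrand:
\begin{align*}
	|\g H_\iota|^2_{g_\iota} - H^2_\iota\, |A_\iota|^2_{g_\iota} + 7H^4_\iota = 0 - \tfrac14 \cdot 2 + 7\cdot \tfrac{1}{16} = -\tfrac{1}{16}.
\end{align*}
The value is independent of the orientation choice, since $\ve$ enters only through $\ve^2 = 1$ and through even powers of $H$. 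As a consistency check against the form of $\Er_{GR}$ written in \cref{sub:Willmore}, the traceless part is $\Arond = \mathrm{diag}(-\ve/2,-\ve/2,\ve/2,\ve/2)$, so $|\Arond|^2_{g_\iota} = 1$ and $-H^2|\Arond|^2_{g_\iota} + 3H^4 = -\tfrac14 + \tfrac{3}{16} = -\tfrac{1}{16}$, matching $-H^2|A|^2 + 7H^4$ via the identity $|A|^2_{g_\iota} = |\Arond|^2_{g_\iota} + 4H^2$.

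There is essentially no analytic obstacle: the entire content is the identification of the second fundamental form of a product immersion. The only points demanding any care are fixing a consistent sign convention for $A$, which turns out to be immaterial to the final value, and verifying that $H$ is genuinely constant so that the gradient term vanishes. I would therefore present the argument as a short direct computation rather than invoking any of the machinery developed for the conformal Gauss map.
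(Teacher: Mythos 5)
Your proposal is correct and follows essentially the same route as the paper: identify the second fundamental form of the product immersion as $\mathrm{diag}(0,0,1,1)$ (up to sign), note $H=\pm\tfrac12$ is constant so the gradient term vanishes, and evaluate $-\tfrac14\cdot 2 + \tfrac{7}{16} = -\tfrac{1}{16}$. The additional consistency check against the $\Arond$-form of the integrand is a nice touch but not needed.
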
	
	
	\begin{proof}
		The second fundamental form of $\iota$ is given by 
		\begin{align*}
			A_\iota = \begin{pmatrix}
				0 & 0 & 0 & 0\\
				0 & 0 & 0 & 0\\
				0 & 0 & 1 & 0 \\
				0 & 0 & 0 & 1
			\end{pmatrix}.
		\end{align*}
		Hence, we have $|A_\iota|^2 = 2$ and $H_\iota = \frac{1}{2}$. Thus, it holds
		\begin{align*}
			|\g H_{\iota}|^2_{g_\iota} - H^2_{\iota} |A_\iota|^2_{g_\iota} + 7H^4_{\iota} & = -2 \frac{1}{2^2} + 7 \frac{1}{2^4} = \frac{-8 +7}{16}.
		\end{align*}
	\end{proof}
	
	By inserting a manifold of the form $[0,L]^2 \times \s^2$ inside $\s^4$, we obtain the following result. \Cref{prop:min_W_spheres} is then a corollary of this.
	
	\begin{lemma}\label{lm:inf_W_spheres}
		There exists $\delta\in(0,\frac{1}{16})$ and $L_0>1$ such that the following holds. For any $L>L_0$, there exists a smooth hypersurface $S_L\subset \R^5$ having the same topology has $\s^4$ and such that 
		\begin{align*}
			\Er_{GR}(S_L) \leq -\delta L^2.
		\end{align*}
	\end{lemma}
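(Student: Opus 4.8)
The plan is to realize $S_L$ as a hypersurface of revolution in $\R^5=\R^2\times\R^3$ that is invariant under $SO(2)\times SO(3)$, so that it contains a long flat piece modeled on the immersion $\iota$ of \Cref{lm:II_R2_S2} and two small caps. Writing a point of $\R^5$ as $(u,v)$ with $u\in\R^2,\ v\in\R^3$, such a hypersurface is determined by a profile curve $\tau\mapsto(s(\tau),t(\tau))$ in the closed quarter-plane $\{s\geq 0,\ t\geq 0\}$, the surface being $\{(u,v):(|u|,|v|)=(s,t)\text{ for some }\tau\}$. Computing the principal curvatures in an adapted orthonormal frame (with $\dot s^2+\dot t^2=1$) gives $\kappa_{\mathrm{mer}}=\dot s\ddot t-\ddot s\dot t$ along the meridian, $\kappa_1=\dot t/s$ along the $\R^2$-circle of radius $s$, and $\kappa_2=\kappa_3=\dot s/t$ along the $\R^3$-sphere of radius $t$. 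In particular the horizontal segment $t\equiv 1$, $s\in[0,R]$ produces exactly $A=\mathrm{diag}(0,0,1,1)$, the second fundamental form of $\iota$, so there the integrand of $\Er_{GR}$ equals the constant $-\tfrac1{16}$ by \Cref{lm:II_R2_S2}.

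I would take the profile curve to consist of this segment $t\equiv 1$, $s\in[0,R]$ with $R\sim L$ — over which the surface is isometric to a flat disc of radius $R$ times $\s^2(1)$ — followed by a \emph{fixed} capping arc joining $(R,1)$ to a point $(s_1,0)$ of the $s$-axis, meeting it perpendicularly, whose shape does not depend on $L$ (only its starting abscissa $R$ does). The end $s=0$ of the segment needs no cap: there the $\R^2$-circle collapses and the surface is just an honest piece of $\R^2\times\s^2(1)$ through its axis, hence smooth. Near the other end $t=0$ the $\s^2$-fibre collapses, and choosing the arc to meet the $s$-axis with a smooth even profile makes this a smooth spherical-type cap. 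Topologically the result is $(D^2\times\s^2)\cup_{\s^1\times\s^2}(\s^1\times D^3)=\partial(D^2\times D^3)=\s^4$, as required.

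It then remains to estimate $\Er_{GR}(S_L)$ by splitting $S_L$ into the flat region and the cap. On the flat region the integrand is $-\tfrac1{16}$ and the $4$-volume is $(\text{area of the disc})\times\vol(\s^2(1))=\pi R^2\cdot 4\pi$, giving a contribution $-\tfrac{\pi^2}{4}R^2$, i.e. bounded above by $-cL^2$ for an explicit $c>0$. On the capping arc the crucial point is uniformity in $L$: the only principal curvature involving the large radius is $\kappa_1=\dot t/s=O(1/R)$, which is harmless, while $\kappa_{\mathrm{mer}}$ and $\kappa_2=\kappa_3=\dot s/t$ depend only on the fixed shape of the arc (near $t=0$ one has $\dot s\to 0$, so $\dot s/t$ stays bounded). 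Hence $H$, $|A|$ and $|\g H|$ are $O(1)$ on the cap uniformly in $L$, while the cap's $4$-volume is only $O(R)=O(L)$, since the collapsing $\R^2$-circle has circumference $\sim R$ and the arc has fixed length. Therefore $\Er_{GR}(S_L)\leq -cL^2+O(L)$, and choosing $L_0$ large and $\delta\in(0,\tfrac1{16})$ yields $\Er_{GR}(S_L)\leq-\delta L^2$ for all $L>L_0$.

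The main obstacle is exactly this uniform control of the caps: one must check that pushing the cap out to radius $R$ creates no curvature concentration, so that the second fundamental form and its first derivative stay bounded independently of $L$ while the cap's volume grows only linearly in $L$. The resulting linear-versus-quadratic gap between the cap's $O(L)$ contribution and the flat region's $-cL^2$ contribution is what forces the energy down to $-\infty$.
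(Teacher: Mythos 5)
Your proof is correct, and it takes a genuinely different route from the paper's. The paper constructs $S_L$ by two successive surgeries on the round sphere: it cuts $\s^4$ along the equator and inserts a flat cylinder $[0,L]\times\s^3$, then cuts the result along $\{x_2=0\}$ and inserts a second cylinder of length $L$; the square region $[\tfrac1{10},L-\tfrac1{10}]^2\times\s^2$ that appears carries the integrand $-\tfrac1{16}$ of \Cref{lm:II_R2_S2} and has volume $\sim L^2$, while all remaining pieces (caps, mollified gluing regions, lower-dimensional cylinders) are argued to contribute only $O(L)$. Your construction exploits the same mechanism --- a region isometric to a large piece of $\R^2\times\s^2$ where the integrand is the negative constant $-\tfrac1{16}$, closed off by caps of volume $O(L)$ --- but realizes it in one step as an explicit $SO(2)\times SO(3)$-invariant hypersurface with a profile curve. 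This buys you two things the paper's argument handles more informally: the uniformity in $L$ of the cap is transparent (the capping arc is a fixed curve translated in $s$, so the only $L$-dependent principal curvature is $\dot t/s=O(1/L)$, and by symmetry $H$ depends on the arc-length parameter alone), and the identification of the topology as $\partial(D^2\times D^3)=\s^4$ is clean, with no mollification regions to control. The paper's surgery approach is in turn better adapted to the subsequent step of \Cref{prop:min_W_spheres} (connected sums with an arbitrary $\Sigma^4$), but for the lemma at hand your argument is complete; the only detail worth making explicit is that the capping arc must agree to infinite order with the horizontal segment at the junction $(R,1)$, and meet the $s$-axis so that $s$ is a smooth even function of $t$ there, so that the concatenated profile yields a smooth hypersurface --- both are easily arranged and do not affect the estimates.
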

	
	\begin{proof}
		Consider the round sphere $\s^4 \subset \R^5$ and denote $x_1,\ldots,x_5$ the coordinates in $\R^5$. The equator in $\s^4$ is isometric to $\s^3$ : 
		\begin{align*}
			\{x\in \s^4 : x_1 = 0\} = \{ x\in\R^5 : x_1 =0, x_2^2 + x_3^2 + x_4^2 + x_5^2 = 1\}.
		\end{align*}
		We cut $\s^4$ along the equator to obtain $\s^4 \cap \{x_1<0\}$ and $\s^4 \cap \{x_1>0\}$. The boundary of these two manifolds is isometric to $\s^3$. We glue these two boundaries along a flat cylinder $[0,L]\times\s^3$, for a given $L>1$. To obtain a smooth hypersurface, we only need to mollify the boundaries of $[0,L]\times\s^3$, $\s^4 \cap \{x_1<0\}$ and $\s^4 \cap \{x_1>0\}$. To do so, we only need to modify a small part of each of these manifolds near their boundaries. Thus, we can glue these three pieces into a smooth manifolds $S^{(1)}_L = S_+^{(1)}\cup G_L^{(1)} \cup C_L^{(1)} \cup S_-^{(1)}$ which contains four pieces : $S_-^{(1)}$ is isometric to $\s^4 \cap \{x_1 < -\frac{1}{10}\}$, $S_+^{(1)}$ is isometric to $\s^4\cap \{x_1 > \frac{1}{10}\}$, $C_L^{(1)}$ is isometric to $[\frac{1}{10}, L - \frac{1}{10}] \times\s^3$ and $G_L^{(1)}$ is the mollified part. Concerning the computation of $\Er_{GR}(S_L^{(1)})$, we use the fact that the gluing $G_L^{(1)}$ occurs only on a fixed region which volume is independant of $L$ : there exists a constant $\Gamma>1$ such that for any $L>1$,
		\begin{align*}
			\Er_{GR}(S_L^{(1)}) \leq \Gamma + \int_{ C_L^{(1)} } |\g H|^2 - H^2|A|^2 + H^4.
		\end{align*}
		The second fundamental form of $C_L^{(1)} \simeq [\frac{1}{10}, L - \frac{1}{10}] \times\s^3$ is given by
		\begin{align*}
			A_{\R\times\s^3} = \begin{pmatrix}
				0 & 0 & 0 & 0\\
				0 & 1 & 0 & 0\\
				0 & 0 & 1 & 0\\
				0 & 0 & 0 & 1
			\end{pmatrix}.
		\end{align*}
		Thus, up to increasing $\Gamma$, we obtain the following estimate :
		\begin{align}\label{eq:W_CL}
			\Er_{GR}(S_L^{(1)}) \leq \Gamma(1+L).
		\end{align}
		
		We proceed in the same manner in the direction $x_2$. We cut $S_L^{(1)}$ along $S_L^{(1)} \cap \{x_2=0\}$, and consider the two remaining parts $S_L^{(1)}\cap \{x_2<0\}$ and $S_L^{(1)} \cap \{x_2>0\}$. We glue them along the boundary of $[0,L]\times (S_L^{(1)}\cap \{x_2=0\})$. Again, we use the fact that the region needed to proceed to the gluing is a small neighborhood of $S_L^{(1)}\cap \{x_2=0\}$, that is to say, we keep the parts $S^{(2)}_- := S_L^{(1)}\cap \{x_2<-\frac{1}{10}\}$, $S^{(2)}_+ := S_L^{(1)} \cap \{x_2>\frac{1}{10}\}$ and $C^{(2)}_L := [\frac{1}{10},L-\frac{1}{10}]\times (S_L^{(1)}\cap \{x_2=0\})$ fixed, and the mollifying necessary to obtain a smooth manifold occurs only in the regions $C_L\cap \{0>x_2>-\frac{1}{10}\}$, $C_L \cap \{0<x_2<\frac{1}{10}\}$ and $\big( [0,\frac{1}{10}]\cup[L-\frac{1}{10},L] \big) \times (C_L\cap \{x_2=0\})$. We denote $G_L^{(2)}$ this mollified part. Using the product structure, we see that the energy needed to proceed to the gluing can be choosen uniformly bounded. Let $S_L^{(2)} := S^{(2)}_- \cup S^{(2)}_+ \cup C_L^{(2)} \cup G_L^{(2)}$ be the resulting manifold. To estimate $\Er_{GR}(S_L^{(2)})$, we have three distincts parts to estimates.
		\begin{itemize}
			\item On the parts $S^{(2)}_- \cup S^{(2)}_+$, the energy can by estimated in the same manner as \eqref{eq:W_CL} to show that the energy grows linearly in $L$.
			
			\item On the parts $G_L^{(2)}$, again the energy grows linearly in $L$. Indeed, the domains of gluing are $S_L^{(1)}\cap \{0>x_2>-\frac{1}{10}\}$, $S_L^{(1)} \cap \{0<x_2<\frac{1}{10}\}$ and $\big( [0,\frac{1}{10}]\cup[L-\frac{1}{10},L] \big) \times (S_L^{(1)}\cap \{x_2=0\})$. As explained previously, the gluing can be bounded uniformly, so we obtain the following estimate :
			\begin{align*}
				\int_{G_L^{(2)}} |\g H_{S_L^{(2)}}|^2 -H_{S_L^{(2)}}^2 |A_{S_L^{(2)}}|^2 + 7H_{S_L^{(2)}}^4 \leq &  \Gamma \left[ \Big|S_L^{(1)}\cap \{0>x_2>-\frac{1}{10}\} \Big| + \Big| S_L^{(1)} \cap \{0<x_2<\frac{1}{10}\} \Big|\right. \\
				& \left. + \Big| \big( [0,\frac{1}{10}]\cup[L-\frac{1}{10},L] \big) \times (S_L^{(1)}\cap \{x_2=0\}) \Big| \right] \\
				\leq & \Gamma (1+L).
			\end{align*}
			Here we have used that the diameter of $S_L^{(1)}$ in the $x_1$-direction is controlled by $L$ and is bounded in every other directions parallel to the axis.
			
			\item On the part $C_L^{(2)}$, the computation of the integrand of $\Er_{GR}$ is different in three cases. As previously, the integrand $|\g H|^2 -H^2|A|^2+7H^4$ can be bounded uniformly on the parts involving $(S_+^{(1)}\cup S_-^{(1)})\cap \{x_2=0\}$ and $G_L^{(1)}\cap \{x_2=0\}$. So again, after integration, the energy is bounded by a linear function of $L$. Now, on the part of $C_L^{(1)}\cap \{x_2=0\}$ which is isometric to $\left( [\frac{1}{10},L-\frac{1}{10}]\times \s^3\right) \cap \{x_2=0\} = [\frac{1}{10},L-\frac{1}{10}]\times \s^2 $, the integrand is given by \cref{lm:II_R2_S2}. Therefore, we obtain that $[\frac{1}{10},L-\frac{1}{10}]\times \left( C_L^{(1)}\cap \{x_2=0\} \right) = [\frac{1}{10},L-\frac{1}{10}]^2 \times \s^2$ so :
			\begin{align*}
				\int_{ [\frac{1}{10},L-\frac{1}{10}]^2\times \s^2 } |\g H|^2 -H^2|A|^2+7H^4 = -\frac{1}{16}\left( L-\frac{1}{5} \right)^2.
			\end{align*}
		\end{itemize}
		Adding every piece together, we end up with the following estimate of $\Er_{GR}(S_L^{(2)})$ :
		\begin{align*}
			\Er_{GR}(S_L^{(2)}) \leq \Gamma(1+L) - \frac{1}{16}L^2.
		\end{align*}
	\end{proof}
	
	\section{Proof of \cref{prop:min_P}}\label{se:inf_paneitz}	
	
	\Cref{prop:min_P} follows from a straightforward application of Cauchy-Schwarz inequality.\\
	
	Consider $\Phi : \Sigma^4 \to \R^5$ a smooth immersion, $g$ the metric induced by $\Phi$ on $\Sigma$ and $Y$ its conformal Gauss map. By \cref{lm:integral_Paneitz}, it holds :
	\begin{align*}
		\Er_P &=\int_\Sigma 4|\g^g H|^2 + \frac{1}{3}|\Arond|^4_g + 2H^2 |\Arond|^2_g -4H\tr_g(\Arond^3)  + 2\tr_g(\Arond^4)\ d\vol_g.
	\end{align*}
	By Cauchy-Schwarz inequality, we bound the term $H \tr_g(\Arond^3)$ :
	\begin{align*}
		2H\tr_g(\Arond^3) = 2\scal{H\Arond}{\Arond^2}_g \leq H^2|\Arond|^2_g + |\Arond^2|^2_g = H^2|\Arond|^2_g + \tr_g(\Arond^4).
	\end{align*}
	By \cref{lm:integral_Paneitz}, we obtain
	\begin{align}\label{eq:lower_bound_Paneitz}
		\Er_P &\geq \int_\Sigma 4|\g^g H|^2_g + \frac{1}{3} |\Arond|^4_g \ d\vol_g \geq 0.
	\end{align}
	There is equality in the last inequality if and only if $\g H = 0$ and $\Arond = 0$. That is to say $H$ is constant and $\Phi$ is totally umbilic.

	\section{Computation of the Euler-Lagrange equation of $\Sr$}\label{se:conservation_laws}
	
	In order to study the relation between critical points of $\Er_{GR}$ and the conformal Gauss map, we compute the Euler-Lagrange equation of both functionals $\pr$ and $\Sr$. We start with $\Sr$. At first glance, if $\Phi$ is a smooth critical point of $\Sr$, then the image of its conformal Gauss map $Y$ should be an Einstein manifold with some cosmological constant. However, there are two restrictions that makes this idea wrong. First, the variations of $\Sr$ are among the set of immersions, so we cannot obtain every variation of the metric, but only those coming from immersions. This restriction forces the leading order term of the Euler-Lagrange equation to be of the form $\scal{\Ric_\ggo}{B}_\eta$, where $B$ is the second fundamental form of $Y$. The second restriction is that the variations of $\Sr$ have to be considered only among the set of conformal Gauss maps, and not only among the set of all immersions. This generates lower order terms that we will explain below.
	
	\begin{proposition}\label{proposition_EL_equ_S}
		Let $\Sigma$ be a closed smooth four dimensional manifold. Let $\Phi : \Sigma \to \R^5$ be a smooth immersion satisfying \eqref{hyp:no_umbilic}. Define $\ve := \sign\left[\det_g\Arond\right]$. Let $Y : \Sigma \to \s^{5,1}$ be its conformal Gauss map. Let $g = \Phi^*\xi$ and $\ggo = Y^*\eta$. We extend the family $(Y,dY)$ into a direct basis $(Y,dY,\nu,\nu^*)$ of $\R^{6,1}$, with $|\nu|^2_\eta = |\nu^*|^2_\eta = 0$. We denote $B$ the second fundamental form of $Y$ and $\vec{b}$ the mean curvature vector of $Y$. The Euler-Lagrange equation of $\Sr$ is the following equation :
		\begin{align*}
			0 =& \ve\Big( \scal{\nu^*}{G}_\eta - \frac{\scal{\nu}{\nu^*}_\eta}{4} |\g Y|^2_{g,\eta} \scal{\nu}{G}_\eta \Big)(\det\Arond) - \frac{1}{4} \scal{\lap_g\left( (\det_g\Arond)\scal{\nu}{G}_\eta  \nu \right) }{\nu^*}_\eta \\
			&+ \frac{1}{2}\scal{\nu^*}{ \di_g\left( \scal{\nu}{G}_\eta \g^g \nu \right) }_\eta  - \scal{ (\g^g )^k \left( (\det_g\Arond) \scal{\nu}{G}_\eta  \Arond_{kl} [(\g^\ggo)^l H] \nu \right) }{ \nu^*},
		\end{align*}
		where 
		\begin{align*}
			G &= - \scal{\Ric_\ggo}{B}_\ggo + 4\left( \frac{\Scal_\ggo}{2} - 3(2-\ve) \right) \vec{b}.
		\end{align*}
	\end{proposition}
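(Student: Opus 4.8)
The plan is to view $\Sr$ as an Einstein--Hilbert--type functional of the induced metric $\ggo=Y^*\eta$ and to differentiate it under variations of $\Phi$, through the dependence \eqref{eq:def_Y_R5} of $Y$ on $\Phi$. We work under \eqref{hyp:no_umbilic}, so that $\ggo=\Arond^2$ is Riemannian (\Cref{lm:metric_ggo}), $Y$ is an immersion, and the frame $(Y,dY,\nu,\nu^*)$ is well defined; since $\det_g\Arond$ never vanishes on the compact $\Sigma$, the sign $\ve$ is constant along small variations and may be treated as a constant. Writing $c:=6(2-\ve)$, the standard Einstein--Hilbert formula gives, for a variation of the metric $\ggo$,
\begin{align*}
	\delta\int_\Sigma\big(c-\Scal_\ggo\big)\,d\vol_\ggo=\int_\Sigma T^{ij}\,\delta\ggo_{ij}\,d\vol_\ggo,\qquad T^{ij}:=\Ric_\ggo^{ij}+\Big(\tfrac{c}{2}-\tfrac{\Scal_\ggo}{2}\Big)\ggo^{ij},
\end{align*}
the divergence terms vanishing as $\Sigma$ is closed. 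For a variation of the immersion $Y$ one has $\delta\ggo_{ij}=\scal{\dr_i\delta Y}{\dr_j Y}_\eta+\scal{\dr_i Y}{\dr_j\delta Y}_\eta$; integrating by parts and using $\di_\ggo T=0$ (contracted Bianchi, $c$ constant) removes the tangential contribution, and the Gauss formula in $\R^{6,1}$ together with $\scal{\delta Y}{Y}_\eta=\tfrac12\delta|Y|^2_\eta=0$ reduces the normal part to $T^{ij}B_{ij}$. By \Cref{second_ff_Y}, \Cref{lm:mean_curv_vectorY} and $\ggo^{ij}B_{ij}=4\vec b$ this equals exactly $-G$, whence, using $\ve\,d\vol_\ggo=(\det_g\Arond)\,d\vol_g$,
\begin{align*}
	\delta\Sr=\int_\Sigma(\det_g\Arond)\,\scal{G}{\delta Y}_\eta\,d\vol_g.
\end{align*}
This is the leading term: because the admissible variations of $\ggo$ come from immersions and not from arbitrary metrics, the Einstein tensor enters only through its contraction $\scal{\Ric_\ggo}{B}_\eta$ with the second fundamental form, as announced before the statement.

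Next I would make $\delta Y$ explicit. Tangential variations of $\Phi$ are infinitesimal reparametrizations and leave the intrinsic quantity $\Sr$ unchanged, so only the normal part $\psi:=\scal{\delta\Phi}{n}_\xi$ matters. The first--variation formulas $\delta n=-\g^g\psi$ and $\delta H=\tfrac14(\lap_g\psi+|A|^2_g\psi)$, together with \eqref{eq:def_Y_R5} and $\delta\nu=\psi(Y-H\nu)$, give after simplification
\begin{align*}
	\delta Y=\tfrac14(\lap_g\psi)\,\nu-g^{kl}(\dr_k\psi)\,\dr_l\nu+\psi\Big(\tfrac14|\Arond|^2_g\,\nu+HY+e_0\Big),
\end{align*}
where I used $\tfrac14|A|^2_g-H^2=\tfrac14|\Arond|^2_g$ and where $e_0:=(0,\dots,0,1,1)$ is a fixed null vector with $\scal{\nu}{e_0}_\eta=\scal{\nu^*}{e_0}_\eta=-1$. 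In particular $\scal{\nu}{\delta Y}_\eta=-\psi$, so $\psi$ couples to the $\nu$--direction.

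Substituting this expression and integrating by parts on the closed $\Sigma$, I would collect the coefficient of $\psi$. The key simplification is that $G$ is normal to $Y(\Sigma)$ and orthogonal to $Y$: differentiating \eqref{eq:derivativeY} gives $\dr_m\nu=(\Arond^{-1})_m^{\ l}\big((\dr_l H)\nu-\dr_l Y\big)$, hence $\scal{G}{\dr_m\nu}_\eta=(\Arond^{-1})_m^{\ l}(\dr_l H)\scal{G}{\nu}_\eta$ and $\Arond_{kl}(\g^\ggo)^l H=(\Arond^{-1})_k^{\ m}\dr_m H$, so every term collapses to the two scalars $\scal{\nu}{G}_\eta$ and $\scal{\nu^*}{G}_\eta$, the latter entering only through $\scal{G}{e_0}_\eta=-\langle\nu,\nu^*\rangle^{-1}\big(\scal{\nu}{G}_\eta+\scal{\nu^*}{G}_\eta\big)$. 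After integration by parts the $\lap_g\psi$ and $\g^g\psi$ contributions are reorganized as the vectorial $\lap_g\!\big((\det_g\Arond)\scal{\nu}{G}_\eta\,\nu\big)$ and $\di_g\!\big(\scal{\nu}{G}_\eta\,\g\nu\big)$ paired against $\nu^*$ — a legitimate packaging since $\langle\nu,\nu^*\rangle\neq0$ — which automatically absorbs the lower--order terms generated by the non--parallel frame. Setting the resulting coefficient of $\psi$ to zero, up to the harmless nonzero factor coming from $\langle\nu,\nu^*\rangle=-2/f$, produces the stated equation with $G$ as in the statement.

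The main obstacle is precisely this last bookkeeping. Because $G$, $\nu$, $\nu^*$ and $e_0$ are not parallel along $\Sigma$, expanding the vectorial Laplacian and divergence generates numerous derivative--of--frame terms — exactly the lower order terms flagged before the statement — and organizing them into the compact $\nu^*$--paired form requires the explicit $\Phi^*$ of \Cref{lm:computation_Phistar}, the value of $\scal{\nu}{\nu^*}_\eta$, and the Gauss--Codazzi relation $\di_g\Arond=3\g^g H$. Computing $\delta H$ correctly (the Jacobi operator) and matching the $\Arond^{-1}\g H=\g^\ggo H$ contractions are the delicate points; the remaining ingredients (Einstein--Hilbert, the first variations of $n$ and $H$) are standard, and all overall signs and constants are immaterial since the variation is set to zero.
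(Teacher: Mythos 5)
Your route is viable and genuinely different from the paper's, and the pieces you make explicit are correct: the Einstein--Hilbert first variation with the tangential part killed by contracted Bianchi, the identification of $G$ via \Cref{second_ff_Y} and \Cref{lm:mean_curv_vectorY}, the formula $\delta Y=\tfrac14(\lap_g\psi)\nu-g^{kl}(\dr_k\psi)\dr_l\nu+\psi\big(\tfrac14|\Arond|^2_g\nu+HY+e_0\big)$ (which I checked against \eqref{eq:def_Y_R5}), and the contractions $\scal{G}{\dr_m\nu}_\eta$ and $\scal{G}{e_0}_\eta$. The paper instead first characterizes the admissible variations $\dot Y$ by the two constraints of \Cref{variation_through_CGM}, corrects an arbitrary normal field $Z=r\nu+s\nu^*$ to an admissible one, observes that $\scal{\dot Y^\perp}{G}_\eta$ depends only on the $\nu^*$-component $s$, and integrates by parts against $s$; the stated vectorial terms $\scal{\lap_g((\det_g\Arond)\scal{\nu}{G}_\eta\,\nu)}{\nu^*}_\eta$ etc.\ then appear directly because all derivatives land on $s\nu^*$ as a block. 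Your $\psi$-parametrization is more elementary (it bypasses \Cref{variation_through_CGM} entirely, using only the classical first variations of $n$ and $H$), but it buys this at two costs that you should not understate. First, since $\psi=-\scal{\nu}{\dot Y}_\eta=-s\scal{\nu}{\nu^*}_\eta$, i.e.\ $s=\tfrac f2\psi$ with $f=H^2+|\g^\ggo H|^2_\ggo$ nonconstant, the coefficient-of-$\psi$ equation you obtain is $\tfrac f2$ times the stated identity: equivalent as an Euler--Lagrange equation, but matching the statement literally requires a pointwise division (and a word about the set where $f$ vanishes, which the paper handles by approximation). Second, the ``legitimate packaging'' step is where the actual content lies: your integration by parts yields scalar Laplacians and divergences of coefficient functions, whereas $\scal{\lap_g(h\nu)}{\nu^*}_\eta=(\lap_g h)\scal{\nu}{\nu^*}_\eta+2\scal{\g^g h\cdot\g^g\nu}{\nu^*}_\eta+h\scal{\lap_g\nu}{\nu^*}_\eta$ carries extra frame-derivative terms, so the claimed cancellation against the lower-order terms of $\delta Y$ must be verified term by term rather than being automatic. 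You correctly flag this as the main obstacle, but it is the substance of the proof rather than residual bookkeeping.
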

	The quantity $G$ is the quantity we would obtain if we could consider the variations among all immersions. The formula for $B$ can be found in \cref{second_ff_Y}. In dimension 2, $\ggo$ is always conformal to $g$, so the above quantity vanishes. In dimension 4, it is not true anymore, actually it holds :
	\begin{align*}
		\scal{\nu}{G}_\eta &= \scal{\Ric_\ggo}{\Arond}_\ggo + \tr_\ggo(\Arond)\left( \frac{\Scal_\ggo}{2} - 3(2-\ve) \right).
	\end{align*}
	We observe that the leading order terms of $B$ is only in the direction $\nu$ and of the form $(\g^\ggo)^2 H$. So there are two main terms in the expression of $G$ : the term $\scal{\Ric_\ggo}{B}_\ggo$ which contains a term of the form $\scal{\Ric_\ggo}{(\g^\ggo)^2 H}_\ggo$, and $\Scal_\ggo$ which contains $\lap_\ggo H$. Thus, the striking difference in dimension 4, is that the leading order term in the Euler-Lagrange equation becomes the term $\lap_g(\scal{\nu}{G}_\eta)$, since this is the only term containing four derivatives of $H$.\\
	
	In this section, $\Sigma$ is a smooth closed 4-manifold and $\xi$ is the flat metric on $\R^5$.\\
	
	\subsection{Variations through conformal Gauss maps}
	In order to compute the Euler-Lagrange equation on the functionals defined on $Y$, we need to identify the admissible variations that we obtain through variations of conformal Gauss maps.	In this section, we prove the following proposition :
	\begin{proposition}\label{variation_through_CGM}
		Let $\Phi : \Sigma^4 \to \R^5$ be a smooth immersion and $g_\Phi:= \Phi^*\xi$. Assume that \eqref{hyp:small_umbilic} holds. Let $Z : \Sigma \to T_Y \s^{5,1}$. There exists a smooth variation $(\Phi_t)_{t\in(-1,1)}$ of $\Phi$ such that there exists conformal Gauss maps $(Y_t)_{t\in(-1,1)}$ satisfying $\dot{Y} =Z$ if and only if $Z$ satisfies
		\begin{align}\label{LCGM_relations}
			\left\{\begin{array}{l}
				\scal{\nu}{\lap_{g_\Phi}Z - |\g Y|^2_\eta Z}_\eta = 0,\\
				\scal{\nu}{\g Z}_\eta = 0.
			\end{array}
			\right.
		\end{align}
	\end{proposition}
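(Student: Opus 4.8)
Throughout I would reduce to \emph{normal} variations $\dot\Phi = bn$: a tangential component of $\dot\Phi$ only reparametrises $Y$, producing the Lie-derivative direction $a^k\dr_k Y$, which is irrelevant for the reparametrisation-invariant functionals under study. The backbone of the argument is a pair of pointwise identities valid for \emph{every} conformal Gauss map. Using \eqref{eq:derivativeY}, \eqref{laplace_Y}, the relations $\scal{\nu}{\nu}_\eta = \scal{\nu}{\dr_i\nu}_\eta = 0$, $\scal{\dr^2_{ij}\nu}{\nu}_\eta = -g_{ij}$ and $\tr_g\Arond = 0$, one checks
\[
\scal{\nu}{\g Y}_\eta = 0, \qquad \scal{\nu}{\lap_g Y}_\eta = 0, \qquad \scal{\nu}{Y}_\eta = 0 .
\]
Hence $\scal{\nu}{\lap_g Y - |\g Y|^2_\eta Y}_\eta = 0$ as well. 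The system \eqref{LCGM_relations} is precisely the linearisation of $\scal{\nu}{\g Y}_\eta = 0$ and $\scal{\nu}{\lap_g Y - |\g Y|^2_\eta Y}_\eta = 0$ obtained by freezing $\nu$, $g$ and $|\g Y|^2_\eta$; the proposition asserts that this freezing is legitimate, and the proof amounts to checking that the contributions of $\dot\nu$ and $\dot g$ cancel.

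For necessity I would differentiate the two exact identities along a normal variation. Writing $\nu_t = \bigl(\Phi_t, \tfrac{|\Phi_t|^2-1}{2}, \tfrac{|\Phi_t|^2+1}{2}\bigr)$ gives $\dot\nu = b\,(Y - H\nu)$. Since $\scal{Y}{\dr_i Y}_\eta = \scal{\nu}{\dr_i Y}_\eta = 0$, differentiating $\scal{\nu_t}{\dr_i Y_t}_\eta = 0$ yields at once $\scal{\nu}{\dr_i Z}_\eta = -\scal{\dot\nu}{\dr_i Y}_\eta = 0$, the second relation of \eqref{LCGM_relations}. For the first relation I differentiate $\scal{\nu_t}{\lap_{g_t}Y_t - |\g Y_t|^2_\eta Y_t}_\eta = 0$. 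The $\dot\nu$-contribution equals $b\,\scal{Y-H\nu}{\lap_g Y - |\g Y|^2_\eta Y}_\eta = -2b\,|\Arond|^2_g$ (using $\scal{Y}{\lap_g Y}_\eta = -|\g Y|^2_\eta$ and $\scal{\nu}{\lap_g Y}_\eta = 0$), whereas the variation of the Laplace--Beltrami operator, whose leading part is $\dot g^{ij}\g^2_{ij}Y$ with $\dot g^{ij} = 2bA^{ij}$, contributes $2b\,A^{ij}\scal{\nu}{\g^2_{ij}Y}_\eta = 2b\,A^{ij}\Arond_{ij} = 2b\,|\Arond|^2_g$ by \eqref{hessianY_direction_nu}; all remaining lower-order terms pair against $\scal{\nu}{\g Y}_\eta = 0$ or $\scal{\nu}{Y}_\eta = 0$ and vanish. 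The two pieces cancel, giving $\scal{\nu}{\lap_g Z - |\g Y|^2_\eta Z}_\eta = 0$.

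For sufficiency, given $Z$ satisfying \eqref{LCGM_relations} I set $b := -\scal{\nu}{Z}_\eta$, let $(\Phi_t)$ be the normal variation with $\dot\Phi = bn$, and put $Z_b := \dot Y$. By the necessity part $Z_b$ satisfies \eqref{LCGM_relations} with $\scal{\nu}{Z_b}_\eta = -b = \scal{\nu}{Z}_\eta$, so $W := Z - Z_b$ satisfies \eqref{LCGM_relations} together with $\scal{\nu}{W}_\eta = 0$ and $\scal{W}{Y}_\eta = 0$; it remains to prove $W\equiv 0$. On the dense open set $\{\det_g\Arond\neq 0\}$ of \eqref{hyp:small_umbilic} the tuple $(Y,\dr_1 Y,\dots,\dr_4 Y,\nu,\nu^*)$ is a basis (see \Cref{lm:computation_Phistar}), so I expand $W = c^i\dr_i Y + c^\nu\nu + c^*\nu^*$. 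Then $\scal{\nu}{W}_\eta = 0$ forces $c^* = 0$ because $\scal{\nu}{\nu^*}_\eta\neq 0$; next $\scal{\nu}{\dr_j W}_\eta = c^i\Arond_{ij} = 0$ forces $c^i = 0$ since $\Arond$ is invertible there; finally the first relation collapses to $\scal{\nu}{\lap_g(c^\nu\nu)}_\eta = c^\nu\scal{\nu}{\lap_g\nu}_\eta = -4c^\nu = 0$, where $\scal{\nu}{\lap_g\nu}_\eta = -4$ follows from $\lap_g\Phi = 4Hn$ and $\lap_g|\Phi|^2_\xi = 8 + 8H\scal{n}{\Phi}_\xi$. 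Hence $W = 0$ on a dense set and, being smooth, $W\equiv 0$, so $Z = Z_b = \dot Y$.

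The main obstacle is the necessity of the first relation: it is the only step requiring the first variation of the Laplace--Beltrami operator, and the delicate point is the exact cancellation between the $\dot\nu$- and $\dot g$-contributions, both equal to $2b|\Arond|^2_g$ in absolute value. A secondary difficulty is that the frame $(Y,\dr Y,\nu,\nu^*)$ and the metric $\ggo = \Arond^2$ degenerate on the umbilic locus, so both the expansion of $W$ and the inversion of $\Arond$ are available only on $\{\det_g\Arond\neq 0\}$; this is exactly why the hypothesis \eqref{hyp:small_umbilic} on the density of the non-umbilic set is imposed, the conclusion being propagated to all of $\Sigma$ by smoothness.
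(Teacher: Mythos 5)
Your proof is correct, and it splits naturally into two halves of different character. The necessity half is essentially the paper's argument in a slightly repackaged form: the paper differentiates the identity $\scal{\nu_t}{\lap_{g_t}Y_t}_\eta=0$ and then converts $\scal{\dot\nu}{Y}_\eta$ into $-\scal{\dot Y}{\nu}_\eta$ at the end, whereas you fold the $-|\g Y|^2_{g,\eta}Y$ term into the differentiated identity from the start; the decisive cancellation is the same in both versions, namely $\scal{\dot\nu}{\lap_g Y}_\eta=-\scal{\dot\Phi}{n}_\xi|\Arond|^2_g$ against the contribution $\dot g^{ij}\scal{\nu}{\g^2_{ij}Y}_\eta=2\scal{\dot\Phi}{n}_\xi|\Arond|^2_g$ of the varied Laplacian, using \eqref{hessianY_direction_nu}. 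The sufficiency half is where you genuinely diverge. The paper (Lemma \ref{caracterization_CGM_variation}) starts from an arbitrary variation $Y_t$ of $Y$ in $\s^{5,1}$ with $\dot Y=Z$, reconstructs an immersion $\Phi_t$ from the isotropic normal directions of $Y_t$, writes $Z=\dot\Gamma+(\dot\vp\cdot\g)Y+\dot\beta\nu$ for a reparametrized conformal Gauss map $\Gamma_t$, and uses Lemma \ref{LCGM_invariants} plus the orthogonality relation to kill $\dot\beta$ and $\dot\vp$. You instead guess the candidate variation directly — $\dot\Phi=-\scal{\nu}{Z}_\eta\,n$, which is exactly the $\tilde\Phi_t$ the paper arrives at — and reduce the problem to showing that the difference $W=Z-\dot Y$, which satisfies the two relations together with $\scal{\nu}{W}_\eta=\scal{Y}{W}_\eta=0$, vanishes; this is a pointwise linear-algebra argument in the frame $(\dr Y,\nu,\nu^*)$ on $\{\det_g\Arond\neq0\}$, using the same elementary identities $\scal{\nu}{\dr_j(c^i\dr_iY)}_\eta=c^i\Arond_{ij}$ and $\scal{\nu}{\lap_g(c^\nu\nu)}_\eta=-4c^\nu$ that underlie Lemma \ref{LCGM_invariants}, followed by the same density-and-continuity step. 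Your route buys a shorter and more transparent sufficiency proof that avoids reconstructing $\Phi_t$ from $Y_t$ and the diffeomorphism bookkeeping; the paper's route has the minor advantage of exhibiting explicitly how an arbitrary ambient variation of $Y$ decomposes relative to conformal Gauss maps. One small caveat, which applies equally to the paper: the reduction to normal variations at the outset is not merely a convenience, since a purely tangential $\dot\Phi=a^k\dr_k\Phi$ produces $\dot Y=a^k\dr_kY$ with $\scal{\nu}{\dr_j(a^k\dr_kY)}_\eta=a^k\Arond_{jk}\neq0$, so the second relation genuinely fails for non-normal variations; the proposition must be read, as the paper states just before its lemmas, with the convention $\dot\Phi\perp\g\Phi$, and your opening sentence makes the same (correct) convention explicit.
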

	
	Every conformal Gauss map $Y$ of an immersion $\Phi : \Sigma^4 \to \R^5$ satisfies \eqref{hessianY_direction_nu}. In particular, we obtain $\scal{\lap_{g_\Phi}Y}{\nu}_\eta = 0$. Here we are only interested in variations $(\Phi_t)_{t\in(-1,1)}$ such that $\dot{\Phi}\perp \g \Phi$.

	\begin{lemma}
		Let $(\Phi_t)_{t\in(-1,1)}$ be a smooth deformation of $\Phi$ such that $\dot{\Phi} \perp \g \Phi$. Consider $(Y_t)_{t\in(-1,1)}$ their conformal Gauss map. It holds
		\begin{align}\label{LCGM_perp_variation}
			\scal{\nu}{\lap_{g_\Phi} \dot{Y} - |\g Y|^2_\eta \dot{Y} }_\eta &= 0.
		\end{align}
	\end{lemma}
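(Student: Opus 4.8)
The plan is to reduce everything to a normal variation and then differentiate in $t$ the pointwise relations satisfied by \emph{every} conformal Gauss map. Since $\dot\Phi\perp\g\Phi$, I would write $\dot\Phi=\psi\,n$ for a scalar function $\psi\in C^\infty(\Sigma)$. The first key computation is the variation of $\nu$: differentiating $\nu=\left(\Phi,\tfrac{|\Phi|^2-1}{2},\tfrac{|\Phi|^2+1}{2}\right)^{\!\top}$ and using $\dot\Phi=\psi\,n$ together with the identity $Y-H\nu=\left(n,\scal{n}{\Phi}_\xi,\scal{n}{\Phi}_\xi\right)^{\!\top}$ read off from \eqref{eq:def_Y_R5}, one obtains the clean formula $\dot\nu=\psi\,(Y-H\nu)$. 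This is the formula that makes all subsequent inner products collapse.

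Next I would differentiate in $t$ the orthogonality relations $\scal{Y}{\nu}_\eta=0$ and $\scal{\dr_i Y}{\nu}_\eta=0$, valid for all $t$. Writing $Z=\dot Y$ and using $\scal{Y}{Y}_\eta=1$, $\scal{\dr_i Y}{Y}_\eta=0$ and $\scal{\dr_i Y}{\nu}_\eta=0$, the formula for $\dot\nu$ yields at once $\scal{Z}{\nu}_\eta=-\psi$ and $\scal{\dr_i Z}{\nu}_\eta=0$. The second identity is already the second line of \eqref{LCGM_relations}, and because $\scal{\dr_k Z}{\nu}_\eta=0$ it reduces the Laplacian term to $\scal{\lap_g Z}{\nu}_\eta=g^{ij}\scal{\dr^2_{ij}Z}{\nu}_\eta=-g^{ij}\scal{\dr_i Z}{\dr_j\nu}_\eta$, so that the whole problem becomes the evaluation of a single trace.

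To compute $g^{ij}\scal{\dr_i Z}{\dr_j\nu}_\eta$, I would differentiate in $t$ the relation $\scal{\dr_i Y}{\dr_j\nu}_\eta=-\Arond_{ij}$ (which follows from \eqref{eq:derivativeY} and $\scal{\dr_k\nu}{\dr_l\nu}_\eta=g_{kl}$), giving $\scal{\dr_i Z}{\dr_j\nu}_\eta=-\dot\Arond_{ij}-\scal{\dr_i Y}{\dr_j\dot\nu}_\eta$. Expanding $\dr_j\dot\nu$ from $\dot\nu=\psi(Y-H\nu)$ and using $\scal{\dr_i Y}{\dr_j Y}_\eta=\ggo_{ij}=(\Arond^2)_{ij}$ produces $\scal{\dr_i Z}{\dr_j\nu}_\eta=-\dot\Arond_{ij}-\psi(\Arond^2)_{ij}-\psi H\Arond_{ij}$; tracing with $g^{ij}$ and using $\tr_g\Arond=0$ leaves $-g^{ij}\dot\Arond_{ij}-\psi|\Arond|^2_g$.

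The step I expect to demand the most care is the term $g^{ij}\dot\Arond_{ij}$, which is \emph{not} $\tfrac{d}{dt}\tr_g\Arond$ because the induced metric also varies. Here I would differentiate the constant identity $g^{ij}\Arond_{ij}=0$ to get $g^{ij}\dot\Arond_{ij}=-\dot g^{ij}\Arond_{ij}$, compute $\dot g_{ij}=-2\psi A_{ij}$ (hence $\dot g^{ij}=2\psi A^{ij}$) from $\dot\Phi=\psi n$ and $\g n=-A\g\Phi$, and use $\scal{A}{\Arond}_g=|\Arond|^2_g$ to conclude $g^{ij}\dot\Arond_{ij}=-2\psi|\Arond|^2_g$. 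Substituting back gives $g^{ij}\scal{\dr_i Z}{\dr_j\nu}_\eta=\psi|\Arond|^2_g$, hence $\scal{\lap_g Z}{\nu}_\eta=-\psi|\Arond|^2_g$. Finally, since $|\g Y|^2_\eta=\tr_g\Arond^2=|\Arond|^2_g$ and $\scal{Z}{\nu}_\eta=-\psi$, the subtracted term is $|\g Y|^2_\eta\,\scal{Z}{\nu}_\eta=-\psi|\Arond|^2_g$, so the two contributions cancel and \eqref{LCGM_perp_variation} follows.
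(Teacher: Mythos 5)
Your proof is correct, and it takes a genuinely different (and somewhat more elementary) route than the paper's. The paper differentiates in $t$ the second-order identity $\scal{\nu_t}{\lap_{g_{\Phi_t}}Y_t}_\eta=0$, which forces it to compute the variation of the Laplace--Beltrami operator itself (the $\dot g$-terms inside the divergence) and then to identify $\scal{\dot g}{\g^2Y}_{g}$ with $-2\scal{\dot\Phi}{n}_\xi\,|\Arond|^2_g$ via $\scal{\nu}{\g^2Y}_\eta=\Arond$. You instead differentiate only zeroth- and first-order identities: $\scal{Y}{\nu}_\eta=0$, $\scal{\dr_iY}{\nu}_\eta=0$ and $\scal{\dr_iY}{\dr_j\nu}_\eta=-\Arond_{ij}$, powered by the clean formula $\dot\nu=\psi\,(Y-H\nu)$; the relation $\scal{\dr_iZ}{\nu}_\eta=0$ (which the paper proves separately as \cref{orthogonality_lemma}) then lets you convert $\scal{\nu}{\lap_gZ}_\eta$ into the single trace $-g^{ij}\scal{\dr_iZ}{\dr_j\nu}_\eta$ without ever varying the operator. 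Your handling of the delicate step $g^{ij}\dot\Arond_{ij}=-\dot g^{ij}\Arond_{ij}=-2\psi|\Arond|^2_g$ is right, and all the constituent identities you invoke ($\dot g_{ij}=-2\psi A_{ij}$, $\ggo_{ij}=(\Arond^2)_{ij}$, $|\g Y|^2_{g,\eta}=|\Arond|^2_g$) check out. What your approach buys is that it avoids the variation-of-the-Laplacian formula and yields the sharper pointwise statements $\scal{Z}{\nu}_\eta=-\psi$ and $\scal{\nu}{\lap_gZ}_\eta=-\psi|\Arond|^2_g$ separately, together with \cref{orthogonality_lemma} as a byproduct; what the paper's route buys is that it generalizes more directly to situations where one only knows $\lap_gY\perp\nu$ rather than the full first-order structure of $Y$.
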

	
	\begin{proof}
		Let $\nu_t := \begin{pmatrix}
			\vspace{0.2em}\Phi_t \\ \vspace{0.2em}\frac{1}{2}(|\Phi_t|^2-1) \\ \vspace{0.2em}\frac{1}{2}(|\Phi_t|^2+1)
		\end{pmatrix}$. It holds :
		\begin{align*}
			0 &= \frac{d}{dt} \left( \scal{ \nu_t }{ \lap_{g_{\Phi_t}} Y_t }_\eta \right)_{|t=0} \\
			&= \scal{\dot{\nu}}{\lap_{g_\Phi} Y}_\eta + \scal{\nu}{ \frac{\dr}{\dr t}\left( \lap_{g_{\Phi_t}} Y_t \right)_{|t=0} }_\eta\\
			&= \scal{\dot{\nu}}{\lap_{g_\Phi} Y}_\eta + \scal{ \nu }{  \lap_{g_{\Phi}} \dot{Y} - \frac{1}{2\sqrt{\det g_{\Phi} }} \tr_{g_\Phi}(\dot{g}_\Phi) \dr_i \left[ g_\Phi^{ij} \sqrt{\det g_\Phi} \dr_j Y \right] }_\eta \\
			&+ \scal{ \nu }{ -\frac{1}{\sqrt{\det g_\Phi}} \dr_i\left[ g_\Phi^{i\alpha} (\dot{g}_\Phi)_{\alpha\beta} g_\Phi^{\beta j} \sqrt{\det g_\Phi} \dr_j Y \right] + \frac{1}{2\sqrt{\det g_{\Phi} }} \dr_i \left[ g_\Phi^{ij} \tr_{g_\Phi}(\dot{g}_\Phi) \sqrt{\det g_\Phi} \dr_j Y \right] }_\eta .
		\end{align*}
		Since $\lap_{g_\Psi} Y \perp \nu$ and $\g Y\perp \nu$, it holds :
		\begin{align}\label{LCGM_step1}
			0 &= \scal{\dot{\nu}}{\lap_{g_\Phi} Y}_\eta + \scal{ \nu }{ \lap_{g_{\Phi}} \dot{Y} - \scal{\dot{g}_\Phi}{\g^2 Y}_{g_\Phi} }_\eta.
		\end{align}
		Using \eqref{eq:derivativeY}, we compute the Hessian of $Y$ :
		\begin{align*}
			\g^2 Y &= (\g^2 H) \nu + (\g H - \g \Arond)(\g \nu) - \Arond \g^2 \nu.
		\end{align*}
		Since $\nu,\g \nu\perp \nu$, we obtain 
		\begin{align*}
			\scal{\nu}{\g^2 Y}_\eta = -\scal{\nu}{\Arond \g^2 \nu}_\eta = \scal{\g \nu}{\Arond \g \nu}_\eta = \Arond.
		\end{align*}
		Therefore, \eqref{LCGM_step1} reduces to
		\begin{align}\label{LCGM_step2}
			0 &= \scal{\dot{\nu}}{\lap_{g_\Phi} Y}_\eta + \scal{ \nu }{ \lap_{g_{\Psi}} \dot{Y}}_\eta - \scal{\dot{g}_\Psi}{\Arond}_{g_\Psi} .
		\end{align}
		Using $\dot{\Phi}\perp \g \Phi$, we obtain
		\begin{align*}
			(\dot{g}_\Phi)_{ij} = \scal{\dr_i \dot{\Phi} }{\dr_j \Phi}_\xi + \scal{\dr_i \Phi}{\dr_j \dot{\Phi}}_\xi = -2 \scal{\dot{\Phi} }{\dr^2_{ij} \Phi}_\xi = - 2 \scal{\dot{\Phi}}{n}_\xi A_{ij}.
		\end{align*}
		We now write $\scal{\dot{\Phi}}{n}_\xi$  in terms of $\dot{Y}$. By definition of a conformal Gauss map and using $\nu\perp \dot{\nu}$, it holds 
		\begin{align}
			\scal{\dot{\nu}}{Y}_\eta &= \scal{ \begin{pmatrix}
					\dot{\Phi} \\ \scal{\dot{\Phi}}{\Phi}_\xi \\ \scal{\dot{\Phi}}{\Phi}_\xi 
			\end{pmatrix} }{ \begin{pmatrix}
					n\\ \scal{n}{\Phi}_\xi \\ \scal{n}{\Phi}_\xi
			\end{pmatrix} }_\eta  = \scal{\dot{\Phi}}{n}_\xi. \label{normal_part_variation_Phi}
		\end{align}
		Hence, \eqref{LCGM_step2} can be written as 
		\begin{align}\label{LCGM_step3}
			0 &= \scal{\dot{\nu}}{\lap_{g_\Phi} Y}_\eta + \scal{\nu}{\lap_{g_\Phi} \dot{Y}}_\eta +2\scal{\dot{\nu}}{Y}_\eta |\Arond|^2_{g_\Phi}.
		\end{align}
		Furthermore, we can write the first term as
		\begin{align*}
			\scal{\dot{\nu}}{\lap_{g_\Phi} Y}_\eta &= \scal{\dot{\nu} }{(\lap_{g_\Phi} H) \nu +(\g H - \di \Arond)\g \nu - \Arond \cdot (\g^2 \nu) }_\eta.
		\end{align*}
		Using $\dot{\Phi}\perp \g \Phi$, we conclude $\dot{\nu}\perp \g \nu$ and we obtain
		\begin{align*}
			\scal{\dot{\nu}}{\lap_{g_\Phi} Y}_\eta &= - \scal{\dot{\nu}}{\Arond \cdot \g^2 \nu}_\eta = -\Arond^{\alpha\beta} \scal{\dot{\Phi}}{\g_{\alpha\beta} \Phi}_\xi = -\scal{\dot{\Phi}}{n}_\xi |\Arond|^2_{g_\Phi}.
		\end{align*}
		Therefore, \eqref{LCGM_step3} can be written as
		\begin{align*}
			0 &= \scal{\nu}{\lap_{g_{\Phi}}\dot{Y}}_\eta + \scal{\dot{\nu}}{Y}_\eta |\Arond|^2_{g_\Phi}.
		\end{align*}
		Using $|\Arond|^2_{g_\Phi} = |\g Y|^2_{g_\Phi,\eta}$ and $\scal{\dot{\nu}}{Y}_\eta = -\scal{\dot{Y}}{\nu}_\eta$, we conclude.
	\end{proof}
	
	Since we consider variations of $\Phi$ satisfying $\dot{\Phi}\perp \g \Phi$, the relation \eqref{LCGM_perp_variation} is not enough to characterize the variations coming from conformal Gauss maps. A second condition to fully complete the description is the following.
	
	\begin{lemma}\label{orthogonality_lemma}
		Let $(\Phi_t)_{t\in(-1,1)}$ be a smooth deformation of $\Phi$ such that $\dot{\Phi} \perp \g \Phi$. Consider $(Y_t)_{t\in(-1,1)}$ their conformal Gauss map. It holds $\scal{\g \dot{Y} }{\nu}_\eta = 0$.
	\end{lemma}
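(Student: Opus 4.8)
The plan is to differentiate in $t$ the orthogonality relation $\scal{\g Y}{\nu}_\eta = 0$, which holds for the conformal Gauss map of \emph{any} immersion (see the orthogonality relations stated just after \eqref{eq:derivativeY}). Applied to the family $\Phi_t$, this reads $\scal{\g_i Y_t}{\nu_t}_\eta = 0$ for every $t$ and every index $i$, where $\nu_t := \begin{pmatrix} \Phi_t \\ \frac{|\Phi_t|^2-1}{2} \\ \frac{|\Phi_t|^2+1}{2} \end{pmatrix}$. Since spatial derivatives and the $t$-derivative commute for a smooth family, differentiating at $t=0$ gives
\begin{align*}
0 &= \scal{\g_i \dot{Y}}{\nu}_\eta + \scal{\g_i Y}{\dot{\nu}}_\eta, \qquad \dot{\nu} = \begin{pmatrix} \dot{\Phi} \\ \scal{\Phi}{\dot{\Phi}}_\xi \\ \scal{\Phi}{\dot{\Phi}}_\xi \end{pmatrix}.
\end{align*}
It therefore suffices to prove that the second term $\scal{\g_i Y}{\dot{\nu}}_\eta$ vanishes.

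To evaluate this term I would use the formula \eqref{eq:derivativeY}, namely $\g_i Y = (\dr_i H)\nu - \Arond_i^{\ l}\dr_l\nu$, which yields
\begin{align*}
\scal{\g_i Y}{\dot{\nu}}_\eta &= (\dr_i H)\scal{\nu}{\dot{\nu}}_\eta - \Arond_i^{\ l}\scal{\dr_l\nu}{\dot{\nu}}_\eta.
\end{align*}
The first pairing vanishes because $|\nu|^2_\eta = 0$ identically, so that $\scal{\nu}{\dot{\nu}}_\eta = \tfrac{1}{2}\tfrac{d}{dt}\big(|\nu_t|^2_\eta\big)\big|_{t=0} = 0$. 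For the second pairing, a direct expansion in the Minkowski metric shows that the two light-cone components of $\dr_l\nu$ and of $\dot{\nu}$, both equal to $\scal{\Phi}{\dr_l\Phi}_\xi$ respectively $\scal{\Phi}{\dot{\Phi}}_\xi$, enter with opposite signs and cancel, leaving $\scal{\dr_l\nu}{\dot{\nu}}_\eta = \scal{\dr_l\Phi}{\dot{\Phi}}_\xi$.

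At this point the hypothesis $\dot{\Phi}\perp\g\Phi$ enters: it asserts precisely that $\scal{\dr_l\Phi}{\dot{\Phi}}_\xi = 0$ for all $l$, whence $\scal{\dr_l\nu}{\dot{\nu}}_\eta = 0$ and therefore $\scal{\g_i Y}{\dot{\nu}}_\eta = 0$. Substituting back into the differentiated identity gives $\scal{\g_i\dot{Y}}{\nu}_\eta = 0$ for every $i$, which is the claim. The computation is elementary throughout; the only point demanding care is the bookkeeping of the two light-cone components in the pairing $\scal{\cdot}{\cdot}_\eta$, and the one genuinely useful idea is to differentiate the already-established relation $\scal{\g Y}{\nu}_\eta = 0$ rather than trying to compute $\g\dot{Y}$ directly.
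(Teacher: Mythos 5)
Your proposal is correct and follows essentially the same route as the paper: differentiate the identity $\scal{\g Y}{\nu}_\eta=0$ in $t$, expand $\scal{\g Y}{\dot\nu}_\eta$ via \eqref{eq:derivativeY} together with $\nu\perp\dot\nu$, and observe that $\scal{\g\nu}{\dot\nu}_\eta=\scal{\g\Phi}{\dot\Phi}_\xi=0$ by hypothesis. The only difference is that you spell out the light-cone cancellation explicitly, which the paper leaves implicit.
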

	
	\begin{proof}
		By direct computation, using $\nu\perp \dot{\nu}$, it holds
		\begin{align*}
			\scal{\g Y}{\dot{\nu}}_\eta &= - \Arond \scal{\g \nu}{\dot{\nu}}_\eta.
		\end{align*}
		Using $\dot{\Phi}\perp \g \Phi$, we obtain $\scal{\g \nu}{\dot{\nu}}_\eta = 0$. We conclude thanks to $\scal{\g Y}{\dot{\nu}}_\eta = - \scal{\g \dot{Y} }{\nu}_\eta$.
	\end{proof}
	
	To show that the converse is true, we will need to test the equation \eqref{LCGM_perp_variation} against linear combinations of $\nu$ and $\g \nu$.
	
	\begin{lemma}\label{LCGM_invariants}
		Let $\Omega\subset\Sigma$ be an open set. For any $\alpha^1,\alpha^2,\beta\in C^\infty(\Omega)$, it holds
		\begin{align*}
			\scal{\nu}{\lap_{g_\Phi} (\alpha^k \g_k \nu) - |\g Y|^2_\eta \alpha^k\g_k \nu }_\eta &= -2\di_{g_\Phi}(\alpha),\\
			\scal{\nu}{\lap_{g_\Phi} (\beta \nu) - |\g Y|^2_\eta \beta \nu }_\eta &= -4\beta.
		\end{align*}
	\end{lemma}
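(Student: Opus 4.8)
The plan is to verify both identities pointwise at an arbitrary $x_0\in\Omega$, working in normal coordinates for $g_\Phi$ centred at $x_0$; since both right-hand sides are tensorial (a function, resp.\ the divergence of the vector field $\alpha=\alpha^k\dr_k$), this is enough. The entire computation will rely only on the orthogonality relations $\scal{\nu}{\nu}_\eta=0$, $\scal{\nu}{\dr_i\nu}_\eta=0$, $\scal{\dr_i\nu}{\dr_j\nu}_\eta=g_{ij}$ and those obtained by differentiating them; in particular, differentiating the second gives $\scal{\nu}{\dr^2_{ij}\nu}_\eta=-g_{ij}$. A first observation is that in both identities the term carrying $|\g Y|^2_\eta$ drops out after pairing with $\nu$, since it produces $|\g Y|^2_\eta\,\beta\,\scal{\nu}{\nu}_\eta$ in one case and $|\g Y|^2_\eta\,\alpha^k\scal{\nu}{\dr_k\nu}_\eta$ in the other, both zero because $\nu$ is null and orthogonal to $\g\nu$. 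So everything reduces to the Laplacian terms $\scal{\nu}{\lap_{g_\Phi}(\beta\nu)}_\eta$ and $\scal{\nu}{\lap_{g_\Phi}(\alpha^k\g_k\nu)}_\eta$.

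For the second identity I would expand, the Laplacian acting componentwise on the trivial bundle,
\[
\lap_{g_\Phi}(\beta\nu)=(\lap_{g_\Phi}\beta)\,\nu+2g^{ij}(\dr_i\beta)\,\dr_j\nu+\beta\,\lap_{g_\Phi}\nu .
\]
Pairing with $\nu$ kills the first two summands via $\scal{\nu}{\nu}_\eta=0$ and $\scal{\nu}{\dr_j\nu}_\eta=0$, while the last yields $\beta\,g^{ij}\scal{\dr^2_{ij}\nu}{\nu}_\eta=-\beta\,g^{ij}g_{ij}=-4\beta$, using $\dim\Sigma=4$. This gives the claimed value $-4\beta$ directly.

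For the first identity, set $W:=\alpha^k\dr_k\nu$ and use $\lap_{g_\Phi}W=g^{ij}\dr^2_{ij}W$ at $x_0$. Expanding $\dr^2_{ij}W$ by the Leibniz rule and pairing with $\nu$: the terms proportional to $\dr_k\nu$ vanish, the two cross terms contribute $-(\dr_i\alpha^k)g_{jk}-(\dr_j\alpha^k)g_{ik}$ through $\scal{\nu}{\dr^2_{jk}\nu}_\eta=-g_{jk}$, and there remains a third-order contribution $\alpha^k\scal{\nu}{\dr^3_{ijk}\nu}_\eta$. Contracting with $g^{ij}$ then produces $-2\,\dr_i\alpha^i=-2\,\di_{g_\Phi}\alpha$ at $x_0$, provided the third-order term vanishes.

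The hard part, and the only genuine point, will be showing that $\scal{\nu}{\dr^3_{ijk}\nu}_\eta=0$ at $x_0$; this is exactly where the normal-coordinate hypothesis enters. From the explicit form of $\nu$ its last two components coincide, so they cancel in the Minkowski pairing and $\scal{\dr_i\nu}{\dr^2_{jk}\nu}_\eta=\scal{\dr_i\Phi}{\dr^2_{jk}\Phi}_\xi$; at $x_0$ the vector $\dr^2_{jk}\Phi$ is proportional to $n$, hence orthogonal to $\dr_i\Phi$, so this quantity is zero. Differentiating $\scal{\nu}{\dr^2_{jk}\nu}_\eta=-g_{jk}$ and evaluating at $x_0$, where $\dr_i g_{jk}=0$ and the term just computed vanishes, forces $\scal{\nu}{\dr^3_{ijk}\nu}_\eta=0$. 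As $x_0\in\Omega$ was arbitrary, both identities hold on all of $\Omega$; beyond this cancellation the argument is pure bookkeeping of the orthogonality structure of $\nu$, with no estimate required.
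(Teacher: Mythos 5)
Your proof is correct and follows essentially the same route as the paper: the $|\g Y|^2_\eta$ terms are killed by $\scal{\nu}{\nu}_\eta=0$ and $\scal{\nu}{\g\nu}_\eta=0$, and the Laplacian terms are handled with the relations $\scal{\nu}{\dr^2_{jk}\nu}_\eta=-g_{jk}$ and $\scal{\dr_i\nu}{\dr^2_{jk}\nu}_\eta=\scal{\dr_i\Phi}{\dr^2_{jk}\Phi}_\xi$. The only cosmetic difference is that you evaluate at the centre of normal coordinates, where the Christoffel-symbol contribution $\scal{\dr_i\nu}{\dr^2_{jk}\nu}_\eta$ vanishes outright, whereas the paper carries it along in arbitrary coordinates as a $\g_k\log\sqrt{\det g_\Phi}$ term.
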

	
	\begin{proof}
		We show the first relation. By direct computation using $\nu \perp \g \nu$, it holds
		\begin{align*}
			\scal{\nu}{\lap_{g_\Phi} (\alpha^k \g_k \nu) - |\g Y|^2_\eta \alpha^k\g_k \nu }_\eta &= \scal{\nu}{\lap_{g_\Phi} (\alpha^k \g_k \nu) }_\eta \\
			&= 2\scal{\nu }{\g^j \alpha^k \g_{jk} \nu}_\eta + \scal{\nu}{\alpha^k \lap_{g_\Phi}(\g_k \nu)}_\eta \\
			&= -2\scal{\g_j \nu }{\g^j \alpha^k \g_k \nu}_\eta + \alpha^k \g^i (\scal{\nu}{ \g_{ik} \nu }_\eta) - \alpha^k \scal{\g^i \nu}{\g_{ik} \nu}_\eta \\
			&= -2\di_{g_\Phi} (\alpha) - \alpha^k \g^i (g_\Phi)_{ik} - \alpha^k  \g_k[\log\sqrt{\det g_\Phi}] \\
			&= -2\di_{g_\Phi} (\alpha).
		\end{align*}
		We show the second relation : using $\nu,\g \nu \perp \nu$, we have
		\begin{align*}
			\scal{\nu}{\lap_{g_\Phi} (\beta \nu) - |\g Y|^2_\eta \beta \nu }_\eta &= \scal{\nu}{\lap_{g_\Phi} (\beta \nu)}_\eta = \beta \scal{\nu}{\lap_{g_\Phi}\nu}_\eta = -\beta |\g \nu|^2_\eta = -4\beta.
		\end{align*}
	\end{proof}
	
	We can now prove the caracterization of the variations of $Y$ coming from conformal Gauss maps, which concludes the proof of Proposition \ref{variation_through_CGM}. 
	\begin{lemma}\label{caracterization_CGM_variation}
		We suppose \eqref{hyp:small_umbilic}. Consider a smooth map $Z : (\Sigma,g_\Phi)\to \R^{6,1}$ such that $Z\in T_Y \s^{5,1}$. Assume that $Z$ satisfies the two following equations :
		\begin{align}
			\scal{\nu}{\lap_{g_\Phi} Z - |\g Y|^2_\eta Z }_\eta &= 0, \label{LCGM_perp}\\
			\scal{\g Z }{\nu}_\eta = 0. \label{orthogonality}
		\end{align}
		Then, there exists a smooth variations $(Y_t)_{t\in(-1,1)}$ of $Y$ which are conformal Gauss maps of a variation $(\Phi_t)_{t\in(-1,1)}$ of $\Phi$ satisfying $\dot{\Phi} \perp \g \Phi$ and $\dot{Y} = Z$.
	\end{lemma}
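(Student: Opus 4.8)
The necessity of the two conditions is precisely \eqref{LCGM_perp_variation} together with \Cref{orthogonality_lemma}; what remains is the converse, and the plan is constructive. Given $Z$, I would first read off the normal speed that any realizing variation must have, build the corresponding normal variation $\Phi_t$ of $\Phi$ explicitly, and then prove that the induced first variation $\dot Y$ of the conformal Gauss map equals $Z$. The equality $\dot Y = Z$ will be obtained from a uniqueness principle: the difference $W := Z - \dot Y$ satisfies the (linear) equations \eqref{LCGM_perp} and \eqref{orthogonality} and in addition $\scal{W}{\nu}_\eta = 0$, and I will show these three conditions force $W \equiv 0$.

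To fix the normal speed, I would differentiate $\scal{Y}{\nu}_\eta = 0$ and use \eqref{normal_part_variation_Phi}: for any variation with $\dot\Phi \perp \g\Phi$ one has $\scal{\dot Y}{\nu}_\eta = -\scal{\dot\nu}{Y}_\eta = -\scal{\dot\Phi}{n}_\xi$. I therefore set $\phi := -\scal{Z}{\nu}_\eta \in C^\infty(\Sigma)$ and define $\Phi_t := \Phi + t\,\phi\, n$, which is an immersion for small $|t|$ by compactness of $\Sigma$ and satisfies $\dot\Phi = \phi\, n \perp \g\Phi$. Writing $Y_t$ for the conformal Gauss maps of $\Phi_t$ and $\tilde Z := \dot Y$, the lemmas \eqref{LCGM_perp_variation} and \Cref{orthogonality_lemma} guarantee that $\tilde Z$ obeys \eqref{LCGM_perp} and \eqref{orthogonality}, while the identity above gives $\scal{\tilde Z}{\nu}_\eta = -\phi = \scal{Z}{\nu}_\eta$.

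Set $W := Z - \tilde Z$. Since $Z, \tilde Z \in T_Y\s^{5,1}$ we have $W \perp Y$, by construction $\scal{W}{\nu}_\eta = 0$, and linearity of the two conditions in the variation shows that $W$ satisfies \eqref{LCGM_perp} and \eqref{orthogonality}. Now $\Span(\nu,\g\nu)$ is $5$-dimensional and lies in $\{Y,\nu\}^\perp$, because $\scal{\nu}{Y}_\eta = \scal{\g_k\nu}{Y}_\eta = 0 = \scal{\nu}{\nu}_\eta = \scal{\g_k\nu}{\nu}_\eta$; since $\eta$ is nondegenerate and $Y,\nu$ are linearly independent, $\{Y,\nu\}^\perp$ is also $5$-dimensional, so the two spaces coincide. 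Hence $W \perp Y$ and $W \perp \nu$ yield $W = \beta\nu + \alpha^k\g_k\nu$ with smooth $\alpha,\beta$. Testing \eqref{orthogonality} against $\nu$ and using $\scal{\nu}{\dr^2_{ik}\nu}_\eta = -g_{ik}$ gives $\alpha_i = 0$; with $\alpha = 0$, the second identity of \Cref{LCGM_invariants} turns \eqref{LCGM_perp} into $-4\beta = 0$. Thus $W = 0$, i.e. $\dot Y = Z$, and $(\Phi_t)$ is the required variation.

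No single estimate is hard here; the delicate point is the bookkeeping linking the normal variation of $\Phi$ to the variation of $Y$, in particular the identity $\scal{\dot Y}{\nu}_\eta = -\scal{\dot\Phi}{n}_\xi$ that pins down $\phi$, and the verification that $\tilde Z$ inherits both structural conditions from the earlier lemmas. The hypothesis \eqref{hyp:small_umbilic} enters to make the completed frame $(Y, dY, \nu, \nu^*)$ available where the construction requires $Y$ to be an immersion, and to pass the pointwise conclusion $W = 0$ to all of $\Sigma$ by density and continuity.
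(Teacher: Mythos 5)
Your argument is correct, but it is organized quite differently from the paper's. The paper starts from an arbitrary variation $(Y_t)$ of $Y$ inside $\s^{5,1}$ realizing $Z$, extracts an immersion $\Phi_t$ from the normal bundle of $Y_t$ on the set where $\det_g\Arond\neq 0$, writes $Y_t=\alpha_t\gamma_t+\beta_t\nu_t$ with $\gamma_t$ the conformal Gauss map of $\Phi_t$, reparametrizes so that $\dot{\Phi}\perp\g\Phi$, and then uses \eqref{LCGM_perp} and \eqref{orthogonality} together with the invertibility of $\Arond$ to kill the tangential and $\nu$ components; the identity $Z=\dot{\Gamma}$ is thus first obtained on $\{\det_g\Arond\neq 0\}$ and extended to $\Sigma$ by density, which is exactly where \eqref{hyp:small_umbilic} enters. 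You instead guess the candidate variation $\tilde{\Phi}_t=\Phi-t\scal{Z}{\nu}_\eta\, n$ directly from the necessary identity $\scal{\dot{Y}}{\nu}_\eta=-\scal{\dot{\Phi}}{n}_\xi$ (the paper only arrives at this same $\tilde{\Phi}_t$ at the very end of its proof) and establish $\dot{Y}=Z$ by a pointwise uniqueness argument: the difference $W$ is orthogonal to both $Y$ and $\nu$, hence lies in $\{Y,\nu\}^\perp=\Span(\nu,\g\nu)$ by a dimension count valid wherever $\Phi$ is an immersion, and the two linear conditions annihilate its coefficients via \Cref{LCGM_invariants}. This buys two things: the proof is shorter (no extraction of $\Phi_t$ from $Y_t$, no reparametrization by diffeomorphisms), and it nowhere uses $\det_g\Arond\neq 0$, so it in fact proves the lemma without \eqref{hyp:small_umbilic} and without any density step. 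For that reason your closing remark is the one inaccuracy: the frame $(Y,dY,\nu,\nu^*)$ and the density of $\{\det_g\Arond\neq 0\}$ are needed in the paper's route, but neither actually appears in yours, and attributing a role to \eqref{hyp:small_umbilic} in your argument is unwarranted (though harmless).
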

	
	\begin{proof}
		Let $\Ur := \{ \det_g\Arond = 0\}$.	Since $Z\in T_Y \s^{5,1}$, there exists a variation $(Y_t)_{t\in(-1,1)}$ of $Y$ in $\s^{5,1}$ such that $\dot{Y} = Z$. Consider an open set $\Omega\subset \Sigma\setminus \Ur$ relatively compact. Since $Y$ is strictly space-like on $\Omega$, there exists $t_0\in(0,1)$ depending on $\Omega$ such that for $|t|<t_0$, $Y_t$ is strictly space-like. So if $|t|<t_0$, the normal space of $Y_t$ contains to isotropic directions $\nu_t$ and $\nu_t^*$. Up to exchanging $\nu_t$ and $\nu_t^*$, the family $(Y_t,\g Y_t,\nu_t,\nu_t^*)$ is a direct basis of $\R^{6,1}$. So if we normalize $\nu_t$ in order to have $(\nu_t)_7 - (\nu_t)_6 = 1$, then $\nu_t = \begin{pmatrix}
			\vspace{0.2em}\Phi_t \\ \vspace{0.2em}\frac{|\Phi_t|^2 - 1}{2} \\ \vspace{0.2em}\frac{|\Phi_t|^2+1}{2}
		\end{pmatrix}$ and $(\Phi_t)_{t\in(-t_0,t_0)}$ is a smooth variation of $\Phi$ on $\Omega$. Let $\gamma_t$ be the conformal Gauss map of $\Phi_t$. Since $\Phi_t$ is also an immersion for $|t|$ small enough, the vector space $\Span(\nu_t,\g \nu_t)^\perp$ has dimension 2 and contains three vectors : $\gamma_t$, $Y_t$ and $\nu_t$. Hence, there exists $\alpha_t,\beta_t\in C^\infty(\Omega)$ such that $Y_t= \alpha_t \gamma_t + \beta_t \nu_t$. We remark that $\alpha_t = 1$ and $\beta_0 = 0$ since $Y_0 = Y = \gamma_0$ and $1 = |Y_t|^2_\eta = \alpha_t^2$. Therefore $Z =\dot{Y} = \dot{\gamma} + \dot{\beta} \nu$. Consider a family of diffeomorphisms $\vp_t \in \Diff(\Omega)$ such that $\Phi_t\circ\vp_t^{-1}$ satisfy $\dr_t[\Phi_t\circ\vp_t^{-1}]_{|t=0} \perp \g \Phi$. Let $\Gamma_t := \gamma_t\circ\vp_t^{-1}$ be the conformal Gauss map of $\Phi_t\circ\vp_t^{-1}$. Then it holds
		\begin{align}\label{decomposition_Z_step1}
			Z &= \dot{\Gamma} + (\dot{\vp}\cdot \g )Y + \dot{\beta}\nu.
		\end{align}
		Using \eqref{eq:derivativeY}, we obtain
		\begin{align*}
			Z &= \dot{\Gamma} + (\dot{\beta} + (\dot{\vp}\cdot \g )H )\nu - \dot{\vp} \Arond \g \nu.
		\end{align*}
		Since $Z$ and $\dot{\Gamma}$ satisfy \eqref{LCGM_perp}, by the lemma \ref{LCGM_invariants} we obtain
		\begin{align*}
			\dot{\beta} + (\dot{\vp}\cdot \g )H &= \di_{g_\Phi}(\dot{\vp} \Arond).
		\end{align*}
		Therefore, \eqref{decomposition_Z_step1} reduces to
		\begin{align*}
			Z &= \dot{\Gamma} +\di_{g_\Phi}(\dot{\vp} \Arond) \nu - \dot{\vp} \Arond \g \nu. 
		\end{align*}
		Using the relation \eqref{orthogonality}, \Cref{orthogonality_lemma} and the orthogonality relations $\nu,\g\nu\perp \nu$, we obtain 
		\begin{align*}
			0 = \scal{\g Z}{\nu}_\eta &= \scal{\g \dot{\Gamma}}{\nu}_\eta  - \scal{\dot{\vp} \Arond \g^2 \nu}{\nu}_\eta = \dot{\vp}\Arond.
		\end{align*}	
		Since $\Arond$ is invertible on $\Omega$, we obtain $\dot{\vp} = 0$. So $Z = \dot{\Gamma}$ on $\Omega$. We remark that 
		\begin{align*}
			\scal{\dr_t[\Phi_t\circ \vp_t^{-1}]_{|t=0}}{n}_\xi = - \scal{\dot{\Gamma}}{\nu}_\eta = -\scal{Z}{\nu}_\eta.
		\end{align*}
		By differentiating the definition \eqref{eq:def_Y_R5}, we obtain that $\dot{\Gamma}$ depends only on $\scal{\dr_t[\Phi_t\circ \vp_t^{-1}]_{|t=0}}{n}_\xi$. So $\dot{\Gamma}$ is actually the derivative at $t=0$ of the conformal Gauss map of the variation 
		\begin{align*}
			\tilde{\Phi}_t &:= \Phi - t \scal{Z}{\nu}_\eta n.
		\end{align*}
		The variation $(\tilde{\Phi}_t)_{t\in(-1,1)}$ is a smooth variation of $\Phi$ defined on the whole surface $\Sigma$. Hence, the equality $Z= \dot{\Gamma}$ holds on $\Sigma \setminus \Ur$. Since $\Sigma\setminus \Ur$ is dense in $\Sigma$, we obtain $Z = \dot{\Gamma}$ on $\overline{\Sigma\setminus \Ur }$ by continuity.
	\end{proof}
	
	\subsection{Euler-Lagrange equation of $\Sr$}\label{sub:EL_S}	
	
	The goal of this section, see \Cref{pr:computation_EL_S}, is to compute the Euler-Lagrange equation of the functional $\Sr$. Consider $(\Phi_t)_t$ a variation of a smooth immersion $\Phi:\Sigma^4 \to \R^5$. If $Y_t$ is the conformal Gauss map of $\Phi_t$, then it holds
	\begin{align*}
		\frac{d}{dt} \Er_{GR}(\Phi_t)_{|t=0} &= \frac{\ve}{2} \frac{d}{dt} \left( \int_\Sigma \left( 6(2-\ve) - \Scal_{Y_t^*\eta} \right) \ d\vol_{Y_t^*\eta} \right)_{|t=0},
	\end{align*}
	where $\ve := \sign\left[ \det_g \Arond_\Phi \right]$. Thanks to the assumption \eqref{hyp:no_umbilic}, for $|t|$ small enough, it holds $\sign\left[\det_{g_t} \Arond_{\Phi_t}\right] = \sign\left[\det_g\Arond_\Phi\right]$. Direct computation leads to an equation of the form
	\begin{align*}
		\int_\Sigma \scal{\dot{Y}}{G}_\eta d\vol_\ggo &= 0,
	\end{align*}
	for any $\dot{Y}$ coming from a variation through conformal Gauss maps. However, to obtain a pointwise equation, we need to consider the restrictions \eqref{LCGM_relations}. We proceed to this computation in the following proposition.
	
	\begin{proposition}\label{pr:computation_EL_S}
		Let $\Phi : \Sigma^4 \to \R^5$ be a smooth immersion satisfying \eqref{hyp:no_umbilic}. Let $\ve := \sign\left[\det_g\Arond\right]$. If the immersion $\Phi$ is a critical point of $W$ then its conformal Gauss map $Y$ satisfies the following. For any variation $(Y_t)_{t\in(-1,1)}$ of conformal Gauss maps, it holds
		\begin{align*}
			\int_\Sigma \scal{\dot{Y}}{G}_\eta d\vol_\ggo &= 0,
		\end{align*}
		where 
		\begin{align*}
			G &= - \scal{\Ric_\ggo}{B}_\ggo + 4\left( \frac{\Scal_\ggo}{2} - 3(2-\ve) \right) \vec{b}.
		\end{align*}
		As a consequence, we obtain the following pointwise relation :
		\begin{align*}
			0 =& \ve \Big( \scal{\nu^*}{G}_\eta - \frac{\scal{\nu}{\nu^*}_\eta}{4} |\g Y|^2_{g,\eta} \scal{\nu}{G}_\eta \Big)(\det\Arond) - \frac{1}{4} \scal{\lap_g\left( (\det_g\Arond)\scal{\nu}{G}_\eta  \nu \right) }{\nu^*}_\eta \\
			&+ \frac{1}{2}\scal{\nu^*}{ \di_g\left( \scal{\nu}{G}_\eta \g^g \nu \right) }_\eta  - \scal{ (\g^g )^k \left( (\det_g\Arond) \scal{\nu}{G}_\eta  \Arond_{kl} [(\g^\ggo)^l H] \nu \right) }{ \nu^*}.
		\end{align*}
	\end{proposition}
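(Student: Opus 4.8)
The plan is to compute the first variation of $\Sr$ in two stages. In the first stage I pretend that $Y$ varies among \emph{all} immersions of $\Sigma$ into $\s^{5,1}$; this produces the normal field $G$ and the integral identity. In the second stage I impose that the variation actually comes from a family of conformal Gauss maps, i.e.\ that $\dot Y$ obeys the two constraints \eqref{LCGM_relations} of \Cref{variation_through_CGM}, and convert the integral identity into the pointwise equation by a Lagrange multiplier argument whose multipliers are computed through \Cref{LCGM_invariants}.

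For the first stage I would write $\Sr = \frac{\ve}{2}\int_\Sigma\big(6(2-\ve)-\Scal_\ggo\big)\,d\vol_\ggo$ and differentiate a smooth family $Y_t$ with $\dot Y = Z$. Since $\Sr$ is invariant under reparametrisation I may take $Z$ normal to $Y(\Sigma)$, so that $\dot\ggo_{ij} = -2\scal{Z}{B_{ij}}_\eta$. Feeding this into the classical first variation of the total scalar curvature, $\delta\int\Scal_\ggo\,d\vol_\ggo = \int_\Sigma\big(\tfrac12\Scal_\ggo\,\ggo^{ij}-\Ric_\ggo^{ij}\big)\dot\ggo_{ij}\,d\vol_\ggo$ up to a divergence that integrates to zero on the closed manifold $\Sigma$, and combining with $\delta\,d\vol_\ggo = \tfrac12\ggo^{ij}\dot\ggo_{ij}\,d\vol_\ggo$, I would contract using $\ggo^{ij}B_{ij} = 4\vec b$ from \Cref{lm:mean_curv_vectorY} and $\Ric_\ggo^{ij}B_{ij} = \scal{\Ric_\ggo}{B}_\ggo$. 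Collecting the terms gives $\delta\Sr = \ve\int_\Sigma\scal{Z}{G}_\eta\,d\vol_\ggo$ with $G = -\scal{\Ric_\ggo}{B}_\ggo + 4\big(\tfrac{\Scal_\ggo}{2}-3(2-\ve)\big)\vec b$; since $\ve=\pm1$, vanishing of $\delta\Sr$ along admissible variations yields the first displayed identity. Note that $G$ is normal to $Y(\Sigma)$, hence a combination of $\nu$ and $\nu^*$.

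For the second stage the obstruction is that $Z=\dot Y$ is not arbitrary: by \Cref{variation_through_CGM} it must satisfy $\scal{\nu}{\lap_g Z - |\g Y|^2_\eta Z}_\eta = 0$ and $\scal{\nu}{\g Z}_\eta = 0$. I would introduce a scalar multiplier $\lambda$ and a $1$-form multiplier $\mu$ so that
\[
\int_\Sigma\scal{Z}{G}_\eta\,d\vol_\ggo = \int_\Sigma\Big(\lambda\,\scal{\nu}{\lap_g Z - |\g Y|^2_\eta Z}_\eta + \mu^k\,\scal{\nu}{\g_k Z}_\eta\Big)\,d\vol_g
\]
holds for every section $Z$ of $T_Y\s^{5,1}$; the fact that \Cref{LCGM_invariants} inverts the two constraint operators explicitly in the directions $\nu$ and $\g\nu$ is exactly what makes these multipliers both exist and be computable. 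Testing against $Z=\beta\nu$ and using $\scal{\nu}{\lap_g(\beta\nu)-|\g Y|^2_\eta\beta\nu}_\eta = -4\beta$ forces $\lambda = -\tfrac{\ve}{4}(\det\Arond)\scal{\nu}{G}_\eta$. Testing against $Z=\alpha^k\g_k\nu$ and using $\scal{\nu}{\lap_g(\alpha^k\g_k\nu)-|\g Y|^2_\eta\alpha^k\g_k\nu}_\eta = -2\di_g\alpha$ together with the elementary relation $\scal{\nu}{\g_l(\alpha^k\g_k\nu)}_\eta = -\alpha_l$ determines $\mu_k$ in terms of $\g\lambda$ and $\scal{\dr_k\nu}{G}_\eta$; here the key computation is $\scal{\dr_k\nu}{\nu^*}_\eta = -\tfrac{2}{f}\Arond_{kl}(\g^\ggo H)^l$, obtained from the expansion $\Phi^* = \Phi+\beta^j\dr_j\Phi+\gamma n$ of \Cref{lm:computation_Phistar}, which is precisely what produces the factor $\Arond_{kl}(\g^\ggo)^l H$ in the final equation.

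Finally I would integrate the multiplier identity by parts, using the self-adjointness of $\lap_g$ and $\di_g$ on the closed manifold $\Sigma$ and the weight $d\vol_\ggo = |\det\Arond|\,d\vol_g$, to obtain $\ve(\det\Arond)G = \lap_g(\lambda\nu)-|\g Y|^2_\eta\lambda\nu-\di_g(\mu^k\nu)$ modulo a multiple of $Y$. Pairing this vector identity with $\nu^*$ (which kills both the $Y$ and the $\nu^*$ directions) and substituting the values of $\lambda$ and $\mu$ reorganises the four contributions into the stated relation. I expect the decisive difficulty to be exactly this constrained reduction: the naive Euler--Lagrange equation $G=0$ is \emph{not} correct, and one must carry the multipliers, compute $\scal{\dr_k\nu}{\nu^*}_\eta$, and track carefully the weights $\det\Arond$ and the signs $\ve$ while rearranging the divergence terms $\di_g(\scal{\nu}{G}_\eta\g\nu)$ and $\g^k\big((\det\Arond)\scal{\nu}{G}_\eta\Arond_{kl}(\g^\ggo)^l H\,\nu\big)$.
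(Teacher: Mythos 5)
Your proposal is correct and follows essentially the same route as the paper: the unconstrained first variation of the total scalar curvature (with $\dot\ggo_{ij}=-2\scal{Z}{B_{ij}}_\eta$ and $\tr_\ggo B=4\vec b$) produces $G$ and the integral identity, and the constrained reduction uses exactly the ingredients the paper uses, namely \Cref{variation_through_CGM}, the inversion of the two constraints on the $\nu$ and $\g\nu$ directions from \Cref{LCGM_invariants}, and the computation $\scal{\g_k\nu}{\nu^*}_\eta=-\tfrac{2}{f}\Arond_{kl}(\g^\ggo H)^l$ coming from \Cref{lm:computation_Phistar}. The only difference is presentational: you phrase the reduction as a Lagrange-multiplier identity whose multipliers are determined by testing against $\beta\nu$ and $\alpha^k\g_k\nu$, whereas the paper establishes the same fact constructively by correcting an arbitrary normal field $Z$ to an admissible variation $E=Z+\alpha\nu+\beta^i\g_i\nu$ and then specializing to $Z=s\nu^*$ — that construction is precisely what guarantees the existence of the multipliers you posit.
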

	
	\begin{proof}
		Since the functional $Y \mapsto \left( 6(2-\ve) \vol_{Y_t^* \eta}(\Sigma) - \int_\Sigma \Scal_{Y_t^*\eta} d\vol_{Y_t^* \eta} \right)$ is a geometric functional, only the normal variations generate nontrivial contributions to the first derivative. Let $Z$ be a smooth vector field along $Y$, normal to $Y(\Sigma)$. Let 
		\begin{align*}
			E &:= Z + \alpha\nu + \beta^i \g^{g_\Phi}_i \nu,\\
			\beta &:= \scal{\g^{g_\Phi} Z}{\nu}_\eta,\\
			\alpha &:= -\frac{1}{4} \scal{\nu}{\lap_{g_\Phi}Z + |\g Y|^2_{g_\Phi,\eta} Z}_\eta - \frac{1}{2}\scal{\g Z}{\g \nu}_{g_\Phi,\eta}.
		\end{align*}
		Then $E$ satisfies the two following equations :
		\begin{align*}
			\scal{\g^{g_\Phi} E}{\nu}_\eta &= \scal{\g^{g_\Phi} Z}{\nu}_\eta - \beta = 0,\\
			\scal{\nu}{\lap_{g_\Phi}E - |\g Y|^2_{g_\Phi,\eta} E}_\eta &= \scal{\nu}{\lap_{g_\Phi}Z - |\g Y|^2_{g_\Phi,\eta} Z}_\eta -2\di_{g_\Phi}(\beta) - 4\alpha =0.
		\end{align*}
		Hence, there exists a variation $(\Phi_t)_{t\in(-1,1)}$ of $\Phi$ such that $\dot{\Phi}\perp \g \Phi$ and there conformal Gauss maps $(Y_t)_{t\in(-1,1)}$ satisfy $\dot{Y} = E$. Thanks to \cite[Proposition 1.1]{viaclovsky2016}, if $\ggo_t := Y_t^* \eta$, then it holds
		\begin{align*}
			\frac{d}{dt} \left( \int_\Sigma \Scal_{\ggo_t} - 6(2-\ve) \ d\vol_{\ggo_t} \right)_{|t= 0} &= \int_\Sigma \left[ -(\Ric^\ggo)^{ij} + \left( \frac{\Scal_\ggo}{2} - \frac{6(2-\ve)}{2} \right)\ggo^{ij} \right] \dot{\ggo}_{ij}\ d\vol_\ggo,
		\end{align*}
		where the indices have been raised with respect to $\ggo$. Let $\dot{Y}^\perp$ be the normal part of $\dot{Y}$ along $Y(\Sigma)$. We compute the variation of the metric :
		\begin{align*}
			\dot{\ggo}_{ij} &= \scal{\dr_i \dot{Y}^\perp }{\dr_j Y}_\eta + \scal{\dr_i Y}{\dr_j \dot{Y}^\perp }_\eta = -2\scal{\dot{Y}^\perp }{\dr^2_{ij} Y}_\eta.
		\end{align*}
		Hence, it holds
		\begin{align}\label{derivative_scalar}
			\frac{d}{dt} \left( \int_\Sigma \Scal_{\ggo_t} - 6(2-\ve) \ d\vol_{\ggo_t} \right)_{|t= 0} &= \int_\Sigma \scal{\dot{Y}^\perp }{ - \scal{\Ric^\ggo}{\g^2 Y}_\ggo + \left( \frac{\Scal_\ggo}{2} - 3(2-\ve) \right) \lap_\ggo Y }_\eta\ d\vol_\ggo
		\end{align}
		Let $G := \proj_{[T Y(\Sigma)]^\perp}\left[ - \scal{\Ric^\ggo}{\g^2 Y}_\ggo + \left( \frac{\Scal_\ggo}{2} - 3(2-\ve) \right) \lap_\ggo Y \right] \in \R^{6,1}$. Using the relations 
		\begin{align*}
			\vec{b} = \frac{1}{4} \proj_{[T Y(\Sigma)]^\perp}[\lap_\ggo Y], & &B = \proj_{[T Y(\Sigma)]^\perp}[\g^2 Y],
		\end{align*}
		we obtain
		\begin{align*}
			G &= - \scal{\Ric^\ggo}{B}_\ggo + 4\left( \frac{\Scal_\ggo}{2} - 3(2-\ve) \right) \vec{b}.
		\end{align*}
		From \eqref{derivative_scalar}, we get
		\begin{align}\label{derivative_scalar2}
			\frac{d}{dt} \left( \int_\Sigma \Scal_{\ggo_t} - 6(2-\ve) \ d\vol_{\ggo_t} \right)_{|t= 0} &= \int_\Sigma \scal{\dot{Y}^\perp}{G}_\eta d\vol_\ggo.
		\end{align}
		We now compute $\dot{Y}^\perp = E^\perp$. Since $Z\perp dY$, it holds
		\begin{align*}
			\scal{E}{\dr_i Y}_\eta &= -\beta^j \Arond_{ij}.
		\end{align*}
		So $E^\perp$ is given by :
		\begin{align*}
			E^\perp &= E - \ggo^{ij}\scal{E}{\dr_i Y}_\eta \dr_j Y \\
			&= Z+\alpha\nu + \beta^i \g^{g_\Phi}_i \nu + \beta^k \Arond_{ki} \ggo^{ij} \left( (\dr_i H)\nu - \Arond_i^{\ l}\dr_l \nu \right) \\
			&= Z +\left( \alpha + \beta^k \Arond_{kl} (\g^\ggo)^l H \right) \nu.
		\end{align*}
		We consider the following decomposition in $TY(\Sigma)^\perp$ : 
		\begin{align}\label{eq:decompositionZ}
			Z=r\nu + s\nu^*.
		\end{align}
		We express the relations \eqref{LCGM_relations} in terms of $a$ and $b$. We start with the second relation.
		\begin{align*}
			\scal{\g^{g_\Phi}_j E}{\nu}_\eta &= \scal{\g^{g_\Phi}_j(s\nu^*) + \beta^i \g_{ij}^{g_\Phi} \nu }{\nu}_\eta \\
			&=  \scal{\nu}{\g_j (s \nu^*)}_\eta - \beta^i(g_\Phi)_{ij}.
		\end{align*}
		We deduce that
		\begin{align}\label{eq:def_betaj}
			\beta_j &=\scal{\nu}{\dr_j (s \nu^*)}_\eta .
		\end{align}
		The first relation of \eqref{LCGM_relations} can be expressed in the following way.
		\begin{align*}
			\scal{\nu}{\lap_g E- |\g Y|^2_{g,\eta} E}_\eta =& \scal{\nu}{ (r+\alpha) (\lap_g \nu) + \lap_g(s\nu^*) + 2(\g^j \beta^i)(\g_{ij} \nu) + \beta^i \lap_g(\g_i \nu) -|\g Y|^2_{g,\eta}b\nu^* }_\eta \\
			=& -(r+\alpha)|\g \Phi|^2_g + \scal{\nu}{\lap_g(s\nu^*)} - |\g Y|^2_{g,\eta} s \scal{\nu}{\nu^*}_\eta \\
			&- 2 \g_i \beta^i + \beta^i \left[ \g^k(\scal{\nu}{\g_{ik} \nu}_\eta ) - \scal{\g^k \nu}{\g_{ik} \nu}_\eta \right].
		\end{align*}
		We obtain an expression of the quantity $r+\alpha$ :
		\begin{align}
			4(r+\alpha) &= \scal{\nu}{\lap_g(s\nu^*)} - |\g Y|^2_{g,\eta} s \scal{\nu}{\nu^*}_\eta - 2\di_g(\beta) + \beta^i \left[ -\g^k g_{ik} - \g_i \log \sqrt{\det g} \right] \nonumber\\
			&= \scal{\nu}{\lap_g(s\nu^*)} - |\g Y|^2_{g,\eta} s \scal{\nu}{\nu^*}_\eta - 2\di_g(\beta). \label{eq:r_plus_alpha}
		\end{align}
		Using \eqref{eq:def_betaj}-\eqref{eq:r_plus_alpha}, we write the integrand of \eqref{derivative_scalar2} in the following form :
		\begin{align*}
			& \scal{E^\perp}{G}_\eta \\
			=& \left( r+\alpha + \beta^k \Arond_{kl} (\g^\ggo)^l H \right) \scal{\nu}{G}_\eta + s \scal{\nu^*}{G}_\eta \\
			=& \Big( \frac{1}{4} \scal{\nu}{\lap_g(s\nu^*)}_\eta - \frac{|\g Y|^2_{g,\eta}}{4} \scal{\nu}{\nu^*}_\eta s - \frac{1}{2} \di_g (\beta) +\beta^k \Arond_{kl} (\g^\ggo)^l H \Big) \scal{\nu}{G}_\eta + s \scal{\nu^*}{G}_\eta \\
			=& s \scal{\nu^*}{G}_\eta \\
			&+ \scal{\nu}{G}_\eta \Big( - \frac{1}{4} \scal{\nu}{\lap_g (s\nu^*)}_\eta -\frac{1}{2} \scal{\g \nu}{\g(s\nu^*)}_{g,\eta} + \scal{\nu}{(\g^g)^k(s\nu^*)} \Arond_{kl} (\g^\ggo)^l H - \frac{|\g Y|^2_{g,\eta}}{4} \scal{\nu}{\nu^*}_\eta s \Big)
		\end{align*}
		Hence, for any $s\in C^\infty_c(\Sigma)$, it holds 
		\begin{align*}
			0 =& \int_\Sigma s\scal{\nu^*}{G}_\eta d\vol_\ggo\\
			&+ \int_\Sigma \scal{\nu}{G}_\eta \Big( - \frac{1}{4} \scal{\nu}{\lap_g (s\nu^*)}_\eta -\frac{1}{2} \scal{\g \nu}{\g(s\nu^*)}_{g,\eta} + \scal{\nu}{(\g^g)^k(s\nu^*)} \Arond_{kl} (\g^\ggo)^l H - \frac{|\g Y|^2_{g,\eta}}{4} \scal{\nu}{\nu^*}_\eta s \Big) d\vol_\ggo .
		\end{align*}
		We integrate by parts :
		\begin{align*}
			0 =& \int_\Sigma s\Big( \scal{\nu^*}{G}_\eta - \frac{\scal{\nu}{\nu^*}_\eta}{4} |\g Y|^2_{g,\eta} \scal{\nu}{G}_\eta \Big) d\vol_\ggo\\
			&- \int_\Sigma \frac{1}{4} \scal{\lap_g\left( (\det_g\Arond)\scal{\nu}{G}_\eta  \nu \right) }{s\nu^*}_\eta d\vol_g\\
			&+ \int_\Sigma \frac{1}{2}\scal{s\nu^*}{ \di_g\left( \scal{\nu}{G}_\eta \g \nu \right) }_\eta d\vol_g\\
			& - \int_\Sigma \scal{ (\g^g )^k \left( (\det_g\Arond) \scal{\nu}{G}_\eta  \Arond_{kl} [(\g^\ggo)^l H] \nu \right) }{s \nu^*} d\vol_g.
		\end{align*}
		Since the above quantity vanishes for any $s\in C^\infty(\Sigma)$, we obtain :
		\begin{align*}
			0 =& \left( \scal{\nu^*}{G}_\eta - \frac{\scal{\nu}{\nu^*}_\eta}{4} |\g Y|^2_{g,\eta} \scal{\nu}{G}_\eta\right) |\det_g\Arond|\\
			&- \frac{1}{4} \scal{\lap_g\left( (\det_g\Arond)\scal{\nu}{G}_\eta  \nu \right) }{\nu^*}_\eta\\
			&+ \frac{1}{2}\scal{\nu^*}{ \di_g\left( \scal{\nu}{G}_\eta \g \nu \right) }_\eta \\
			&-\scal{ (\g^g )^k \left( (\det_g\Arond) \scal{\nu}{G}_\eta  \Arond_{kl} [(\g^\ggo)^l H] \nu \right) }{ \nu^*}.
		\end{align*}
	\end{proof}

	\section{Computation of the Euler-Lagrange equation of $\pr$ and $\Er_P$}\label{sec:proof_EL_P}
	
	We now focus on the Euler-Lagrange equation of $\pr$. Again, at first glance, a critical point of $\pr$ should satisfy a system of leading order terms $P_g Y - \frac{8}{3} \lap_{4,g} Y +2|\det_g \Arond| \vec{b}$, where $\lap_{4,g} Y = \di_g(|\g^g  Y|^2_g \g^g  Y)$ is the $4$-Laplacian in the metric $g$. Again, this system is perturbed by lower order terms coming from the fact that we restrict ourselves to variations by conformal Gauss maps. In the following result, given two vectors $a,b\in\R^7$, we denote $a\wedge b$ the matrix of size $7\times 7$ given by $(a\wedge b)^{ij} := a^i b^j - a^j b^i$.
	
	\begin{proposition}\label{pr:EL_equ_P}
		Let $\Sigma$ be a compact smooth four dimensional manifold. Let $\Phi : \Sigma \to \R^5$ be a smooth immersion satisfying \eqref{hyp:small_umbilic}. Let $Y : \Sigma \to \s^{5,1}$ be its conformal Gauss map. Let $g = \Phi^*\xi$ and $\ggo = Y^*\eta$. We define
		\begin{align*}
			E_Y &:= \frac{16}{3} \lap_{g,4} Y + 2P_g Y + 4|\det_g\Arond|\vec{b},\\
			C_Y &:= Y \wedge \left( V + |\det_g\Arond|\g^\ggo Y\right) + \g^g Y\wedge \lap_g Y  , \\
			V &:= -\g^g \lap_g Y +\frac{2}{3}\Scal_g \g^g Y-2  \Ric_g\g^g Y - \frac{8}{3} |\g Y|^2_{g,\eta} \g^g Y,
		\end{align*}
		where $\vec{b}$ is the mean curvature vector of $Y$. The Euler-Lagrange equation of $\pr$ is the following system :
		\begin{align*}
			0=&-\frac{|\g Y|^2_{g,\eta}}{4} \scal{\nu}{E_Y}_\eta \nu + \proj_{T_Y \s^{5,1}} \Bigg[ E_Y + \di_g\left( \frac{1}{2} \scal{\nu}{E_Y}_\eta \g^g \nu - \scal{\g^g \nu}{E_Y} \nu \right) -\frac{1}{4} \lap_g \left( \scal{\nu}{E_Y}_\eta \nu \right)  \Bigg],
		\end{align*}
		where $\proj_{T_Y \s^{5,1}}$ is the map $(z\mapsto z-\scal{z}{Y}_\eta z)$. It holds $\scal{\g Y}{E_Y}_\eta = 0$. The associated conservation laws are :
		\begin{align*}
			\di_g\left( \frac{1}{2} \scal{\nu}{E_Y}_\eta \g^g \left( Y\wedge \nu \right) - \scal{\g^g \nu}{E_Y} Y\wedge \nu - C_Y\right)  - \frac{1}{4} \lap_g \left( \scal{\nu}{E_Y}_\eta Y\wedge \nu  \right) = 0. 
		\end{align*}
	\end{proposition}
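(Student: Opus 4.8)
The plan is to mirror the two-stage scheme already used for $\Sr$ in \Cref{pr:computation_EL_S}: first compute the \emph{unconstrained} first variation of $\pr$, treating $Y$ as a free field into $\s^{5,1}$, and then cut this down to the variations that genuinely come from conformal Gauss maps, using the characterization \eqref{LCGM_relations} of \Cref{variation_through_CGM}. Writing $\pr=\frac14\int_\Sigma \scal{Y}{P_gY}_\eta-\frac43|\g Y|^4_{g,\eta}-4\det\Arond\ d\vol_g$, I would differentiate the three pieces separately. The term $-\frac13\int|\g Y|^4$ has gradient the $4$-Laplacian $\lap_{g,4}Y=\di_g(|\g Y|^2_{g,\eta}\g Y)$; the Paneitz term $\frac14\int\scal{Y}{P_gY}_\eta$ contributes $\frac12 P_gY$ by self-adjointness of $P_g$ in \eqref{eq:integral_paneitz_general_formula}; and the last term, which by \Cref{lm:metric_ggo} is the signed volume $\ve\int d\vol_\ggo$, varies into the mean curvature vector $\vec b$ of \Cref{lm:mean_curv_vectorY}. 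Collecting the three with the correct normalization produces the candidate gradient $E_Y=\frac{16}{3}\lap_{g,4}Y+2P_gY+4|\det_g\Arond|\vec b$, so that $\delta\pr=\frac14\int\scal{\delta Y}{E_Y}_\eta\,d\vol_g$. Since $\pr$ is invariant under reparametrizations of $\Sigma$, its gradient must be orthogonal to the tangential directions, which gives the stated identity $\scal{\g Y}{E_Y}_\eta=0$; this should be checked directly as a consistency test.

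The one genuinely new difficulty, absent from the purely geometric functional $\Sr$, is that $g=\Phi^*\xi$ is \emph{itself} a functional of $Y$: the null normal $\nu$ is recovered from $Y(\Sigma)$ once normalized by $(\nu)_7-(\nu)_6=1$, and $\nu$ encodes $\Phi$, hence $g$. Consequently a variation of $Y$ drags the induced metric along, and one must track $\dot g$ inside $P_g$, inside $\lap_{g,4}$ and inside $d\vol_g$. Using \eqref{normal_part_variation_Phi} one has $\scal{\dot\Phi}{n}_\xi=-\scal{\dot Y}{\nu}_\eta$, so for normal variations $\dot g_{ij}=2\scal{\dot Y}{\nu}_\eta A_{ij}$; the resulting contributions are all proportional to the scalar $\scal{\dot Y}{\nu}_\eta$ and therefore belong to the same family as the $\nu$-correction terms created by the constraints. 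I expect the crux of the whole proof to be exactly this bookkeeping: isolating every occurrence of $\scal{\dot Y}{\nu}_\eta$ and verifying that the metric variation assembles cleanly into (the $\nu$-component of) $E_Y$ rather than generating spurious terms.

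For the second stage I would reuse the device of \Cref{pr:computation_EL_S}. Given an arbitrary normal field $Z=r\nu+s\nu^*$ along $Y$, set $E:=Z+\alpha\nu+\beta^i\g_i\nu$ with $\beta:=\scal{\g Z}{\nu}_\eta$ and $\alpha:=-\frac14\scal{\nu}{\lap_g Z+|\g Y|^2_{g,\eta}Z}_\eta-\frac12\scal{\g Z}{\g\nu}_\eta$, so that by \Cref{LCGM_invariants} the field $E$ satisfies both relations of \eqref{LCGM_relations} and is therefore an admissible variation $\dot Y$. Imposing $\int\scal{E}{E_Y}_\eta\,d\vol_g=0$ for all $Z$, decomposing $E$ into its $T_Y\s^{5,1}$-part and its normal part $E^\perp$, and integrating by parts to push $\lap_g$ and $\di_g$ off $Z$, the two multipliers $\scal{\nu}{E_Y}_\eta$ and $\scal{\g\nu}{E_Y}_\eta$ surface under $\lap_g$ and $\di_g$ precisely as displayed, together with the undifferentiated term $-\frac{|\g Y|^2_{g,\eta}}{4}\scal{\nu}{E_Y}_\eta\nu$ stemming from the $|\g Y|^2$ factor of the first constraint. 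Applying $\proj_{T_Y\s^{5,1}}$ and separating the coefficient of $\nu$ then yields the stated pointwise Euler–Lagrange equation.

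Finally, the conservation law is the Noether identity attached to the invariance $\pr(MY)=\pr(Y)$ for $M\in SO(6,1)$. The infinitesimal generators are $Y\mapsto\Lambda Y$ with $\Lambda$ antisymmetric for $\eta$, i.e. spanned by the matrices $a\wedge b$; contracting the Euler–Lagrange equation against $Y\wedge(\cdot)$ annihilates the normalization term because $\scal{Y\wedge\nu\,,Y}_\eta=0$ and turns the projected equation into a divergence. Concretely, writing each piece of $E_Y$ in divergence form via $Y\wedge\lap^2Y=\di_g(Y\wedge\g\lap_gY-\g Y\wedge\lap_gY)$ and $Y\wedge\di_g(M\g Y)=\di_g(Y\wedge M\g Y)$, the Paneitz and $4$-Laplacian fluxes collapse into $Y\wedge V$ with $V=-\g^g\lap_gY+\frac23\Scal_g\g^g Y-2\Ric_g\g^g Y-\frac83|\g Y|^2_{g,\eta}\g^g Y$, the bi-Laplacian contributes $\g Y\wedge\lap_gY$, and the $\vec b$-term supplies $|\det_g\Arond|\,Y\wedge\g^\ggo Y$, reproducing $C_Y$ while the $\nu$-corrections carry over verbatim. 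Once $E_Y$ is correctly identified in the first two paragraphs, this last step and the projection of the third are essentially formal manipulations.
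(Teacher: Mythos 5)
Your overall architecture is the same as the paper's: identify the unconstrained gradient $E_Y$ piece by piece (and your identifications of the three contributions $\frac{16}{3}\lap_{g,4}Y$, $2P_gY$ and $4|\det_g\Arond|\vec b$ are correct), then correct for the constrained class of variations using \cref{variation_through_CGM} exactly as in \cref{pr:computation_EL_S}, and finally read off the conservation laws as Noether currents of the $SO(6,1)$-invariance. The second half of your outline would go through essentially verbatim.

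The genuine gap is the point you yourself flag as ``the crux'' and then leave as a hope: the contribution of the induced metric $g=\Phi^*\xi$, which is dragged along by any variation of $Y$. You propose to track $\dot g=2\scal{\dot Y}{\nu}_\eta A$ through $P_g$, $\lap_{g,4}$ and $d\vol_g$ by brute force and expect the result to ``assemble cleanly into the $\nu$-component of $E_Y$.'' That is not what happens, and the direct bookkeeping (which requires the first variation of $\Scal_g$ and $\Ric_g$ inside the Paneitz operator under normal deformations of $\Phi$) is both very heavy and not obviously convergent to the stated formula. The mechanism that actually disposes of these terms is the second Noether theorem applied to the $g$-slot: writing $\delta\!\int F(Y,g)=\int\scal{\dot Y}{E_Y}_\eta+\scal{\dot g}{E_g}_g$, the pointwise conformal invariance $F(Y,e^{2u}g)=F(Y,g)$ forces $\tr_gE_g=0$, and the reparametrization invariance then forces $E_g\equiv 0$, so the metric variation contributes \emph{nothing} — it does not fold into $E_Y$, it vanishes. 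This is also what makes the explicit computation of $E_Y$ and $C_Y$ tractable: one differentiates along conformal deformations $\Phi_t=\Theta_t\circ\Phi$, for which $g_t=e^{u_t}g$ and $P_{g_t}=e^{-2u_t}P_g$, so every $\dot u$ term cancels pointwise and one never has to vary the Paneitz operator in a general direction. Without this structural input your plan stalls at precisely the step you identify as decisive, and your claim $\scal{\g Y}{E_Y}_\eta=0$ (which in the paper is derived \emph{from} $E_g=0$ via the diffeomorphism identity $\scal{\g Y}{E_Y}_\eta=\di_g(E_g)$) is likewise left unsupported.
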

	The quantity $E_Y$ would be the Euler-Lagrange equation if we had access to the variations of $\pr$ in every directions, not only in the directions given by conformal Gauss maps. As well for the conservation laws, the quantity $C_Y$ is the one we would have without this restriction. In dimension 2, it holds $E_Y = \lap_g Y$ and it turns out that $E_Y$ is proportional to $\nu$. Thus, $\scal{\nu}{E_Y}_\eta = 0$ and $\scal{\g^g \nu}{E_Y}_\eta = 0$, so we recover the known Euler-Lagrange equations and conservation laws. In the four dimensional case, \cref{lm:EY_nu} shows that $\scal{\nu}{E_Y}_\eta$ does not vanish in general. However, it consists in lower order terms.\\
	The Euler-Lagrange system and the associated conservation laws for $\Er_P$ can be deduced from \cref{pr:EL_equ_P} by forgetting the terms $\lap_{4,g} Y$, $|\det \Arond|\vec{b}$, $|\det \Arond| \g^\ggo Y$ and $|\g Y|^2_{g,\eta} \g^g Y$. That is to say, we remplace $E_Y$ with $2P_g Y$ and $C_Y$ by 
	\begin{align*}
		Y\wedge\left( -\g^g \lap_g Y +\frac{2}{3}\Scal_g \g^g Y-2  \Ric_g\g^g Y\right) + \g^g Y\wedge \lap_g Y.
	\end{align*}

	First in \cref{subsec:Noether}, we explain how to obtain the Euler-Lagrange equation and the conservation laws of a conformally invariant geometric functional of the form $\Phi \mapsto \int_\Sigma F(Y,g)$, for some function $F$ depending only on the conformal Gauss map $Y$ of $\Phi$ and the induced metric $g=\Phi^*\xi$. To do so, we apply the two Noether theorems. In \cref{subsec:EL_P}, we apply this method to $\pr$ and obtain the \cref{pr:EL_equ_P}. Again, we will use \cref{variation_through_CGM}, so we work in the whole section under the hypothesis \eqref{hyp:small_umbilic}.
	
	\subsection{About Noether theorems}\label{subsec:Noether}
	\subsubsection{General form of the Euler-Lagrange equation}
	We work with an energy of the form $\Phi \mapsto \int_\Sigma F(Y,g)$, where $F$ satisfies the pointwise invariance : $F(MY,g) = F(Y,g)$ for any $M\in SO(6,1)$ and $F(Y,e^{2u}g)= F(Y,g)$ for any $u\in C^\infty(\Sigma;\R)$. In dimension 2, we consider 
	\begin{align*}
		F(Y,g) = |\g Y|^2_{g,\eta} d\vol_g .
	\end{align*}
	In dimension 4, we consider 
	\begin{align*}
		F(Y,g) = \left(\scal{Y}{P_g Y}_\eta - \frac{4}{3} |\g Y|^4_{g,\eta}\right)d\vol_g - \sign\left[\det_g\Arond\right]d\vol_\ggo.
	\end{align*}
	Hence, we have a finite dimensional group of invariance in the variable $Y$ and an infinite dimensional group of invariance in the variable $g$. Thanks to Noether theorem, the conservations laws of $\int F(Y,g)$ are only those of the $Y$-part of $F$. This can be understood in the following way. Given an arbitrary deformation $(\Phi_t)_{t\in(-1,1)}$ of $\Phi$, we obtain a variation $(Y_t)_{t\in(-1,1)}$ of $Y$ and a variation $(g_t)_{t\in(-1,1)}$ of $g$. In order to obtain the Euler-Lagrange equation, we compute the derivative for $F(Y_t,g_t)$. It can be written in the form
	\begin{align*}
		\frac{d}{dt} \left( F(Y_t,g) \right)_{|t=0} &= \left( \scal{\dot{Y}}{E_Y}_\eta + \di_g(C_Y) \right) d\vol_g, \\
		\frac{d}{dt} \left(  F(Y,g_t) \right)_{|t=0} &= \left( \scal{\dot{g}}{E_g}_g + \di_g(C_g) \right) d\vol_g.
	\end{align*}
	where $E_Y$ and $E_g$ depends only on $Y$ and $g$, $C_Y$ depends on $Y,g$ and $\dot{Y}$, and $C_g$ depends on $Y,g$ and $\dot{g}$. If we integrate $\frac{d}{dt}F(Y_t,g_t)_{|t=0}$, we obtain an expression of the form 
	\begin{align*}
		0 =\int_\Sigma \scal{\dot{Y} }{E_Y}_\eta + \scal{\dot{g}}{E_g}_g\ d\vol_g.
	\end{align*}
	Using the restriction that $\dot{Y}$ must come from a variation through conformal Gauss maps and $\dot{g}$ comes from a variation through immersions, we obtain that a certain combination of $E_Y$ and $E_g$ must vanish, which is the Euler-Lagrange equation. Indeed, since $\Phi \mapsto \int_\Sigma F(Y,g)$ is geometric, we consider variations such that $\dot{\Phi} = r n$ for some $r\in C^\infty(\Sigma)$. Then $\dot{g} = -2r A = 2\scal{\dot{Y}}{\nu}_\eta A$, and we obtain that for any $\dot{Y}$ satisfying \eqref{LCGM_perp}-\eqref{orthogonality}, it holds
	\begin{align}\label{EL_general_step1}
		0 &= \int_\Sigma \left( \scal{\dot{Y}}{E_Y}_\eta + 2 \scal{\dot{Y}}{\nu}_\eta \scal{A}{E_g}_g \right) d\vol_g.
	\end{align}
	Given a vector field $Z$ along $Y$, the following vector field satisfy \eqref{LCGM_perp}-\eqref{orthogonality} :
	\begin{align*}
		X := Z - \left( \frac{1}{4} \scal{\nu}{\lap_g Z + |\g Y|^2_{g,\eta} Z}_\eta + \frac{1}{2} \scal{\g Z}{\g \nu}_{g,\eta} \right) \nu + \scal{\nu}{\g^i Z}_\eta \g_i \nu.
	\end{align*}
	The quantity $\scal{X}{E_Y}_\eta$ is given by
	\begin{align*}
		\scal{X}{E_Y}_\eta =& \scal{Z}{E_Y}_\eta - \scal{\nu}{E_Y}_\eta \left( \frac{1}{4} \scal{\nu}{\lap_g Z + |\g Y|^2_{g,\eta} Z}_\eta + \frac{1}{2} \scal{\g Z}{\g \nu}_{g,\eta} \right) + \scal{\nu}{\g^i Z}_\eta \scal{\g_i \nu}{E_Y}_\eta \\
		=& \scal{Z}{ E_Y -\frac{|\g Y|^2_{g,\eta}}{4} \scal{\nu}{E_Y}_\eta \nu + \di_g\left( \frac{1}{2} \scal{\nu}{E_Y}_\eta \g^g \nu - \scal{\g^g \nu}{E_Y} \nu \right) }_\eta \\
		&- \di_g\left( \frac{1}{2} \scal{Z}{\g^g \nu}_\eta \scal{\nu}{E_Y}_\eta - \scal{\g^g \nu}{E_Y}_\eta \scal{Z}{\nu}_\eta + \frac{1}{4} \scal{\g^g Z}{ \nu}_\eta \scal{\nu}{E_Y}_\eta \right)\\
		&+ \frac{1}{4}\scal{\g Z}{\g \left( \scal{\nu}{E_Y}_\eta \nu \right) }_{g,\eta} \\
		=& \scal{Z}{ E_Y -\frac{|\g Y|^2_{g,\eta}}{4} \scal{\nu}{E_Y}_\eta \nu + \di_g\left( \frac{1}{2} \scal{\nu}{E_Y}_\eta \g^g \nu - \scal{\g^g \nu}{E_Y} \nu \right) -\frac{1}{4} \lap_g \left( \scal{\nu}{E_Y}_\eta \nu \right) }_\eta \\
		&- \di_g\left( \frac{1}{2} \scal{Z}{\g^g \nu}_\eta \scal{\nu}{E_Y}_\eta - \scal{\g^g \nu}{E_Y}_\eta \scal{Z}{\nu}_\eta + \frac{1}{4} \scal{\g^g Z}{ \nu}_\eta \scal{\nu}{E_Y}_\eta - \frac{1}{4} \scal{Z}{ \g^g \left( \scal{\nu}{E_Y}_\eta \nu \right) }_\eta \right).
	\end{align*}
	Hence, \eqref{EL_general_step1} can be written in the form : for any vector field $Z$ along $Y$, it holds
	\begin{align*}
		0 =& \int_\Sigma \scal{Z}{ E_Y -\frac{|\g Y|^2_{g,\eta}}{4} \scal{\nu}{E_Y}_\eta \nu + \di_g\left( \frac{1}{2} \scal{\nu}{E_Y}_\eta \g^g \nu - \scal{\g^g \nu}{E_Y} \nu \right) -\frac{1}{4} \lap_g \left( \scal{\nu}{E_Y}_\eta \nu \right) }_\eta \\
		& + 2 \scal{Z}{\nu}_\eta \scal{A}{E_g}_g\ d\vol_g.
	\end{align*}
	Therefore, the Euler-Lagrange equation is the following :
	\begin{align}
		& \proj_{T_Y \s^{5,1}} \Bigg[ E_Y + \di_g\left( \frac{1}{2} \scal{\nu}{E_Y}_\eta \g^g \nu - \scal{\g^g \nu}{E_Y} \nu \right) -\frac{1}{4} \lap_g \left( \scal{\nu}{E_Y}_\eta \nu \right)  \Bigg] \label{EL_general_1}\\
		& -\frac{|\g Y|^2_{g,\eta}}{4} \scal{\nu}{E_Y}_\eta \nu +2 \scal{A}{E_g}_g \nu = 0. \label{EL_general_2}
	\end{align}
	
	\subsubsection{Second Noether theorem}
	The second Noether theorem on the $g$ variable gives relations for \textit{any} immersion, not only critical points. Here, we have two infinite dimensional groups of invariance : a first one coming from the pointwise conformal invariance $F(Y,e^{u}g) = F(Y,g)$ for any $u\in C^\infty(\Sigma)$, and a second one coming from the invariance by parametrization of $\Phi \mapsto \int_\Sigma F(Y,g)$.\\
	We first treat the case of the conformal invariance. By pointwise conformal invariance in the variable $g$, for any smooth path of functions $(u_t)_{t\in(-1,1)}$, it holds
	\begin{align}\label{2nd_N_thm}
		0 = \frac{d}{dt} F(Y,e^{u_t}g)_{|t=0} = \left( \scal{\dot{u}g }{E_g}_g + \di_g(C_g) \right) d\vol_g.
	\end{align}
	where $C_g$ depends on $\dot{u}$, $g$ and $Y$. If we integrate this relation, we obtain, for any $\dot{u}\in C^\infty(\Sigma)$ :
	\begin{align*}
		0 &= \int_\Sigma \dot{u} \scal{g}{E_g}_g d\vol_g.
	\end{align*}
	Hence, $\tr_g E_g = 0$. Applying this in \eqref{2nd_N_thm}, we conclude that $\di_g(C_g) = 0$ pointwise, for any $g,Y$ and $\dot{g}$ of the form $\dot{u}g$. Another consequence in \eqref{EL_general_2} is that we can replace the term $\scal{A}{E_g}_g = \scal{\Arond}{E_g}_g$. \\
	We now treat the case of the invariance by parametrization. Let $\Omega\subset \Sigma$ be a smooth open set. For any smooth family of diffeomorphisms $(f_t)_{t\in(-1,1)} \subset \Diff(\Omega)$ and any smooth immersion $\Phi : \Sigma \to \R^5$, it holds
	\begin{align}
		0 = \frac{d}{dt} \left( \int_\Omega F(Y\circ f_t, f_t^*g) \right)_{|t=0} = \int_\Omega \scal{ (\dot{f}\cdot \g) Y}{E_Y}_\eta + \scal{\Lr_{\dot f} g}{E_g}_g\ d\vol_g, \label{eq:invariance_parametrization}
	\end{align}
	where $\Lr$ denotes the Lie derivative. This is valid for any vector field $\dot{f}$, which has not necesserly compact support on $\Omega$. Considering first $\Omega=\Sigma$ and using $\scal{\Lr_{\dot{f}} g}{E_g}_g = \di_g(\dot{f} E_g) -\dot{f}\di_g E_g $, we obtain the pointwise relation : 
	\begin{align}\label{invariance_diffeo1}
		0 &= \scal{\g Y}{E_Y}_\eta - \di_g(E_g).
	\end{align}
	We consider now the case where $\Omega$ is arbitrary. Let $\vec{n}$ be the outwardpointing normal in $\Sigma$ to $\dr\Omega$. By inserting \eqref{invariance_diffeo1} into \eqref{eq:invariance_parametrization}, for any $\Omega$ and any $\dot{f}\in C^\infty(\Omega)$, we obtain 
	\begin{align*}
		0 &= \int_\Omega \di_g(\dot{f} E_g) d\vol_g = \int_{\dr\Omega} E_g(\dot{f},\vec{n}).
	\end{align*}
	So $E_g = 0$. This simplifies \eqref{EL_general_1}-\eqref{EL_general_2} and \eqref{invariance_diffeo1} becomes 
	\begin{align}\label{invariance_diffeo2}
		\scal{\g Y}{E_Y}_\eta = 0.
	\end{align}
	
	\subsubsection{First Noether theorem and conservation laws}
	Chosing a variation $\Phi_t$ of the form $\Theta_t\circ \Phi$ for some path $\Theta_t \in \Conf(\R^5)$, we obtain our conservation laws. The metric $g_t$ is then of the form $e^{u_t}g$, and thanks to Noether theorem, there is no term involving $\dot{u}$ in the conservation laws : only the derivative in the $Y$-direction of $F$ produces nontrivial conservation laws. Indeed, using the pointwise conformal invariance, with $\dot{Y} = \dot{M} Y$ for some $\dot{M} \in so(6,1)$ and $\dot{g} = \dot{u}g$, it holds
	\begin{align}
		0 = \frac{d}{dt} F(Y_t,g_t)_{|t=0} &= \left( \scal{\dot{M} Y }{E_Y}_\eta + \di_g(C_Y) + \dot{u}\scal{g}{E_g}_g + \di_g(C_g) \right) d\vol_g \nonumber \\
		& = \left( \scal{\dot{M} Y }{E_Y}_\eta  + \di_g(C_Y) \right) d\vol_g. \label{Nthm1}
	\end{align}
	Now we consider the decomposition $\dot{M} Y = (\dot{f}\cdot \g)Y + X$, where $X$ satisfies \eqref{LCGM_relations}, that is to say, up to reparametrization $\dr_t(\Theta_t\circ \Phi)_{|t=0}\perp \g \Phi$. We consider the scalar product of  \eqref{EL_general_1}-\eqref{EL_general_2} with $X$. It holds 
	\begin{align*}
		0 =& \scal{X}{ E_Y + \di_g\left( \frac{1}{2} \scal{\nu}{E_Y}_\eta \g^g \nu - \scal{\g^g \nu}{E_Y} \nu \right) -\frac{1}{4} \lap_g \left( \scal{\nu}{E_Y}_\eta \nu \right) }_\eta \\
		& -\frac{|\g Y|^2_{g,\eta}}{4} \scal{\nu}{E_Y}_\eta \scal{X}{\nu}_\eta \\
		=& \scal{X}{E_Y}_\eta + \di_g\left( \frac{1}{2} \scal{\nu}{E_Y}_\eta \scal{X}{\g^g \nu}_\eta - \scal{\g^g \nu}{E_Y} \scal{X}{\nu}_\eta - \frac{1}{4} \scal{X}{\g^g \left( \scal{\nu}{E_Y}_\eta \nu \right)}_\eta \right) \\
		&-\frac{1}{2} \scal{\nu}{E_Y}_\eta \scal{\g^g X}{\g^g \nu}_\eta + \scal{\g^g \nu}{E_Y}_\eta \scal{\g^g X}{\nu}_\eta  + \frac{1}{4} \scal{\g^g X}{\g^g\left( \scal{\nu}{E_Y}_\eta \nu \right) }_\eta \\
		&-\frac{|\g Y|^2_{g,\eta}}{4} \scal{\nu}{E_Y}_\eta \scal{X}{\nu}_\eta.
	\end{align*}
	Thanks to the second equation of \eqref{LCGM_relations}, we obtain
	\begin{align*}
		0 =& \di_g\left( \frac{1}{2} \scal{\nu}{E_Y}_\eta \scal{X}{\g^g \nu}_\eta - \scal{\g^g \nu}{E_Y} \scal{X}{\nu}_\eta - \frac{1}{4} \scal{X}{\g^g \left( \scal{\nu}{E_Y}_\eta \nu \right)}_\eta \right)\\
		& + \frac{1}{2} \scal{\nu}{E_Y}_\eta \scal{\lap_g X}{\nu}_\eta - \frac{1}{4} \scal{\lap_g X}{ \scal{\nu}{E_Y}_\eta \nu}_\eta\\
		&+ \scal{X}{E_Y}_\eta -\frac{|\g Y|^2_{g,\eta}}{4} \scal{\nu}{E_Y}_\eta \scal{X}{\nu}_\eta .
	\end{align*}
	Thanks to the first equation of \eqref{LCGM_relations}, we obtain
	\begin{align*}
		0 =& \di_g\left( \frac{1}{2} \scal{\nu}{E_Y}_\eta \scal{X}{\g^g \nu}_\eta - \scal{\g^g \nu}{E_Y} \scal{X}{\nu}_\eta - \frac{1}{4} \scal{X}{\g^g \left( \scal{\nu}{E_Y}_\eta \nu \right)}_\eta \right)\\
		&+ \scal{X}{E_Y}_\eta .
	\end{align*}
	Thanks to \eqref{invariance_diffeo2}, we obtain
	\begin{align*}
		0 =& \di_g\left( \frac{1}{2} \scal{\nu}{E_Y}_\eta \scal{X}{\g^g \nu}_\eta - \scal{\g^g \nu}{E_Y} \scal{X}{\nu}_\eta - \frac{1}{4} \scal{X}{\g^g \left( \scal{\nu}{E_Y}_\eta \nu \right)}_\eta \right)\\
		&+ \scal{X + (\dot{f}\cdot\g)Y }{E_Y}_\eta .
	\end{align*}
	Therefore, it holds
	\begin{align*}
		0 =& \di_g\left( \frac{1}{2} \scal{\nu}{E_Y}_\eta \scal{X}{\g^g \nu}_\eta - \scal{\g^g \nu}{E_Y} \scal{X}{\nu}_\eta - \frac{1}{4} \scal{X}{\g^g \left( \scal{\nu}{E_Y}_\eta \nu \right)}_\eta \right)\\
		&+ \scal{\dot{M} Y }{E_Y}_\eta \\
		=&\di_g\left( \frac{1}{2} \scal{\nu}{E_Y}_\eta \g^g \left( \scal{X}{\nu}_\eta \right) - \scal{\g^g \nu}{E_Y} \scal{X}{\nu}_\eta  \right) - \frac{1}{4} \lap_g \left( \scal{X}{\nu}_\eta \scal{\nu}{E_Y}_\eta \right)\\
		&+ \scal{\dot{M} Y }{E_Y}_\eta.
	\end{align*}
	Recall that $\scal{X}{\nu}_\eta = \scal{\dot{M}Y}{\nu}_\eta$. Therefore, once the computation \eqref{Nthm1} is made, the conservation laws are given by
	\begin{align}\label{general_formula_CL}
		\di_g\left( \frac{1}{2} \scal{\nu}{E_Y}_\eta \g^g \left( \scal{\dot{M}Y}{\nu}_\eta \right) - \scal{\g^g \nu}{E_Y} \scal{\dot{M}Y}{\nu}_\eta - C_Y\right)  - \frac{1}{4} \lap_g \left( \scal{\dot{M}Y}{\nu}_\eta \scal{\nu}{E_Y}_\eta \right) = 0. 
	\end{align}
	
	\begin{remark}
		In dimension 2, it holds
		\begin{align*}
			E_Y &= \lap_g Y + |\g Y|^2_{g,\eta} Y,\\
			E_g &= \Arond^2 - \frac{|\Arond|^2_g}{2} g,\\
			C_Y &= \scal{\dot{M}Y}{\g Y}_\eta.
		\end{align*}
		Therefore, $E_Y$ is proportional to $\nu$ and $E_g = 0$ by Cayley-Hamilton. So \eqref{general_formula_CL} reduces to $\di_g(C_Y) = 0$. This being valid for every $\dot{M}\in so(4,1)$, we recover the known conservations laws $\di(Y\wedge \g Y) = 0$, see \cite{marque2021}.
	\end{remark}

	\subsection{The Euler-Lagrange equation of $\pr$}\label{subsec:EL_P}
	
	Now we compute $E_Y$ and $C_Y$. From \eqref{EL_general_1}-\eqref{EL_general_2}, we obtain the Euler-Lagrange equation.
	\begin{lemma}
		If $\Phi$ is a critical point of $W$ satisfying the assumption \eqref{hyp:small_umbilic}, then the quantities $C_Y$ and $E_Y$ in \eqref{general_formula_CL} are given by: 
		\begin{align*}
			& E_Y = \frac{16}{3} \lap_{g,4} Y + 2P_g Y + 4|\det_g\Arond|\vec{b},\\
			& C_Y := \scal{\dot{M} Y}{ V + |\det_g\Arond|\g^\ggo Y}_\eta + \scal{\dot{M}\g^g Y }{ \lap_g Y}_\eta  , \label{CL_P} \\
			& V := -\g^g \lap_g Y +\frac{2}{3}\Scal_g \g^g Y-2  \Ric_g\g^g Y - \frac{8}{3} |\g Y|^2_{g,\eta} \g^g Y.
		\end{align*}
		where $(a\wedge b)^{ij} = a^i b^j - a^j b^i$, for any $a,b\in \R^{6,1}$.
	\end{lemma}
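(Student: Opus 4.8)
The plan is to compute the first variation of the Lagrangian density $F(Y,g)$ in the $Y$-direction only, with the induced metric $g$ frozen, since the Noether analysis of \cref{subsec:Noether} has already reduced the problem to isolating the pair $(E_Y,C_Y)$ characterised pointwise by $\frac{d}{dt}F(Y_t,g)_{|t=0} = \big(\scal{\dot Y}{E_Y}_\eta + \di_g(C_Y)\big)\,d\vol_g$, and then evaluating at the infinitesimal conformal motions $\dot Y = \dot M Y$, $\dot M\in so(6,1)$, to read off the conservation current via \eqref{general_formula_CL}. I would split the density as $F = F_1+F_2+F_3$ with $F_1 = \scal{Y}{P_gY}_\eta\,d\vol_g$, $F_2 = -\tfrac43|\g Y|^4_{g,\eta}\,d\vol_g$ and $F_3 = -\sign[\det_g\Arond]\,d\vol_\ggo$, treat each piece separately, and collect the bulk parts into $E_Y$ and the divergence parts into $C_Y$.

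For $F_1$ I would use that the Paneitz operator $P_g$ is formally self-adjoint, so that $\frac{d}{dt}\scal{Y}{P_gY}_\eta = 2\scal{\dot Y}{P_gY}_\eta + \di_g(\cdots)$, which at once gives the contribution $2P_gY$ to $E_Y$. The pointwise current is produced by the two Green identities $u\lap_g^2 v - v\lap_g^2 u = \di_g\big(u\,\g^g\lap_g v - (\lap_g v)\g^g u + (\lap_g u)\g^g v - v\,\g^g\lap_g u\big)$ for the leading part, and $\scal{Y}{\di_g(T\g^g\dot Y)}_\eta - \scal{\di_g(T\g^g Y)}{\dot Y}_\eta = \di_g\big(\scal{Y}{T\g^g\dot Y}_\eta - \scal{\dot Y}{T\g^g Y}_\eta\big)$ for the second-order part, where $T := \tfrac{2\Scal_g}{3} - 2\Ric_g$ is $g$-symmetric. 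Substituting $\dot Y = \dot M Y$, $\g^g\dot Y = \dot M\g^g Y$, $\lap_g\dot Y = \dot M\lap_g Y$ and repeatedly using the $\eta$-antisymmetry $\scal{\dot M a}{b}_\eta = -\scal{a}{\dot M b}_\eta$, these two currents collapse precisely onto the terms $-\g^g\lap_g Y$ and $\tfrac{2}{3}\Scal_g\g^g Y - 2\Ric_g\g^g Y$ in $V$, together with the $\g^g Y\wedge\lap_g Y$ slot of $C_Y$. For $F_2$ a single integration by parts of $\frac{d}{dt}\big(-\tfrac43|\g Y|^4\big) = -\tfrac{16}{3}|\g Y|^2_{g,\eta}\scal{\g^g\dot Y}{\g^g Y}_\eta$ produces the bulk term $\tfrac{16}{3}\lap_{g,4}Y = \tfrac{16}{3}\di_g(|\g Y|^2_{g,\eta}\g^g Y)$ and the current $-\tfrac{16}{3}|\g Y|^2_{g,\eta}\scal{\dot Y}{\g^g Y}_\eta$, i.e. the $-\tfrac83|\g Y|^2_{g,\eta}\g^g Y$ term of $V$ after factoring out $\dot M$.

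For $F_3$ I would read $-\sign[\det_g\Arond]\,d\vol_\ggo$ as $-\ve$ times the pseudo-Riemannian area of the immersion $Y$ into $(\s^{5,1},\eta)$ and invoke the first variation of area, $\frac{d}{dt}d\vol_\ggo = \big(\di_\ggo(\dot Y^\top) - \scal{4\vec b}{\dot Y}_\eta\big)\,d\vol_\ggo$, using $4\vec b = \tr_\ggo B$ from \cref{lm:mean_curv_vectorY} and $\dot\ggo_{ij} = -2\scal{\dot Y^\perp}{\dr^2_{ij}Y}_\eta$. After referring everything to $d\vol_g$ through $d\vol_\ggo = |\det_g\Arond|\,d\vol_g$ (\cref{lm:metric_ggo}), the normal part yields the contribution $4|\det_g\Arond|\vec b$ to $E_Y$, the sign being fixed by the Lorentzian orientation of the normal bundle $\Span(\nu,\nu^*)$; the tangential divergence, after rewriting $\di_\ggo$ and $\sqrt{\det\ggo} = |\det_g\Arond|\sqrt{\det g}$ via $\ggo^{-1} = \Arond^{-2}$, becomes $\di_g$ of $|\det_g\Arond|\g^\ggo Y$ paired against $\dot Y$, which is the remaining slot of $C_Y$. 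Assembling the three pieces gives the stated $E_Y$, and under $\dot Y = \dot M Y$ with the identification $\scal{\dot M a}{b}_\eta \leftrightarrow \dot M\cdot(a\wedge b)$ the currents reorganise into $C_Y = Y\wedge\big(V + |\det_g\Arond|\g^\ggo Y\big) + \g^g Y\wedge\lap_g Y$. The identity $\scal{\g Y}{E_Y}_\eta = 0$ and the suppression of the metric current follow from the second Noether theorem exactly as in \eqref{invariance_diffeo2}, so that feeding $E_Y$ into \eqref{EL_general_1}--\eqref{EL_general_2} produces the announced Euler--Lagrange system.

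The main obstacle is not the identification of $E_Y$, which is routine once self-adjointness and the first variation of area are in hand, but the careful extraction of the \emph{local} current $C_Y$ rather than its integrated form: one must retain every boundary term through both Green identities and through the area variation, and then verify that, upon inserting $\dot Y = \dot M Y$ and systematically exploiting the antisymmetry of $\dot M$, the many resulting terms recombine into the compact wedge expression with the correct coefficients (the factors of $2$ from antisymmetrisation being absorbed by the convention $(a\wedge b)^{ij}=a^ib^j-a^jb^i$). A secondary subtlety, concentrated in $F_3$, is the pseudo-Riemannian bookkeeping: because the normal bundle of $Y(\Sigma)$ is Lorentzian, with $\nu,\nu^*$ null and $\scal{\nu}{\nu^*}_\eta<0$, the signs in the projection onto $\vec b$ and in the first variation of $d\vol_\ggo$ must be tracked with care to land on $+4|\det_g\Arond|\vec b$ and on the correct orientation of the tangential current $|\det_g\Arond|\g^\ggo Y$.
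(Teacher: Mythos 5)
Your proposal is correct and follows essentially the same route as the paper: compute the first variation of each of the three pieces of the density (Paneitz term, quartic Dirichlet term, signed volume term), integrate by parts to separate the bulk contribution $E_Y$ from the divergence current, and then substitute $\dot Y=\dot M Y$ and use the $\eta$-antisymmetry of $\dot M\in so(6,1)$ to recast the current as the wedge expression $C_Y$. The paper carries out the Green identities and the first variation of $d\vol_{\ggo_t}$ explicitly (within a conformal variation $\Theta_t\circ\Phi$, where the $\dot u$ terms cancel by pointwise conformal invariance) rather than quoting self-adjointness of $P_g$ and the first variation of area, but the computations are the same.
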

	
	\begin{proof}
		We compute $E_Y$ and $C_Y$ thanks to \eqref{Nthm1}. Consider $(\Theta_t)_{t\in(-1,1)} \subset \Conf(\R^5)$ a smooth path such that $\Theta_0 = \mathrm{id}$. Let $\Phi_t := \Theta_t \circ \Phi$. The metric $g_t := \Phi_t^*\xi$ is conformal to $g$ : $g_t = e^{u_t} g$. The conformal Gauss map $Y_t$ of $\Phi_t$ is given by $M_t Y$ for some $M_t\in SO(6,1)$. Therefore, it holds $P_{g_t} = e^{-2u_t} P_g$. By \eqref{Nthm1}, it holds
		\begin{align*}
			\frac{d}{dt}\Bigg[ \Big( \scal{Y_t}{P_{g_t} Y_t}_\eta - \frac{4}{3} |\g Y_t|^4_{g_t,\eta}  \Big) d\vol_{g_t} -d\vol_{\ggo_t}\Bigg]_{|t=0} = 0.
		\end{align*}
		We compute this derivative :
		\begin{align*}
			& \frac{d}{dt}\Bigg[ \Big( \scal{Y_t}{P_{g_t} Y_t}_\eta - \frac{4}{3} |\g Y_t|^4_{g_t,\eta} \Big) d\vol_{g_t} - d\vol_{\ggo_t} \Bigg]_{|t=0} \\
			=& \Big[ \scal{\dot{Y}}{P_g Y}_\eta - 2\dot{u}\scal{Y}{P_g Y}_\eta + \scal{Y}{P_g \dot{Y}}_\eta -\frac{8}{3} |\g Y|^2_{g,\eta} \Big( 2\scal{\g Y}{ \g \dot{Y} }_{g,\eta} - \dot{u} |\g Y|^2_{g,\eta} \Big) \Big] d\vol_g \\
			&+2\dot{u} \Big( \scal{Y}{P_{g} Y}_\eta - \frac{4}{3} |\g Y|^4_{g,\eta}\Big) d\vol_g  - \frac{1}{2}\tr_\ggo(\dot{\ggo})d\vol_\ggo.
		\end{align*}
		The terms in $\dot{u}$ simplify. We obtain :
		\begin{align*}
			& \frac{d}{dt}\Bigg[ \Big( \scal{Y_t}{P_{g_t} Y_t}_\eta - \frac{4}{3} |\g Y_t|^4_{g_t,\eta} - \det_{g_t} \Arond_{\Phi_t} \Big) d\vol_{g_t} \Bigg]_{|t=0} \\
			=&  \left( \scal{\dot{Y}}{P_g Y}_\eta + \scal{Y}{P_g \dot{Y}}_\eta  - \frac{16}{3} |\g Y|^2_{g,\eta} \scal{\g Y}{\g \dot{Y}}_{g,\eta} \right) d\vol_g- \ggo^{ij} \scal{\g^\ggo_i \dot{Y}}{\g^\ggo_j Y}_\eta d\vol_\ggo.
		\end{align*}
		We write the last line in the form of a divergence term plus a rest containing no derivative of $\dot{Y}$ :
		\begin{align}
			&\left[\scal{Y}{P_g \dot{Y}}_\eta  - \frac{16}{3} |\g Y|^2_{g,\eta} \scal{\g Y}{\g \dot{Y}}_{g,\eta}\right] d\vol_g - \ggo^{ij} \scal{\g^\ggo_i \dot{Y}}{\g^\ggo_j Y}_\eta d\vol_\ggo \nonumber\\
			=& \Big[ \di_g\left( \scal{Y}{\g^g \lap_g \dot{Y} }_\eta - \frac{2}{3}\Scal_g \scal{Y}{\g^g \dot{Y}}_\eta +2  \scal{\Ric_g\g^g \dot{Y}}{Y}_\eta \right) \\
			& - \scal{\g Y}{\g \lap_g \dot{Y}}_{g,\eta} + \frac{2}{3} \Scal_g \scal{\g Y}{\g \dot{Y}}_{g,\eta} -2\scal{\Ric_g \g^g \dot{Y} }{\g^g Y}_{g,\eta} \label{CL_paneitz_1}\\
			&+ \di_g\left( - \frac{16}{3} |\g Y|^2_{g,\eta} \scal{\g Y}{\dot{Y}}_{g,\eta} \right) + \frac{16}{3} \scal{\lap_{g,4} Y}{\dot{Y}}_\eta \Big] d\vol_g\\
			&+\left( -\di_\ggo\left( \scal{\dot{Y}}{\g^\ggo Y}_\eta \right) + \scal{\dot{Y}}{\lap_\ggo Y}_\eta \right) d\vol_\ggo. \label{CL_panetiz_2}
		\end{align}
		The line \eqref{CL_paneitz_1} can be written as
		\begin{align*}
			& - \scal{\g Y}{\g \lap_g \dot{Y}}_{g,\eta} + \frac{2}{3} \Scal_g \scal{\g Y}{\g \dot{Y}}_{g,\eta} -2\scal{\Ric_g \g^g \dot{Y} }{\g^g Y}_{g,\eta} \\
			=& \di_g \left( -\scal{\g Y}{\lap_g \dot{Y}}_\eta + \frac{2}{3} \Scal_g \scal{\g Y}{\dot{Y}}_\eta -2 \scal{\dot{Y}}{\Ric_g \g Y}_\eta \right) \\
			& +\scal{\lap_g Y}{\lap_g \dot{Y}}_\eta  + \scal{\dot{Y} }{ \di_g\left( -\frac{2}{3}\Scal_g \g Y+2\Ric_g \g Y\right) }_\eta.
		\end{align*}
		The term $\scal{\lap_g Y}{\lap_g \dot{Y}}_\eta$ can be written as 
		\begin{align*}
			\scal{\lap_g Y}{\lap_g \dot{Y}}_\eta &= \di_g \left( \scal{\lap_g Y}{\g \dot{Y}}_\eta \right) - \scal{\g \lap_g Y}{\g \dot{Y} }_\eta \\
			&= \di_g\left(  \scal{\lap_g Y}{\g \dot{Y}}_\eta - \scal{\g \lap_g Y}{\dot{Y}}_\eta \right) + \scal{\lap_g^2 Y }{\dot{Y}}_\eta .
		\end{align*}
		Therefore the line \eqref{CL_paneitz_1} can be written as
		\begin{align*}
			& - \scal{\g Y}{\g \lap_g \dot{Y}}_{g,\eta} + \frac{2}{3} \Scal_g \scal{\g Y}{\g \dot{Y}}_{g,\eta} -2\scal{\Ric_g \g^g \dot{Y} }{\g^g Y}_{g,\eta} \\
			=& \scal{\dot{Y}}{\lap^2_g Y + \di_g\left( -\frac{2}{3}\Scal_g \g Y+2\Ric_g \g Y\right) }_\eta \\
			&+ \di_g\left( -\scal{\g Y}{\lap_g \dot{Y}}_\eta + \frac{2}{3} \Scal_g \scal{\g Y}{\dot{Y}}_\eta -2 \scal{\dot{Y}}{\Ric_g \g Y}_\eta + \scal{\lap_g Y}{\g \dot{Y}}_\eta - \scal{\g \lap_g Y}{\dot{Y}}_\eta \right).
		\end{align*}		
		Using that $Y:(\Sigma,\ggo)\to (\s^{3,1},\eta)$ is an isometry, the second term of \eqref{CL_panetiz_2} is given by :
		\begin{align*}
			\scal{\dot{Y}}{\lap_\ggo Y} =& 4\scal{\dot{Y}}{\vec{b}}_\eta.
		\end{align*}
		We conclude that 
		\begin{align*}
			&\frac{d}{dt}\Bigg[ \Big( \scal{Y_t}{P_{g_t} Y_t}_\eta - \frac{4}{3} |\g Y_t|^4_{g_t,\eta}- \det_{g_t} \Arond_{\Phi_t} \Big) d\vol_{g_t} \Bigg]_{|t=0} \\
			=& \Bigg[ \scal{\dot{Y}}{ \frac{16}{3} \lap_{g,4} Y + 2P_g Y }_\eta \\
			&+ \di_g \Big( \scal{Y}{\g^g \lap_g \dot{Y} }_\eta - \frac{2}{3}\Scal_g \scal{Y}{\g^g \dot{Y}}_\eta +2  \scal{\Ric_g\g^g \dot{Y}}{Y}_\eta - \frac{16}{3} |\g Y|^2_{g,\eta} \scal{\g Y}{\dot{Y}}_\eta \Big)  \\
			&+ \di_g \Big( -\scal{\g Y}{\lap_g \dot{Y}}_\eta + \frac{2}{3} \Scal_g \scal{\g Y}{\dot{Y}}_\eta -2 \scal{\dot{Y}}{\Ric_g \g Y}_\eta + \scal{\lap_g Y}{\g \dot{Y}}_\eta - \scal{\g \lap_g Y}{\dot{Y}}_\eta \Big) \Bigg] d\vol_g\\
			&+\left( \di_\ggo \left( \scal{\dot{Y}}{\g^\ggo Y}_\eta \right) + 4\scal{\dot{Y}}{\vec{b}}_\eta \right) d\vol_\ggo.
		\end{align*}
		The divergence terms are the conservation laws. Using $\dot{Y} = \dot{M}Y$ and $\dot{M} \in so(6,1)$, we can rewrite the terms of the form $\di_g(\cdot)$ in the following way :
		
		\begin{align*}
			&\scal{Y}{\g^g \lap_g \dot{Y} }_\eta - \frac{2}{3}\Scal_g \scal{Y}{\g^g \dot{Y}}_\eta +2  \scal{\Ric_g\g^g \dot{Y}}{Y}_\eta - \frac{16}{3} |\g Y|^2_{g,\eta} \scal{\g Y}{\dot{Y}}_{g,\eta} \\
			& -\scal{\g Y}{\lap_g \dot{Y}}_\eta + \frac{2}{3} \Scal_g \scal{\g Y}{\dot{Y}}_\eta -2 \scal{\dot{Y}}{\Ric_g \g Y}_\eta + \scal{\lap_g Y}{\g \dot{Y}}_\eta - \scal{\g \lap_g Y}{\dot{Y}}_\eta \\
			=& 2 \left( \scal{Y}{\dot{M} \g^g \lap_g Y }_\eta + \scal{\lap_g Y}{\dot{M} \g Y}_\eta - \frac{2}{3}\Scal_g \scal{Y}{\dot{M} \g^g Y}_\eta -2  \scal{\Ric_g\g^g Y}{\dot{M} Y}_\eta \right) - \frac{16}{3} |\g Y|^2_{g,\eta} \scal{\g Y}{\dot{M} Y}_{g,\eta} \\
			=& 2\scal{\dot{M} Y}{ -\g^g \lap_g Y+ \frac{2}{3} \Scal_g \g^g Y- 2 \Ric_g \g^g Y- \frac{8}{3} |\g Y|^2_{g,\eta} \g^g Y}_\eta +2 \scal{\dot{M} \g Y}{\lap_g Y}_\eta .
		\end{align*}
		Therefore, it holds
		\begin{align*}
			&\frac{d}{dt}\Bigg[ \Big( \scal{Y_t}{P_{g_t} Y_t}_\eta - \frac{4}{3} |\g Y_t|^4_{g_t,\eta}- \det_{g_t} \Arond_{\Phi_t} \Big) d\vol_{g_t} \Bigg]_{|t=0} \\
			=& \di_g\left[ 2\scal{\dot{M} Y}{ -\g^g \lap_g Y+ \frac{2}{3} \Scal_g \g^g Y- 2 \Ric_g \g^g Y- \frac{8}{3} |\g Y|^2_{g,\eta} \g^g Y}_\eta +2 \scal{\dot{M} \g Y}{\lap_g Y}_\eta \right] d\vol_g \\
			&+ \di_\ggo\left( \scal{\dot{M}Y}{\g^\ggo Y}_\eta \right) d\vol_\ggo\\
			&+ \scal{\dot{M}Y}{ \frac{16}{3} \lap_{g,4} Y + 2P_g Y + 4|\det_g\Arond|\vec{b} }_\eta d\vol_g.
		\end{align*}
		Using $\di_\ggo = |\det_g\Arond|^{-1} \di_g\left( |\det_g\Arond| \cdot  \right)$, we obtain
		\begin{align*}
			\di_\ggo\left( \scal{\dot{M}Y}{\g^\ggo Y}_\eta \right) d\vol_\ggo = \di_g\left(|\det_g\Arond| \scal{\dot{M}Y}{\g^\ggo Y}_\eta \right) d\vol_g.
		\end{align*}
	\end{proof}
	
	As for the Euler-Lagrange equation of $\Sr$, we check that $\scal{E_Y}{\nu}_\eta$ does not vanish in general. 
	
	\begin{lemma}\label{lm:EY_nu}
		Let $\ve := \sign\left[\det_g\Arond\right]$. It holds
		\begin{align*}
			\scal{\nu}{E_Y}_\eta = 4\di_g\left( \scal{\g^g \nu}{\Arond\cdot \g^2 \nu}_\eta \right)+ \left( - 4 + \frac{4\ve}{3}\right) \tr_g(\Arond^3).
		\end{align*}
	\end{lemma}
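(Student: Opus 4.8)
The plan is to expand $E_Y=\frac{16}{3}\lap_{g,4}Y+2P_gY+4|\det_g\Arond|\vec{b}$ and pair each summand with $\nu$ in the metric $\eta$, throughout exploiting the orthogonality relations $\scal{\nu}{\nu}_\eta=\scal{\nu}{\g\nu}_\eta=0$, $\scal{\dr_i\nu}{\dr_j\nu}_\eta=g_{ij}$, $\scal{\nu}{\dr^2_{ij}\nu}_\eta=-g_{ij}$, together with \eqref{eq:derivativeY} in the form $\g_iY=(\dr_iH)\nu-\Arond_i^{\ l}\dr_l\nu$ and \eqref{hessianY_direction_nu}, $\scal{\dr^2_{ij}Y}{\nu}_\eta=\Arond_{ij}$. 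First I would record the three workhorse consequences $\scal{\nu}{\g_iY}_\eta\equiv 0$, $\scal{\nu}{\lap_gY}_\eta=g^{ij}\Arond_{ij}=\tr_g\Arond=0$, and $\scal{\dr_i\nu}{\g_jY}_\eta=-\Arond_{ij}$, which reduce every $\eta$-pairing with $\nu$ to a tensor contraction in $\Arond$.

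The $4$-Laplacian term then drops out immediately: since $|\g Y|^2_{g,\eta}=\tr_g\Arond^2=|\Arond|^2_g$ and $\scal{\nu}{\g Y}_\eta\equiv0$, we get $\scal{\nu}{\lap_{g,4}Y}_\eta=|\Arond|^2_g\,\scal{\nu}{\lap_gY}_\eta=0$. For the mean-curvature term I would invoke \Cref{lm:mean_curv_vectorY} together with $\scal{\nu}{\nu^*}_\eta=-2/f$: only the $\nu^*$-component of $4\vec b$ survives the pairing, so $\scal{\nu}{\vec b}_\eta=\tfrac14\tr_\ggo\Arond=\tfrac14\tr_g\Arond^{-1}$. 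Cayley–Hamilton for traceless $4\times4$ matrices (\Cref{calcul_a_inv}, \eqref{cayley_hamilton}) converts this into $\tr_g\Arond^{-1}=\frac{\tr_g\Arond^3}{3\det_g\Arond}$, whence $4|\det_g\Arond|\,\scal{\nu}{\vec b}_\eta$ is a multiple of $\ve\,\tr_g\Arond^3$, the factor $|\det_g\Arond|/\det_g\Arond$ being exactly $\ve=\sign[\det_g\Arond]$. This is the sole origin of the $\ve$-dependent term in the statement.

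The substance is $2\scal{\nu}{P_gY}_\eta$, which I would split along $P_gY=\lap_g^2Y-\di_g\big[(\tfrac{2\Scal_g}{3}g-2\Ric_g)\g^gY\big]$. For the biharmonic part I apply Green's second identity pointwise, $\scal{\nu}{\lap_g^2Y}_\eta=\scal{\lap_g\nu}{\lap_gY}_\eta+\di_g\big(\scal{\nu}{\g\lap_gY}_\eta-\scal{\g\nu}{\lap_gY}_\eta\big)$, reducing to $\scal{\lap_g\nu}{\lap_gY}_\eta$. Feeding in \eqref{laplace_Y} and the pointwise identities $\scal{\lap_g\nu}{\nu}_\eta=-4$, $\scal{\lap_g\nu}{\dr_i\nu}_\eta=0$, and the coordinate-free relation $\scal{\g^2_{ij}\nu}{\g^2_{kl}\nu}_\eta=A_{ij}A_{kl}$ (which follows from $\g^2_{ij}\Phi=A_{ij}n$), this evaluates to $-4\lap_gH-4H|\Arond|^2_g$. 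For the second-order part, the pairing with $\nu$ collapses through $\scal{\dr_i\nu}{\g_jY}_\eta=-\Arond_{ij}$ to $2\scal{\Ric_g}{\Arond}_g$, and the Gauss identity $\Ric_g=4HA-A^2$ (\Cref{ricci_Phi}) gives $\scal{\Ric_g}{\Arond}_g=2H|\Arond|^2_g-\tr_g\Arond^3$. The two $H|\Arond|^2_g$ contributions cancel, $\lap_gH$ is itself an exact divergence, and the surviving pointwise part is the intrinsic contribution $-4\,\tr_g\Arond^3$ after accounting for the factor $2$; assembling this with the mean-curvature contribution produces the coefficient $\big(-4+\tfrac{4\ve}{3}\big)\tr_g(\Arond^3)$.

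The main obstacle is the bookkeeping of the fourth-order divergence terms: one must reorganize $\scal{\nu}{\g\lap_gY}_\eta-\scal{\g\nu}{\lap_gY}_\eta$, together with the $\lap_gH$ divergence, into the compact form $4\di_g\big(\scal{\g^g\nu}{\Arond\cdot\g^2\nu}_\eta\big)$ recorded in the statement. This requires expanding $\lap_gY$ via \eqref{laplace_Y}, commuting covariant derivatives on $\nu$ (so that the third-derivative terms $\g^3\nu$ appear, whose $\eta$-pairings with $\g\nu$ are again governed by the $A_{ij}A_{kl}$-type identity above), and using Codazzi in the form $\di_g\Arond=3\g^gH$ to trade divergences of $\Arond$ for gradients of $H$. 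Once this identification of the divergence is complete, the remaining non-exact terms are precisely the advertised multiple of $\tr_g(\Arond^3)$, which finishes the proof.
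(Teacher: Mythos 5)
Your proposal follows essentially the same route as the paper: the same three-way split of $E_Y$, the vanishing of the $\lap_{g,4}Y$ pairing via $\scal{\nu}{\g Y}_\eta=0$ and $\tr_g\Arond=0$, the $\vec b$-term via \cref{lm:mean_curv_vectorY} and Cayley--Hamilton, and the Paneitz term reduced to $\scal{\lap_g\nu}{\lap_g Y}_\eta=-4\lap_g H-4H|\Arond|^2_g$ plus $2\scal{\Ric_g}{\Arond}_g$, with the divergences repackaged using $\di_g\Arond=3\g^g H$. One small caveat: your (correct) intermediate value $\scal{\nu}{\vec b}_\eta=\tfrac14\tr_\ggo\Arond$, multiplied by $4|\det_g\Arond|$, yields $\tfrac{\ve}{3}\tr_g(\Arond^3)$ rather than the stated $\tfrac{4\ve}{3}$ --- the same factor-of-four tension exists between \cref{lm:mean_curv_vectorY} and equation \eqref{eq:b_nu} in the paper's own proof, so you should pin down which normalization of $\vec b$ is intended before asserting the final coefficient.
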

	
	\begin{proof}
		We compute :
		\begin{align}\label{eq:EY_nu}
			\scal{\nu}{E_Y}_\eta = & \scal{\nu}{ \frac{16}{3} \lap_{g,4} Y + 2P_g Y + 4|\det_g\Arond|\vec{b} }_\eta.
		\end{align}
		The first term vanishes :
		\begin{align}\label{eq:4lapY_nu}
			\scal{\nu}{\lap_{4,g} Y}_\eta = & \di\left( |\g Y|^2_g \scal{\nu}{\g^g Y}_\eta \right) - \scal{\g \nu}{\g Y}_{\eta,g} |\g Y|^2_g = 0.
		\end{align}
		We compute the third term of \eqref{eq:EY_nu} thanks to \cref{lm:mean_curv_vectorY} and \eqref{eq:tr_Arond_inv}: if $\ve := \sign\left[\det_g\Arond\right]$ then,
		\begin{align}\label{eq:b_nu}
			\scal{\nu}{|\det \Arond| \vec{b}}_\eta = & |\det \Arond| \tr_\ggo(\Arond)
			= \frac{\ve}{3} \tr_g(\Arond^3).
		\end{align}
		We compute the second term of \eqref{eq:EY_nu} :
		\begin{align*}
			\scal{\nu}{ P_g Y}_\eta = & \scal{\nu}{\di_g \left( \g^g \lap_g Y  - \frac{2}{3} \Scal_g \g^g Y + 2\Ric_g \g^g Y \right)}_\eta \\
			= & \di\left( \scal{\nu}{ \g^g \lap_g Y  - \frac{2}{3} \Scal_g \g^g Y + 2\Ric_g \g^g Y }_\eta \right) \\
			& - \scal{\g^g \nu }{\g^g \lap_g Y  - \frac{2}{3} \Scal_g \g^g Y + 2\Ric_g \g^g Y }_{g,\eta}.
		\end{align*}
		Since $\nu\perp \g Y$ and $\scal{\g \nu}{\g Y}_{g,\eta} = \tr_g(\Arond) = 0$, we obtain
		\begin{align}
			\scal{\nu}{ P_g Y}_\eta = & \di\left( \scal{\nu}{ \g^g \lap_g Y }_\eta \right) - \scal{\g^g \nu }{\g^g \lap_g Y}_{g,\eta} + 2 \scal{\Ric_g}{\Arond}_g \nonumber \\
			=& \lap_g\left( \scal{\nu}{\lap_g Y}_\eta \right) - 2\di\left( \scal{\g^g \nu}{\lap_g Y}_\eta \right) + \scal{\lap_g \nu}{\lap_g Y}_\eta +2 \scal{\Ric_g}{\Arond}_g . \label{eq:PY_nu}
		\end{align}
		Thanks to \eqref{hessianY_direction_nu}, it holds $\scal{\nu}{\lap_g Y}_\eta = 0$. The second term satisfies
		\begin{align*}
			\scal{\g^g \nu}{\lap_g Y}_\eta =& \scal{\g^g \nu }{ (\lap_g H)\nu + \g H \cdot \g \nu -(\di_g \Arond)\g^g \nu - \Arond \cdot \g^2 \nu}_\eta.
		\end{align*}
		Thanks to $\di_g \Arond = 3\g^g H$, we conclude that
		\begin{align*}
			\scal{\g^g \nu}{\lap_g Y}_\eta =&\g^g H -3\g^g H- \scal{\g^g \nu}{\Arond\cdot \g^2 \nu}_\eta \\
			= & -2 \g^g H - \scal{\g^g \nu}{\Arond\cdot \g^2 \nu}_\eta .
		\end{align*}
		Therefore, \eqref{eq:PY_nu} reduces to 
		\begin{align*}
			\scal{\nu}{ P_g Y}_\eta & =  4 \lap_g H +2\di_g\left( \scal{\g^g \nu}{\Arond\cdot \g^2 \nu}_\eta \right) + \scal{\lap_g \nu}{\lap_g Y}_\eta +2 \scal{\Ric_g}{\Arond}_g .
		\end{align*}
		The term $\scal{\lap_g \nu}{\lap_g Y}_\eta$ is given by
		\begin{align*}
			\scal{\lap_g \nu}{\lap_g Y}_\eta =& \scal{\lap_g \nu }{ (\lap_g H)\nu + \g H \cdot \g \nu -(\di_g \Arond)\g^g \nu - \Arond \cdot \g^2 \nu}_\eta \\
			=& -4 \lap_g H -2 \scal{\lap_g \nu}{\g H\cdot \g \nu}_\eta - \scal{\lap_g \nu}{\Arond\cdot \g^2 \nu}_\eta.
		\end{align*}
		Hence, it holds
		\begin{align}\label{eq:PaneitzY_nu1}
			\scal{\nu}{ P_g Y}_\eta & = 2\di_g\left( \scal{\g^g \nu}{\Arond\cdot \g^2 \nu}_\eta \right)-2 \scal{\lap_g \nu}{\g H\cdot \g \nu}_\eta - \scal{\lap_g \nu}{\Arond\cdot \g^2 \nu}_\eta +2 \scal{\Ric_g}{\Arond}_g.
		\end{align}
		We simplify the two middle terms involving $\lap_g\nu$. Using the definition $\nu = \begin{pmatrix}
			\vspace{0.2em}\Phi \\ \vspace{0.2em}\frac{|\Phi|^2-1}{2} \\ \vspace{0.2em}\frac{|\Phi|^2+1}{2} 
		\end{pmatrix}$, we obtain
		\begin{align*}
			-2 \scal{\lap_g \nu}{\g H\cdot \g \nu}_\eta - \scal{\lap_g \nu}{\Arond\cdot \g^2 \nu}_\eta &= -2 \scal{\lap_g \Phi}{\g H\cdot \g \Phi}_\xi - \scal{\lap_g \Phi}{\Arond\cdot \g^2 \Phi}_\xi.
		\end{align*}
		Since $\Phi : (\Sigma,g)\to(\R^5,\xi)$ is an isometry, it holds $\lap_g \Phi = 4H n$, so the above equality reduces to
		\begin{align*}
			-2 \scal{\lap_g \nu}{\g H\cdot \g \nu}_\eta - \scal{\lap_g \nu}{\Arond\cdot \g^2 \nu}_\eta = -8 \scal{H n}{\g H\cdot \g \Phi}_\xi - 4\scal{H n}{\Arond\cdot \g^2 \Phi}_\xi = -4H |\Arond|^2_g.
		\end{align*}
		Hence, \eqref{eq:PaneitzY_nu1} becomes
		\begin{align}\label{eq:PaneitzY_nu2}
			\scal{\nu}{ P_g Y}_\eta & = 2\di_g\left( \scal{\g^g \nu}{\Arond\cdot \g^2 \nu}_\eta \right) -4H |\Arond|^2_g+2 \scal{\Ric_g}{\Arond}_g.
		\end{align}
		Now we compute the last term using \cref{ricci_Phi} :
		\begin{align*}
			\scal{\Ric_g}{\Arond}_g = \left( -\Arond^2_{\alpha\beta} + 2H\Arond_{\alpha\beta} + 3H^2g_{\alpha\beta} \right) \Arond^{\alpha\beta} = -\tr_g(\Arond^3) + 2H|\Arond|^2_g.
		\end{align*}
		So \eqref{eq:PaneitzY_nu2} becomes 
		\begin{align}\label{eq:PaneitzY_nu3}
			\scal{\nu}{ P_g Y}_\eta & = 2\di_g\left( \scal{\g^g \nu}{\Arond\cdot \g^2 \nu}_\eta \right) - 2 \tr_g(\Arond^3).
		\end{align}
		The result follows from \eqref{eq:EY_nu}-\eqref{eq:4lapY_nu}-\eqref{eq:b_nu}-\eqref{eq:PaneitzY_nu3}.
	\end{proof}
	
	\appendix
	
	\section{Cayley-Hamilton theorem}\label{sec:cayley_ham}
	
	Now we obtain an expression with respect to the metric $g$. For this, we need to have an expression of $\Arond^{-1}$.
	\begin{lemma}\label{calcul_a_inv}
		Let $\Phi : \Sigma^4\to\R^5$ be a smooth immersion satisfying \eqref{hyp:no_umbilic}. Its traceless second fundamental form $\Arond$ satisfies :
		\begin{align}
			\tr_g\left( \Arond^4 \right) &= \frac{1}{2} \left[ \tr_g \left( \Arond^2 \right) \right]^2 - 4\det\left( \Arond \right), \label{det_Arond} \\
			\Arond^{-1} &= \frac{-1}{\det\left( \Arond \right) } \left[ \Arond^3 - \frac{1}{2} \tr_g \left( \Arond^2 \right) \Arond - \frac{1}{3} \tr_g\left( \Arond^3 \right) g \right], \nonumber\\
			\left| \Arond^{-1} \right|^2_g &= \det\left( \Arond \right)^{-2} \Bigg[  \left( \tr_g \Arond^2 \right) \left(\det \Arond \right) + \frac{1}{9} \left( \tr_g \Arond^3 \right)^2 \Bigg], \nonumber\\
			\tr_g \left( \Arond^{-1} \right) &= \frac{1}{3\det\left( \Arond \right) } \tr_g\left( \Arond^3\right). \label{eq:tr_Arond_inv}
		\end{align}
	\end{lemma}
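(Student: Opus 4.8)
The plan is to deduce all four identities from the Cayley--Hamilton relation \eqref{cayley_hamilton},
\[
\Arond^4 - \tfrac12 |\Arond|^2_g\,\Arond^2 - \tfrac13 \tr_g(\Arond^3)\,\Arond + (\det\Arond)\,g = 0 ,
\]
by repeatedly taking the $g$-trace and multiplying by powers of $\Arond^{-1}$. If this relation has not yet been recorded, I would first obtain it: since $\Arond$ is $g$-symmetric it is diagonalizable with real eigenvalues, and the vanishing of the first elementary symmetric function ($\tr_g\Arond = 0$) together with Newton's identities expresses the remaining coefficients of the characteristic polynomial as $e_2 = -\tfrac12|\Arond|^2_g$, $e_3 = \tfrac13\tr_g(\Arond^3)$, $e_4 = \det\Arond$, producing exactly the displayed coefficients. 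Throughout I will use the elementary facts $\tr_g\Arond = 0$ and $\tr_g g = 4$, together with the observation that $\Arond^{-1}$ is again $g$-symmetric, so that $|\Arond^{-1}|^2_g = \tr_g(\Arond^{-2})$.

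First I would take the $g$-trace of \eqref{cayley_hamilton}: the linear term drops out (trace zero), the $g$-term contributes $4\det\Arond$, and $\tr_g(\Arond^2) = |\Arond|^2_g$, which gives $\tr_g(\Arond^4) = \tfrac12 |\Arond|^4_g - 4\det\Arond$, i.e.\ the identity \eqref{det_Arond}. Next, since $\det\Arond \neq 0$ under \eqref{hyp:no_umbilic}, I multiply \eqref{cayley_hamilton} by $\Arond^{-1}$ and solve for $\Arond^{-1}$, obtaining
\[
(\det\Arond)\,\Arond^{-1} = -\Arond^3 + \tfrac12 |\Arond|^2_g\,\Arond + \tfrac13 \tr_g(\Arond^3)\,g ,
\]
which is the stated expression for $\Arond^{-1}$ after dividing by $-\det\Arond$.

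The last two identities follow from this formula, in a definite order since the norm identity depends on the trace identity \eqref{eq:tr_Arond_inv}. Taking the $g$-trace of the displayed relation and again using $\tr_g\Arond = 0$, $\tr_g g = 4$ collapses the right-hand side to $\tr_g(\Arond^3) - \tfrac43\tr_g(\Arond^3) = -\tfrac13\tr_g(\Arond^3)$, whence $\tr_g(\Arond^{-1}) = \tfrac{1}{3\det\Arond}\tr_g(\Arond^3)$, which is \eqref{eq:tr_Arond_inv}. For the norm, I multiply the same relation once more by $\Arond^{-1}$ to get
\[
(\det\Arond)\,\Arond^{-2} = -\Arond^2 + \tfrac12 |\Arond|^2_g\,g + \tfrac13 \tr_g(\Arond^3)\,\Arond^{-1},
\]
and take the $g$-trace; the three terms contribute $-|\Arond|^2_g$, $2|\Arond|^2_g$, and $\tfrac13\tr_g(\Arond^3)\,\tr_g(\Arond^{-1})$. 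Substituting the value of $\tr_g(\Arond^{-1})$ just found and recalling $|\Arond^{-1}|^2_g = \tr_g(\Arond^{-2})$ yields $|\Arond^{-1}|^2_g = (\det\Arond)^{-2}\big[(\tr_g\Arond^2)(\det\Arond) + \tfrac19(\tr_g\Arond^3)^2\big]$, the third identity.

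I do not expect any genuine obstacle: once \eqref{cayley_hamilton} is in hand the entire lemma is bookkeeping, and the only point requiring attention is the ordering just noted. A fully self-contained alternative, which avoids \eqref{cayley_hamilton} entirely, is to diagonalize $\Arond$ and verify each identity directly on the eigenvalues $\lambda_1,\dots,\lambda_4$ (with $\sum_i\lambda_i = 0$) through Newton's identities and the relations $\sum_i\lambda_i^{-1} = e_3/e_4$ and $\sum_i\lambda_i^{-2} = (e_3/e_4)^2 - 2e_2/e_4$; I would keep the Cayley--Hamilton route for brevity and to match the formulation used elsewhere in the paper.
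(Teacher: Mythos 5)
Your proposal is correct and follows the paper's approach: both derive \eqref{cayley_hamilton} (the paper simply applies Cayley--Hamilton to $\Arond g^{-1}$ and sets $\tr_g\Arond=0$, rather than going through Newton's identities, but the resulting relation is identical), then obtain the first, second and fourth identities by tracing and by multiplying through by $\Arond^{-1}$, exactly as you do. The only real divergence is the norm identity: the paper expands $\left|\Arond^{-1}\right|^2_g$ by brute force from the formula for $\Arond^{-1}$, which forces it to compute $\tr_g(\Arond^6)$ via a second application of \eqref{cayley_hamilton} and then cancel several cubic terms in $\tr_g\Arond^2$; your route of multiplying $(\det\Arond)\,\Arond^{-1} = -\Arond^3 + \tfrac12|\Arond|^2_g\Arond + \tfrac13\tr_g(\Arond^3)g$ once more by $\Arond^{-1}$ and tracing (using $\left|\Arond^{-1}\right|^2_g=\tr_g(\Arond^{-2})$ and the already-established value of $\tr_g(\Arond^{-1})$) reaches the same answer with noticeably less bookkeeping. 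Both computations check out.
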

	
	\begin{proof}
		We apply Cayley-Hamilton to the matrix $\Arond g^{-1}$ :
		\begin{align*}
			\Arond^4 - \left( \tr_g \Arond \right) \Arond^3 + \frac{1}{2}\left[ \left( \tr_g \Arond \right)^2 - \tr_g \left( \Arond^2 \right) \right] \Arond^2 - \frac{1}{6} \left[ \left( \tr_g \Arond \right)^3 - 3 \tr_g\left( \Arond^2 \right) \left( \tr_g \Arond \right) +2 \tr_g\left( \Arond^3 \right) \right] \Arond + \det\left( \Arond \right) g = 0.
		\end{align*}
		Since $\tr_g\Arond = 0$ :
		\begin{align}\label{cayley_hamilton}
			\Arond^4 -\frac{1}{2} \tr_g \left( \Arond^2 \right) \Arond^2 - \frac{1}{3} \tr_g \left( \Arond^3 \right) \Arond + \det\left( \Arond \right) g = 0.
		\end{align}
		If we consider the trace of this relation, we obtain :
		\begin{align*}
			\tr_g\left( \Arond^4 \right) &= \frac{1}{2} \left[ \tr_g \left( \Arond^2 \right) \right]^2 - 4\det\left( \Arond \right).
		\end{align*}
		We also obtain an expression for the inverse. It holds :
		\begin{align*}
			\Arond\left[ \Arond^3 - \frac{1}{2} \tr_g \left( \Arond^2 \right) \Arond - \frac{1}{3} \tr_g\left( \Arond^3 \right) g \right] = - \det\left( \Arond \right) g,
		\end{align*}
		Hence, the inverse is given by
		\begin{align*}
			\Arond^{-1} = \frac{-1}{\det\left( \Arond \right) } \left[ \Arond^3 - \frac{1}{2} \tr_g \left( \Arond^2 \right) \Arond - \frac{1}{3} \tr_g\left( \Arond^3 \right) g \right].
		\end{align*}
		We compute the trace of $\Arond^{-1}$ :
		\begin{align*}
			\tr_g \Arond^{-1} &= \frac{-1}{\det\left( \Arond \right) } \left[ \tr_g \Arond^3 - \frac{4}{3} \tr_g \Arond^3 \right] = \frac{1}{3\det\left( \Arond \right) } \tr_g\left( \Arond^3\right).
		\end{align*}
		We compute the norm of $\Arond^{-1}$ :
		\begin{align}
			\left| \Arond^{-1} \right|^2_g =& \det\left( \Arond \right)^{-2} \Bigg[ \left| \Arond^3 \right|^2_g + \frac{1}{4} \left[ \tr_g \Arond^2 \right]^2 \left| \Arond \right|^2_g + \frac{1}{9} \left[ \tr_g \Arond^3 \right]^2 |g|^2_g \label{eq:norm_Ainv1}\\
			&- \tr_g\left(\Arond^2 \right) \scal{\Arond^3}{\Arond}_g - \frac{2}{3} \tr_g\left(\Arond^3 \right) \scal{\Arond^3}{g}_g + \frac{1}{3} \tr_g\left( \Arond^2\right)\tr_g\left(\Arond^3\right) \scal{\Arond}{g}_g \Bigg] \label{eq:norm_Ainv2}
		\end{align}
		For \eqref{eq:norm_Ainv2}, it holds :
		\begin{align*}
			\scal{\Arond}{g}_g &= \tr_g\Arond =0, \\
			\scal{\Arond^3}{g}_g &= \tr_g \left(\Arond^3 \right), \\
			\scal{\Arond^3}{\Arond}_g &= \tr_g\left(\Arond^4 \right) = \frac{1}{2} \left[ \tr_g \left( \Arond^2 \right) \right]^2 - 4\det\left( \Arond \right).
		\end{align*}
		For \eqref{eq:norm_Ainv1}, it holds :
		\begin{align*}
			|g|^2_g &= \tr_g(g) = 4, \\
			\left| \Arond^3 \right|^2_g &= \tr_g\left(\Arond^6 \right) = \tr_g\left[ \Arond^2 \left( \frac{1}{2} \tr_g \left(\Arond^2 \right) \Arond^2 + \frac{1}{3} \tr_g\left( \Arond^3 \right) \Arond - \det\left(\Arond\right)g \right) \right], \\
			&= \frac{1}{2}\tr_g\left(\Arond^2 \right) \tr_g\left(\Arond^4 \right) + \frac{1}{3} \tr_g\left(\Arond^3 \right)^2-\det\left(\Arond \right) \tr_g\left(\Arond^2 \right), \\
			&= \frac{1}{4} \left[ \tr_g \Arond^2 \right]^3 - 3\left( \det \Arond \right) \tr_g \left(\Arond^2 \right) + \frac{1}{3} \left(\tr_g \Arond^3 \right)^2.
		\end{align*}
		So \eqref{eq:norm_Ainv1}-\eqref{eq:norm_Ainv2} reduces to :
		\begin{align*}
			\left| \Arond^{-1} \right|^2_g &= \det\left( \Arond \right)^{-2} \Bigg[ \frac{1}{4} \left( \tr_g \Arond^2 \right)^3 - 3 \left(\det\Arond \right) \left( \tr_g\Arond^2 \right) + \frac{1}{3} \left( \tr_g \Arond^3 \right)^2 + \frac{1}{4} \left( \tr_g \Arond^2 \right)^3 + \frac{4}{9} \left( \tr_g \Arond^3 \right)^2 \\
			&- \frac{1}{2} \left( \tr_g \Arond^2 \right)^3 + 4\left( \tr_g \Arond^2 \right) \left( \det\Arond \right) - \frac{2}{3} \left(\tr_g \Arond^3 \right)^2 \Bigg] \\
			&= \det\left( \Arond \right)^{-2} \Bigg[ \left( \tr_g \Arond^2 \right) \left(\det \Arond \right) + \frac{1}{9} \left( \tr_g \Arond^3 \right)^2 \Bigg].
		\end{align*}
	\end{proof}
	
	From \cite[Chapter 4, Section 9]{guillemin2010}, we have a Gauss-Bonnet formula in dimension 4 :
	\begin{align}\label{gauss_bonnet}
		\int_\Sigma \det A\ d\vol_g &= \frac{4\pi^2}{3} \chi(\Sigma).
	\end{align}
	We now express $\det A$ in terms of $H$ and $\Arond$.
	
	\begin{lemma}
		Let $\Phi : \Sigma^4\to\R^5$ be a smooth immersion. It holds,
		\begin{align}\label{det_A}
			\det A &= H^4 - \frac{1}{2}H^2 |\Arond|^2_g + \frac{1}{3} H \left( \tr_g\Arond^3 \right) + \frac{1}{8} |\Arond|^4_g - \frac{1}{4} \left(\tr_g \Arond^4 \right).
		\end{align}
	\end{lemma}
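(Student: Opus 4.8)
The plan is to diagonalize the problem by exploiting the decomposition $A = \Arond + Hg$, so that $\det A = \det(\Arond + Hg)$ can be read off from the characteristic polynomial of the traceless matrix $\Arond$. First I would introduce $P(t) := \det(\Arond + t\, g)$, where $\Arond$ is viewed as an endomorphism via $g$. If $\mu_1,\dots,\mu_4$ denote the eigenvalues of $\Arond$ with respect to $g$, then $P(t) = \prod_{i=1}^4 (\mu_i + t) = t^4 + e_1 t^3 + e_2 t^2 + e_3 t + e_4$, where the $e_k = e_k(\Arond)$ are the elementary symmetric functions of the $\mu_i$. Since $A = \Arond + Hg$, the quantity we want is simply $\det A = P(H)$, so everything reduces to expressing $e_1,\dots,e_4$ in terms of $H$, $|\Arond|^2_g$, $\tr_g\Arond^3$ and $\tr_g\Arond^4$.

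Second, I would invoke Newton's identities relating the $e_k$ to the power sums $p_k := \tr_g(\Arond^k)$. Because $\Arond$ is traceless, $p_1 = e_1 = 0$, which immediately kills the cubic term in $P$. Then $e_2 = \tfrac12(p_1^2 - p_2) = -\tfrac12|\Arond|^2_g$, and the identity $p_3 - e_1 p_2 + e_2 p_1 - 3e_3 = 0$ collapses (as $e_1 = p_1 = 0$) to give $e_3 = \tfrac13 p_3 = \tfrac13\tr_g\Arond^3$. Finally $e_4 = \det\Arond$, and rather than leaving the answer in this form I would substitute relation \eqref{det_Arond} of \Cref{calcul_a_inv}, namely $\det\Arond = \tfrac18(\tr_g\Arond^2)^2 - \tfrac14\tr_g\Arond^4 = \tfrac18|\Arond|^4_g - \tfrac14\tr_g\Arond^4$, so that the constant term is already written in the target variables.

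Assembling these, one gets $\det A = P(H) = H^4 - \tfrac12 H^2|\Arond|^2_g + \tfrac13 H\,\tr_g\Arond^3 + \tfrac18|\Arond|^4_g - \tfrac14\tr_g\Arond^4$, which is exactly \eqref{det_A}. There is no genuine analytic difficulty here: the computation is purely algebraic, and the only point demanding care is the sign and coefficient bookkeeping in Newton's identities, together with remembering to rewrite the constant term $\det\Arond$ through \eqref{det_Arond} instead of leaving it implicit.

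An entirely equivalent route would be to apply the $4\times 4$ Newton formula for the determinant directly to $A$, using $\tr_g A = 4H$, $\tr_g A^2 = 4H^2 + |\Arond|^2_g$, $\tr_g A^3 = 4H^3 + 3H|\Arond|^2_g + \tr_g\Arond^3$ and $\tr_g A^4 = 4H^4 + 6H^2|\Arond|^2_g + 4H\tr_g\Arond^3 + \tr_g\Arond^4$ (all obtained by expanding powers of $\Arond + Hg$ and discarding the terms linear in $\Arond$ whose trace vanishes). I would nonetheless prefer the characteristic-polynomial evaluation above, since it isolates the single substitution \eqref{det_Arond} and avoids expanding the full degree-four symmetric function of $A$.
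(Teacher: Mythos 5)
Your proposal is correct, and it rests on the same algebraic identity as the paper: the paper applies Cayley--Hamilton to $Ag^{-1}$, traces it, and expands everything in $\Arond = A - Hg$, which is precisely your ``alternative route'' via the degree-four Newton determinant formula for $A$. Your preferred packaging --- evaluating the characteristic polynomial $P(t)=\det(\Arond+t\,g)$ at $t=H$, exploiting $\tr_g\Arond=0$ from the start, and invoking \eqref{det_Arond} for the constant term --- is a cleaner reorganization of the same computation rather than a genuinely different argument.
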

	
	\begin{proof}
		We apply Cayley-Hamilton to the matrix $A g^{-1}$ :
		\begin{align*}
			0 & = A^4 - \left( \tr_g A \right) A^3 + \frac{1}{2}\left[ \left( \tr_g A \right)^2 - \tr_g \left( A^2 \right) \right] A^2 - \frac{1}{6} \left[ \left( \tr_g A \right)^3 - 3 \tr_g\left( A^2 \right) \left( \tr_g A \right) +2 \tr_g\left( A^3 \right) \right] A + \det\left( A \right) g \\
			& = A^4 - 4H A^3 + \frac{1}{2}\left[ 16H^2 - |A|^2_g \right] A^2 - \frac{1}{6} \left[ 64H^3 - 12 |A|^2_g H +2 \tr_g\left( A^3 \right) \right] A + \det\left( A \right) g .
		\end{align*}
		By tracing this relation, we obtain
		\begin{align*}
			0 =& \tr_g(A^4) - 4H\tr_g(A^3) +(6H^2 - \frac{1}{2}|\Arond|^2_g) |A|^2_g - \frac{4}{3}(32 H^3 - 6|A|^2_g H + \tr_g(A^3)) H + 4\det(A)\\
			=& \tr_g\left[ (\Arond + Hg)^4 \right] \\
			&-4H \tr_g\left[(\Arond +Hg)^3 \right] \\
			&+(6H^2 - \frac{1}{2}|\Arond|^2_g) (4H^2 + |\Arond|^2_g) \\
			&- \frac{4}{3}\left(32 H^3 - 6(4H^2 + |\Arond|^2_g) H + \tr_g\left[(\Arond +Hg)^3 \right] \right) H \\
			&+4\det A \\
			=& \tr_g\left[ \Arond^4 +4 H\Arond^3 +6H^2 \Arond^2 + 4H^3 \Arond + H^4 g \right] \\
			& - 4H \tr_g\left[ \Arond^3 + 3H \Arond^2 + 3H^2 \Arond + H^3g \right]\\
			&+ 24H^4 + 4H^2 |\Arond|^2_g - \frac{1}{2}|\Arond|^4_g \\
			&-\frac{4}{3} \left( 8H^3 - 6|\Arond|^2_g H + \tr_g\left[ \Arond^3 + 3H \Arond^2 + 3H^2 \Arond + H^3g \right] \right) H \\
			&+ 4\det A.
		\end{align*}
		Therefore, it holds
		\begin{align*}
			0 &= 4\det A + \tr_g(\Arond^4) - \frac{1}{2}|\Arond|^4_g - \frac{4}{3} H \tr_g(\Arond^3) + 2H^2 |\Arond|^2_g -4H^4.
		\end{align*}
	\end{proof}
	
	\section{Curvature of $\Phi(\Sigma)$}\label{sec:curvature_Phi}
	
	Consider a smooth immersion $\Phi : \Sigma^4 \to \R^5$ where $\Sigma$ is a closed 4-manifold. Let $\xi$ be the flat metric on $\R^5$ and $g = \Phi^*\xi$. In \cref{subsec:Simons}, we prove Simons'identity. In \cref{subsec:Curvature}, we express all curvature tensors of $\Phi(\Sigma)$ in terms of its second fundamental form $A$ or its traceless part $\Arond$, the goal being \cref{lm:integral_gH}.\\

	\subsection{Simons' identity}\label{subsec:Simons}
	For \cite[Theorem 4]{petersen2006}, we have the following formula :
	\begin{theorem}
		The Gauss-Codazzi equations can be written as follows. If $x,y,z$ are tangent vector fields to $\Phi(\Sigma)\subset \R^5$ :
		\begin{align*}
			\Riem^{\R^5}(x,y,z,N) &= \g_y A(x,z) - \g_x A(y,z)
		\end{align*}
	\end{theorem}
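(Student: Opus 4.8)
The plan is to recognize this as the Codazzi equation and to derive it from the orthogonal splitting of the flat ambient connection of $\R^5$ along the hypersurface $\Phi(\Sigma)$. Write $D$ for the Levi-Civita connection of $(\R^5,\xi)$ and $\g$ for the induced connection on $(\Sigma,g)$. For tangent vector fields $x,y,z$ along $\Phi(\Sigma)$ and unit normal $N$, the only structural inputs I would use are the Gauss formula $D_x y = \g_x y + A(x,y)N$, where $A(x,y) = \scal{D_x y}{N}_\xi$ is the scalar second fundamental form (codimension one), and the Weingarten formula $D_x N = -S(x)$ with $S(x)$ tangent and $\scal{S(x)}{y}_\xi = A(x,y)$.

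First I would expand the ambient curvature endomorphism $\Riem^{\R^5}(x,y)z = D_x D_y z - D_y D_x z - D_{[x,y]}z$ and pair it with $N$ in the metric $\xi$. Differentiating $D_y z = \g_y z + A(y,z)N$ by $D_x$ and projecting onto $N$, every purely tangential contribution drops out: the term $\g_x\g_y z$ is tangent, and the Weingarten term $-A(y,z)S(x)$ is tangent, hence both are $\xi$-orthogonal to $N$. What survives from $D_xD_y z$ is $x\big(A(y,z)\big) + A(x,\g_y z)$. Subtracting the symmetric term and the bracket term, one arrives at
\[
\scal{\Riem^{\R^5}(x,y)z}{N}_\xi = x\big(A(y,z)\big) + A(x,\g_y z) - y\big(A(x,z)\big) - A(y,\g_x z) - A([x,y],z).
\]

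The second step is to identify this right-hand side with a difference of covariant derivatives of the $(0,2)$-tensor $A$. Since $(\g_x A)(y,z) = x\big(A(y,z)\big) - A(\g_x y,z) - A(y,\g_x z)$, and likewise for $(\g_y A)(x,z)$, the torsion-free identity $\g_x y - \g_y x = [x,y]$ turns the two first-slot connection terms into precisely the bracket term above. One therefore obtains $\scal{\Riem^{\R^5}(x,y)z}{N}_\xi = (\g_x A)(y,z) - (\g_y A)(x,z)$, which is the asserted formula $\g_y A(x,z) - \g_x A(y,z)$ up to the ordering of $x,y$.

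The only delicate point, and the step I would treat with care, is precisely this convention matching: depending on whether $\Riem(x,y,z,N)$ is read as $\scal{\Riem^{\R^5}(x,y)z}{N}_\xi$ or its negative, the variables $(x,y)$ appear in the stated order $\g_y A(x,z) - \g_x A(y,z)$ rather than the reverse. I would fix the sign convention for the Riemann tensor of \cite{petersen2006} at the outset so that the antisymmetry in $(x,y)$ comes out with the sign written in the statement; everything else is the routine projection bookkeeping described above. I note finally that for the flat ambient space $\R^5$ the left-hand side vanishes identically, so the identity specializes to the symmetry $\g_x A(y,z) = \g_y A(x,z)$ of $\g A$ that is used repeatedly elsewhere in the paper.
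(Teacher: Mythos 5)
Your derivation is correct and complete: expanding $D_xD_yz-D_yD_xz-D_{[x,y]}z$ via the Gauss and Weingarten formulas, projecting onto $N$, and regrouping the surviving terms into $(\g_xA)(y,z)-(\g_yA)(x,z)$ using torsion-freeness is the standard proof of the Codazzi equation, and your bookkeeping checks out. The paper itself offers no proof of this statement --- it simply cites \cite[Theorem 4]{petersen2006} and only proves the subsequent corollary (that $\g^g_iA_{jk}=\g^g_jA_{ik}$, by flatness of $\R^5$) --- so your self-contained argument is, if anything, more informative than what the paper provides. Your caution about the ordering of $(x,y)$ versus the sign convention for $\Riem(x,y,z,N)$ is the right thing to flag; as you observe, the point is moot in the application since the ambient curvature vanishes identically and only the resulting symmetry of $\g A$ is ever used downstream. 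One very minor remark: you could note explicitly that the paper's convention $A_{ij}=-\scal{\dr_i\Phi}{\dr_jn}_\xi$ agrees with your $A(x,y)=\scal{D_xy}{N}_\xi$, which closes the last gap between your setup and the paper's.
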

	
	\begin{corollary}
		The Codazzi equations can be written :
		\begin{align}\label{gauss_codazzi_Phi}
			\forall i,j,k,\ \ \ \ \g^g_i A_{jk} = \g^g_j A_{ik}
		\end{align}
	\end{corollary}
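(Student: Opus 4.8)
The plan is to observe that the ambient space here is the flat Euclidean space $\R^5$, so its Riemann curvature tensor vanishes identically: $\Riem^{\R^5} \equiv 0$. The corollary is then an immediate specialization of the preceding theorem.

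Concretely, I would apply the Gauss-Codazzi identity
\begin{align*}
	\Riem^{\R^5}(x,y,z,N) &= \g_y A(x,z) - \g_x A(y,z)
\end{align*}
to the coordinate vector fields $x = \dr_i$, $y = \dr_j$, $z = \dr_k$ in a local chart. Under this substitution the right-hand side becomes $\g^g_j A_{ik} - \g^g_i A_{jk}$, using the definition of the components $A_{ik} = A(\dr_i,\dr_k)$ and that $\g_j A$ denotes the component of the covariant derivative of $A$ along $\dr_j$. Since $(\R^5,\xi)$ is flat, the left-hand side vanishes, which yields $\g^g_i A_{jk} = \g^g_j A_{ik}$ for all $i,j,k$, as claimed.

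There is essentially no obstacle here: the only point requiring care is the placement and ordering of indices, namely checking that the antisymmetry in the first two slots of $\Riem^{\R^5}$ matches the antisymmetry in $(i,j)$ of the Codazzi expression, and that the full symmetry in the remaining index $k$ is consistent with $A$ being a symmetric $2$-tensor. Once the vanishing of $\Riem^{\R^5}$ is invoked, the result follows directly with no computation.
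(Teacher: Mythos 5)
Your proposal is correct and follows exactly the paper's own argument: the paper's proof consists of the single observation that the Riemann tensor of $\R^5$ vanishes, and the statement then follows from the preceding theorem just as you describe. Your additional remarks on index placement are fine but add nothing beyond the paper's one-line proof.
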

	
	\begin{proof}
		The Riemann tensor of $\R^5$ is $0$.
	\end{proof}
	
	\begin{proposition}
		Simons' identity can be written as :
		\begin{align}\label{simons_identity_Phi}
			\lap_g A_{ij} &= 4\g^g_i \g^g_j H + 4 H \left( A^2 \right)_{ij} - |A|^2_g A_{ij}
		\end{align}
	\end{proposition}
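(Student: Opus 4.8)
The plan is to follow the classical route to Simons' identity, combining the Codazzi equations \eqref{gauss_codazzi_Phi}, the commutation of covariant derivatives, and the Gauss equation. Since $\R^5$ is flat, the Gauss equation for $\Phi(\Sigma)$ reads $\Riem_{ijkl} = A_{ik}A_{jl} - A_{il}A_{jk}$, in the same sign convention used for $\ggo$ in \Cref{se:proof_duality}; in particular, contracting with $g^{ik}$ gives $\Ric_{jl} = 4H A_{jl} - (A^2)_{jl}$, which I would use to fix the orientation of $\Riem$ once and for all.

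First I would write $\lap_g A_{ij} = g^{kl}\g^g_k \g^g_l A_{ij}$ and apply Codazzi \eqref{gauss_codazzi_Phi} to the inner derivative, $\g^g_l A_{ij} = \g^g_i A_{lj}$, to obtain $\lap_g A_{ij} = g^{kl}\g^g_k \g^g_i A_{lj}$. Then I would commute the two outer derivatives,
\begin{align*}
g^{kl}\g^g_k \g^g_i A_{lj} = g^{kl}\g^g_i \g^g_k A_{lj} + g^{kl}[\g^g_k,\g^g_i]A_{lj}.
\end{align*}
For the first term, the full symmetry of $\g^g A$ coming from Codazzi yields the contracted identity $g^{kl}\g^g_k A_{lj} = \g^g_j(\tr_g A) = 4\g^g_j H$, and hence, by metric compatibility, $g^{kl}\g^g_i \g^g_k A_{lj} = 4\g^g_i \g^g_j H$. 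This is the leading Hessian term of \eqref{simons_identity_Phi}.

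For the second term, the Ricci (commutation) identity expresses $[\g^g_k,\g^g_i]A_{lj}$ through two contractions of $\Riem$ against $A$. Substituting the Gauss equation $\Riem_{ijkl} = A_{ik}A_{jl} - A_{il}A_{jk}$ and tracing over $k,l$ with $g^{kl}$ produces the terms $4H(A^2)_{ij}$, $\pm(A^3)_{ij}$ and $|A|^2_g A_{ij}$, using $g^{kl}A_{kl} = 4H$ and $g^{kl}A_{ik}A_{lj} = (A^2)_{ij}$. The two cubic contributions $(A^3)_{ij}$ cancel, leaving exactly $4H(A^2)_{ij} - |A|^2_g A_{ij}$, and adding the leading term gives \eqref{simons_identity_Phi}.

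The main obstacle is purely bookkeeping: the Ricci identity and the Gauss equation must be used with the same orientation of $\Riem$, since a mismatch flips the sign of the entire curvature contribution (turning $4H(A^2)_{ij} - |A|^2_g A_{ij}$ into its negative). I would therefore anchor the convention to the check $\Ric_{jl} = 4HA_{jl} - (A^2)_{jl}$ before carrying out the two contractions, the only genuine computational content being the cancellation of the $(A^3)_{ij}$ terms.
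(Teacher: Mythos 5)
Your proposal is correct and follows essentially the same route as the paper: apply Codazzi to swap an index in $\lap_g A_{ij}$, commute the covariant derivatives via the Ricci identity for a $(0,2)$-tensor, identify the traced term $g^{kl}\g^g_i\g^g_k A_{lj}$ with $4\g^g_i\g^g_j H$, and substitute the Gauss equation so that the two cubic contributions $(A^3)_{ij}$ cancel, leaving $4H(A^2)_{ij} - |A|^2_g A_{ij}$. Your extra care in anchoring the sign convention of $\Riem$ to $\Ric_{jl} = 4HA_{jl} - (A^2)_{jl}$ is a sensible safeguard but does not change the argument.
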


	\begin{proof}
		By the Codazzi equations, it holds :
		\begin{align*}
			\lap_g A_{ij} = \tr_g\left[ \left( \g^g \right)^2 A_{ij} \right] = g^{kl} \g^g_k \g^g_l A_{ij} = g^{kl} \g^g_k \g^g_i A_{lj}.
		\end{align*}
		We exchange second derivatives on $\Phi(\Sigma)$ of a $(0,2)$-tensor :
		\begin{align*}
			\lap_g A_{ij} =& g^{kl} \left[ \g^g_i \g^g_k A_{lj} + \left( \Riem^{\Phi(\Sigma)}\right)_{kil}^{\ \ \ \alpha} A_{\alpha j} + \left( \Riem^{\Phi(\Sigma)}\right)_{kij}^{\ \ \ \alpha} A_{\alpha l} \right] \\
			=& g^{kl} \g^g_i \g^g_j A_{lk} + g^{kl}\left[ A_{kl} A_i^{\ \alpha} -A_k^{\ \alpha} A_{il} \right] A_{\alpha j} + g^{kl}\left[ A_{kj} A_i^{\ \alpha} -A_k^{\ \alpha} A_{ij} \right] A_{\alpha l} .
		\end{align*}
		Hence, it holds
		\begin{align*}
			\lap_g A_{ij} &= 4\g^g_i \g^g_j H + 4 H \left( A^2 \right)_{ij} - \left( A^3 \right)_{ij}  + \left(A^3 \right)_{ij}- |A|^2_g A_{ij}.
		\end{align*}
	\end{proof}
	
	\subsection{Curvature tensors}\label{subsec:Curvature}
	
	The following can be found in \cite[Theorem II.2.1]{chavel2006} :
	\begin{theorem}
		The Gauss equation can be written as : for any vector fields $x,y,z,t$ along $\Phi(\Sigma)\subset (\R^5,\xi)$,
		\begin{align*}
			\Riem^g(x,y,z,t) &= \scal{A(x,z)}{A(y,t)}_\xi - \scal{A(x,t)}{ A(y,z)}_\xi
		\end{align*}
	\end{theorem}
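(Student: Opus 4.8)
The plan is to derive the Gauss equation directly from the splitting of the ambient flat connection of $\R^5$ into tangential and normal parts, and then to kill the ambient curvature term using flatness of $\R^5$. Write $\g$ for the standard connection on $(\R^5,\xi)$ and $\g^g$ for the Levi-Civita connection of $(\Sigma,g)$. For tangent vector fields $x,y$ along $\Phi(\Sigma)$ the Gauss formula reads
\begin{align*}
\g_x y &= \g^g_x y + A(x,y),
\end{align*}
where $\g^g_x y$ is the tangential part and $A(x,y)=\scal{\g_x y}{n}_\xi\, n$ is the normal (second fundamental form) part; one checks in the usual way that $\g^g$ is torsion-free and metric, hence the Levi-Civita connection, and that $A$ is symmetric and tensorial. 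For the unit normal $n$, differentiating $\scal{n}{y}_\xi=0$ gives the Weingarten relation $(\g_x n)^\top=-S(x)$ for the shape operator $S$, which satisfies $\scal{S(x)}{y}_\xi=\scal{A(x,y)}{n}_\xi$. Since $\Phi(\Sigma)$ has codimension one, the normal bundle is a line bundle and $A(x,y)=\scal{A(x,y)}{n}_\xi\, n$, so for any normal field $N$ one has $\scal{S_N(x)}{t}_\xi=\scal{A(x,t)}{N}_\xi$, where $S_N:=\scal{N}{n}_\xi\, S$.

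Next I would expand the ambient curvature operator (in the convention for which the round sphere is positively curved, i.e. $\Riem(x,y)z=\g_y\g_x z-\g_x\g_y z+\g_{[x,y]}z$, matching the normalization of $\Riem^\eta$ used earlier in the paper) by substituting the Gauss formula twice. Collecting tangential components, the pure $\g^g$-terms reassemble into $\Riem^g(x,y)z$, while the terms obtained by differentiating the normal vectors $A(y,z)$ and $A(x,z)$ contribute, via the Weingarten relation, the shape-operator expressions $S_{A(y,z)}(x)$ and $S_{A(x,z)}(y)$; the remaining normal components only enter the Codazzi identity \eqref{gauss_codazzi_Phi} and are discarded here. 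Since $(\R^5,\xi)$ is flat, $\Riem^{\R^5}\equiv 0$, so taking the tangential part and pairing with a tangent field $t$, then converting shape operators back through $\scal{S_N(x)}{t}_\xi=\scal{A(x,t)}{N}_\xi$, yields
\begin{align*}
\Riem^g(x,y,z,t) &= \scal{A(x,z)}{A(y,t)}_\xi - \scal{A(x,t)}{A(y,z)}_\xi .
\end{align*}

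The only delicate point is the bookkeeping of signs and index orderings: the final identity must be stated in the same curvature convention used throughout the paper, namely the one for which $\Riem^\eta(x,y,z,t)=\scal{x}{z}_\eta\scal{y}{t}_\eta-\scal{x}{t}_\eta\scal{y}{z}_\eta$, so that the round sphere has sectional curvature $+1$. I would pin the convention down by this normalization and then sanity-check on $\s^4\subset\R^5$: there $A(x,y)=-\scal{x}{y}_\xi\, n$, so the right-hand side collapses to $\scal{x}{z}_\xi\scal{y}{t}_\xi-\scal{x}{t}_\xi\scal{y}{z}_\xi$, reproducing constant curvature $+1$ and fixing the overall sign. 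The reduction to a single normal direction also shows that each product $\scal{A(\cdot,\cdot)}{A(\cdot,\cdot)}_\xi$ is simply a product of components of the scalar second fundamental form, which is the form actually needed in \cref{sec:curvature_Phi} and is consistent with Simons' identity \eqref{simons_identity_Phi} proved above.
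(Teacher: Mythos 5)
Your proof is correct: the paper does not actually prove this statement but simply cites \cite[Theorem II.2.1]{chavel2006}, and your argument (Gauss formula splitting the flat ambient connection, Weingarten relation, vanishing of $\Riem^{\R^5}$, with the sign convention pinned down so that $\Riem^\eta(x,y,z,t)=\scal{x}{z}_\eta\scal{y}{t}_\eta-\scal{x}{t}_\eta\scal{y}{z}_\eta$) is exactly the standard derivation given in that reference. The sanity check on $\s^4\subset\R^5$ correctly confirms the overall sign, so nothing is missing.
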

	
	\begin{lemma}\label{riemann_Phi}
		In local coordinates, it holds :
		\begin{align*}
			\Riem^g_{ijkl} =&  A_{ik}A_{jl} - A_{il} A_{jk} \\
			=& \Arond_{ik}\Arond_{jl} - \Arond_{il}\Arond_{jk} + H\Big( \Arond_{ik} g_{jl} + g_{ik}\Arond_{jl} - \Arond_{il}g_{jk} - \Arond_{jk}g_{il}\Big) + H^2 \Big( g_{ik} g_{jl} - g_{il} g_{jk} \Big).
		\end{align*}
	\end{lemma}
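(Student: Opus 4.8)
The plan is to read off both equalities essentially for free, the first from the Gauss equation stated just above and the second by algebraic substitution. For the first line, I would evaluate the Gauss equation in the coordinate frame $(\dr_1,\dots,\dr_4)$. The one observation that makes everything collapse is that $\Phi(\Sigma)\subset\R^5$ has codimension one, so the vector-valued second fundamental form is $A(\dr_i,\dr_k)=A_{ik}\,n$, where $n$ is the unit normal and $A_{ik}$ is the scalar second fundamental form introduced in \cref{sub:Willmore}. Since $\scal{n}{n}_\xi=1$, the $\xi$-inner products appearing in the Gauss equation reduce to scalar products:
\begin{align*}
\scal{A(\dr_i,\dr_k)}{A(\dr_j,\dr_l)}_\xi = A_{ik}A_{jl}\scal{n}{n}_\xi = A_{ik}A_{jl}.
\end{align*}
Substituting $x=\dr_i$, $y=\dr_j$, $z=\dr_k$, $t=\dr_l$ into the Gauss equation then yields $\Riem^g_{ijkl}=A_{ik}A_{jl}-A_{il}A_{jk}$.

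For the second line, I would insert the decomposition $A=\Arond+Hg$, that is $A_{ik}=\Arond_{ik}+Hg_{ik}$, into the first equality and expand. Writing out the two products $A_{ik}A_{jl}$ and $A_{il}A_{jk}$ and sorting the resulting terms by their degree in $H$ separates the expression into a purely traceless part $\Arond_{ik}\Arond_{jl}-\Arond_{il}\Arond_{jk}$, a cross term linear in $H$, namely $H(\Arond_{ik}g_{jl}+g_{ik}\Arond_{jl}-\Arond_{il}g_{jk}-\Arond_{jk}g_{il})$, and a quadratic term $H^2(g_{ik}g_{jl}-g_{il}g_{jk})$; this is precisely the claimed formula.

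There is no genuine obstacle here: once the codimension-one reduction of the Gauss equation is made, the rest is routine tensor bookkeeping. The only point requiring mild care is tracking the antisymmetrization in $(k\leftrightarrow l)$ so that the four linear-in-$H$ terms come out with the correct signs and the $\Arond$-$g$ index pairings are not interchanged.
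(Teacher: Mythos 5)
Your proposal is correct and follows essentially the same route as the paper: the first equality is just the Gauss equation evaluated on coordinate vector fields (the paper treats the codimension-one reduction $\scal{A(\dr_i,\dr_k)}{A(\dr_j,\dr_l)}_\xi = A_{ik}A_{jl}$ as immediate, which you spell out), and the second is the expansion of $A=\Arond+Hg$ sorted by degree in $H$, exactly as in the paper's proof.
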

	\begin{proof}
		Using $A = \Arond +Hg$, we obtain
		\begin{align*}
			\Riem^g_{ijkl} =&(\Arond_{ik} + Hg_{ik} )(\Arond_{jl} + Hg_{jl}) -(\Arond_{il} + Hg_{il})(\Arond_{jk} + Hg_{jk}) \\
			=& \Arond_{ik}\Arond_{jl} - \Arond_{il}\Arond_{jk} +H\Big( \Arond_{ik} g_{jl} + g_{ik}\Arond_{jl} - \Arond_{il}g_{jk} - \Arond_{jk}g_{il}\Big) + H^2 \Big( g_{ik} g_{jl} - g_{il} g_{jk} \Big).
		\end{align*}
	\end{proof}

	\begin{lemma}\label{ricci_Phi}
		In local coordinates, it holds :
		\begin{align*}
			\Ric^g_{\alpha \beta} = 4H A_{\alpha \beta} - A_\alpha^{\ \gamma} A_{\gamma\beta} = -\Arond^2_{\alpha\beta} + 2H\Arond_{\alpha\beta} + 3H^2g_{\alpha\beta}.
		\end{align*}
	\end{lemma}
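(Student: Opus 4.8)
The plan is to obtain the Ricci tensor of $\Phi(\Sigma)$ purely by contracting the Gauss equation already recorded in \Cref{riemann_Phi}, and then to rewrite the answer in terms of $\Arond$ via the splitting $A = \Arond + Hg$. Since the ambient space $\R^5$ is flat, all of the curvature of $(\Sigma,g)$ is encoded in the second fundamental form, so no further geometric input is needed beyond a trace.

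First I would start from the coordinate form of the Gauss equation, namely $\Riem^g_{ijkl} = A_{ik}A_{jl} - A_{il}A_{jk}$ from \Cref{riemann_Phi}, and contract the first and third slots against $g^{ik}$ to form the Ricci tensor:
\begin{align*}
	\Ric^g_{\alpha\beta} &= g^{ik}\Riem^g_{i\alpha k\beta} = g^{ik}\left( A_{ik}A_{\alpha\beta} - A_{i\beta}A_{\alpha k}\right).
\end{align*}
Here the two elementary identities to invoke are $g^{ik}A_{ik} = \tr_g A = 4H$ and $g^{ik}A_{\alpha k}A_{i\beta} = A_\alpha^{\ \gamma}A_{\gamma\beta}$, which immediately yield the first displayed equality $\Ric^g_{\alpha\beta} = 4HA_{\alpha\beta} - A_\alpha^{\ \gamma}A_{\gamma\beta}$. (A sanity check on the contraction convention: for a round sphere, where $\Arond = 0$ and $A = Hg$, this gives $\Ric = 3H^2 g$, the expected value in dimension $4$.)

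Next I would pass to the traceless part. Substituting $A = \Arond + Hg$ and using that $g$ acts as the identity under one raised index, one gets $A_\alpha^{\ \gamma}A_{\gamma\beta} = \Arond^2_{\alpha\beta} + 2H\Arond_{\alpha\beta} + H^2 g_{\alpha\beta}$ together with $4HA_{\alpha\beta} = 4H\Arond_{\alpha\beta} + 4H^2 g_{\alpha\beta}$; subtracting and collecting terms produces exactly
\begin{align*}
	\Ric^g_{\alpha\beta} &= -\Arond^2_{\alpha\beta} + 2H\Arond_{\alpha\beta} + 3H^2 g_{\alpha\beta},
\end{align*}
which is the second equality. I do not expect any genuine obstacle here: the entire argument is a single tensor contraction followed by an algebraic expansion, so the only thing requiring care is keeping the index conventions (which slots are contracted in forming $\Ric$) consistent with the sign convention used for $\Riem^g$ in \Cref{riemann_Phi}.
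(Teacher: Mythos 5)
Your proposal is correct and follows the paper's proof essentially verbatim: both trace the Gauss equation from \Cref{riemann_Phi} (the choice of which index pair to contract is immaterial by the symmetries of $\Riem^g$, and gives the same expression $4HA - A^2$) and then expand $A = \Arond + Hg$ to obtain the second identity.
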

	
	\begin{proof}
		We trace (\ref{riemann_Phi}) :
		\begin{align*}
			\Ric^g_{ik} &= g^{jl} \Riem^g_{ijkl} = g^{jl}\Big( A_{ik}A_{jl} - A_{il} A_{jk} \Big) = 4H A_{ik} - A_i^{\ j} A_{jk}.
		\end{align*}
		In terms of $\Arond$, we obtain
		\begin{align*}
			\Ric^g_{ik} = 4H\Arond_{ik} + 4H^2g_{ik} - \Arond^2_{ik} -2H\Arond_{ik} - H^2g_{ik} = 2H\Arond_{ik} + 3H^2 g_{ik} - \Arond^2_{ik}.
		\end{align*}
	\end{proof}

	\begin{lemma}\label{scal_Phi}
		We have
		\begin{align*}
			\Scal^g = 16 H^2 - |A|^2_g = 12H^2 - |\Arond|^2_g.
		\end{align*}
	\end{lemma}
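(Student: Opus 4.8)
The plan is to obtain $\Scal^g$ simply by tracing the Ricci tensor already computed in \Cref{ricci_Phi}. Recall that the scalar curvature is $\Scal^g = g^{\alpha\beta}\Ric^g_{\alpha\beta} = \tr_g(\Ric^g)$, so the whole statement reduces to contracting the two displayed expressions for $\Ric^g_{\alpha\beta}$ with the inverse metric.

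First I would start from the form $\Ric^g_{\alpha\beta} = 4H A_{\alpha\beta} - A_\alpha^{\ \gamma}A_{\gamma\beta}$. Tracing the first term gives $4H\, \tr_g A$, and since $H = \frac14 \tr_g A$ by definition (see \Cref{sub:Willmore}), this equals $16H^2$. Tracing the second term gives $g^{\alpha\beta}A_\alpha^{\ \gamma}A_{\gamma\beta} = \tr_g(A^2) = |A|^2_g$. Combining the two yields
\begin{align*}
    \Scal^g = 16H^2 - |A|^2_g,
\end{align*}
which is the first claimed identity.

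For the second identity I would pass from $A$ to its traceless part via $A = \Arond + Hg$, so that
\begin{align*}
    |A|^2_g = |\Arond|^2_g + 2H\scal{\Arond}{g}_g + H^2|g|^2_g.
\end{align*}
Here $\scal{\Arond}{g}_g = \tr_g\Arond = 0$ because $\Arond$ is the traceless part of $A$, and $|g|^2_g = \tr_g(g) = 4$ since $\dim\Sigma = 4$. Hence $|A|^2_g = |\Arond|^2_g + 4H^2$, and substituting into the first identity gives $\Scal^g = 16H^2 - |\Arond|^2_g - 4H^2 = 12H^2 - |\Arond|^2_g$. Alternatively, one could trace the second expression for $\Ric^g$ in \Cref{ricci_Phi} directly, using $\tr_g(\Arond^2) = |\Arond|^2_g$, $\tr_g\Arond = 0$ and $\tr_g g = 4$, which reproduces $-|\Arond|^2_g + 0 + 12H^2$ immediately. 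There is no genuine obstacle here: the only inputs are the Ricci formula from the preceding lemma and the elementary trace identities $\tr_g A = 4H$, $\tr_g\Arond = 0$, $\tr_g g = 4$, so the entire proof is a two-line contraction.
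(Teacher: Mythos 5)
Your proposal is correct and follows essentially the same route as the paper: the paper also obtains $\Scal^g = 16H^2 - |A|^2_g$ by tracing the Ricci formula of \Cref{ricci_Phi} with $g^{\alpha\beta}$, and the passage to $12H^2 - |\Arond|^2_g$ via $|A|^2_g = |\Arond|^2_g + 4H^2$ is the same elementary substitution. Nothing is missing.
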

	
	\begin{proof}
		We trace \Cref{ricci_Phi} :
		\begin{align*}
			\Scal^g = g^{\alpha\beta} \Ric^{\Phi(\Sigma)}_{\alpha\beta} = g^{\alpha\beta} \Big( 4H A_{\alpha \beta} - A_\alpha^{\ \gamma} A_{\gamma\beta} \Big)  = 16 H^2 - |A|^2_g.
		\end{align*}
	\end{proof}

	\begin{lemma}\label{lemma_laplace_scalar}
		It holds
		\begin{align*}
			\lap_g \Scal^g =& -8\g^g_i \g^g_j \Big(A^{ij} H - 4H^2 g^{ij} \Big) +32 |\g^g H|^2_g  -2 |\g^g A|^2_g  -8 H\tr_g(A^3) +2 |A|^4_g \\
			=& -8\g^g_i \g^g_j \Big(A^{ij} H - 4H^2 g^{ij} \Big) +32 |\g^g H|^2_g  -2 |\g^g A|^2_g -8 H^2 |\Arond |^2_g -8 H\tr_g\Arond^3 +2 |\Arond|^4_g .
		\end{align*}
	\end{lemma}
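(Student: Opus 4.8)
The plan is to differentiate the scalar curvature formula of \Cref{scal_Phi} and feed in Simons' identity \eqref{simons_identity_Phi}. First I would start from $\Scal^g = 16H^2 - |A|^2_g$ and apply $\lap_g$, obtaining $\lap_g \Scal^g = 16\lap_g(H^2) - \lap_g|A|^2_g$. Because the Levi-Civita connection is metric, the product rule applied to the contraction $|A|^2_g = g^{ik}g^{jl}A_{ij}A_{kl}$ gives the Bochner-type identity
\begin{align*}
	\lap_g|A|^2_g = 2\scal{A}{\lap_g A}_g + 2|\g^g A|^2_g,
\end{align*}
with no curvature correction, since we contract directly against the rough Laplacian of $A$ rather than commuting derivatives.

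Next I would substitute Simons' identity \eqref{simons_identity_Phi} into the term $\scal{A}{\lap_g A}_g$. Using $A^{ij}(A^2)_{ij} = \tr_g(A^3)$ and $A^{ij}A_{ij} = |A|^2_g$, this yields
\begin{align*}
	A^{ij}\lap_g A_{ij} = 4A^{ij}\g^g_i\g^g_j H + 4H\tr_g(A^3) - |A|^4_g.
\end{align*}
At this stage $\lap_g \Scal^g$ is expressed in terms of $\lap_g(H^2)$, the Hessian contraction $A^{ij}\g^g_i\g^g_j H$, and the pointwise terms $\tr_g(A^3)$, $|A|^4_g$, $|\g^g A|^2_g$.

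The key algebraic step, which I expect to be the main obstacle, is to convert $A^{ij}\g^g_i\g^g_j H$ into the double-divergence shape appearing in the statement. Tracing the Codazzi equation \eqref{gauss_codazzi_Phi} against $g$ gives $\di_g A = 4\g^g H$, i.e.\ $\g^g_j A^{ij} = 4\g^g{}^i H$. Expanding the double divergence of the symmetric tensor $A^{ij}H$ by the product rule and inserting this relation, I would get
\begin{align*}
	\g^g_i\g^g_j(A^{ij}H) = 2\lap_g(H^2) + 4|\g^g H|^2_g + A^{ij}\g^g_i\g^g_j H,
\end{align*}
so that $A^{ij}\g^g_i\g^g_j H$ can be eliminated in favour of $\g^g_i\g^g_j(A^{ij}H)$. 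The delicate point here is the correct bookkeeping of the first-derivative cross terms $(\g^g_i A^{ij})(\g^g_j H)$ produced by the product rule, all of which are controlled by the single Codazzi consequence $\di_g A = 4\g^g H$. Substituting back, collecting the multiples of $\lap_g(H^2)$, and writing $32\lap_g(H^2) = 32\,\g^g_i\g^g_j(H^2 g^{ij})$ lets me assemble $-8\g^g_i\g^g_j(A^{ij}H - 4H^2 g^{ij})$ and recover the first claimed identity.

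Finally, for the second form I would pass from $A$ to $\Arond = A - Hg$. Using $\tr_g\Arond = 0$ and $\dim\Sigma = 4$ (so $\tr_g g = 4$), one has $|A|^2_g = |\Arond|^2_g + 4H^2$ and $\tr_g(A^3) = \tr_g\Arond^3 + 3H|\Arond|^2_g + 4H^3$. A short expansion then gives
\begin{align*}
	-8H\tr_g(A^3) + 2|A|^4_g = -8H^2|\Arond|^2_g - 8H\tr_g\Arond^3 + 2|\Arond|^4_g,
\end{align*}
which, inserted into the first form, produces the second displayed equality and completes the proof.
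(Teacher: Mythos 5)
Your proposal is correct and follows essentially the same route as the paper: trace Simons' identity against $A^{ij}$, use the Bochner-type product rule for $\lap_g|A|^2_g$, convert $A^{ij}\g^g_i\g^g_j H$ to the double divergence $\g^g_i\g^g_j(A^{ij}H)$ via the traced Codazzi relation $\g^g_j A^{ij}=4(\g^g)^i H$, and finish with the $A=\Arond+Hg$ expansion. The only difference is organizational (you substitute directly into $\lap_g\Scal^g$ rather than first isolating $\lap_g|A|^2_g$), and all the displayed intermediate identities check out.
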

	
	\begin{proof}
		We trace Simons' identity \eqref{simons_identity_Phi} with respect to $A^{ij}$ :
		\begin{align}
			A^{ij} \g_i^g \g^g_j H &= \frac{1}{4} \Big( A^{ij} \lap_g A_{ij}  - 4H \tr_g(A^3)  + |A|^4_g \Big) \nonumber \\
			&= \frac{1}{8} \lap_g (|A|^2_g) - \frac{1}{4} |\g^g A|^2_g - H\tr_g(A^3) + \frac{1}{4} |A|^4_g. \label{eq:lap_Scal1}
		\end{align}
		On the other hand, using Gauss-Codazzi equations, it holds
		\begin{align*}
			A^{ij} \g_i^g \g^g_j H &= \g^g_i\Big( A^{ij} \g^g_j H \Big) - \Big(\g^g_j A^{ij} \Big)\Big( \g^g_i H \Big) \\
			&= \g^g_i \g^g_j \Big(A^{ij} H \Big) - \g^g_i \Big( (\g^g_j A^{ij}) H \Big)- \Big(\g^g_j A^{ij} \Big)\Big( \g^g_i H \Big) \\
			&= \g^g_i \g^g_j \Big(A^{ij} H \Big) - \g^g_i \Big( ((\g^g)^i A_j^{\ j}) H \Big)- \Big((\g^g)^i A_j^{\ j} \Big)\Big( \g^g_i H \Big).
		\end{align*}
		Therefore, we have
		\begin{align*}
			A^{ij} \g_i^g \g^g_j H &= \g^g_i \g^g_j \Big(A^{ij} H \Big) - 4\Bigg[ \g^g_i\Big( ((\g^g)^i H) H \Big) + \Big(  (\g^g)^i H \Big) \Big( \g^g_i H \Big) \Bigg] \\
			&= \g^g_i \g^g_j \Big(A^{ij} H \Big) - 4\Bigg[ \frac{1}{2} \g^g_i (\g^g)^i H^2 + |\g^g H|^2_g \Bigg] \\
			&= \g^g_i \g^g_j \Big(A^{ij} H \Big) - 2 \lap_g H^2 - 4 |\g^g H|^2_g \\
			&= \g^g_i \g^g_j \Big(A^{ij} H - 4H^2 g^{ij} \Big) + 2 \lap_g H^2 - 4 |\g^g H|^2_g 
		\end{align*}
		Together with \eqref{eq:lap_Scal1}, we obtain
		\begin{align*}
			\frac{1}{8} \lap_g (|A|^2_g) - \frac{1}{4} |\g^g A|^2_g - H\tr_g(A^3) + \frac{1}{4} |A|^4_g &= \g^g_i \g^g_j \Big(A^{ij} H - 4H^2 g^{ij} \Big) + 2 \lap_g H^2 - 4 |\g^g H|^2_g.
		\end{align*}
		We isolate the term in $\lap (|A|^2)$ :
		\begin{align*}
			\frac{1}{8} \lap_g (|A|^2_g) =& \g^g_i \g^g_j \Big(A^{ij} H - 4H^2 g^{ij} \Big) + 2 \lap_g H^2 - 4 |\g^g H|^2_g  + \frac{1}{4} |\g^g A|^2_g + H\tr_g(A^3) - \frac{1}{4} |A|^4_g.
		\end{align*}
		Using \eqref{scal_Phi}, we obtain :
		\begin{align}
			\lap_g \Scal^g =& 16 \lap_g (H^2) - \lap_g |A|^2_g \nonumber \\
			=& 16 \lap_g (H^2) -8\g^g_i \g^g_j \Big(A^{ij} H - 4H^2 g^{ij} \Big) -16 \lap_g H^2 +32 |\g^g H|^2_g  -2 |\g^g A|^2_g -8 H\tr_g(A^3) +2 |A|^4_g \nonumber \\
			=& -8\g^g_i \g^g_j \Big(A^{ij} H - 4H^2 g^{ij} \Big) +32 |\g^g H|^2_g  -2 |\g^g A|^2_g  -8 H\tr_g(A^3) +2 |A|^4_g \label{eq:lap_Scal22}
		\end{align}
		We express $|A|^4_g$ in terms of $\Arond$ :
		\begin{align*}
			|A|^4_g = \left( |\Arond|^2_g + 4H^2 \right)^2 = |\Arond|^4_g + 8H^2 |\Arond|^2_g+16H^4.
		\end{align*}
		We express $\tr_g A^3$ in terms of $\Arond$ :
		\begin{align*}
			\tr_g A^3 = \tr_g \left( \Arond^3 + 3H \Arond^2 + 3 H^2 \Arond + H^3 g \right) = \tr_g\Arond^3 +3H|\Arond|^2_g +4H^3.
		\end{align*}
		Using these two expression into \eqref{eq:lap_Scal22}, we obtain
		\begin{align*}
			\lap_g \Scal^g =-8\g^g_i \g^g_j \Big(A^{ij} H - 4H^2 g^{ij} \Big) +32 |\g^g H|^2_g  -2 |\g^g A|^2_g -8 H\left(\tr_g\Arond^3 +3H|\Arond|^2_g +4H^3 \right) +2 \left( |\Arond|^4_g + 8H^2 |\Arond|^2_g+16H^4 \right).
		\end{align*}
		Hence, it holds
		\begin{align*}
			\lap_g \Scal^g = -8\g^g_i \g^g_j \Big(A^{ij} H - 4H^2 g^{ij} \Big) +32 |\g^g H|^2_g  -2 |\g^g A|^2_g -8 H\tr_g\Arond^3 -8H^2|\Arond|^2_g  +2 |\Arond|^4_g.
		\end{align*}
	\end{proof}
	
	By integrating $\lap_g \Scal^g$, we obtain :
	\begin{lemma}\label{lm:integral_gH}
		It holds
		\begin{align*}
			\int_\Sigma |\g^g H|^2_g d\vol_g &= \frac{1}{12} \int_\Sigma |\g^g \Arond|^2_g + 4H^2 |\Arond|^2_g + 4H\tr_g(\Arond^3) -  |\Arond|^4_g\ d\vol_g \\
			 &= \frac{1}{16} \int_\Sigma |\g^g A|^2_g + 4H^2 |\Arond|^2_g + 4H\tr_g(\Arond^3) -  |\Arond|^4_g\ d\vol_g.
		\end{align*}
	\end{lemma}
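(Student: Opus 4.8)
The plan is to integrate the pointwise identity for $\lap_g \Scal^g$ established in \cref{lemma_laplace_scalar} over the closed manifold $\Sigma$ and exploit the absence of boundary. Since $\Sigma$ is compact without boundary, the divergence theorem gives $\int_\Sigma \lap_g \Scal^g\, d\vol_g = 0$; moreover the double-divergence term $\g^g_i\g^g_j\big(A^{ij}H - 4H^2 g^{ij}\big)$ integrates to zero, being a total divergence of a vector field. Thus integrating the formula of \cref{lemma_laplace_scalar} leaves
\begin{align*}
	0 &= \int_\Sigma 32|\g^g H|^2_g - 2|\g^g A|^2_g - 8H\tr_g(\Arond^3) - 8H^2|\Arond|^2_g + 2|\Arond|^4_g\ d\vol_g.
\end{align*}

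The next step is to rewrite $|\g^g A|^2_g$ in terms of $|\g^g \Arond|^2_g$ and $|\g^g H|^2_g$. Since $A = \Arond + Hg$ and $\g^g g = 0$, we have $\g^g_k A_{ij} = \g^g_k \Arond_{ij} + (\g^g_k H)\, g_{ij}$. Expanding the squared norm produces three contributions: the term $|\g^g \Arond|^2_g$, a cross term, and a term proportional to $|\g^g H|^2_g$. The key observation is that the cross term contracts to $2\,g^{kk'}(\g^g_{k'} H)\,g^{ij}\g^g_k \Arond_{ij} = 2\langle \g^g H, \g^g(\tr_g \Arond)\rangle_g$, which vanishes identically because $\tr_g \Arond = 0$. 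The remaining term equals $|g|^2_g\,|\g^g H|^2_g = 4|\g^g H|^2_g$ in dimension four, so that
\begin{align*}
	|\g^g A|^2_g &= |\g^g \Arond|^2_g + 4|\g^g H|^2_g.
\end{align*}

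Substituting this relation into the integrated identity collapses the gradient terms: the coefficient of $\int_\Sigma |\g^g H|^2_g\, d\vol_g$ becomes $32 - 8 = 24$, and the identity reads
\begin{align*}
	24\int_\Sigma |\g^g H|^2_g\, d\vol_g &= \int_\Sigma 2|\g^g \Arond|^2_g + 8H\tr_g(\Arond^3) + 8H^2|\Arond|^2_g - 2|\Arond|^4_g\ d\vol_g.
\end{align*}
Dividing by $24$ and factoring out the common factor of $2$ yields exactly the claimed formula. There is no serious obstacle here: the only point requiring care is recognizing that tracelessness of $\Arond$ kills the cross term in the expansion of $|\g^g A|^2_g$, which is what makes the final coefficients come out cleanly; everything else is bookkeeping justified by integration over a closed manifold.
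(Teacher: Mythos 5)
Your proof is correct and follows essentially the same route as the paper: integrate the identity of \cref{lemma_laplace_scalar} over the closed manifold (killing the Laplacian and double-divergence terms) and substitute $|\g^g A|^2_g = |\g^g\Arond|^2_g + 4|\g^g H|^2_g$. The only difference is that you spell out the derivation of this decomposition (the vanishing of the cross term via $\tr_g\Arond=0$), which the paper simply asserts.
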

	
	\begin{proof}
		Thanks to the lemma \ref{lemma_laplace_scalar}, it holds :
		\begin{align*}
			0 &= \int_\Sigma 32|\g^g H|^2_g - 2|\g^g A|^2_g - 8H^2 |\Arond|^2_g -8H\tr_g(\Arond^3) + 2|\Arond|^4_g\ d\vol_g.
		\end{align*}
		Using $|\g^g A|^2_g = 4|\g^g H|^2_g + |\g \Arond|^2_g$, we obtain the result.
	\end{proof} 
	
	\bibliography{biblio}	
	\bibliographystyle{alpha}
	
\end{document}